\normalfont \DeclareFontShape{T1}{lmr}{bx}{sc} { <-> ssub * cmr/bx/sc }{} 
\newfont{\indic}{bbmss12}
\def\un#1{\hbox{{\indic 1}$_{#1}$}}
 \def\@textbottom{\vskip \z@ \@plus 10pt}
 \let\@texttop\relax
\def\@secnumfont{\bfseries\scshape}
\def\section{\@startsection{section}{1}%
  \z@{.8\linespacing\@plus\linespacing}{.5\linespacing}%
  {\normalfont\large\bfseries\scshape\centering}}
\def\subsection{\@startsection{subsection}{2}%
  \z@{.5\linespacing\@plus.7\linespacing}{-.5em}%
  {\normalfont\bfseries\scshape}}
\def\subsubsection{\@startsection{subsubsection}{3}%
  \z@{.5\linespacing\@plus.7\linespacing}{-.5em}%
  {\normalfont\scshape}}
\def\paragraph{\@startsection{paragraph}{4}%
  \z@{.5\linespacing\@plus.7\linespacing}{-.5em}%
  {\normalfont\scshape}}
\def\specialsection{\@startsection{section}{1}%
  \z@{\linespacing\@plus\linespacing}{.5\linespacing}%
  {\normalfont\centering\large\bfseries\scshape}}
\newcommandx{\lucas}[2][1=]{\todo[inline,linecolor=orange,backgroundcolor=orange!25,bordercolor=orange,#1, author = LUCAS]{#2}} 
\newcommandx{\lorenzo}[2][1=]{\todo[inline,linecolor=blue,backgroundcolor=blue!25,bordercolor=blue,#1, author = LORENZO]{#2}} 
\newcommandx{\francesco}[2][1=]{\todo[inline,linecolor=red,backgroundcolor=red!25,bordercolor=red,#1, author = FRANCESCO]{#2}} 
\numberwithin{equation}{section}
\definecolor{shadecolor}{gray}{.94}
\newenvironment{myshade}{%
  \topsep4\p@\@plus4\p@\relax%
  \MakeFramed{\advance\hsize-\width \FrameRestore}}%
 {\par\unskip\endMakeFramed}%
\newtheoremstyle{mytheorem}{0}{0}%
     {\itshape}
     {}
     {\bfseries}
     {. }
     {0.3ex}
     {\thmname{{\bfseries #1}}\thmnumber{ {\bfseries #2}}\thmnote{ (#3)}}  
\theoremstyle{mytheorem}
\newtheorem{theo}{Theorem}[section]
\newenvironment{theorem}{\begin{myshade}\begin{theo}}{\end{theo}\end{myshade}}
\newtheorem{prop}[theo]{Proposition}
\newenvironment{proposition}{\begin{myshade}\begin{prop}}{\end{prop}\end{myshade}}
\newtheorem{lem}[theo]{Lemma}
\newenvironment{lemma}{\begin{myshade}\begin{lem}}{\end{lem}\end{myshade}}
\newtheorem{defin}[theo]{Definition}
\newenvironment{definition}{\begin{myshade}\begin{defin}}{\end{defin}\end{myshade}}
\newtheorem{cor}[theo]{Corollary}
\newtheorem{assump}[theo]{Assumption}
\newenvironment{assumption}{\begin{myshade}\begin{assump}}{\end{assump}\end{myshade}}
\newtheoremstyle{mydefinition}{.7\linespacing\@plus.3\linespacing}{.7\linespacing\@plus.3\linespacing}%
     {\rmfamily}
     {}
     {\bfseries}
     {. }
     {0.3ex}
     {\thmname{{\bfseries #1}}\thmnumber{ {\bfseries #2}}\thmnote{ (#3)}}  
\theoremstyle{mydefinition}
\newtheorem{example}[theo]{Example}
\newtheorem{remark}[theo]{Remark}
\newenvironment{myenumerate}{%
\renewcommand{\theenumi}{\arabic{enumi}}%
\renewcommand{\labelenumi}{{\rm(\theenumi)}}%
\begin{list}{\labelenumi}
	{%
	\setlength{\itemsep}{0.4em}%
	\setlength{\topsep}{0.5em}%
	\setlength\leftmargin{2.45em}%
	\setlength\labelwidth{2.05em}%
	\setlength{\labelsep}{0.4em}%
	\usecounter{enumi}%
	}%
	}%
{\end{list}
}
\newenvironment{ienumerate}{%
\renewcommand{\theenumi}{\roman{enumi}}%
\renewcommand{\labelenumi}{{\rm(\theenumi)}}%
\begin{list}{\labelenumi}
	{%
	\setlength{\itemsep}{0.4em}%
	\setlength{\topsep}{0.5em}%
	\setlength\leftmargin{2.45em}%
	\setlength\labelwidth{2.05em}%
	\setlength{\labelsep}{0.4em}%
	\usecounter{enumi}%
	}%
	}%
{\end{list}
}
\renewenvironment{enumerate}{
\begin{myenumerate}}%
{\end{myenumerate}}
\newenvironment{myitemize}{%
\begin{list}{$\bullet$}%
 	{%
	\setlength{\itemsep}{0.4em}%
	\setlength{\topsep}{0.5em}%
	\setlength\leftmargin{2.65em}%
	\setlength\labelwidth{2.65em}%
	\setlength{\labelsep}{0.4em}%
	}%
	}%
{\end{list}}
\renewenvironment{itemize}{
\begin{myitemize}}%
{\end{myitemize}}
\newcommand{\R}{\mathbb{R}}
\newcommand{\N}{\mathbb{N}}
\newcommand{\sfK}{\mathsf{K}}
\newcommand{\supp}{\mathrm{supp}}
\newcommand{\cB}{{\mathcal B}}
\newcommand{\cD}{{\mathcal D}}
\newcommand{\cI}{{\mathcal I}}
\newcommand{\cJ}{{\mathcal J}}
\newcommand{\cK}{{\mathcal K}}
\newcommand{\cM}{{\mathcal M}}
\newcommand{\cN}{{\mathcal N}}
\newcommand{\cR}{{\mathcal R}}
\newcommand{\cQ}{{\mathcal Q}}
\newcommand{\cG}{{\mathcal G}}
\newcommand{\cT}{{\mathcal T}}
\newcommand{\cZ}{{\mathcal Z}}
\renewcommand{\epsilon}{\varepsilon}
\newcommand{\co}{\mathrm{coh}}
\newcommand{\ho}{\mathrm{hom}}
\newcommand{\balpha}{\boldsymbol{\alpha}}
\newcommand{\bhatalpha}{\boldsymbol{\hat\alpha}}
\newcommand{\ind}{\mathds{1}}
\newcommand{\vertiii}[1]{{\left\vert\kern-0.25ex\left\vert\kern-0.25ex\left\vert #1 
    \right\vert\kern-0.25ex\right\vert\kern-0.25ex\right\vert}}
\def\d{\mathrm{d}}
\def\and{\ and }
\newcommand\bbX{\mathbb{X}}
\newcommand\poly{\mathrm{poly}}
\newcommand\sfc{\mathsf{c}}
\title[Multilevel Schauder estimates without Regularity Structures]{Hairer's multilevel Schauder estimates\\
without Regularity Structures}
\date{\today}
\author{Lucas Broux, Francesco Caravenna, Lorenzo Zambotti}
\address[Lucas Broux]{Max Planck Institute for Mathematics in the Sciences, Inselstra{\ss}e 22, 04103 Leipzig, Germany}
\email{\href{mailto:lucas.broux@mis.mpg.de}{\nolinkurl{lucas.broux@mis.mpg.de}}}
\address[Lorenzo Zambotti]{Sorbonne Universit\'e, Laboratoire de Probabilit\'es, Statistique et Mod\'elisation, 4 Pl. Jussieu, 75005 Paris, France}
\email{\href{mailto:zambotti@lpsm.paris}{\nolinkurl{zambotti@lpsm.paris}}}
\address[Francesco Caravenna]{Dipartimento di Matematica e Applicazioni, Universit\`a degli Studi di Milano-Bicocca, via Cozzi 55, 20125 Milano, Italy}
\email{\href{mailto:francesco.caravenna@unimib.it}{\nolinkurl{francesco.caravenna@unimib.it}}}
\subjclass[2020]{46F10; 60L30}
\keywords{Distributions, Germs, Regularising Kernel, Schauder Estimates,
Reconstruction Theorem, Regularity Structures}
\begin{document}

\begin{abstract}
We investigate the regularising properties of singular kernels at the level of germs, i.e.\ 
families of distributions indexed by points in $\mathbb{R}^d$.
First we construct a suitable integration map which acts on general coherent germs.
Then we focus on germs that can be decomposed along a basis
(corresponding to the so-called modelled distributions in Regularity Structures)
and we prove a version of Hairer's multilevel Schauder estimates in this setting,
with minimal assumptions.
\end{abstract}

\maketitle

\setcounter{secnumdepth}{3} 
\setcounter{tocdepth}{2} 
\tableofcontents

\section{Introduction}

It is well-known that the convolution of a (Schwartz) distribution against a kernel admitting an integrable 
singularity on the diagonal yields a distribution with improved H\"older-Besov regularity: 
this is the content of the celebrated \emph{Schauder estimates} which are a fundamental tool in the analysis
of PDEs, since examples of regularising kernels include the heat kernel and the Green's function of 
many differential operators.

One of the key insights of Hairer's theory of Regularity Structures 
\cite{Hai14, MR3935036, chandra2018analytic, MR4210726} is that the same regularisation phenomenon 
still occurs when one
works at the level of \emph{families} of distributions, 
as formalised 
by the notion of 
\enquote{modelled distributions}
(which one should think 
of as local approximations to a distribution of interest).
The resulting \emph{multilevel Schauder estimates} \cite[Theorem~5.12]{Hai14} admit powerful 
consequences, as they allow to solve via fixed point 
many singular stochastic PDEs that are classically ill-posed, after
lifting them in a suitable space of modelled distributions;
see \cite{MR4174393,https://doi.org/10.48550/arxiv.2006.03524,Ber22} 
for expository presentations.
Let us also mention the works
\cite{https://doi.org/10.48550/arxiv.1803.07884,OW19,MR4164267,https://doi.org/10.48550/arxiv.2103.11039}
where Schauder estimates are established at the level of families of functions, in particular
with the aim of establishing a priori estimates for solutions of stochastic PDEs.

The purpose of the present paper is to formulate Hairer's 
multilevel Schauder estimates as a \emph{standalone result in distribution theory}, without any reference 
to the formalism of Regularity Structures.
In doing so, we sharpen and extend Hairer's original result under nearly optimal assumptions.

\smallskip

To provide some context, there has recently been an effort, see 
e.g.\
\cite{MR3966852,https://doi.org/10.48550/arxiv.2103.11039,MR4164267,CZ20,zorinkranich2021reconstruction},
to isolate the other key analytic result of 
Regularity Structures,  namely
the \emph{Reconstruction Theorem} \cite[Theorem~3.10]{Hai14}.
Given a family $F = (F_x)_{x \in \mathbb{R}^d}$
of distributions on $\R^d$ indexed by points in $\mathbb{R}^d$, called a \emph{germ},
the Reconstruction Theorem as presented in \cite{CZ20,zorinkranich2021reconstruction} 
roughly states the following:
\begin{quote}
Under a simple condition on the germ $F = (F_x)_{x \in \mathbb{R}^d}$
called \emph{coherence}, see \eqref{eq:coherence},
there exists a distribution $\mathcal{R}  F$, called \emph{reconstruction of $F$}, which is 
``well approximated'' by $F_x$ around any base point $x\in\R^d$
(with a quantitative bound for the difference $F_x - \mathcal{R}F$,
see \eqref{eq:reconstruction_bound}).
\end{quote}

The reconstruction map $F\mapsto \mathcal{R} F$ is better understood if one recalls the classical Taylor expansion of a smooth function: if $f\in C^\infty(\R^d)$ and $\gamma>0$, we can set 
\[
	F_x(\cdot):= \sum_{k\in\N_0^d \colon |k|< \gamma} 
	\partial^k f(x) \, \frac{(\cdot-x)^k}{k!}, \quad x\in\R^d\,,
	\qquad \mathcal{R}F:=f \,.
\]
By Taylor's theorem, we know that $|f(y)-F_x(y)|\lesssim |y-x|^\gamma$ uniformly for $x,y$ 
in compact subsets of $\R^d$, which shows that the function $f(\cdot)$ is 
well approximated by the function $F_x(\cdot)$ around any point $x\in\R^d$, 
with a precise bound. 

However, this situation is special because $f=\mathcal{R}F$ is
known in advance and we associate the family of local approximations $(F_x)_{x\in\R^d}$ to it.
In the Reconstruction Theorem, this point of view is rather reversed: the family $(F_x)_{x\in\R^d}$ is
assigned and the (unknown) distribution $f=\mathcal{R}F$ is (re-)constructed 
from $(F_x)_{x\in\R^d}$. 
We refer to \cite{MR3684891,rinaldi2021reconstruction,broux2021besov,kern} for similar results.
This point of view is
strongly inspired by the theory of rough paths, where the
analog of the Reconstruction Theorem is the Sewing Lemma 
\cite{MR1654527,MR2091358,MR2261056,Davie}.

\smallskip

Coming back to the present paper, we can formulate the Schauder estimates
in great generality, at the level of coherent germs $F = (F_x)_{x \in \mathbb{R}^d}$: we prove that
\emph{the convolution $\sfK * \cR F$ of a suitable regularising kernel $\sfK$ with a reconstruction $\cR F$
can be lifted to a map $\cK$ acting on germs~$F$}, so that the following diagram commutes:
\begin{equation*}
	\begin{tikzcd}
		F
		\arrow{r}{\mathcal{K}} \arrow[swap]{d}{\mathcal{R}} & 
		\cK F
		\arrow{d}{\mathcal{R}} \\%
		\cR F \arrow{r}{\sfK\,*\,}& \cR (\cK F)
	\end{tikzcd}
	\!\!\!\!
	\begin{tikzcd}
	\phantom{F} \\ 
	=\sfK*\cR F  \,.
	\end{tikzcd}
\end{equation*}
More precisely, our first main result can be stated as follows, where we
quantify the coherence
of a germ by an exponent $\gamma \in \R$ (see Definition~\ref{def:homogeneity_coherence})
and the regularisation of a kernel by an exponent $\beta > 0$
(see Definitions~\ref{def:regularising_kernel}
and Lemma~\ref{th:reg-trans}).

\begin{theorem}[Schauder estimates for coherent germs]\label{thm1.1}
Let $F= (F_x)_{x \in \mathbb{R}^d}$ be a $\gamma$-coherent germ
with a reconstruction $\cR F$.
Let $\sfK$ be a $\beta$-regularising kernel.

Then, assuming $\gamma+\beta \not\in\N_0$, the germ
$\cK F = ((\cK F)_x)_{x\in\R^d}$ given by
	\begin{equation} \label{eq:cKF}
		(\cK F)_x \coloneqq \sfK * F_x - \sum\limits_{|k| < \gamma + \beta} 
		D^k ( \sfK * \{ F_x - \mathcal{R} F\} ) ( x )
	\, \frac{ (\,\cdot - x)^k}{k!} 
	\end{equation}
\noindent is well-defined, 
it is $(\gamma + \beta )$-coherent, and it satisfies
$\mathcal{R} (\cK F) = \sfK * \mathcal{R} F$.

\smallskip

(The ``pointwise derivatives'' $D^k ( \ldots ) ( x )$ in \eqref{eq:cKF}
are defined by Lemma~\ref{lemma:pointwise_derivative}.)
\end{theorem}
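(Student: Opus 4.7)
The strategy splits into three interrelated steps: (i) verify that the pointwise derivatives $D^k(\sfK * \{F_x - \cR F\})(x)$ appearing in \eqref{eq:cKF} are well-defined, so that $(\cK F)_x$ makes sense as a distribution; (ii) establish $(\gamma+\beta)$-coherence of the germ $\cK F$; and (iii) identify $\sfK * \cR F$ as its reconstruction.

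For step (i), the reconstruction bound \eqref{eq:reconstruction_bound} guarantees that the germ $(F_x - \cR F)_{x\in\R^d}$ is quantitatively small of order $\lambda^\gamma$ when tested against $\lambda$-rescaled test functions around $x$. Since $\sfK$ is $\beta$-regularising, Lemma~\ref{th:reg-trans} implies that $\sfK * (F_x - \cR F)$ gains $\beta$ orders of regularity near $x$. In particular it admits classical pointwise derivatives at $x$ up to order $\lfloor \gamma+\beta \rfloor$, which we access via Lemma~\ref{lemma:pointwise_derivative}; the exclusion $\gamma+\beta\notin\N_0$ rules out borderline polynomial-growth cases.

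Steps (ii) and (iii) both hinge on the reformulation of \eqref{eq:cKF}
\[
(\cK F)_x - \sfK * \cR F \;=\; \sfK * (F_x - \cR F) \;-\; \sum_{|k|<\gamma+\beta} D^k\bigl(\sfK*\{F_x - \cR F\}\bigr)(x)\,\frac{(\cdot - x)^k}{k!},
\]
which is precisely the $(\gamma+\beta)$-th order Taylor remainder at $x$ of the function $\sfK * (F_x - \cR F)$. Its $(\gamma+\beta)$-regularity combined with a classical Taylor estimate yields a bound of order $\lambda^{\gamma+\beta}$ when tested against $\phi^\lambda_x$, which is exactly the reconstruction bound identifying $\sfK * \cR F$ as the reconstruction of $\cK F$, proving step (iii). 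For step (ii), subtracting the analogous identity at the base point $y$ gives
\[
(\cK F)_x - (\cK F)_y \;=\; \sfK * (F_x - F_y) \;+\; (T_y - T_x),
\]
where $T_x, T_y$ denote the Taylor polynomials involved in the definition of $(\cK F)_x, (\cK F)_y$. The first summand is controlled via the $\gamma$-coherence of the germ $F_x - F_y$ lifted by the regularising action of $\sfK$ (again Lemma~\ref{th:reg-trans}), producing the desired factor $(\|x-y\|+\lambda)^{\gamma+\beta}$; the polynomial correction $T_y - T_x$ is handled by comparing the Taylor polynomials of the common underlying function $\sfK * (F_x - \cR F) - \sfK * (F_y - \cR F) = \sfK*(F_x - F_y)$ at the two base points.

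The main obstacle lies precisely in the polynomial correction $T_y-T_x$: bounding its coefficients requires quantitative control on the \emph{variation} of the pointwise derivatives $D^k(\sfK*\{F_x - \cR F\})(x)$ as the base point $x$ moves, which is strictly stronger than the single-point estimates sufficient for step (iii). This control follows from combining the coherence of $F$ with a refined version of the regularising action of $\sfK$ applied to germs rather than single distributions. The hypothesis $\gamma+\beta \notin \N_0$ enters here to exclude logarithmic corrections that would spoil the clean polynomial decay rates.
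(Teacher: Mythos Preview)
Your plan for steps (i) and (iii) is sound and matches the paper's approach: the germ $g_x := \sfK*(F_x-\cR F)$ is weakly $(\gamma+\beta)$-homogeneous (Theorem~\ref{thm:convolution_germs_in_G_checkG}), so Lemma~\ref{lemma:pointwise_derivative} yields the pointwise derivatives and the Taylor-remainder bound $|(g_x-T_x)(\varphi_x^\lambda)|\lesssim\lambda^{\gamma+\beta}$, which is exactly the reconstruction identity.

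Step (ii), however, has a genuine gap. Your claim that the first summand $\sfK*(F_x-F_y)$ tested against a generic $\varphi_x^\lambda$ is $O((|x-y|+\lambda)^{\gamma+\beta})$ is false: the Schauder gain of $\beta$ applies only against test functions that \emph{annihilate polynomials} of degree $\le\gamma+\beta$. For generic $\varphi$, the large-scale pieces $\sfK_n$ with $2^{-n}\gtrsim\lambda$ contribute terms of order $O(1)$ (see the proof of Theorem~\ref{thm:convolution_germs_in_G_checkG}, specifically the $n<N_\lambda$ regime with $c=-1$). Thus $\sfK*(F_x-F_y)$ and $T_y-T_x$ cannot be bounded separately; there is essential cancellation between them. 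Note also that $T_x$ and $T_y$ are Taylor polynomials of \emph{different} distributions ($g_x$ resp.\ $g_y$) at different points, not of the common function $\sfK*(F_x-F_y)$ as you write; and Lemma~\ref{th:reg-trans} is only a criterion for a kernel to be $\beta$-regularising, not a Schauder estimate.

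The paper circumvents the need to estimate $T_y-T_x$ directly by introducing \emph{weak} coherence (Definition~\ref{def:weak_homogeneity_coherence}): the germ $(g_x)_x$ is shown to be weakly $(\alpha+\beta,\gamma+\beta)$-coherent and weakly $(\gamma+\beta)$-homogeneous (Theorem~\ref{thm:convolution_germs_in_G_checkG}). The passage from weak to strong coherence after Taylor subtraction (Theorem~\ref{thm:embedding_G_checkG}) hinges on a \emph{large-scale decomposition} of test functions (Lemma~\ref{lemma:decomposition_test_functions}): one writes $\psi_x^\lambda=\tilde\psi_x^{\lambda 2^M}+\sum_n\check\psi_x^{\lambda 2^n}$ with $\lambda 2^M\sim\lambda+|x-y|$ and each $\check\psi$ annihilating polynomials. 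On the $\check\psi$-pieces the polynomial corrections vanish, so $(G_y-G_x)(\check\psi_x^{\lambda 2^n})=(g_y-g_x)(\check\psi_x^{\lambda 2^n})$ and the weak coherence of $g$ applies. On the $\tilde\psi$-piece at the large scale one recenters at $y$ and uses the already-established strong $(\gamma+\beta)$-homogeneity of $G$ at both base points. This is precisely the mechanism that produces the cancellation your decomposition misses.
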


\noindent
We refer to Theorem~\ref{thm:Schauder_for_germs} below for a more refined formulation of this result
where, as in the papers 
\cite{Hai14,MR4492924,MR4198718,MR3997634,hairer2023periodic},
we allow for non translation-invariant kernels (so we talk of \emph{integration}
$\sfK F_x$ rather than \emph{convolution} $\sfK * F_x$) and
we prove that the map $\cK$ is continuous for natural topologies on germs.
Some antecedents of Theorem~\ref{thm1.1} can be found in the works
\cite{https://doi.org/10.48550/arxiv.1803.07884,OW19,MR4164267,https://doi.org/10.48550/arxiv.2103.11039}, which
are concerned with special classes of germs given by solutions to appropriate (stochastic) PDEs.

\smallskip
A crucial property for germs in our context is \emph{homogeneity}, 
which quantifies the Hölder-like behavior of a germ through 
an exponent~$\bar\alpha \in \R$, see \eqref{eq:homogeneity} below,
and its variant \emph{weak homogeneity},
that is homogeneity modulo polynomials, see 
Definition~\ref{def:weak_homogeneity_coherence} below. 
For instance, the difference $\{F_x - \cR F\}$
which appears in \eqref{eq:cKF}
is a homogeneous germ (by the Reconstruction Theorem)
and understanding its convolution with~$\sfK$ is essential.
We prove the following result of independent interest,
which extends the classical Schauder estimates for distributions
to general homogeneous germs.

\begin{theorem}[Schauder estimates for homogeneous germs]\label{thm1.105}
Let $F= (F_x)_{x \in \mathbb{R}^d}$ be a $\bar\alpha$-homogeneous germ
and let $\sfK$ a $\beta$-regularising kernel. 
Then the germ $\sfK * F = (\sfK * F_x)_{x\in\mathbb{R}^d}$ is well-defined
and it is $(\bar{\alpha} + \beta)$-weakly homogeneous.
\end{theorem}

\noindent We refer to Theorem~\ref{thm:convolution_germs_in_G_checkG} below for a precise statement, see in particular \eqref{eq:Kconv-hom}. 

Theorems~\ref{thm1.1} and~\ref{thm1.105} show that
convolving/integrating a germ by $\sfK$ improves both coherence and homogeneity by $\beta$ 
(modulo polynomials).
The idea of considering homogeneity and coherence as independent properties comes from 
\cite{zorinkranich2021reconstruction} (in the context of 
the Reconstruction Theorem)
and we adopt it throughout this paper. 

\smallskip
Compared to Theorems~\ref{thm1.1} and \ref{thm1.105}, the Schauder estimates
in Regularity Structures \cite[Theorem~5.12]{Hai14}
have a more narrow scope: they apply to restricted classes
of germs, corresponding to so-called ``modelled distributions'',
but at the same time they yield 
sharper \emph{multilevel Schauder estimates},
which are crucial to solve singular PDEs.
We recover and extend such multilevel estimates in our framework.

Let us fix a finite family $\Pi = ( \Pi^{i} )_{i \in I}$ of germs $\Pi^i = (\Pi^i_x)_{x\in\R^d}$
which, like ordinary monomials,
can be \enquote{reexpanded} around any base point 
via some coefficients $\Gamma = (\Gamma_{xy}^{ji})$:
such a pair $M = (\Pi,\Gamma)$ is called a \emph{model}
(see Definition~\ref{def:model}). We
think of the family $\Pi = ( \Pi^{i} )_{i \in I}$ as a \emph{basis to build germs via linear combinations
$F = \langle f,\Pi \rangle$}, i.e.\
\begin{equation} \label{eq:FfPi}
	F_x = \langle f,\Pi \rangle_x = \sum_{i\in I} f^i(x) \, \Pi^i_x  \,,
\end{equation}
parametrised by real coefficients $f = (f^i(x))$.
To ensure that such germs $F= \langle f,\Pi \rangle$ are $\gamma$-coherent,
we require that
coefficients $f^i(\cdot)$ satisfy \emph{multilevel H\"older-like bounds},
which define a vector space of \emph{$\gamma$-modelled distributions $f$} for~$M$ (see Definition~\ref{def:modelled_distribution}).

Our second main result, which includes both
Hairer's multilevel Schauder estimates \cite[Theorem~5.12]{Hai14}
and Hairer's extension theorem \cite[Theorem~5.14]{Hai14},
shows that the map $F \mapsto \cK F$ from Theorem~\ref{thm1.1},
for germs $F = \langle f, \Pi \rangle$ of the form \eqref{eq:FfPi}, can be lifted to a map on modelled distributions $f \mapsto \hat{f}$
for a new model $\hat{M} = (\hat{\Pi}, \hat{\Gamma})$, such that the following diagram commutes:
\begin{equation*}
	\begin{tikzcd}
		f
		\arrow{r}{\hat{\cdot}} \arrow[swap]{d}{\langle \cdot , \Pi \rangle} & 
		\hat{f}
		\arrow{d}{\langle \cdot , \hat{\Pi} \rangle} \\%
		\langle f, \Pi \rangle \arrow{r}{\cK}& \langle \hat{f}, \hat{\Pi} \rangle
	\end{tikzcd}
	\!\!\!\!\!
	\begin{tikzcd}
	\phantom{F} \\ 
	=\cK\langle f, \Pi \rangle \rule{0pt}{1.5em}  \,.
	\end{tikzcd}
\end{equation*}
More precisely, we can prove the following.

\begin{theorem}[Multilevel Schauder estimates]\label{thm1.2}
Let $M = (\Pi,\Gamma)$ be a model and let $f$ be a $\gamma$-modelled distribution for~$M$,
so that $\langle f, \Pi \rangle$ is a $\gamma$-coherent germ
with a reconstruction $\cR\langle f, \Pi \rangle$.
Let $\sfK$ be a $\beta$-regularising kernel.

Then, assuming $\gamma+\beta \not\in \N_0$,
we can define a new explicit model $\hat{M} = (\hat{\Pi}, \hat{\Gamma})$,
and a new explicit
$(\gamma+\beta)$-modelled distribution $\hat{f}$ for $\hat{M}$, such that 
\begin{equation*}
	\langle \hat{f}, \hat{\Pi}\rangle = \cK \langle f, \Pi \rangle
\end{equation*}
where $\cK$ is the map from Theorem~\ref{thm1.1}; in particular, we have
$\mathcal{R} \langle \hat f,\hat{\Pi} \rangle = \sfK * \mathcal{R} \langle f, \Pi \rangle$. 
\end{theorem}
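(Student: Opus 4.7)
The plan is to construct $\hat M=(\hat\Pi,\hat\Gamma)$ and $\hat f$ explicitly, in such a way that the commutation identity $\langle\hat f,\hat\Pi\rangle=\cK\langle f,\Pi\rangle$ is forced by the construction; the real work will then reduce to verifying analytic estimates.

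First I would enlarge the index set $I$ by adjoining ``polynomial generators'' $X^k$ for multi-indices $k\in\N_0^d$ with $|k|<\gamma+\beta$. For each old index $i$, I would set $\hat\Pi^i:=\cK\Pi^i$, applying Theorem~\ref{thm1.1} to the $\alpha_i$-coherent germ $\Pi^i$, which yields an $(\alpha_i+\beta)$-coherent germ with reconstruction $\sfK*\cR\Pi^i$. On the new indices I would set $\hat\Pi^{X^k}_x(\,\cdot\,):=(\,\cdot-x)^k/k!$, a classical Taylor germ. The matrix $\hat\Gamma$ I would then define block by block: the old/old block inherits the original $\Gamma$; the poly/poly block is the standard Pascal-type reexpansion of monomials around a new base point; and the poly/old entries $\hat\Gamma^{X^k,i}_{xy}$ are read off from \eqref{eq:cKF} by imposing $\hat\Pi^i_x=\sum_j\hat\Gamma^{ij}_{xy}\hat\Pi^j_y$ at the level of the polynomial corrections appearing in each $\cK\Pi^i$.

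Next I would define $\hat f$ by $\hat f^i(x):=f^i(x)$ on old indices, and on polynomial indices by the unique choice making $\langle\hat f,\hat\Pi\rangle_x=\cK\langle f,\Pi\rangle_x$ hold pointwise in $x$. Unwinding \eqref{eq:cKF} and the formulas for the $\hat\Pi^i$ produces an explicit expression of the form
\[
	\hat f^{X^k}(x)\;=\;D^k\!\left(\sfK*\Bigl\{\textstyle\sum_{i}f^i(x)\,\cR\Pi^i-\cR F\Bigr\}\right)\!(x),
\]
which is well-defined by Lemma~\ref{lemma:pointwise_derivative} thanks to the coherence and reconstruction estimates satisfied by $F$ and by the $\Pi^i$. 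The identity $\langle\hat f,\hat\Pi\rangle=\cK\langle f,\Pi\rangle$, and hence $\cR\langle\hat f,\hat\Pi\rangle=\sfK*\cR\langle f,\Pi\rangle$, is then immediate from Theorem~\ref{thm1.1}.

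The bulk of the argument, and its main obstacle, will be checking the analytic bounds: that $\hat M$ is a model and that $\hat f$ is a $(\gamma+\beta)$-modelled distribution for $\hat M$. The scaling $|\hat\Gamma^{X^k,i}_{xy}|\lesssim|x-y|^{\alpha_i+\beta-|k|}$ of the delicate poly/old block is exactly the output of Lemma~\ref{lemma:pointwise_derivative} applied to $\Pi^i$, with the regularisation exponent $\beta$ compensating the loss from differentiating $|k|$ times. For the modelled-distribution estimate one must control the expression
\[
	\hat f^{X^k}(x)\;-\;\sum_{j\in I}\hat\Gamma^{X^k,j}_{xy}\,\hat f^j(y)\;-\;\sum_{|\ell|<\gamma+\beta-|k|}\hat\Gamma^{X^k,X^\ell}_{xy}\,\hat f^{X^\ell}(y)
\]
and show it is $O(|x-y|^{\gamma+\beta-|k|})$. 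Telescoping between $x$ and $y$ reduces this to applying Lemma~\ref{lemma:pointwise_derivative} to the coherence-defect germ $\sum_i\bigl(f^i(x)-\sum_j\Gamma^{ij}_{xy}f^j(y)\bigr)\,\Pi^i_y$, whose size is dictated by the $\gamma$-modelled bound on $f$, and to the reconstruction error $F_y-\cR F$, whose size is controlled by \eqref{eq:reconstruction_bound}. The hypothesis $\gamma+\beta\notin\N_0$ is precisely what makes Lemma~\ref{lemma:pointwise_derivative} applicable up to the threshold $|k|<\gamma+\beta$; managing the cancellations between the old-basis terms, the polynomial corrections hidden in each $\cK\Pi^i$, and the global correction $\sum_kD^k(\sfK*\{F_x-\cR F\})(x)\,(\,\cdot-x)^k/k!$ is what will give the argument its genuinely multilevel character.
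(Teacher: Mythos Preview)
Your overall architecture---enlarge $I$ by polynomial indices, keep $\hat f^i=f^i$ on old indices, and choose the polynomial components of $\hat f$ to force $\langle\hat f,\hat\Pi\rangle=\cK\langle f,\Pi\rangle$---matches the paper. But there is a real gap in how you define $\hat\Pi^i$ on the old indices. You set $\hat\Pi^i:=\cK\Pi^i$ by ``applying Theorem~\ref{thm1.1} to the $\alpha_i$-coherent germ $\Pi^i$''. This does not make sense: each $\Pi^i$ is $\alpha_i$-\emph{homogeneous} (Definition~\ref{def:model}), not $\alpha_i$-coherent, and the map $\cK$ in Theorem~\ref{thm1.1} needs a coherent germ \emph{together with} a chosen reconstruction as input. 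There is no object $\cR\Pi^i$ available in general (in particular not when $\alpha_i\le 0$, which is the typical situation for the non-polynomial basis germs). Your displayed formula $\hat f^{X^k}(x)=D^k(\sfK*\{\sum_i f^i(x)\,\cR\Pi^i-\cR F\})(x)$ inherits the same problem.

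The paper proceeds differently: it sets
\[
\hat\Pi^i_x\;:=\;\sfK\Pi^i_x\;-\!\!\sum_{|k|<\alpha_i+\beta}D^k(\sfK\Pi^i_x)(x)\,\bbX^k_x\,,
\]
subtracting the Taylor polynomial of $\sfK\Pi^i_x$ \emph{itself}, with no reconstruction of $\Pi^i$ anywhere. That this is well-defined and $(\alpha_i+\beta)$-homogeneous uses only the homogeneity of $\Pi^i$, via Theorem~\ref{thm:convolution_germs_in_G_checkG} (integration sends homogeneous germs to \emph{weakly} homogeneous ones) and Theorem~\ref{thm:existence_pointwise_evaluations} (positive renormalisation). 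Unwinding $\langle\hat f,\hat\Pi\rangle_x=\cK\langle f,\Pi\rangle_x$ with this definition produces the correct formula~\eqref{eq:hatf}, which involves only the single global reconstruction $\cR\langle f,\Pi\rangle$. Finally, the modelled-distribution bound for $\hat f$ on the polynomial indices is not a direct consequence of Lemma~\ref{lemma:pointwise_derivative}: the paper replaces each pointwise derivative by the multi-scale expansion~\eqref{eq:D1}, rewrites the combination to be estimated as in~\eqref{eq:deco-hat-Gamma-f2}, splits at the scale $|y-x|$, and at large scales uses the Taylor-remainder identity of Lemma~\ref{lemma:test-functions_Taylor_expansions} to extract the needed cancellations. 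That two-regime argument, not a single invocation of Lemma~\ref{lemma:pointwise_derivative}, is where the multilevel structure is actually exploited.
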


\noindent
We refer to Theorem~\ref{thm:multi_level_schauder_estimate} below for a precise formulation of
this result, where we also show that
the map $(M,f) \mapsto (\hat{M},\hat{f})$ is continuous in natural topologies.

\medskip

The Schauder estimates in Theorems~\ref{thm1.1},
\ref{thm1.105} and~\ref{thm1.2} are the main results of this paper.
Together with the Reconstruction Theorem from \cite{CZ20,zorinkranich2021reconstruction},
they provide \emph{a standalone formulation of the core analytic results from 
\cite{Hai14}}, without defining the notion of Regularity Structures.
We also obtain a number of improvements: let us briefly
describe the most significant ones.

\begin{enumerate}
	\item \emph{We do not assume that the kernel $\sfK$ annihilates polynomials}, i.e.\ 
	we do not require that
	$\int \sfK (x, y) \, y^k \, d y = 0$ as was assumed (for convenience) in \cite[Assumption~5.4]{Hai14}.
	Sometimes
	it is convenient (but not required) to assume that \emph{$\sfK$ preserves polynomials},
	namely that $\int \sfK (x, y) \, y^k \, d y$ is a polynomial of degree $\le |k|$,
	see e.g.\ Remark~\ref{rem:compatibility}.
	Note that this always holds in the translation
	invariant case 	$\sfK (x , y ) = \sfK ( x -y )$.

	\item \emph{We prove Schauder estimates
	for $\gamma$-coherent germs and $\gamma$-modelled distributions
	also for $\gamma \le 0$}, whereas in the literature
	it is always assumed that $\gamma>0$. Since
	the reconstruction of a $\gamma$-coherent germ is not unique when
	$\gamma \le 0$,
	\emph{a choice must be given as an input} in Theorems~\ref{thm1.1} and~\ref{thm1.2}:
	this poses no problem and, in fact, it decouples Schauder estimates
	from the Reconstruction Theorem. As mentioned
	to us by Hendrik Weber, our Schauder estimates with $\gamma \le 0$ can be useful to
	truncate modelled distributions associated with solutions to SPDEs.
	Another application of this idea can be found in \cite[Theorem~3.8]{MR4693281}.

	\item \emph{We introduce a new notion of weakly coherent germs},
	inspired by classical H\"older-Zygmund spaces, 
	see Definition~\ref{def:weak_homogeneity_coherence}.
	This allows us to give a ``conceptual'' proof of Theorem~\ref{thm1.1}
	factorised in two steps, see Section~\ref{section:proof_main_result_I},
	and also to recover in a very clear way the classical Schauder estimates for distributions,
	see Remark~\ref{remark:classical_schauder}. 
	
	\item \emph{We relax the definition of a model $M = (\Pi,\Gamma)$ from \cite{Hai14}}, as
	we do not need to impose that the reexpansion coefficients $\Gamma$ 
	satisfy a group property, an analytic bound, nor a triangular structure (see 
	Remark~\ref{remark:comparison_definitions_of_model}).
	However, we prove that these properties are preserved by the operation 
	$\Gamma \mapsto \hat{\Gamma}$, see 
	Proposition~\ref{prop:stability_of_properties_of_Gamma}.

	\item Another key property of germs, besides coherence,
	is the \emph{homogeneity}, 
	see Definition~\ref{def:homogeneity_coherence}. 
	Even though modelled distributions yield germs which are both coherent and homogeneous,
	\emph{we keep these properties as distinct as possible 
	in our discussion}, following the ideas  of \cite{zorinkranich2021reconstruction}. 
	This greater flexibility makes proofs more transparent and, moreover,
	allows to consider interesting germs 
	which need not be associated to modelled distributions. 
\end{enumerate}

In conclusion, in this paper we have improved some of the most powerful 
and beautiful results from \cite{Hai14}, presenting them
in a more general yet simpler setting, without losing in sharpness. We hope that
our formulation will make these results even more useful and widespread.

\subsection*{Organisation of the paper} 
The paper is structured as follows.
\begin{itemize}
\item In Section~\ref{section:classical_results} we set notations and recall classical results.

\item In Section~\ref{section:schauder_estimate_on_germs} we present our first main result 
Theorem~\ref{thm1.1}
(Schauder estimates for coherent germs), which we rephrase more precisely
as Theorem~\ref{thm:Schauder_for_germs}.

\item In Section~\ref{section:multi_level_schauder_estimate} we present our second main result
Theorem~\ref{thm1.2} (multilevel Schauder estimates),
which we formulate in a more detailed way in Theorem~\ref{thm:multi_level_schauder_estimate}.

\item In Section~\ref{section:proof_main_result_I} we prove Theorem~\ref{thm:Schauder_for_germs}
and other auxiliary results.

\item In Section~\ref{section:proof_main_result_II} we give the proof of
Theorem~\ref{thm:multi_level_schauder_estimate}.

\item Finally, some more technical results are deferred to the Appendix.
\end{itemize}

\subsection*{Acknowledgements} 
We thank Ismaël Bailleul, Martin Hairer, Cyril Labbé, Felix Otto, Scott Smith, Hendrik Weber
for very useful discussions.

\section{Classical results (revisited)}\label{section:classical_results}

We work in $\mathbb{R}^d$, where $d \geq 1$ is a fixed integer, with the Euclidean norm $|\cdot|$.
Balls are denoted by $B(x_0,r) = \{x\in\R^d \colon |x-x_0| \le r\}$.
We use the shorthand
\begin{equation*}
	f \lesssim g \qquad \iff \qquad \exists C < \infty \colon \quad f \le C g \,.
\end{equation*}

\smallskip

Given $r\in\N_0 = \{0,1,2,\ldots\}$, we denote by $C^r$ the space of functions $\varphi: \R^d \to \R$
which admit partial derivatives of order $k$ for all $|k| \le r$. The corresponding norm is
\begin{equation*}
	\|\varphi\|_{C^r} := \max_{|k| \le r} \, \| \partial^k \varphi \|_\infty \,,
\end{equation*}
where for a multi-index $k\in\N_0^d$ we set $|k| = k_1 + \ldots + k_d$.
Similarly, given $r, m \in \mathbb{N}_0$, we denote by $C^{m, r}$ 
the space of functions $\psi: \R^d \times \R^d \to \R$ which admit partial derivatives of order 
$(k_1, k_2)$ for all multi-indices $| k_1| \leq m$, $| k_2| \leq r$.

\subsection{Test functions and distributions}

We denote by $\cD = \cD(\R^d)$ the space of smooth \emph{test functions} 
$\varphi : \R^d \to \R$, i.e.\ $C^\infty$ with compact support.
We write $\cD(K)$ for the family of test functions
$\varphi \in \cD(\R^d)$ that are supported in $K \subset \R^d$.

Given a test-function $\varphi$, its scaled and centered version $\varphi_x^{\lambda}$ 
is defined by
\begin{equation} \label{eq:scaling}
	\varphi_x^{\lambda} ( \cdot ) \coloneqq \lambda^{- d} \varphi (\lambda^{-1} (\cdot - x)) \,,
\end{equation}
for $x \in \mathbb{R}^d$ and $\lambda > 0$.
Note that $\int\varphi_x^\lambda = \int\varphi$.

We denote by $\cD' = \cD'(\R^d)$ the space of \emph{distrbutions}, i.e.\
the linear functionals $f: \cD(\R^d) \to \R$ with the following property:
for any compact $K \subset \R^d$ there are $r = r_K \in \N_0$
and $c = c_K < \infty$ such that
\begin{equation} \label{eq:order}
	|f(\varphi)| \le c \, \|\varphi\|_{C^r}
	\qquad \forall \varphi \in \cD(K) \,.
\end{equation}
We say that \emph{$f$ is a distribution of order~$r$}
(meaning ``at most~$r$'')
if the value of~$r$ in \eqref{eq:order} can be chosen independent of~$K$,
while the constant $c$ may still depend on~$K$.
In this case, we can canonically define $f(\varphi)$ for non-smooth test functions
$\varphi \in C^r$.

The derivative of any distribution is defined by duality:
\begin{equation*}
	D^k f (\varphi) := (-1)^{|k|} f(\partial^k \varphi) 
	\qquad\forall k\in\N_0^d \,.
\end{equation*}
We will later give conditions under which \emph{pointwise derivatives $D^k f(x)$}
can be defined for suitable distributions, see Lemma~\ref{lemma:pointwise_derivative}.

\subsection{H\"older-Zygmund spaces}

For $\gamma \in \mathbb{R}$ we denote  by
$\mathcal{Z}^{\gamma} \coloneqq \mathcal{B}_{\infty, \infty, \mathrm{loc}}^{\gamma}$ 
the (local) H\"older-Zygmund spaces, see \cite[Section~14.3]{MR4174393},
which coincide with the usual (local) H\"older-Besov spaces $\mathcal{C}^\gamma$
when $\gamma$ is not an integer.
To recall their definition, we first introduce for 
$r \in \mathbb{N}_0$ and $\gamma \in \R$ 
the families of test-functions
\begin{equation}\label{eq:BB}
\begin{split}
		\mathscr{B}^r & \coloneqq \left\lbrace \varphi \in \mathcal{D} ( B(0,1) ):
		\ \| \partial^k \varphi \|_\infty\le 1 \  \text{ for all } 0 \le |k| \le r \
		\text{ (i.e.\ $\| \varphi \|_{C^r} \leq 1$)} \right\rbrace  \,, \\
		\mathscr{B}_\gamma  & \coloneqq 
		\left\lbrace \varphi \in \mathcal{D} ( B(0,1) ):
		\ \int_{\mathbb{R}^d} \varphi ( z ) z^k d z = 0 \text{ for all } 
		0 \leq | k | \leq \gamma \right\rbrace ,
\end{split}
	\end{equation}
and we denote their intersection by
	\begin{equation} \label{eq:Brgamma}
		\mathscr{B}_\gamma^r  \coloneqq 
		\mathscr{B}^r  \cap \mathscr{B}_\gamma  \,.
	\end{equation}
Note that we have $\mathscr{B}_\gamma^r = \mathscr{B}_m^r$
where $m = \lfloor \gamma \rfloor$ is the largest integer $m \le \gamma$.
Also note that  for $\gamma < 0$ the constraint $0 \leq | k | \leq \gamma$ is empty
and we have $\mathscr{B}_\gamma^r = \mathscr{B}^r$.

We can now define the spaces $\cZ^\gamma$.
Note that for $\gamma < 0$ we denote by
$r = \lfloor - \gamma + 1 \rfloor$ the smallest positive integer $r > - \gamma$.

\begin{definition}[H\"older-Zygmund spaces $\mathcal{Z}^{\gamma}$]
\label{def:Hoelder_Zygmund_spaces}
Let $\gamma \in \mathbb{R}$, we define $\mathcal{Z}^{\gamma}$ as the 
set of distributions 
$f \in \mathcal{D}^{\prime} (\mathbb{R}^d)$ such that 
\begin{equation*}
	\| f \|_{\mathcal{Z}_{K,\bar\lambda}^{\gamma}} < + \infty
\end{equation*}
for all compacts $K \subset \mathbb{R}^d$
and for some (hence any) $\bar\lambda \in [1,\infty)$, where
	\begin{equation}\label{eq:Cgamma}
		\| f \|_{\mathcal{Z}_{K,\bar\lambda}^{\gamma}} = 
			\begin{dcases}
				\sup\limits_{\substack{x \in K , \ \lambda \in ( 0, \bar\lambda], \\ 
				\varphi \in \mathscr{B}^{r} \,\text{with } r = \lfloor - \gamma + 1 \rfloor}} 
				\frac{\left| f ( \varphi_x^{\lambda} ) \right|}{\lambda^{\gamma}} 
				&\text{ if $\gamma < 0$}, \\
				\sup\limits_{\substack{x \in K \\ \psi \in \mathscr{B}^0}} 
				\left| f ( \psi_x ) \right| 
				+ \sup\limits_{\substack{x \in K , \ \lambda \in ( 0, \bar\lambda], \\ 
				\varphi \in \mathscr{B}^0_{\gamma} }} 
				\frac{\left| f ( \varphi_x^{\lambda} ) \right|}{\lambda^{\gamma}} 
				&\text{ if $\gamma \geq 0$}.
			\end{dcases}
	\end{equation}	 
We often set $\bar\lambda = 1$ and omit it from notation.
\end{definition}

For later purpose, we reformulate the condition that a distribution is of finite order.

\begin{remark}[Bounded order]
\label{rem:singint}
A distribution $f \in \cD'$ is of order~$r$, see \eqref{eq:order}, 
if and only if the following condition holds:
\begin{equation}\label{eq:order2}
	\forall z\in\R^d \,, \ \ \forall \bar\lambda \in [1,\infty): \qquad
	\sup_{\varphi \in \mathscr{B}^r, \, \lambda \in [1,\bar\lambda]} \, |f(\varphi_z^{\bar\lambda})|
	\eqcolon C(z,\bar\lambda) < \infty \,.
\end{equation}
This is also equivalent to the seemingly weaker condition
\begin{equation}\label{eq:order2bis}
	\exists z\in\R^d \ \text{ such that } \ \forall \bar\lambda \in \N = \{1,2,\ldots\}: \qquad
	\sup_{\varphi \in \mathscr{B}^r} \, |f(\varphi_z^{\bar\lambda})|
	\eqcolon C'(z,\bar\lambda) < \infty \,.
\end{equation}
We prove the equivalence between \eqref{eq:order}, \eqref{eq:order2}
and \eqref{eq:order2bis} below.

It follows by \eqref{eq:order2bis} that 
\emph{any $f \in \cZ^\gamma$ is a distribution of order~$r$},
for $r \in \N_0$ with $r > -\gamma$.
If $\gamma < 0$ this is a direct consequence of \eqref{eq:Cgamma}, since
for $z = 0$ (say) and any given $\bar\lambda \in \N$ we have
$|f(\varphi^{\bar\lambda})| \lesssim \bar\lambda^{\gamma} \lesssim 1$
for every $\varphi \in \mathscr{B}^r$.
If $\gamma \ge 0$, let us show that $f$ is a distribution of order~$0$:
by \eqref{eq:Cgamma} we know that
$|f(\psi_x)| \lesssim 1$ uniformly for $x$ in compact sets
and $\psi \in \mathscr{B}^0$; for $\bar\lambda \in \N$ and $\varphi \in \mathscr{B}^0$
we can decompose, using a partition of unity,
$\varphi^{\bar\lambda} = \sum_{k = 1}^n (\psi^{[k]})_{x_k}$ 
for suitable $x_k \in B(0,\bar\lambda+1)$, 
$\psi^{[k]} \in \mathscr{B}^0$ and $n$ (uniformly bounded given $\bar{\lambda}$);
then it follows that $|f(\varphi^{\bar\lambda})| \lesssim \sum_{k = 1}^n |f((\psi^{[k]})_{x_k})|
\lesssim 1$ uniformly over $\psi \in \mathscr{B}^0$, which proves \eqref{eq:order2bis} with
$z=0$ and $r=0$.
\end{remark}

\begin{proof}[Proof of equivalence between \eqref{eq:order}, \eqref{eq:order2} and \eqref{eq:order2bis}]
For any fixed $z\in\R^d$, we can deduce relation \eqref{eq:order2}
from \eqref{eq:order} with $K = B(z,\bar\lambda)$ because,
for fixed $\bar\lambda \in (0,\infty)$, we have
$\|\varphi_z^{\bar\lambda}\|_{C^r} \lesssim \|\varphi\|_{C^r}$
(note that $\partial^k (\varphi_z^\lambda)(\cdot) 
= \lambda^{-|k|-d} (\partial^k \varphi)(\lambda^{-1}(\cdot-z))$). 

Since \eqref{eq:order2} clearly implies \eqref{eq:order2bis}, it remains to show that
\eqref{eq:order2bis} implies \eqref{eq:order}:
given $z\in\R^d$ and a compact $K \subset \R^d$, we fix $\bar\lambda \in\N$ so that
$K \subseteq B(z,\bar\lambda)$, then it suffices to note that
any $\psi \in \cD(K)$ can be written as $\psi = a \,
\varphi_z^{\bar\lambda}$ for some $\varphi \in \mathscr{B}^r$
and $a \lesssim \|\psi\|_{C^r}$ (e.g.\ we can take $\varphi := \psi^{\bar\lambda^{-1}}_{-z}
/ \|\psi^{\bar\lambda^{-1}}_{-z}\|_{C^r}\in \mathscr{B}^r$ and
$c = \|\psi^{\bar\lambda^{-1}}_{-z}\|_{C^r} \le \bar\lambda^{r+d} \|\psi\|_{C^r}$.)
\end{proof}

\subsection{Singular kernels}

We define a class of kernels $\sfK(x,y)$ called \emph{$\beta$-regularising},
for reasons that will soon be clear. Intuitively, these kernels satisfy
\begin{equation} \label{eq:Kbasic}
	|\sfK(x,y)| \lesssim \frac{1}{|x-y|^{d-\beta}} \, \ind_{\{|x-y|\le \sfc\}}
	\qquad \text{for some} \ \beta, \sfc > 0 \,,
\end{equation}
but the precise assumptions are conveniently encoded via
a dyadic decomposition of $\sfK(x,y)$, as in \cite[Assumption~5.1]{Hai14}.
We anticipate that in the \emph{translation invariant} case $\sfK(x,y) = \sfK(x-y)$
these assumptions simplify considerably: we just require that $\sfK$ and its derivatives
satisfy a relation like \eqref{eq:Kbasic}, see Lemma~\ref{th:reg-trans}.

\begin{definition}[Regularising kernel]\label{def:regularising_kernel}
A function $\sfK \colon \mathbb{R}^d \times \mathbb{R}^d \to \mathbb{R}$ is called
\emph{regularising kernel} if there exist constants
$\beta > 0$, $m, r \in \mathbb{N}_0$ and $\rho > 0$
such that one can write
	\begin{equation}
		\sfK ( x, y ) = \sum\limits_{n = 0}^{+ \infty} \sfK_n ( x, y ) 
		\qquad \text{for a.e.\ } x,y \in \R^d \,,
		\label{eq:regularising_kernel_decomposition}
	\end{equation}
\noindent where for all $n\in\N_0$
the functions $\sfK_n \in C^{m,r}$ have the following support:
	\begin{enumerate}
		\item\label{item:regularising_3} 
		$\supp( \sfK_n ) \subset \lbrace (x, y) \colon
		\left| x-y \right| \leq \rho \, 2^{-n} \rbrace$,
	\end{enumerate}
and moreover, for any compact set $K \subset \mathbb{R}^d$,
there is a constant $c_{K} > 0$ such that
	\begin{enumerate}\setcounter{enumi}{1}
		\item\label{item:regularising_4} 
		for $k, l \in \mathbb{N}_0^d$ with $| k | \leq m$, $|l| \leq r$
		we have, for $x,y \in K$,
			\begin{equation} \label{eq:regularising_kernel_bound_on_derivatives}
				\left| \partial_1^k \partial_2^l \sfK_n ( x, y ) \right| 
				\le c_K \, 2^{(d - \beta + | l | + | k |)n} \,;
			\end{equation}

		\item\label{item:regularising_5} for $k, l \in \mathbb{N}_0^d$ with $| k |,|l| \leq r$
		we have, for $y \in K$,
			\begin{equation} \label{eq:reg-int}
				\left| \int_{\mathbb{R}^d} \left( y - x \right)^l \partial_2^k \sfK_n \left( x, y \right) 
				d x \right| \le c_K \, 2^{- \beta n}  \,.
			\end{equation}
	\end{enumerate}
We call such a function $\sfK(x,y)$ a \emph{$\beta$-regularising kernel of order $(m,r)$
with range~$\rho$.}
\end{definition}

Let us show that assumptions \eqref{item:regularising_3}, \eqref{item:regularising_4} 
and \eqref{item:regularising_5} are less restrictive than they might appear,
as they can be deduced from \eqref{eq:Kbasic} and from translation invariance.

\begin{remark}[Singular kernels]\label{rem:singular-kernel}
To fulfill assumptions \eqref{item:regularising_3} and
\eqref{item:regularising_4}, it is enough that
$\sfK$ satisfies condition \eqref{eq:Kbasic} 
and, correspondingly, for $| k | \leq m$, $|l| \leq r$,
	\begin{equation} \label{eq:Kbasic+}
		\left| \partial_1^k \partial_2^l \sfK ( x, y ) \right| 
		\lesssim \frac{1}{|x-y|^{d - \beta + | l | + | k |}}
		\, \mathds{1}_{\lbrace | y - x | \leq \rho \rbrace} \,,
	\end{equation}
uniformly for $x,y$ in compact sets. 
This can be seen via a dyadic partition of unity:
given $\chi \in \cD$ 
with $\ind_{\{|z| \le \rho \}} \le \chi(z) \le \ind_{\{|z| \le 2 \rho \}}$,
we set $\phi(z) := \chi(z) - \chi(2z)$ and define
\begin{equation} \label{eq:Kndyadic}
	\sfK_n(x,y) :=\sfK(x,y) \, \phi(2^n (x-y)) \,.
\end{equation}
Since $\sum_{n\in\N_0} \phi(2^n |z|) = \ind_{\{z\ne 0\}}$, it follows that
$\sfK( x, y) = \sum_{n = 0}^{\infty} \sfK_n (x,y) $  for $x \ne y$, and assumptions 
\eqref{item:regularising_3} and \eqref{item:regularising_4} follow by \eqref{eq:Kbasic+}.

Interestingly, in the boundary case $\beta = d$, we can weaken condition \eqref{eq:Kbasic}
allowing for a logarithmic divergence (see \cite[Remark~5.6]{Hai14}):
	\begin{equation} \label{eq:Kbasiclog}
		\left| \sfK ( x, y ) \right| 
		\lesssim \log (1+ |x-y|^{-1} )
		\, \mathds{1}_{\lbrace | y - x | \leq \rho \rbrace} \,,
	\end{equation}
and we can correspondingly weaken \eqref{eq:Kbasic+}, for $| k | \leq m$, $|l| \leq r$:
	\begin{equation} \label{eq:Kbasic+log}
		\left| \partial_1^k \partial_2^l \sfK ( x, y ) \right| 
		\lesssim \frac{\log (1+ |x-y|^{-1} )}{|x-y|^{| l | + | k |}}
		\, \mathds{1}_{\lbrace | y - x | \leq \rho \rbrace} \,.
	\end{equation}
In this case, it is convenient to modify
\eqref{eq:Kndyadic} as follows:
\begin{equation*} 
	\sfK_n(x,y) :=\sfK(x,y) \, \sum_{m=n}^\infty \frac{1}{m+1} \phi(2^m (x-y)) \,,
\end{equation*}
so that \eqref{item:regularising_3} is satisfied and we still have
$\sfK( x, y) = \sum_{n = 0}^{\infty} \sfK_n (x,y) $  for $x \ne y$.
To see that condition \eqref{item:regularising_4} is satisfied too, we
note that for $\rho \, 2^{-\ell-1} \le |x-y| \le \rho \, 2^{-\ell}$ we can bound
$|\sfK_n(x,y)| \lesssim \log (1+ \frac{2^{\ell+1}}{\rho}) 
\sum_{m \in \{\ell, \ell + 1\}} \frac{1}{m+1} \lesssim 1$ by \eqref{eq:Kbasiclog}
and, similarly,  $|\partial_1^k \partial_2^l\sfK_n(x,y)| \lesssim 2^{(|l|+|k|)\ell}$
by \eqref{eq:Kbasic+log}, uniformly over $\ell \ge n$.
\end{remark}

\begin{remark}[Translation invariance, I]\label{rem:translation}
If assumptions \eqref{item:regularising_3} and \eqref{item:regularising_4} 
are satisfied, assumption \eqref{item:regularising_5} is 
easily seen to hold for $|l| \ge |k|$.
Then the issue is whether \eqref{item:regularising_5} is satisfied for $|l| < |k|$.
This always holds in the \emph{translation invariant} case:
\begin{equation} \label{eq:trans-inv}
	\sfK_n ( x, y ) = \sfK_n ( x-y ) \qquad \forall x,y \in \R^d \,,
\end{equation}
because the integral in the l.h.s.\ of \eqref{eq:reg-int} vanishes for $|l| < |k|$,
as one sees through integration by parts,
since $\partial_2^k\sfK_n = (-1)^{|k|} \partial_1^k \sfK_n$ and $\partial_x^k (y-x)^l = 0$.
\end{remark}

In some cases, we will require a last assumption on the kernel.

\begin{assumption}[Preserving polynomials]\label{assumption:preserving_polynomial_annihilation}
Let $\sfK \colon \mathbb{R}^d \times \mathbb{R}^d \to \mathbb{R}$ admit
a decomposition $\sfK = \sum_{n = 0}^{+ \infty} \sfK_n$ as in 
\eqref{eq:regularising_kernel_decomposition}. For $\gamma \in \R$,
we say that $\sfK$ \emph{preserves polynomials at level $\gamma$} 
if, for every $n\in\N_0$ and for all $k\in\N_0^d$ with $0 \le |k| \le \gamma$,
\begin{equation}\label{eq:polypres}
	x \mapsto \int_{\mathbb{R}^d} \sfK_n ( x, y ) \, y^k \, \d y \quad
	\text{is a polynomial of degree $\le |k|$} \,.
\end{equation}
(This condition is automatically satisfied for $\gamma < 0$.)
\end{assumption}

\begin{remark}[Translation invariance, II]\label{rem:translation2}
A sufficient condition for \eqref{eq:polypres} is that
\begin{equation*}
	\int_{\mathbb{R}^d} \sfK_n ( x, y ) \, (y-x)^k \, \d y \quad
	\text{does not depend on~$x$} \,.
\end{equation*}
This condition clearly holds if the kernels $\sfK_n$ are translation invariant, see \eqref{eq:trans-inv},
in which case Assumption~\ref{assumption:preserving_polynomial_annihilation} 
is satisfied at any level $\gamma$.
\end{remark}

\begin{remark}
In \cite[Assumption~5.4]{Hai14} much more
than \eqref{eq:polypres} is required,
namely that for all multi-indices $k$ with $| k | \leq \gamma$ and any $n \in \mathbb{N}_0$
			\begin{equation*}
			\forall x \in \R^d: \qquad
				\int_{\mathbb{R}^d} \, \sfK_n ( x, y ) \,  y^k \, \d y = 0 \,.
			\end{equation*}
\end{remark}

We finally show that for translation invariant kernels 
$\sfK(x,y) = \sfK(x-y)$ the notion of $\beta$-regularising kernel
is greatly simplified.

\begin{lemma}[Translation invariant regularising kernel]\label{th:reg-trans}
Let $\beta,\rho > 0$ and $m, r \in \mathbb{N}_0$.
Fix a function $\sfK: \R^d \to \R$ such that, for all $k \in \mathbb{N}_0^d$ with $| k | \leq m+r$,
\begin{equation*} 
	\left| \partial^k \sfK ( z ) \right| 
	\lesssim \frac{1}{|z|^{d - \beta + | k |}}
	\, \mathds{1}_{\lbrace | z | \leq \rho \rbrace} \,.
\end{equation*}
Then $\sfK(x,y) := \sfK(x-y)$ is a $\beta$-regularising kernel of order $(m,r)$ with range $\rho$
and it preserves polynomial at any level~$\gamma \in \R$
(see Definition~\ref{def:regularising_kernel}
and Assumption~\ref{assumption:preserving_polynomial_annihilation}).
\end{lemma}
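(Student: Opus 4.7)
The plan is to assemble the pieces already laid out in Remarks \ref{rem:singular-kernel}, \ref{rem:translation} and \ref{rem:translation2}, using the dyadic decomposition from \eqref{eq:Kndyadic}. Explicitly, I would fix $\chi \in \cD(\R^d)$ with $\ind_{\{|z|\le \rho/2\}} \le \chi(z) \le \ind_{\{|z|\le \rho\}}$, set $\phi(z) := \chi(z) - \chi(2z)$ so that $\sum_{n\ge 0} \phi(2^n z) = \ind_{\{0 < |z| \le \rho\}}$, and define
\begin{equation*}
	\sfK_n(x,y) := \sfK(x-y)\,\phi\bigl(2^n(x-y)\bigr) \,,
\end{equation*}
which is translation invariant and supported in $\{|x-y|\le \rho\,2^{-n}\}$. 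This immediately yields the decomposition \eqref{eq:regularising_kernel_decomposition} and property \eqref{item:regularising_3}.

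For property \eqref{item:regularising_4}, I would apply the Leibniz rule to $\partial^{k+l}\bigl[\sfK(z)\phi(2^n z)\bigr]$: on the support of $\phi(2^n\cdot)$ we have $|z|\asymp 2^{-n}$, so the assumed bound $|\partial^j \sfK(z)| \lesssim |z|^{-(d-\beta+|j|)}$ gives $|\partial^j\sfK(z)|\lesssim 2^{(d-\beta+|j|)n}$, while each derivative falling on $\phi(2^n\cdot)$ produces a factor $2^n$; summing the contributions yields the desired bound $|\partial_1^k\partial_2^l\sfK_n(x,y)|\lesssim 2^{(d-\beta+|k|+|l|)n}$ for all $|k|\le m$, $|l|\le r$ (here translation invariance gives $\partial_1^k\partial_2^l\sfK_n(x,y) = (-1)^{|l|}\partial^{k+l}\sfK_n(x-y)$, using only $|k+l|\le m+r$ derivatives of $\sfK$, which are covered by the hypothesis).

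For \eqref{item:regularising_5}, I split on whether $|l|\ge |k|$ or $|l|<|k|$. In the first case I would estimate the integral directly by brute force: $|(y-x)^l|\lesssim 2^{-|l|n}$ on the support, $|\partial_2^k \sfK_n|\lesssim 2^{(d-\beta+|k|)n}$, and the support has volume $\lesssim 2^{-dn}$, giving an overall bound $2^{(|k|-|l|-\beta)n}\le 2^{-\beta n}$. In the second case $|l|<|k|$ I would use the integration-by-parts argument of Remark \ref{rem:translation}: translation invariance gives $\partial_2^k \sfK_n = (-1)^{|k|}\partial_1^k \sfK_n$, and after the substitution $u=x-y$ the integral becomes (up to signs) $\int u^l \partial^k \sfK_n(u)\,du$; integrating by parts $|k|$ times produces $\partial^k(u^l)$, which vanishes whenever $|l|<|k|$ (since then some coordinate $i$ must satisfy $k_i>l_i$).

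Finally, for the polynomial-preservation statement, I would change variables $y=x-z$:
\begin{equation*}
	\int_{\R^d} \sfK_n(x,y)\,y^k\,\d y \;=\; \int_{\R^d} \sfK_n(z)\,(x-z)^k\,\d z \,,
\end{equation*}
and expand $(x-z)^k$ by the multinomial theorem; since $\sfK_n$ is compactly supported and bounded, each moment $\int \sfK_n(z)z^{k-j}\,\d z$ is a finite constant, so the result is a polynomial in $x$ of degree $|k|$, verifying Assumption~\ref{assumption:preserving_polynomial_annihilation} at every level $\gamma\in\R$ (consistently with Remark~\ref{rem:translation2}). I do not anticipate any genuinely hard step; the only mildly delicate point is the integration by parts for $|l|<|k|$, and that is exactly what the translation invariance buys us.
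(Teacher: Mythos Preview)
Your proposal is correct and follows exactly the approach of the paper, which simply cites Remarks~\ref{rem:singular-kernel}, \ref{rem:translation} and~\ref{rem:translation2}; you have merely spelled out those arguments in detail. One cosmetic slip: with your choice of cutoff $\chi$ the telescoping sum gives $\sum_{n\ge 0}\phi(2^n z)=\chi(z)$ rather than $\ind_{\{0<|z|\le\rho\}}$, so you should take $\chi\equiv 1$ on $\{|z|\le\rho\}$ (as the paper does, with support in $\{|z|\le 2\rho\}$) to ensure the decomposition reproduces $\sfK$ on its full support.
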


\begin{proof}
It suffices to apply Remarks~\ref{rem:singular-kernel}, \ref{rem:translation}
and~\ref{rem:translation2}.
\end{proof}

\begin{remark}[Scale-invariant kernels]
Given a smooth function
$\sfK \colon \mathbb{R}^d \setminus \lbrace 0 \rbrace \to \mathbb{R}$ with 
the scaling property $\sfK ( x / \lambda ) = \lambda^{d - \beta} \, \sfK ( x)$ for all $\lambda > 0$,
a $\beta$-regularising kernel is obtained by $\sfK(x,y) := \sfK(x-y) \, \chi(x-y)$,
where $\chi$ is any smooth function supported in $B(0,\rho)$.
This is a direct consequence of Lemma~\ref{th:reg-trans}, see
also \cite[Lemma~5.5]{Hai14}.

Examples of kernels falling in this situation include the Heat kernel, the Green's function of usual differential operators with constant coefficients, the Green's function of the fractional Laplacian \cite{MR3667364}, etc.
\end{remark}

\subsection{Singular integration and classical Schauder estimates}

The integration of a distribution $f \in \mathcal{D}^{\prime}(\mathbb{R}^d)$ 
with a kernel $\sfK(x,y)$ is formally defined by
\begin{equation*} 
	\sfK f (x) \coloneqq f(\sfK(x,\cdot)) = \int_{\R^d} \sfK(x,y) \, f(\d y) \,,
\end{equation*}
which makes sense when $\sfK(x,\cdot)$ is regular enough. If
$\sfK$ is singular, then one expects $\sfK f$ to be a distribution,
defined by duality on test functions $\psi \in \cD (\mathbb{R}^d)$ by
\begin{equation} \label{eq:K*}
	(\sfK f)(\psi) \coloneqq f(\sfK^*\psi) \qquad \text{where} \qquad
	(\sfK^*\psi)(y) \coloneqq \int_{\R^d} \psi(x) \, \sfK(x,y) \, \d x \,,
\end{equation}
provided $\sfK^*\psi$ is regular enough, so that $f(\sfK^*\psi)$ makes sense.

\begin{remark}[Translation invariance, III]\label{rem:translation3}
Formula \eqref{eq:K*} for $\sfK f$ is always well-defined
if the kernel $\sfK(x,y) = \sfK(x-y)$ is \emph{translation invariant}
with $\sfK: \R^d \to \R$ compactly supported and integrable:
indeed, $(\sfK^*\psi)(y) 
= \int_{\R^d} \psi(y-x) \, \sfK(-x) \, \d x$ in this case
is the \emph{convolution} of $\psi$ with $\sfK(-\,\cdot\,)$, hence $\sfK^*\psi$
is smooth (as $\psi$ is smooth)
and compactly supported (as $\psi$ and $\sfK$ are compactly supported).
\end{remark}

We now consider the case of a $\beta$-regularising kernel  $\sfK$ of order $(m,r)$, as
in Definition~\ref{def:regularising_kernel}.
From \eqref{eq:regularising_kernel_bound_on_derivatives} and Fubini's theorem
we can formally write
\begin{equation} \label{eq:K_n*}
		(\sfK^*\psi)(y) = \sum\limits_{n \in \mathbb{N}_0} ( \sfK_n^*\psi ) (y) 
		\qquad \text{where} \qquad
		(\sfK_n^*\psi)(y) \coloneqq \int_{\R^d} \psi(x) \, \sfK_n(x,y) \, \d x \, ,
\end{equation}
and note that $\sfK_n^*\psi$ is a well-posed  $C^r$ function, 
for any $n\in\N_0$, because $\sfK_n(x,\cdot) \in C^{r}$.
This means that $f (\sfK_n^*\psi)$ is well-defined
as soon as $f$ is a distribution of order~$r$,
see \eqref{eq:order} and the following lines.
In conclusion, we can set
	\begin{equation}\label{eq:definition_of_convolution}
		\sfK f ( \psi ) \coloneqq \sum\limits_{n \in \mathbb{N}_0} f (\sfK_n^* \psi) ,
	\end{equation}
provided the series converges. This is guaranteed by the next result.

\begin{proposition}[Singular integration]
\label{prop:well_posedness_convolution}
Given $r\in\N_0$, if $\sfK$ is a regularising kernel of order $(0, r)$
and $f$ is a distribution of order~$r$, then the integration $\sfK f$ is well-defined by \eqref{eq:definition_of_convolution} and it is a
distribution of order~$r$.

If $\sfK$ is a regularising kernel of order $(0, r)$ \emph{for any $r\in\N_0$},
then the integration $\sfK f$ is well-defined for any distribution $f \in \cD'$.
\end{proposition}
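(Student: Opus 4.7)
The plan is to reduce everything to a single quantitative estimate on $\sfK_n^*\psi$, combined with the very definition of a distribution of order~$r$ applied on a fixed compact. First I would exploit the support condition \eqref{item:regularising_3}: since $\sfK_n(x,y)$ is supported in $\{|x-y|\le \rho\, 2^{-n}\le\rho\}$, the function $y\mapsto (\sfK_n^*\psi)(y)$ is supported, for every $n\in\N_0$, in the common compact set $K' := \{y : \mathrm{dist}(y,\supp\psi)\le\rho\}$. Thus all the terms $f(\sfK_n^*\psi)$ are evaluations of $f$ on test functions supported in the same $K'$, and it suffices to establish absolute convergence of the series $\sum_n f(\sfK_n^*\psi)$ together with a bound $|(\sfK f)(\psi)|\lesssim \|\psi\|_{C^r}$ uniform in $\psi\in\cD(K'')$ for any compact $K''$.

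The heart of the proof is then the estimate $\|\sfK_n^*\psi\|_{C^r}\lesssim 2^{-\beta n}\|\psi\|_{C^r}$, uniformly in~$n$. A naive pointwise bound from \eqref{eq:regularising_kernel_bound_on_derivatives} only gives $|\partial^k(\sfK_n^*\psi)(y)|\lesssim 2^{(|k|-\beta)n}\|\psi\|_\infty$, which diverges as soon as $|k|>\beta$. To recover the missing decay I would Taylor expand $\psi$ around $y$ to order~$r$,
\begin{equation*}
\psi(x) = \sum_{|l|<r}\frac{\partial^l\psi(y)}{l!}(x-y)^l + R_r(x,y), \qquad |R_r(x,y)|\lesssim \|\psi\|_{C^r}\,|x-y|^r,
\end{equation*}
insert it into $\partial^k(\sfK_n^*\psi)(y) = \int\psi(x)\,\partial_2^k\sfK_n(x,y)\,\d x$, and treat the two resulting pieces separately. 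Each Taylor monomial gets paired with $\int(x-y)^l\,\partial_2^k\sfK_n(x,y)\,\d x$, which is bounded by $c_{K'}\,2^{-\beta n}$ by the cancellation property \eqref{eq:reg-int} (valid for all $|k|,|l|\le r$). For the remainder, the extra factor $|x-y|^r\le(\rho\, 2^{-n})^r$ combines with the volume $\lesssim 2^{-nd}$ of the $x$-support of $\sfK_n(\cdot,y)$ and the pointwise bound $2^{(d-\beta+|k|)n}$ from \eqref{eq:regularising_kernel_bound_on_derivatives} to give $\lesssim \|\psi\|_{C^r}\,2^{(|k|-r-\beta)n}\le \|\psi\|_{C^r}\,2^{-\beta n}$, precisely because $|k|\le r$.

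With this estimate in hand, the order-$r$ bound \eqref{eq:order} for $f$ gives $|f(\sfK_n^*\psi)|\le c_{K'}\|\sfK_n^*\psi\|_{C^r}\lesssim \|\psi\|_{C^r}\,2^{-\beta n}$, and summing the geometric series (convergent because $\beta>0$) yields absolute convergence of \eqref{eq:definition_of_convolution} and the bound $|(\sfK f)(\psi)|\lesssim \|\psi\|_{C^r}$, exhibiting $\sfK f$ as a distribution of order $r$ (linearity in $\psi$ is immediate from linearity of $\psi\mapsto \sfK_n^*\psi$ and of $f$). For the second statement, given an arbitrary $f\in\cD'$ and a test function~$\psi$, I would first compute the fixed compact $K'$ as above, invoke \eqref{eq:order} on $K'$ to get a finite local order $r_{K'}$, and then apply the first part with $r=r_{K'}$; this is legitimate precisely because $\sfK$ is assumed to be regularising of order $(0,r)$ for every $r\in\N_0$. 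The principal difficulty is the estimate $\|\sfK_n^*\psi\|_{C^r}\lesssim 2^{-\beta n}\|\psi\|_{C^r}$: one must combine the cancellation \eqref{eq:reg-int} with a Taylor expansion carried out to exactly the right order so that the leading and remainder contributions produce the same rate $2^{-\beta n}$.
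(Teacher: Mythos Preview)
Your proof is correct. The core mechanism is the same as the paper's --- a Taylor expansion of the test function combined with the derivative bound \eqref{eq:regularising_kernel_bound_on_derivatives} and the cancellation condition \eqref{eq:reg-int} --- but the packaging differs. The paper routes the argument through Lemma~\ref{lemma:test_functions_K_n}, which rewrites $\sfK_n^*\varphi_x^{\lambda}$ as a rescaled test function in $\mathrm{cst}\,\mathscr{B}^r$, and then invokes the equivalent characterisation \eqref{eq:order2bis} of ``order~$r$'' by testing only against $\varphi^{\bar\lambda}$ with $\varphi\in\mathscr{B}^r$ and $\bar\lambda\in\N$. You instead establish the direct estimate $\|\sfK_n^*\psi\|_{C^r}\lesssim 2^{-\beta n}\|\psi\|_{C^r}$ for arbitrary $\psi\in\cD$ and apply the defining bound \eqref{eq:order}. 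Your route is more self-contained for this proposition in isolation; the paper's route is more economical globally, since Lemma~\ref{lemma:test_functions_K_n} is the workhorse also for the classical and germ-level Schauder estimates. Both approaches exploit exactly the same cancellation (the Taylor remainder contributes $2^{(|k|-r-\beta)n}$ and the polynomial terms contribute $2^{-\beta n}$ via \eqref{eq:reg-int}), so neither is essentially stronger.
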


We can finally show that the 
integration by a $\beta$-regularising kernel $\sfK$ improves the H\"older regularity 
of a distribution by $\beta$: this result is known as the classical Schauder estimates
and can be stated as follows (see also \cite[Theorem~14.17]{MR4174393}).

\begin{theorem}[Classical Schauder estimates]\label{thm:classical_schauder}
Let $\gamma \in \mathbb{R}$.
Let $\sfK$ be a $\beta$-regularising kernel of order $(m, r)$,
where $\beta > 0$ and $m,r \in \mathbb{N}_0$ satisfy:
	\begin{equation} \label{eq:condmr}
		m > \gamma + \beta , \quad r > - \gamma .
	\end{equation}
Also assume (if $\gamma \ge 0$) 
that $\sfK$ preserves polynomials at level~$\gamma$,
see Assumption~\ref{assumption:preserving_polynomial_annihilation}.
Then, integration by $\sfK$ in \eqref{eq:definition_of_convolution} defines a continuous linear map from 
$\mathcal{Z}^{\gamma}$ to $\mathcal{Z}^{\gamma + \beta}$.
\end{theorem}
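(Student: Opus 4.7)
Given $f \in \mathcal{Z}^\gamma$, the plan is to show that $|(\sfK f)(\varphi_x^\lambda)| \lesssim \lambda^{\gamma+\beta}$ uniformly for $x$ in a compact $K$ and $\lambda \in (0,1]$, where $\varphi$ ranges over the test-function class dictated by the $\mathcal{Z}^{\gamma+\beta}$-seminorm (namely $\mathscr{B}^{r'}$ with $r' = \lfloor-(\gamma+\beta)+1\rfloor$ if $\gamma+\beta<0$, or $\mathscr{B}^0_{\gamma+\beta}$ if $\gamma+\beta\ge 0$). By \eqref{eq:definition_of_convolution}, $(\sfK f)(\varphi_x^\lambda) = \sum_{n\in\N_0} f(\sfK_n^*\varphi_x^\lambda)$, so it suffices to estimate each term and sum.

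For each $n$, I would represent $\sfK_n^*\varphi_x^\lambda = c_n(\chi_n)_x^{\mu_n}$ as a rescaled test function at the natural scale $\mu_n\coloneqq\max(\lambda,2^{-n})$, with $\chi_n$ belonging to the appropriate test-function class for $f$ (namely $\mathscr{B}^{r_f}$ with $r_f=\lfloor-\gamma+1\rfloor\le r$ when $\gamma<0$, and $\mathscr{B}^0_\gamma$ when $\gamma\ge 0$). The H\"older--Zygmund bound then gives $|f(\sfK_n^*\varphi_x^\lambda)|\lesssim c_n \mu_n^\gamma$. When $\gamma\ge 0$, the moment-vanishing of $\chi_n$ (required for this bound) is verified from the identity
\[
\int \sfK_n^*\varphi_x^\lambda(y)\,y^k\,\d y = \int\varphi_x^\lambda(u)\Big[\int\sfK_n(u,y)\,y^k\,\d y\Big]\d u,
\]
since the polynomial-preserving hypothesis ensures the bracket is a polynomial in $u$ of degree $\le|k|\le\gamma\le\gamma+\beta$, which is annihilated by the vanishing moments of $\varphi\in\mathscr{B}^0_{\gamma+\beta}$.

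The amplitude $c_n$ would be controlled in two regimes, split at $N\coloneqq\lceil\log_2\lambda^{-1}\rceil$. In the small-scale regime ($n\ge N$, $\mu_n=\lambda$), I would Taylor-expand $\varphi_x^\lambda(u)$ in $u$ around $y$ to sufficient order and exploit the cancellation property \eqref{eq:reg-int} of the kernel (the same strategy as in the proof of Proposition~\ref{prop:well_posedness_convolution}) to obtain $c_n\lesssim 2^{-n\beta}$. In the large-scale regime ($n<N$, $\mu_n=2^{-n}$), I would Taylor-expand $\sfK_n(u,y)$ in $u$ around $x$ up to order $M\coloneqq\lfloor\gamma+\beta\rfloor+1\le m$ (permitted by $m>\gamma+\beta$); when $\gamma+\beta\ge 0$, the polynomial terms of degree $<M$ are killed by the vanishing moments of $\varphi$, and the Taylor remainder is controlled via \eqref{eq:regularising_kernel_bound_on_derivatives}, giving $c_n \lesssim (\lambda 2^n)^M \cdot 2^{-n\beta}$; when $\gamma+\beta<0$, the direct bound $c_n \lesssim 2^{-n\beta}$ already suffices.

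Summing these bounds yields $\lambda^{\gamma+\beta}$ in both regimes: in the small-scale regime, $\lambda^\gamma \sum_{n\ge N} 2^{-n\beta} \asymp \lambda^{\gamma+\beta}$ (using $\beta>0$); in the large-scale regime, the sum is dominated by its endpoint at $n=N$, giving $\lambda^M\cdot 2^{N(M-\gamma-\beta)} \asymp \lambda^{\gamma+\beta}$ when $\gamma+\beta\ge 0$ (using $M > \gamma+\beta$) or $2^{-N(\gamma+\beta)} \asymp \lambda^{\gamma+\beta}$ when $\gamma+\beta<0$. Continuity of $\sfK : \mathcal{Z}^\gamma \to \mathcal{Z}^{\gamma+\beta}$ is automatic from these quantitative bounds. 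The main obstacle I anticipate is the careful bookkeeping of the Taylor expansions together with the moment cancellations in the large-scale regime when $\gamma\ge 0$: one must align the polynomial-preserving level $\gamma$ of the kernel, the moment-vanishing order $\gamma+\beta$ of the test function, and the Taylor expansion order $M$ so as to obtain cancellations of exactly the right size.
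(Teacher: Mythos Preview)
Your proposal is correct and follows essentially the same route as the paper's proof. The paper packages your two Taylor-expansion arguments into a single auxiliary lemma (Lemma~\ref{lemma:test_functions_K_n}), which represents $\sfK_n^*\varphi_x^\lambda$ as $2^{-\beta n}\eta_x^{2\lambda}$ in the small-scale regime and as $2^{-\beta n}(2^n\lambda)^{c+1}\zeta_x^{2\rho 2^{-n}}$ in the large-scale regime, with $\eta,\zeta$ lying in the correct test-function class (using the polynomial-preserving hypothesis exactly as you describe); the proof then splits the sum at $N_\lambda$ and sums the same geometric series you do. Two small points you should add: when $\gamma+\beta\ge 0$ the $\mathcal{Z}^{\gamma+\beta}$-seminorm also contains the unscaled term $\sup_{\psi\in\mathscr{B}^0}|\sfK f(\psi_x)|$, which must be bounded separately (take $\lambda=1$, $c=-1$ and use only the small-scale argument); and the estimates you obtain hold for test functions of regularity $r>-\gamma$, so a final appeal to Proposition~\ref{prop:independence_in_r} is needed to descend to the smaller $r'>-(\gamma+\beta)$ required by the definition of $\mathcal{Z}^{\gamma+\beta}$.
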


We prove Proposition~\ref{prop:well_posedness_convolution} and Theorem~\ref{thm:classical_schauder} 
in Section~\ref{section:preliminary_tools_I} below.

\section{Main result I: Schauder estimates for germs}\label{section:schauder_estimate_on_germs}

	\subsection{Germs}

Our first goal is to extend Theorem~\ref{thm:classical_schauder} in the context of germs, that is, families of distributions indexed by $\mathbb{R}^d$.
\begin{definition}[Germs]
A \emph{germ} is a family $F = (F_x)_{x \in \mathbb{R}^d}$ of distributions $F_x \in \mathcal{D}^{\prime} ( \mathbb{R}^d)$, such that for any $\varphi \in \mathcal{D} ( \mathbb{R}^d)$, the map $x \mapsto F_x ( \varphi )$ is measurable.
\end{definition}

We will denote $\mathcal{G}$ the vector space of germs.
In general, we will see a germ $F \in \mathcal{G}$ as a family of local approximations of a global distribution $f$.
The reconstruction problem, i.e.\ the problem of constructing a suitable $f$ from $F$, has been previously considered in a number of different contexts, see \cite{Hai14, CZ20,zorinkranich2021reconstruction}.
In \cite{CZ20}, it was established that this reconstruction can be performed under the assumption that $F$ satisfies properties named homogeneity and coherence, which we recall now.

\begin{definition}[Homogeneity and coherence]\label{def:homogeneity_coherence}
Let $F = (F_x)_{x\in\R^d}$ be a germ.
Let $\bar\alpha, \alpha, \gamma \in \R$ with $\bar\alpha, \alpha \le \gamma$ and $r \in \N_0$.
\begin{itemize}
\item $F$ is called \emph{$\bar\alpha$-homogeneous of order~$r$}, denoted
$F \in \cG^{\bar\alpha}_{\ho,r}$, if
the following \emph{homogeneity property} holds,
for any compact $K \subset \mathbb{R}^d$ and $\bar\lambda \in [1,\infty)$:
\begin{equation}\label{eq:homogeneity}
\begin{gathered}
	| F_x ( \varphi_x^{\lambda})| \lesssim \lambda^{\bar{\alpha}} \\
	\text{uniformly over $x \in K$, $\lambda \in (0, \bar\lambda]$, $\varphi \in \mathscr{B}_{}^{r}$} .
\end{gathered}
\end{equation}
The space of $\bar\alpha$-homogenous germs (of any order)
is $\cG^{\bar\alpha}_{\ho} := \bigcup\limits_{r\in\N_0} \cG^{\bar\alpha}_{\ho,r}$.

\item $F$ is called \emph{$(\alpha,\gamma)$-coherent
of order~$r$}, denoted $F \in \cG^{\alpha,\gamma}_{\co,r}$,
if the following \emph{coherence property} holds, 
for any compact $K \subset \mathbb{R}^d$ and $\bar\lambda \in [1,\infty)$:
\begin{equation}\label{eq:coherence}
\begin{gathered}
	| ( F_y - F_x) ( \varphi_x^{\lambda})| \lesssim \lambda^{\alpha} 
	(| y - x| + \lambda)^{\gamma - \alpha} \\
	\text{uniformly over $x,y \in K$, $\lambda \in (0, \bar\lambda]$, 
	$\varphi \in \mathscr{B}_{}^{r}$} .
\end{gathered}
\end{equation}
The space of $(\alpha,\gamma)$-coherent germs (of any order) is
$\cG^{\alpha,\gamma}_{\co} := \bigcup\limits_{r\in\N_0} \cG^{\alpha,\gamma}_{\co,r}$.

\item $F$ is called \emph{$(\alpha,\gamma)$-coherent with homogeneity $\bar{\alpha}$} if both \eqref{eq:homogeneity} and \eqref{eq:coherence} hold, for some order~$r$.
The space of such germs is denoted by
\begin{equation*}
	\cG^{\bar\alpha; \alpha,\gamma}
	= \cG^{\bar\alpha}_{\ho} \cap \cG^{\alpha,\gamma}_{\co} \,.
\end{equation*}
\end{itemize}
\end{definition}

\begin{remark}[Monotonicity]
Increasing the exponents $\bar\alpha, \alpha, \gamma$, homogeneity
and coherence become more restrictive:
$\cG^{\bar\alpha'; \alpha',\gamma'} \subseteq \cG^{\bar\alpha; \alpha,\gamma}$
for any $\bar\alpha' \ge \bar\alpha$, $\alpha'\ge\alpha$, $\gamma'\ge\gamma$.
\end{remark}

\begin{remark}[Coherence almost implies homogeneity]
Any coherent germ automatically satisfies the homogeneity relation
\eqref{eq:homogeneity} with an exponent $\bar\alpha_K$
that may depend on the compact~$K$, see \cite[Lemma~4.12]{CZ20}.
Requiring that a coherent germ is $\bar\alpha$-homogeneous simply means
that we can take $\bar\alpha_K \ge \bar\alpha$ for any compact~$K$.
\end{remark}

\begin{remark}[Homogeneity implies some coherence]
In the definition of $(\alpha,\gamma)$-coherent germs with homogeneity $\bar{\alpha}$,
we require that $\bar\alpha \le \gamma$ for convenience,
to rule out trivialities. Indeed, if a germ $F$ is $\bar\alpha$-homogeneous, then
arguing as in \cite[Proposition~6.2]{CZ20}
one can show that $F$ is $(\alpha,\gamma)$-coherent with $\gamma = \bar\alpha$
(and suitable~$\alpha$).
This shows that the ``interesting regime'' for the coherence
exponent is $\gamma \ge \bar\alpha$.
\end{remark}

\begin{remark}[Uniformity of $r$ for coherence + homogeneity]\label{rem:uniformity}
If either relation \eqref{eq:homogeneity} or \eqref{eq:coherence} 
holds for some order $r$, then it holds for all orders $r' > r$, simply because
$\mathscr{B}_{}^{r'} \subseteq \mathscr{B}_{}^{r}$. 
If these relations hold together, i.e.\ \emph{if a germ is both 
$(\alpha,\gamma)$-coherent and $\bar{\alpha}$-homogeneous}, then we can choose the ``canonical'' order $r=r_{\bar\alpha,\alpha}$ given by
\begin{equation}\label{eq:canonical-r}
\begin{split}
	r_{\bar\alpha,\alpha} 
	:=&\, \min\big\{r\in\N \colon r > \max\{-\bar\alpha,-\alpha\}\big\} \\
	=&\, (\lfloor \max\{-\bar{\alpha},-\alpha\} \rfloor + 1)^+ \,.
\end{split}
\end{equation}
Indeed, if \eqref{eq:homogeneity} and \eqref{eq:coherence} 
hold for some $r > \max\{-\bar{\alpha},-\alpha\}$, it turns out that they
also hold for $r = r_{\bar\alpha,\alpha}$, see Proposition~\ref{prop:independence_in_r}
in Appendix~\ref{section:independence_in_r}.\footnote{We also point out
\cite[Propositions~13.1 and~13.2]{CZ20}: leaving aside for simplicity the case $\gamma=0$,
it is shown that if both \eqref{eq:homogeneity} and \eqref{eq:coherence} hold 
for a \emph{single test function $\varphi \in \cD$ with $\int\varphi \ne 0$},
then they hold uniformly over $\varphi \in \mathscr{B}^r$ for $r=r_{\bar\alpha,\alpha}$
as in \eqref{eq:canonical-r} (the proof requires $\alpha \le 0$, however when $\alpha > 0$
one can show that a $(\alpha,\gamma)$-coherent germ must be constant, hence
the conclusion still holds).}
\end{remark}

\begin{remark}[Bounded order and singular integration]\label{rem:bounded-order}
If a germ $F = (F_x)_{x\in\R^d}$ is $\bar\alpha$-homogeneous of order~$r$, then
\emph{each distribution $F_x$ is of order~$r$}.
This follows by Remark~\ref{rem:singint}, see \eqref{eq:order2bis} with $z=x$, because 
$|F_x(\varphi_x^{\bar\lambda})| \lesssim \bar\lambda^{\bar\alpha} \lesssim 1$ 
uniformly for $\varphi \in \mathscr{B}^r$, by \eqref{eq:homogeneity},
for any given $\bar\lambda \in \N$.
As a consequence, \emph{we can define the germ
$\sfK F = (\sfK F_x)_{x\in\R^d}$ for any regularising kernel $\sfK$ of order $(0,r)$}, 
by Proposition~\ref{prop:well_posedness_convolution}.

Similarly, if a germ $F = (F_x)_{x\in\R^d}$ is $(\alpha,\gamma)$-coherent of order~$r$,
then for any $x,y \in\R^d$
\emph{the difference $F_y - F_x$ is a distribution of order~$r$},
because $|(F_y-F_x)(\varphi_x^{\bar\lambda})| \lesssim 1$  by \eqref{eq:coherence},
uniformly for $\varphi \in \mathscr{B}^r$,
for any given $\bar\lambda \in (0,\infty)$.
Thus \emph{we can define $\sfK (F_y - F_x)$ for any regularising kernel $\sfK$ of order $(0,r)$},
but not necessarily $\sfK F_x$.
\end{remark}

\begin{remark}[General scales]\label{remark:lower_scales}
For germs $F \in \cG^{\bar\alpha; \alpha,\gamma}$
that are \emph{both coherent and homogeneous} we can get rid of $\bar\lambda$:
if we assume that both relations \eqref{eq:homogeneity} and \eqref{eq:coherence}
hold for $\bar\lambda = 1$, then they hold for any $\bar\lambda \in [1,\infty)$.
To this goal, we claim that for
$\lambda \in [1, \bar\lambda]$ we have $|F_y(\varphi_x^\lambda)| \lesssim 1$,
which is enough
since $1  \lesssim \lambda^{\bar\alpha}$
and $1 \lesssim \lambda^\alpha (| y - x| + \lambda)^{\gamma - \alpha}$.

First note that by \eqref{eq:homogeneity} and \eqref{eq:coherence}
for $\lambda \in (0, 1]$
we have, for $x,y \in K$ and $\varphi \in \mathscr{B}_{}^{r}$,
\begin{equation} \label{eq:boundedness}
	|F_y(\varphi_x^\lambda)| \le
	|(F_y-F_x)(\varphi_x^\lambda)| + |F_x(\varphi_x^\lambda)| \lesssim
	\lambda^{\alpha} + \lambda^{\bar\alpha} \lesssim
	\lambda^{\min\{\bar\alpha,\alpha\}} \,.
\end{equation}
If we now consider $\lambda \in [1, \bar\lambda]$, 
using a partition of unity we can write
$\varphi_x^{\lambda} = \sum_{k = 1}^n (\hat{\varphi}_k)_{x_k}^{1}$ 
for suitable $x_k \in K$, 
$\hat{\varphi}_k \in \mathscr{B}^r$ and $n$ (uniformly bounded, depending on 
$\bar{\lambda}$ and $K$). Then by \eqref{eq:boundedness} for $\lambda=1$ we obtain
$|F_y(\varphi_x^\lambda)| \lesssim 1$ for $\lambda \in [1, \bar\lambda]$, as we claimed.
\end{remark}

Let us now introduce the semi-norms for the relations
of homogeneity and coherence, see \eqref{eq:homogeneity} and
\eqref{eq:coherence}:
given a compact set $K \subset \R^d$, $r \in \N$ and $\bar\lambda \in [1,\infty)$, we set
\begin{align}\label{eq:semi-norm-homogeneity}
	\| F \|_{\mathcal{G}_{\mathrm{hom}; K,\bar\lambda,r}^{\bar{\alpha}}}
	& := \sup_{\substack{x\in K, \, \lambda \in (0,\bar\lambda] \\
	\varphi \in \mathscr{B}_{}^{r}}} 
	\frac{| F_x ( \varphi_x^{\lambda})|}{\lambda^{\bar{\alpha}}} \,,
\end{align}
\begin{align}	\label{eq:semi-norm-coherence}
	\| F \|_{\mathcal{G}_{\mathrm{coh}; K,\bar\lambda,r}^{\alpha, \gamma}}
	& := \sup_{\substack{x,y\in K, \, \lambda \in (0,\bar\lambda] \\
	\varphi \in \mathscr{B}_{}^{r}}} 
	\frac{| ( F_y - F_x) ( \varphi_x^{\lambda})|}%
	{\lambda^{\alpha} (| y - x| + \lambda)^{\gamma - \alpha}} \,.
\end{align}
We next define the joint semi-norm for homogeneous and coherent germs,
where we fix 
$r = r_{\bar\alpha,\alpha}$ as in Remark~\ref{rem:uniformity},
see \eqref{eq:canonical-r}:
\begin{equation}\label{eq:semi-norm-coherence+homogeneity}
\begin{gathered}
	\| F \|_{\mathcal{G}_{K,\bar\lambda}^{\bar\alpha;\alpha, \gamma}} 
	:= \| F \|_{\mathcal{G}_{\mathrm{hom}; K,\bar\lambda,r_{\bar\alpha,\alpha}}^{\bar{\alpha}}} +
	\| F \|_{\mathcal{G}_{\mathrm{coh}; K,\bar\lambda,r_{\bar\alpha,\alpha}}^{\alpha, \gamma}} \,.
\end{gathered}
\end{equation}
We often set $\bar\lambda = 1$ and omit it from notation.
Note that a germ $F$ is $(\alpha,\gamma)$-coherent with homogeneity~$\bar\alpha$ if and only if 
$\| F \|_{\mathcal{G}_{K}^{\bar\alpha;\alpha, \gamma}}  < \infty$
for any compact set $K \subset \R^d$.

\subsection{Reconstruction}

The Reconstruction Theorem was originally formulated in \cite[Theorem~3.10]{Hai14}, 
see also \cite{OW19}.
We present here the version given by
\cite[Theorem~5.1]{CZ20}
and \cite{zorinkranich2021reconstruction}
(we exclude the case $\gamma = 0$
to avoid introducing logarithmic corrections).

\begin{theorem}[Reconstruction for $\gamma \neq 0$]
\label{thm:reconstruction}
Let $\alpha, \gamma \in \mathbb{R}$ with $\alpha \le \gamma$ and $\gamma \ne 0$. 
For any germ $F = (F_x)_{x\in\R^d}$ 
which is $(\alpha, \gamma)$-coherent, there exists a distribution 
\begin{equation*}
	\mathcal{R}^\gamma F \in \mathcal{D}^{\prime} ( \mathbb{R}^d ) \,,
\end{equation*}
called a \emph{$\gamma$-reconstruction of $F$},
which is \enquote{locally approximated 
by $F$} in the following sense: for any integer $r\in\N_0$ with $r > - \alpha$ 
and for any compact $K \subset \mathbb{R}^d$ we have
\begin{equation}\label{eq:reconstruction_bound}
\begin{gathered}
		| ( F_x - \mathcal{R}^\gamma F ) ( \varphi_x^{\lambda} ) | \lesssim \lambda^{\gamma} \\
		\text{uniformly  over $x \in K$, $\lambda \in ( 0, 1 ]$, $\varphi \in \mathscr{B}^r$} .
\end{gathered}
\end{equation}
Such a distribution $\cR^\gamma F$ is unique if and only if $\gamma > 0$. Furthermore:
\begin{itemize}
\item for any $\gamma$, one can define $\cR^\gamma F$ so that the map
$F \mapsto \mathcal{R}^\gamma F$ is linear;

\item if the germ $F$ has homogeneity $\bar{\alpha} \leq \gamma$, 
then $\mathcal{R}^\gamma F \in \mathcal{Z}^{\bar{\alpha}}$, i.e.\
	\begin{equation}\label{eq:target}
		\mathcal{R}^\gamma \colon \mathcal{G}^{\bar\alpha; \alpha, \gamma} 
		\to \mathcal{Z}^{\bar{\alpha}} ;
	\end{equation}

\item if $\bar\alpha > 0$, then $\cR^\gamma F = 0$.
\end{itemize}
\end{theorem}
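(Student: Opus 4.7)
The plan is to construct $\mathcal{R}^\gamma F$ as a dyadic limit of smoothings, following the schemes of \cite{CZ20, zorinkranich2021reconstruction}. Fix a smooth mollifier $\rho \in \mathscr{B}^r$ with $\int \rho = 1$ (with $r > -\alpha$) and, for each dyadic scale $n \in \mathbb{N}_0$, introduce the function $g_n(x) \coloneqq F_x(\rho_x^{2^{-n}})$, which is measurable by the definition of a germ and defines a distribution $G_n \in \mathcal{D}'(\mathbb{R}^d)$. The goal is to show that $G_n$ converges in $\mathcal{D}'$ to a distribution $\mathcal{R}^\gamma F$ satisfying \eqref{eq:reconstruction_bound}.

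The heart of the matter is controlling the increments $G_{n+1} - G_n$. Writing their pairing with a test function $\psi$ as an iterated integral and inserting $F_x(\rho_x^{2^{-n-1}}) - F_x(\rho_x^{2^{-n}})$ together with a term of the form $F_y - F_x$ at comparable scales, the coherence bound \eqref{eq:coherence} produces an estimate of order $2^{-n\gamma}$ times a factor depending on $\psi$. For $\gamma > 0$ this is summable, so $\sum_n (G_{n+1} - G_n)$ converges absolutely in $\mathcal{D}'$; for $\gamma < 0$ the naive sum diverges but, testing against $\varphi_x^\lambda$ and matching the dyadic scale $2^{-n} \sim \lambda$, the contributions from $n$ with $2^{-n} \ll \lambda$ telescope and those with $2^{-n} \gg \lambda$ are summable in the opposite direction, again yielding a well-defined limit and the bound \eqref{eq:reconstruction_bound}. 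This scale-matched telescoping is where the constraint $\gamma \neq 0$ enters decisively, as the borderline case would produce logarithmic divergences.

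Uniqueness for $\gamma > 0$: if $H = \mathcal{R}^\gamma_1 F - \mathcal{R}^\gamma_2 F$ satisfies $|H(\varphi_x^\lambda)| \lesssim \lambda^\gamma$, then letting $\lambda \to 0$ against any $\varphi \in \mathcal{D}$ forces $H = 0$. For $\gamma \le 0$, any $h \in \mathcal{Z}^\gamma$ can be added to a reconstruction without spoiling \eqref{eq:reconstruction_bound}, so uniqueness fails; linearity is secured by selecting the canonical limit from a fixed mollifier $\rho$, so that $F \mapsto \mathcal{R}^\gamma F$ is manifestly linear in $F$. For the target claim \eqref{eq:target}, assume in addition the homogeneity $|F_x(\varphi_x^\lambda)| \lesssim \lambda^{\bar\alpha}$. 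Combining with \eqref{eq:reconstruction_bound} and $\bar\alpha \le \gamma$ yields
\begin{equation*}
	|\mathcal{R}^\gamma F(\varphi_x^\lambda)| \le |F_x(\varphi_x^\lambda)| + |(F_x - \mathcal{R}^\gamma F)(\varphi_x^\lambda)| \lesssim \lambda^{\bar\alpha} + \lambda^\gamma \lesssim \lambda^{\bar\alpha},
\end{equation*}
uniformly over $\varphi \in \mathscr{B}^{r_{\bar\alpha,\alpha}}$. For $\bar\alpha < 0$ this matches \eqref{eq:Cgamma} directly; for $\bar\alpha \ge 0$, one additionally needs to bound $|\mathcal{R}^\gamma F(\psi_x)|$ uniformly in $x \in K$ for $\psi \in \mathscr{B}^0$, which follows from \eqref{eq:boundedness} and the partition-of-unity argument of Remark~\ref{remark:lower_scales} applied to $\mathcal{R}^\gamma F$. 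Finally, if $\bar\alpha > 0$, the canonical construction gives $g_n(x) = F_x(\rho_x^{2^{-n}}) \to 0$ as $n \to \infty$ by homogeneity, so $\mathcal{R}^\gamma F = 0$.

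The main obstacle is the convergence of the approximating sequence when $\gamma < 0$: the partial sums $G_n$ do not converge pointwise, so one must extract convergence only after pairing with a test function at a matched scale, and keep track of the separate contributions of coherence and homogeneity along the dyadic hierarchy. The other delicate point is handling the lack of uniqueness for $\gamma \le 0$ in a way compatible with linearity, which is solved by fixing the mollifier $\rho$ once and for all.
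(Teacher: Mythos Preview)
The paper does not actually prove this theorem: it is stated as a quotation of \cite[Theorem~5.1]{CZ20} and \cite{zorinkranich2021reconstruction}, and only the final bullet ($\bar\alpha>0 \Rightarrow \mathcal{R}^\gamma F=0$) is justified in the text, in the remark immediately following the statement. Your sketch is broadly in line with the construction in those references (dyadic smoothings $g_n(x)=F_x(\rho_x^{2^{-n}})$, increment control via coherence, scale matching for $\gamma<0$), so as a proof outline it is acceptable, though several steps are left quite impressionistic---in particular the $\gamma<0$ convergence argument would need to be made precise, since as written it is not clear that your ``telescoping plus opposite-direction summation'' actually produces a globally defined distribution rather than just the scale-$\lambda$ estimate.

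On the one point the paper does treat: for $\bar\alpha>0$ you argue via the construction, noting $g_n(x)\to 0$ pointwise. The paper instead argues intrinsically from the bounds already established: since $|\mathcal{R}^\gamma F(\varphi_x^\lambda)|\lesssim \lambda^{\bar\alpha}+\lambda^\gamma\lesssim \lambda^{\bar\alpha}$ and $\bar\alpha>0$, the distribution $\mathcal{R}^\gamma F$ must vanish. This is cleaner because it does not depend on the particular approximating sequence (and avoids the dominated-convergence step your version implicitly needs to pass from $g_n\to 0$ pointwise to $G_n\to 0$ in $\mathcal{D}'$). Note also that $\bar\alpha>0$ forces $\gamma>0$ (since $\bar\alpha\le\gamma$), so uniqueness is already in force and the conclusion is about \emph{the} reconstruction.
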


With an abuse of notation, we sometimes write $f = \cR^\gamma F$ to mean
that a distribution $f\in\cD'$ is a $\gamma$-reconstruction of $F$,
i.e.\ it satisfies \eqref{eq:reconstruction_bound},
but we stress that when $\gamma < 0$ there are many such a reconstruction~$f$ is not unique.
If the value of $\gamma$ is clear from the context, we may omit it and simply write
that $f = \cR F$ is a reconstruction of~$F$.

\begin{remark}[Non uniqueness is tame]
For $\gamma < 0$ there is no unique $\gamma$-reconstruction $\cR^\gamma F$, but 
\emph{any two $\gamma$-reconstructions differ by a distribution in $\cZ^\gamma$}. This follows
comparing \eqref{eq:reconstruction_bound} and \eqref{eq:Cgamma}, 
because the precise value of $r > -\gamma$ in the definition of $\cZ^\gamma$
is immaterial,
see Proposition~\ref{prop:independence_in_r} in Appendix~\ref{section:independence_in_r}.
\end{remark}

\begin{remark}
The fact that $\mathcal{R}^\gamma = 0$ when $\bar\alpha > 0$ follows by \eqref{eq:homogeneity}
and \eqref{eq:reconstruction_bound}, which yield
$|\cR^\gamma F(\varphi_x^\lambda)| \lesssim \lambda^{\bar\alpha}+\lambda^{\gamma}
\lesssim \lambda^{\bar\alpha}$, since $\bar\alpha\le\gamma $.
If $\bar\alpha > 0$, this implies that $\cR^\gamma F = 0$.
\end{remark}

\begin{remark}[Reconstruction bounds]\label{rem:reconstruction-bounds}
If a germ $F$ is $(\alpha,\gamma)$-coherent, the germ 
$F - \cR^\gamma F = ( F_x - \mathcal{R}^\gamma F )_{x\in\R^d}$
is \emph{not only $(\alpha,\gamma)$-coherent but also $\gamma$-homogeneous},
as the bound \eqref{eq:reconstruction_bound} shows,
i.e.\ $F - \cR^\gamma F \in  \mathcal{G}_{}^{\gamma; \alpha, \gamma}$.
More precisely, by \cite{CZ20,zorinkranich2021reconstruction},
\begin{equation}\label{eq:RT-bounds}
	\left\| F - \mathcal{R}^\gamma F 
	\right\|_{\mathcal{G}_{K, \bar\lambda}^{\gamma;
		\alpha, \gamma}} 
	\lesssim \left\| F 
	\right\|_{\mathcal{G}_{\mathrm{coh}; K^{\prime}, \bar\lambda}^{\alpha,\gamma}} ,
\end{equation}
for the enlarged compact $K^{\prime} = K \oplus B (0, \bar{\lambda}+1)$.
If $F$ has homogeneity $\bar\alpha$, then
\begin{equation*}
	\left\| \mathcal{R}^\gamma F \right\|_{\mathcal{Z}_{K,\bar\lambda}^{\bar{\alpha}}} 
		\lesssim \big\| F 
		\big\|_{\mathcal{G}_{K^{\prime}, \bar\lambda}^{\bar{\alpha}; \alpha, \gamma}} .
\end{equation*}
\end{remark}

	\subsection{Schauder estimates for coherent germs}

A natural and interesting problem is to find a \enquote{nice} continuous linear map 
$\mathcal{K}$ which 
\enquote{lifts the integration with $\sfK$ on the space of coherent
(resp.\ coherent and homogeneous) germs}.
More precisely, given $\bar{\alpha}, \alpha, \gamma \in \mathbb{R}$
and a $\beta$-regularising kernel $\sfK$,
we look for a 
continuous linear map $\cK = \mathcal{K}^{\gamma,\beta}$ such that the following diagrams commute,
for suitable $\bar{\alpha}', \alpha', \gamma' \in \mathbb{R}$:
\begin{equation}\label{eq:diagram_coherent_germs}
	\begin{tikzcd}
		\mathcal{G}^{\alpha, \gamma} 
		\arrow{r}{\mathcal{K}^{\gamma,\beta}} \arrow[swap]{d}{\mathcal{R}^\gamma} & 
		\mathcal{G}^{\alpha^{\prime},  \gamma^{\prime}} 
		\arrow{d}{\mathcal{R}^{\gamma'}} \\%
		\cD' \arrow{r}{\sfK}& \cD'
	\end{tikzcd}	
	\qquad\quad
	\begin{tikzcd}
		\mathcal{G}^{\bar{\alpha} ;\alpha, \gamma} 
		\arrow{r}{\mathcal{K}^{\gamma,\beta}} \arrow[swap]{d}{\mathcal{R}^\gamma} & 
		\mathcal{G}^{\bar{\alpha}^{\prime};\alpha^{\prime},  \gamma^{\prime}} 
		\arrow{d}{\mathcal{R}^{\gamma'}} \\%
		\mathcal{Z}^{\bar{\alpha}} \arrow{r}{\sfK}& \mathcal{Z}^{\bar{\alpha}'}
	\end{tikzcd}	
\end{equation}
that is $\cR^{\gamma'}(\cK^{\gamma,\beta}F) = \sfK(\cR^\gamma F)$.
In particular, we need to assume that
\emph{the integration $\sfK (\cR^\gamma F)$ is well-defined}. 
This is a mild condition, as we now discuss.

\begin{remark}[Integration of reconstruction]\label{rem:intrec}
The integration \emph{$\sfK (\cR^\gamma F)$ is always well-defined if the kernel $\sfK$
is translation invariant}, see Remark~\ref{rem:translation3}.

For germs $F$ that are \emph{$\bar\alpha$-homogeneous} for some $\bar\alpha \in \R$,
the integration
$\sfK (\cR^\gamma F)$ is well defined 
\emph{if the regularising kernel $\sfK$ is of order $(m,r)$ with $r > -\bar\alpha$},
by Proposition~\ref{prop:well_posedness_convolution},
because $\cR^\gamma F \in \cZ^{\bar\alpha}$
(see Theorem~\ref{thm:reconstruction})
is a distribution of order~$r$ (see Remark~\ref{rem:singint}).

Finally, for \emph{non homogeneous germs $F$},
the integration $\sfK (\cR^\gamma F)$ is still well defined 
\emph{if the regularising kernel $\sfK$ is of order $(m,r)$ for any $r\in\N_0$},
again by Proposition~\ref{prop:well_posedness_convolution}.
\end{remark}

We next discuss the definition of $(\mathcal{K}^{\gamma,\beta} F)_x$.
A naive guess would be to define it as
	\begin{equation}\label{eq:KconvF}
	\sfK F_x 
	\end{equation}
but this choice of germ is typically neither coherent nor homogeneous.
However, it turns out that we can nicely modify \eqref{eq:KconvF} by 
\emph{subtracting a suitable polynomial term}.

\begin{remark}
One ``trivial'' solution would be to define $(\mathcal{K}^{\gamma,\beta} F )_x$ 
for all $x\in\R^d$ by
	\begin{equation*}
		\sfK (\mathcal{R}^\gamma F)  \,.
	\end{equation*}
However, such a germ is independent of~$x$ and does not contain $F_x$.
This is not useful for applications (e.g.\ to stochastic equations)
where one needs germs which do depend on~$x$, to reflect the local fluctuations of the noise.
\end{remark}

As a first ingredient (of independent interest), we show in the next Lemma 
how to define \emph{pointwise derivatives}
for any distribution which is ``locally homogeneous'' on test functions
that \emph{annihilate polynomials}, and we prove that subtracting
a Taylor polynomial yields a homogeneity bound
for general test functions. 

\begin{lemma}[Pointwise derivatives]\label{lemma:pointwise_derivative}
Let $f \in \cD'$ be a distribution which satisfies a \emph{``weak'' homogeneity bound
at a given point $x \in \R^d$}, for some $\delta > 0$:
\begin{equation} \label{eq:local-hom}
	\text{for any $\varphi \in \mathscr{B}_{\delta}$} : \qquad
	|f(\varphi_{x}^\lambda)| \lesssim \lambda^{\delta} \quad
	\text{uniformly for $\lambda \in (0,1]$}
\end{equation}
where we recall that functions in $\mathscr{B}_{\delta}$ annihilate polynomials of degree $\le \delta$.

Then $f$ admits ``pointwise derivatives''
of any order $<\delta$, defined by
\begin{equation} \label{eq:definition_Dkf(x)2}
	D^k f (x) := \lim_{\lambda \downarrow 0} D^{k} f(\eta_{x}^\lambda) \in \R
	\qquad \forall k \in \N_0^d \ \text{ with } \ 0 \le |k| < {\delta} \,,
\end{equation}
for any $\eta \in \cD$ with
$\int \eta = 1$ and $\int \eta(x) \, x^l \, \d x = 0$ for all $1 \le |l| < {\delta}$
(the limit does not depend on the choice of such~$\eta$).
We can thus define the \emph{Taylor polynomial at~$x$}
\begin{equation}\label{eq:Taylor-f}
	\cT^\delta_x(f)(\,\cdot\,) := \sum_{0 \le |k| < \delta} D^k f (x)\,
	\frac{(\,\cdot\,-x)^k}{k!} \,.
\end{equation}

If moreover $\delta \not\in \N$,
then $f - \cT^{\delta}_x(f)$
satisfies a \emph{``strong'' homogeneity bound at~$x$}:
\begin{equation} \label{eq:gbound}
	|(f - \cT^{\delta}_x(f))(\psi_{x}^\lambda)| \lesssim \lambda^{\delta} \quad
	\text{uniformly for $\lambda \in (0,1]$} \,,
\end{equation}
for any test function $\psi \in \cD(B(0,1))$ which needs not annihilate polynomials.

Finally, the bound \eqref{eq:gbound} holds also for $\delta \in \mathbb{N}$ 
if the following condition holds:
for any $k \in \mathbb{N}_0^d$ with $| k | = \delta$ and any $\varphi \in \mathcal{D} ( B ( 0, 1 ) )$,
one has $\sup_{\lambda \in (0,1]} | D^k f ( \varphi_x^{\lambda} ) | \lesssim 1$.
\end{lemma}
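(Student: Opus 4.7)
The plan is to establish the three claims sequentially: existence of the pointwise derivative, independence of the auxiliary function $\eta$, and the sharp homogeneity bound. The technical heart throughout is careful bookkeeping of vanishing moments so that auxiliary test functions land in $\mathscr{B}_\delta$, making the sole hypothesis \eqref{eq:local-hom} applicable.

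For existence of $D^k f(x)$, I would exploit the scaling identity $D^k f(\eta_x^\lambda) = (-1)^{|k|} \lambda^{-|k|} f((\partial^k \eta)_x^\lambda)$ and analyse dyadic increments. Setting $\tilde\eta := \partial^k \eta$ and $\tilde\eta^{[2]}(z) := 2^{-d}\tilde\eta(z/2)$ so that $\tilde\eta_x^{2\lambda} = (\tilde\eta^{[2]})_x^\lambda$, a direct computation gives
\[
  D^k f(\eta_x^\lambda) - D^k f(\eta_x^{2\lambda}) = (-1)^{|k|} \lambda^{-|k|} f(G_x^\lambda), \qquad G := \tilde\eta - 2^{-|k|}\tilde\eta^{[2]} \,.
\]
The moments of $G$ equal $(1 - 2^{|l|-|k|}) \int \tilde\eta(y) y^l \, dy$; the prefactor vanishes at $l = k$, and the hypothesis $\int \eta(y) y^l \, dy = 0$ for $1 \le |l| < \delta$ combined with integration by parts forces $\int \tilde\eta(y) y^l \, dy = 0$ for all other relevant $l$, placing $G$ in $\mathscr{B}_\delta$ (at least when $\delta \notin \N$; the integer case requires a small adjustment). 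Hypothesis \eqref{eq:local-hom} then yields $|f(G_x^\lambda)| \lesssim \lambda^\delta$ and hence $|D^k f(\eta_x^\lambda) - D^k f(\eta_x^{2\lambda})| \lesssim \lambda^{\delta - |k|}$; since $|k| < \delta$ the telescoping series over dyadic scales converges, showing $\{D^k f(\eta_x^{2^{-n}})\}_n$ is Cauchy. The limit extends to all $\lambda \downarrow 0$ by interpolation between consecutive dyadic scales, and the same estimate yields the quantitative rate $|D^k f(\eta_x^\lambda) - D^k f(x)| \lesssim \lambda^{\delta - |k|}$, which will be reused below. Independence from $\eta$ follows by applying the analogous argument to $\eta - \eta'$: the extra vanishing of $\int(\eta - \eta')\,dy = 0$ (compared to $\int\eta\,dy = 1$) forces $\partial^k(\eta - \eta')$ into $\mathscr{B}_\delta$, so $|D^k f((\eta-\eta')_x^\lambda)| \lesssim \lambda^{\delta - |k|} \to 0$.

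For the homogeneity bound \eqref{eq:gbound}, I would fix once and for all a ``dual basis'' $\{\rho_l\}$ of test functions in $\mathcal{D}(B(0,1))$ satisfying $\int \rho_l(y) y^m \, dy = \delta_{lm}$ for $|l|, |m| \le \lfloor \delta \rfloor$, and decompose any $\psi \in \mathcal{D}(B(0,1))$ as
\[
  \psi = \sum_{|l| \le \lfloor \delta \rfloor} M_l(\psi)\, \rho_l + \tilde\psi \,, \qquad M_l(\psi) := \int \psi(y) y^l \, dy \,,
\]
where $\tilde\psi$ has all moments of degree $\le \lfloor \delta \rfloor$ vanishing. When $\delta \notin \N$ this coincides with $\tilde\psi \in \mathscr{B}_\delta$, so $|f(\tilde\psi_x^\lambda)| \lesssim \lambda^\delta$ by hypothesis. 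For each $|l| < \delta$, comparing $\rho_l$ with $\frac{(-1)^{|l|}}{l!}\partial^l \eta$ (for an $\eta$ as in the first part) places the difference in $\mathscr{B}_\delta$; combined with the convergence rate from step~1, this gives
\[
  f((\rho_l)_x^\lambda) = \frac{\lambda^{|l|}}{l!}\, D^l f(x) + O(\lambda^\delta) \,.
\]
Summing against $M_l(\psi)$ and recognising $\cT^\delta_x f(\psi_x^\lambda) = \sum_{|l|<\delta} \frac{\lambda^{|l|}}{l!} D^l f(x)\, M_l(\psi)$ concludes the case $\delta \notin \N$. In the integer case I would enlarge the basis to also include terms with $|l| = \delta$ so that $\tilde\psi$ lies in $\mathscr{B}_\delta$; the additional contributions $M_l(\psi) f((\rho_l)_x^\lambda)$ with $|l| = \delta$ are then controlled by $O(\lambda^\delta)$ via the extra hypothesis $\sup_\lambda |D^l f(\varphi_x^\lambda)| \lesssim 1$, by rewriting $\rho_l$ through $\partial^l \eta$ modulo a residual in $\mathscr{B}_\delta$. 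The main technical difficulty is ensuring at every step that moment constraints align sharply with the threshold $\mathscr{B}_\delta$, since \eqref{eq:local-hom} offers no slack above $\lambda^\delta$.
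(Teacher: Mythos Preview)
Your proposal is correct and follows essentially the same strategy as the paper: both arguments establish existence of $D^k f(x)$ via a dyadic telescoping in which the increment test function lies in $\mathscr{B}_\delta$ (your $G$ is, up to a rescaling, the paper's $\partial^k\varphi$ with $\varphi=\eta^{1/2}-\eta$), and both prove \eqref{eq:gbound} by stripping the moments of $\psi$ and applying the convergence rate from the first step. The one minor detour in your argument is the introduction of an abstract dual basis $\{\rho_l\}$ followed by a comparison of each $\rho_l$ with $\tfrac{(-1)^{|l|}}{l!}\partial^l\eta$; the paper observes directly that the functions $\partial^k\eta$ already form such a dual basis (this is relation \eqref{eq:eta-polynomials}), so the decomposition $\check\psi=\psi-\sum_{|k|\le\delta}(-1)^{|k|}\,\mathbb{X}_0^k(\psi)\,\partial^k\eta$ lands in $\mathscr{B}_\delta$ in one step, avoiding the intermediate comparison.
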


\begin{remark}
We point out that Lemma~\ref{lemma:pointwise_derivative} provides
a \emph{local version}, for a fixed base point~$x$, of the following well-known result in H\"older 
spaces (see e.g. \cite[Proposition~A.5]{broux2021besov}, or
\cite[Proposition~14.15]{MR4174393}): for a distribution 
$f \in \mathcal{D}^{\prime} ( \mathbb{R}^d )$ and an exponent 
$\delta > 0$ with $\delta \not\in \mathbb{N}$, there is equivalence between:
\begin{ienumerate}
	\item\label{it:i} $f \in \mathcal{C}^{\delta}$ i.e.\ 
	$|f ( \varphi_x^{\lambda}) | \lesssim \lambda^{\delta}$ over $x$ in compacts, $\lambda \in (0, 1]$, 
	$\varphi \in \mathscr{B}_{\delta}$.
	\item\label{it:ii} $f$ is a $C^{\lfloor \delta \rfloor}$ function and 
	$\left| f (y ) - \sum_{| k | < \delta} \frac{\partial^k f ( x )}{x !} (y - x)^k \right| \lesssim |y - x|^{\delta}$.
	\end{ienumerate}
In the case of integer exponents, only the implication \eqref{it:ii} $\Rightarrow$ \eqref{it:i} holds.
\end{remark}

Consider now an $(\alpha,\gamma)$-coherent germ $F = (F_x)_{x\in\R^d}$
and a $\gamma$-reconstruction $\cR^\gamma F$ (which is unique if $\gamma > 0$).
Given a $\beta$-regularizing kernel $\sfK$,
we will show in Theorem~\ref{thm:convolution_germs_in_G_checkG} that 
$f = \sfK \{ F_x - \mathcal{R}^\gamma F\}$ satisfies \eqref{eq:local-hom}
with $\delta = \gamma+\beta$, hence we can consider
\begin{equation}\label{eq:definition_of_KF}
\begin{split}
	(\cK^{\gamma,\beta} F )_x 
	& \coloneqq \sfK F_x - \cT^{\gamma+\beta}_x\big( \sfK \{F_x - \cR^\gamma F\} \big) \\
	& = \sfK F_x - \sum\limits_{0 \le |k| < \gamma + \beta} 
	D^k( \sfK \{ F_x - \mathcal{R}^\gamma F\} )( x )\,
	\frac{ (\,\cdot - x)^k}{k!} \,,
\end{split}
	\end{equation}
that is, we subtract 
from $\sfK F_x$ 
the Taylor polynomial at~$x$ of $\sfK \{F_x - \cR^\gamma F\}$.
In case $\gamma+\beta \le 0$, we agree that $\cT^{\gamma+\beta}_x \equiv 0$,
that is, we define $(\cK^{\gamma,\beta} F )_x  \coloneqq \sfK F_x$.

Note that the difference $(\cK^{\gamma,\beta} F )_x  - \sfK(\cR^\gamma F)$ admits the expression
\begin{equation}\label{eq:definition_of_KF_alt}
	(\cK^{\gamma,\beta} F )_x - \sfK(\cR^\gamma F)
	\coloneqq \sfK \{F_x-\cR^\gamma F\}
	- \cT^{\gamma+\beta}_x\big( \sfK \{F_x - \cR^\gamma F\} \big) \,,
\end{equation}
which is always well-defined when $F$ is a coherent germ,
as we now discuss.

\smallskip

We can now state our first main result, that we prove in Section~\ref{section:proof_main_result_I}.

\begin{theorem}[Schauder estimates for coherent germs]\label{thm:Schauder_for_germs}
Let $\alpha, \gamma \in \mathbb{R}$ with $\alpha \le \gamma$ and $\gamma \neq 0$.
Consider an $(\alpha, \gamma)$-coherent germ $F = (F_x)_{x\in\R^d}$
and a $\gamma$-reconstruction $\cR^\gamma F$
(which is unique if $\gamma>0$), see \eqref{eq:reconstruction_bound}.
Let $\beta > 0$ satisfy
	\begin{equation} \label{eq:assumption_remove_polynomial}
		\alpha + \beta \neq 0, 
		\quad \gamma + \beta \notin \mathbb{N}_0 \,,
	\end{equation}
and consider a $\beta$-regularising kernel $\sfK$  with range $\rho$
of order $(m, r)$ large enough:
	\begin{equation}\label{eq:assumption_schauder_on_germs}
		m > \gamma + \beta , \quad r > - \alpha \,.
	\end{equation}	
Then the germ $\cK^{\gamma,\beta} F - \sfK(\cR^\gamma F)$
is well-defined by \eqref{eq:definition_of_KF_alt} and it satisfies
\begin{equation}\label{eq:main-gen}
	\cK^{\gamma,\beta} F - \sfK(\cR^\gamma F) \in 
	\cG^{\gamma+\beta;(\alpha+\beta)\wedge 0,\gamma+\beta} \,,
\end{equation}
i.e.\ it is $(\gamma+\beta)$-homogeneous and
$((\alpha + \beta) \wedge 0, \gamma + \beta)$-coherent, with
the following continuity estimate: 
for any compact $K \subseteq \R^d$ and $\bar\lambda \in (0,\infty)$ we have
\begin{equation}\label{eq:cont-schauder1}
	\| \cK^{\gamma,\beta} F - \sfK(\cR^\gamma F) 
	\|_{\mathcal{G}_{K,\bar\lambda}^{\gamma+\beta; (\alpha+\beta)\wedge 0, \gamma+\beta}}
	\le \mathrm{cst}_{K,\bar\lambda} \,
	\| F - \cR^\gamma F
	\|_{\mathcal{G}_{K,\bar\lambda'}^{\gamma; \alpha, \gamma}} \,,
\end{equation}
with $\bar\lambda' = 8(\bar\lambda + \rho + \mathrm{diam}(K))$, and the RHS
can be bounded by \eqref{eq:RT-bounds}.

If we further assume that $\sfK(\cR^\gamma F)$ is well-defined,
see Remark~\ref{rem:intrec}, then also the germ $\cK^{\gamma,\beta} F$
is well-defined by \eqref{eq:definition_of_KF} and the following holds:
\begin{itemize}
\item $\cK^{\gamma,\beta}F$ is $((\alpha + \beta) \wedge 0, \gamma + \beta)$-coherent
and (with some abuse of notation)
\begin{equation} \label{eq:reco-schauder}
	\mathcal{R}^{\gamma+\beta} (\cK^{\gamma,\beta} F) = \sfK ( \mathcal{R}^\gamma F) \,,
\end{equation}
i.e.\ $\sfK (\mathcal{R}^\gamma F)$ is a $(\gamma+\beta)$-reconstruction 
of $\cK^{\gamma,\beta} F$;
	
\item if $F$ has homogeneity $\bar{\alpha} \leq \gamma$,
then $\cK^{\gamma,\beta} F$ has homogeneity $(\bar{\alpha} + \beta) \wedge 0$,
provided
\begin{equation*}
	\bar{\alpha} + \beta \neq 0 \,;
\end{equation*}

\item the map $F \mapsto \cK^{\gamma,\beta}F$ is linear and continuous, both on $\cG_{\mathrm{coh}}^{\alpha,\gamma} \to \cG_{\mathrm{coh}}^{(\alpha+\beta)\wedge 0,\gamma+\beta}$ 
and on $\cG^{\bar\alpha;\alpha,\gamma} \to 
\cG^{(\bar\alpha+\beta)\wedge 0;(\alpha+\beta)\wedge 0,\gamma+\beta}$, with the following continuity estimate: for any compact $K \subseteq \mathbb{R}^d$ and $\bar{\lambda} \in (0, \infty)$, 
	\begin{align}
		\| \cK^{\gamma,\beta}F \|_{\mathcal{G}_{\mathrm{coh}; K, \bar{\lambda}}^{(\alpha + \beta) \wedge 0, \gamma + \beta}} & \lesssim \mathrm{cst}_{K, \bar{\lambda}} \| F \|_{\mathcal{G}_{\mathrm{coh}; K^{\prime}, \bar{\lambda}^{\prime}}^{\alpha, \gamma}} , \label{eq:cont-schauder2} \\
		\| \cK^{\gamma,\beta}F \|_{\mathcal{G}_{K, \bar{\lambda}}^{(\bar{\alpha} + \beta) \wedge 0 ; (\alpha + \beta) \wedge 0, \gamma + \beta}} & \lesssim \mathrm{cst}_{K, \bar{\lambda}} \| F \|_{\mathcal{G}_{K^{\prime}, \bar{\lambda}^{\prime}}^{\bar{\alpha}; \alpha, \gamma}} , \label{eq:cont-schauder3}
	\end{align}
\noindent with $K^{\prime} = K \oplus B (0, 1 + 2 \bar{\lambda})$, $\lambda^{\prime} = 8(\bar\lambda + \rho + \mathrm{diam}(K))$.
\end{itemize}

As a consequence, if we set
\begin{gather*}
	(\bar{\alpha}', \alpha', \gamma') = 
	((\bar{\alpha} + \beta) \wedge 0, (\alpha + \beta) \wedge 0, \gamma + \beta) \,,
\end{gather*}
both diagrams in \eqref{eq:diagram_coherent_germs} commute,
where $\cK^{\gamma,\beta}$ is linear and continuous.
\end{theorem}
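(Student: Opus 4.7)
The plan is to center the whole argument on the auxiliary ``remainder'' germ $G_x := \sfK\{F_x - \cR^\gamma F\}$, which under the hypothesis $r > -\alpha \ge -\gamma$ is well-defined by Remark~\ref{rem:bounded-order} applied to $F - \cR^\gamma F$ (this germ is $(\alpha,\gamma)$-coherent and, by the reconstruction bound \eqref{eq:reconstruction_bound} and Remark~\ref{rem:reconstruction-bounds}, also $\gamma$-homogeneous). The Schauder effect on distributions ought to lift, in the form of a \emph{weak} homogeneity estimate at the base point~$x$, the exponent from $\gamma$ to $\gamma+\beta$; once this is established, Lemma~\ref{lemma:pointwise_derivative} mechanically produces the pointwise derivatives $D^k G_x(x)$ for $|k|<\gamma+\beta$ appearing in \eqref{eq:definition_of_KF}, the Taylor polynomial $\cT^{\gamma+\beta}_x(G_x)$, and the strong homogeneity bound for $G_x - \cT^{\gamma+\beta}_x(G_x) = (\cK F)_x - \sfK(\cR^\gamma F)$ (here we use $\gamma+\beta\notin\N_0$ to avoid the extra hypothesis in the last clause of Lemma~\ref{lemma:pointwise_derivative}). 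That strong bound is simultaneously the $(\gamma+\beta)$-homogeneity of $\cK F - \sfK(\cR F)$ and, by definition, the reconstruction identity \eqref{eq:reco-schauder}.

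The key scaling estimate to prove is therefore: for $\varphi \in \mathscr{B}^r_{\gamma+\beta}$ with $r>-\alpha$,
\begin{equation*}
    |G_x(\varphi_x^\lambda)| \;=\; \bigl|(F_x - \cR^\gamma F)\bigl(\sfK^\ast \varphi_x^\lambda\bigr)\bigr| \;\lesssim\; \lambda^{\gamma+\beta}\qquad \forall\lambda\in(0,1].
\end{equation*}
I would insert the dyadic decomposition $\sfK^\ast\varphi_x^\lambda = \sum_n \sfK_n^\ast\varphi_x^\lambda$ and split the sum at the ``critical'' index $n_\ast$ with $2^{-n_\ast}\sim\lambda$. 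For $2^{-n}\le\lambda$ (large-scale regime) one tests the $\gamma$-homogeneity of $F_x-\cR^\gamma F$ against the piece $\sfK_n^\ast \varphi_x^\lambda$ localised at scale $\lambda$ via a partition of unity, using the pointwise bound \eqref{eq:regularising_kernel_bound_on_derivatives}; this produces a factor $\lambda^\gamma\cdot 2^{-n\beta}$ summing to $\lambda^{\gamma+\beta}$. For $2^{-n}>\lambda$ (small-scale regime), the localisation of $\sfK_n$ on a shell of size $\sim 2^{-n}$ lets one re-test $F_x-\cR^\gamma F$ at scale~$2^{-n}$ after Taylor-expanding $\varphi_x^\lambda$; the moment-annihilation of $\varphi$ (which gave us $\mathscr{B}_{\gamma+\beta}$), together with the integration-by-parts identity \eqref{eq:reg-int}, gives the cancellation that yields a factor $(\lambda/2^{-n})^{\gamma+\beta}\cdot 2^{-n(\gamma+\beta)}$; the two geometric series sum to $\lambda^{\gamma+\beta}$ precisely because $\gamma+\beta\notin\N_0$ forbids a logarithmic resonance.

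For coherence, the natural splitting is
\begin{equation*}
    (\cK F)_y-(\cK F)_x \;=\; \sfK(F_y-F_x) \;-\; \cT^{\gamma+\beta}_y(G_y) \;+\; \cT^{\gamma+\beta}_x(G_x),
\end{equation*}
tested against $\varphi_x^\lambda$. The first piece $\sfK(F_y-F_x)(\varphi_x^\lambda)$ is handled by repeating the dyadic analysis above, but now using the coherence bound \eqref{eq:coherence} of $F$ in place of $\gamma$-homogeneity and with $\varphi\in\mathscr{B}^r$ (no cancellation required); this yields the desired bound $\lambda^{(\alpha+\beta)\wedge 0}(|y-x|+\lambda)^{\gamma+\beta-(\alpha+\beta)\wedge 0}$, the cutoff at $0$ being exactly the threshold where test-function regularity can no longer absorb the singularity and the hypothesis $\alpha+\beta\neq 0$ rules out the critical case. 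The residual Taylor-polynomial difference is expanded by re-centering $\cT^{\gamma+\beta}_y(G_y)$ at $x$ and comparing coefficient-by-coefficient with $\cT^{\gamma+\beta}_x(G_x)$; each coefficient difference $D^k G_y(y)-D^k(\textup{reexpansion of }G_y)(x)$ is controlled, via the limiting definition \eqref{eq:definition_Dkf(x)2}, by the same key scaling estimate applied now to $F_y-F_x$, producing the gain $|y-x|^{\gamma+\beta-|k|}$.

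The main obstacle is precisely this last point: obtaining, uniformly and with sharp exponent, the coherence bound on the Taylor-polynomial difference. It requires combining the two flavours of the dyadic estimate (homogeneity-based and coherence-based) with the algebraic identity that reorganises $\cT^{\gamma+\beta}_y(G_y)$ around the base point $x$, and handling carefully the case $|y-x|\gtrsim\lambda$ (where one reads off the $(|y-x|+\lambda)^{\gamma+\beta-\alpha'}$ factor) versus $|y-x|\lesssim\lambda$ (where both terms are comparable after a change of centre). The continuity estimates \eqref{eq:cont-schauder1}--\eqref{eq:cont-schauder3} and the conclusion that $\cK F$ has homogeneity $(\bar\alpha+\beta)\wedge 0$ when $F$ is $\bar\alpha$-homogeneous (provided $\bar\alpha+\beta\neq 0$) follow by a transparent bookkeeping of the constants through every inequality, the enlargement $K^\prime$ coming from the compact support of the dyadic kernels $\sfK_n$ inside $B(0,\rho)$.
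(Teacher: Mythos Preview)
Your treatment of the homogeneity half is correct and matches the paper: the dyadic splitting of $\sfK^*\varphi_x^\lambda$, the $\gamma$-homogeneity of $F-\cR^\gamma F$, and the moment conditions on $\varphi\in\mathscr{B}^r_{\gamma+\beta}$ give the weak bound $|G_x(\varphi_x^\lambda)|\lesssim\lambda^{\gamma+\beta}$, after which Lemma~\ref{lemma:pointwise_derivative} closes the argument. The paper packages this step as Theorem~\ref{thm:convolution_germs_in_G_checkG}, which also records the analogous \emph{weak coherence} bound for $\sfK(F_y-F_x)$ tested against $\varphi\in\mathscr{B}^r_{\gamma+\beta}$.

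The gap is in your coherence argument. You assert that the piece $\sfK(F_y-F_x)(\varphi_x^\lambda)$ already satisfies the target bound for \emph{general} $\varphi\in\mathscr{B}^r$ (``no cancellation required''), and separately that each Taylor-coefficient difference is of order $|y-x|^{\gamma+\beta-|k|}$. Both claims are false. Take $F_x=g(x)\,h$ with $h\in\cZ^\gamma$, $\gamma\in(-\beta,0)$, $\gamma+\beta\in(0,1)$, $g$ bounded but discontinuous (e.g.\ $g=\ind_{[0,\infty)}$ in $d=1$), and $\cR^\gamma F=0$. This germ is $(\gamma,\gamma)$-coherent and $\gamma$-homogeneous, so all hypotheses hold with $\alpha=\gamma$. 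Then $\sfK(F_y-F_x)(\varphi_x^\lambda)=(g(y)-g(x))(\sfK h)(\varphi_x^\lambda)\to(g(y)-g(x))(\sfK h)(x)\int\varphi$ as $\lambda\downarrow 0$, which is of order~$1$ for $x<0<y$ arbitrarily close. Likewise $G_x(x)-G_y(y)=g(x)(\sfK h)(x)-g(y)(\sfK h)(y)$ is of order~$1$. Neither piece alone is $\lesssim(|y-x|+\lambda)^{\gamma+\beta}$; only their \emph{combination} is small, via the H\"older continuity of $\sfK h\in\cZ^{\gamma+\beta}$.

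The paper never separates these two pieces. It instead decomposes the \emph{test function} (Lemma~\ref{lemma:decomposition_test_functions}): one writes $\psi_x^\lambda$ as a single function at scale $\sim|y-x|+\lambda$ plus a telescoping sum of functions in $\mathscr{B}^r_{\gamma}$ at the intermediate scales $\lambda 2^n$. On each polynomial-annihilating piece the Taylor terms vanish identically, so one may invoke the weak coherence of $G$ directly; the remaining large-scale piece is handled by the already-established strong homogeneity of $H=G-\cT^{\gamma+\beta}(G)$ at both base points after re-centring. This is Theorem~\ref{thm:embedding_G_checkG}, and it is precisely where the $\wedge 0$ in the output coherence exponent arises.
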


We finally show that the definition \eqref{eq:definition_of_KF} of the
$\cK^{\gamma,\beta} F$ is \emph{canonical}, in the following precise sense.
The proof of the next result is also given in Section~\ref{section:proof_main_result_I}.

\begin{proposition}[Canonicity of the germ $\cK^{\gamma,\beta} F$]
\label{thm:canonicity}
If we postulate that
\begin{equation} \label{eq:postulate}
	\cK^{\gamma,\beta} F = \sfK F_x - P_x \,,
\end{equation}
then the only polynomial $P_x$ of degree $< \gamma+\beta$
such that \eqref{eq:reco-schauder} holds (i.e.\ such that the diagrams in
\eqref{eq:diagram_coherent_germs} commute) is
$P_x = \cT^{\gamma+\beta}_x(\sfK\{F_x - \cR^\gamma F\})$ 
as in \eqref{eq:definition_of_KF}.
\end{proposition}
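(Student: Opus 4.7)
My plan is to fix the canonical choice $P_x^{(0)} := \cT^{\gamma+\beta}_x(\sfK\{F_x - \cR^\gamma F\})$ from \eqref{eq:definition_of_KF} and compare it with any other candidate $P_x$ satisfying the hypothesis, showing that the difference $Q_x := P_x^{(0)} - P_x$ -- still a polynomial in $y$ of degree $< \gamma+\beta$ for each fixed $x$ -- must vanish identically. The case $\gamma+\beta \leq 0$ is vacuous since there are no nonzero polynomials of degree $< \gamma+\beta$ to consider, so I would concentrate on $\gamma+\beta > 0$, where in fact the $(\gamma+\beta)$-reconstruction is unique by Theorem~\ref{thm:reconstruction}.

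The first step is a direct subtraction of the two reconstruction bounds. By the postulate, $\sfK \cR^\gamma F$ is a $(\gamma+\beta)$-reconstruction of the germ $(\sfK F_x - P_x)_{x\in\R^d}$, and by Theorem~\ref{thm:Schauder_for_germs} the same distribution $\sfK \cR^\gamma F$ is a $(\gamma+\beta)$-reconstruction of the canonical germ $(\sfK F_x - P_x^{(0)})_{x\in\R^d}$. Applying \eqref{eq:reconstruction_bound} to each and invoking the triangle inequality, I obtain the pointwise homogeneity estimate
\begin{equation*}
	|Q_x(\varphi_x^\lambda)| \lesssim \lambda^{\gamma+\beta}
\end{equation*}
uniformly for $x$ in compact subsets of $\R^d$, for $\lambda \in (0,1]$, and for $\varphi \in \mathscr{B}^r$ with $r$ large enough (any $r > -\alpha$ works for both bounds after enlarging).

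The conclusion would then follow from an elementary algebraic manipulation. Expanding the polynomial around $x$ as $Q_x(y) = \sum_{0 \leq |k| < \gamma+\beta} q_k(x)\, (y-x)^k / k!$ and integrating against $\varphi_x^\lambda$ yields
\begin{equation*}
	Q_x(\varphi_x^\lambda) = \sum_{0 \leq |k| < \gamma+\beta} \frac{q_k(x)}{k!} \, \lambda^{|k|} \, \mu_k(\varphi), \qquad \mu_k(\varphi) := \int_{\R^d} z^k \varphi(z) \, d z.
\end{equation*}
Since every exponent $|k|$ is strictly less than $\gamma+\beta$, each factor $\lambda^{|k|}$ dominates $\lambda^{\gamma+\beta}$ as $\lambda \downarrow 0$. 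For each multi-index $l$ with $|l| < \gamma+\beta$, I would select a test function $\varphi \in \cD(B(0,1))$ with moments $\mu_k(\varphi) = \delta_{kl}$ (up to rescaling into $\mathscr{B}^r$), insert it into the above homogeneity bound, and let $\lambda \downarrow 0$: this forces $q_l(x) = 0$ for every $x \in \R^d$. Running this over every $l$ yields $Q_x \equiv 0$ and hence $P_x = P_x^{(0)}$.

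The only mildly delicate point is the existence of such test functions with prescribed moments inside $\cD(B(0,1))$; this is however classical (e.g.\ by Gram--Schmidt orthogonalization of a sufficiently large collection of bump functions against the monomial basis), so no serious obstacle should arise.
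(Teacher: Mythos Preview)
Your proof is correct and follows essentially the same approach as the paper: obtain $|Q_x(\varphi_x^\lambda)| \lesssim \lambda^{\gamma+\beta}$ for the polynomial difference $Q_x$ of degree $< \gamma+\beta$, then conclude $Q_x \equiv 0$ by probing with suitable test functions as $\lambda \downarrow 0$. The only cosmetic difference is in this last step: rather than constructing test functions with prescribed moments $\mu_k(\varphi)=\delta_{kl}$, the paper fixes a single $\psi$ with $\int\psi=1$ and observes $(\partial^k Q_x)(\psi_x^\lambda) = (-\lambda)^{-|k|} Q_x((\partial^k\psi)_x^\lambda) = O(\lambda^{\gamma+\beta-|k|}) \to 0$, whence $\partial^k Q_x(x)=0$ for all $|k|<\gamma+\beta$.
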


\section{Main result II: multilevel Schauder estimates}
\label{section:multi_level_schauder_estimate}

In many applications, 
the space $\mathcal{G}^{\bar{\alpha}; \alpha, \gamma}$ of all coherent and homogeneous germs
is \enquote{too big}. This happens for instance
when one wants to define singular operations on germs,
such as the product with a non smooth function, or even a distribution:
one can typically make sense of such a product \emph{only for a few germs $\Pi^i$},
hence the best one can hope is to extend the product
to \emph{those germs that are locally given by linear combinations of the $\Pi^i$'s}. This leads to the notion
of \emph{models} and \emph{modelled distributions},
which are cornerstones of the theory of regularity structures~\cite{Hai14}.

\subsection{Models and modelled distributions} \label{section:models_modelled_distributions}

We fix a family $\Pi = ( \Pi^i)_{i\in I}$ 
of germs $\Pi^i = (\Pi^i_x)_{x\in\R^d}$ on $\R^d$ indexed by a finite set~$I$.
We view this family as a \emph{basis to build  germs} $F = \langle f, \Pi \rangle$
through linear combinations 
with coefficients $f^i(x)$:
	\begin{equation} \label{eq:germbase}
		F_x = \langle f, \Pi \rangle_x :=
		\sum_{i \in I} f^i ( x ) \, \Pi_x^i \,.
	\end{equation}
We will call the basis $\Pi$ a \emph{model} and the family of coefficients $f$ a
\emph{modelled distribution}, provided they satisfy assumptions
that we now discuss.

To define a model $\Pi = ( \Pi^i)_{i\in I}$, we require that each germ $\Pi^i$
is \emph{homogeneous}, and furthermore that the vector space $\mathrm{Span} \{\Pi_x^i\colon i\in I\} \subset \cD'(\R^d)$
\emph{does not depend on~$x$} (i.e.\  $\Pi^i_x$ is a linear combination of $(\Pi^j_y)_{j\in I}$,
for any $x,y \in \R^d$). This leads to:

\begin{definition}[Model]\label{def:model}
Fix a finite set $I$ and a family $\balpha = (\alpha_i)_{i\in I}$
of real numbers. 
A pair $M = ( \Pi, \Gamma )$ is called a \emph{model} on $\R^d$ with homogeneities 
$\balpha$ if there exists an integer $r = r_{\Pi} \in \mathbb{N}_0$
(called the ``order'' of the model) such that:
\begin{enumerate}
\item $\Pi = ( \Pi^i )_{i\in I}$ is a family of germs on $\R^d$ 
that are $\alpha_i$-homogeneous of order~$r$, that is
for any compact $K\subset \R^d$ and any $\bar\lambda \in [1,\infty)$
\begin{equation}\label{eq:homogeneity-Pi}
\begin{gathered}
	| \Pi^i_x(\varphi_x^\lambda) | \lesssim \lambda^{\alpha_i} \\
	\text{uniformly over $x \in K$, $\lambda \in (0, \bar\lambda]$, $\varphi \in \mathscr{B}_{}^{r}$} \,;
\end{gathered}
\end{equation}
		
\item $\Gamma = (\Gamma^{ji}_{xy})$ are real numbers such that, for all $i\in I$
and $x,y \in \R^d$,
	\begin{equation}\label{eq:relation_Pi_Gamma}
		\Pi_y^{i} = \sum\limits_{j \in I} \Pi_{x}^j \, \Gamma_{xy}^{ji} \, .
	\end{equation}
\end{enumerate}

We denote by $\cM^{\balpha}$ 
the class of models with homogeneities $\balpha$
and we set, see \eqref{eq:semi-norm-homogeneity},
	\begin{equation}\label{eq:norm-model}
		\| \Pi \|_{\mathcal{M}^{\balpha}_{K, \bar\lambda}} 
		\coloneqq \sup_{i \in I} 
		\| \Pi^i \|_{\mathcal{G}_{\mathrm{hom}; K, \bar\lambda, r_{\Pi}}^{\alpha_i}} \,.
	\end{equation}
\end{definition}

\begin{remark}[Models in Regularity Structures]\label{remark:comparison_definitions_of_model}
Our definition of a model is more general than Hairer's original definition \cite[Definition~2.17]{Hai14},
because we do not enforce the following requirements:
\begin{enumerate}
		\item\label{it:uno}
		\emph{Group Property}: \  $\Gamma_{x y} \,\Gamma_{y z} = \Gamma_{x z}$ \
		(that is $\sum_{k\in I}\Gamma_{x y}^{jk} \,\Gamma_{y z}^{ki} = \Gamma_{x z}^{ji}$);
		\item\label{it:due} 
		\emph{Triangular Structure}: \
		$\Gamma_{x y}^{i i} = 1, \Gamma_{x y}^{ji} = 0$ if $j\ne i$ and ${\alpha}_j \ge {\alpha}_i$;
		\item\label{it:tre} 
		\emph{Analytic Bound}: \
		$| \Gamma_{x y}^{ji} | \lesssim |y - x|^{{\alpha}_i - {\alpha}_j}$. 
\end{enumerate}
Property \eqref{it:uno} is natural, in view of \eqref{eq:relation_Pi_Gamma}
(indeed, when the $\Pi^i$'s are linearly independent, 
the coefficients $\Gamma^{ji}_{xy}$ are univocally determined by \eqref{eq:relation_Pi_Gamma} 
and \eqref{it:uno} holds automatically).
The role of properties \eqref{it:due} and \eqref{it:tre} is discussed
below, see Remark~\ref{rem:modreg}.

If the analytic bound \eqref{it:tre} holds, we can define the norm
\begin{equation}\label{eq:norm-gamma}
	\| \Gamma \|_{\mathcal{M}_{K}^{\balpha}} 
	\,\coloneqq\, \max_{i, j \in I}
	\, \sup_{x, y \in K} \,
	\frac{| \Gamma_{x y}^{ji}|}{|x - y|^{\alpha_i - \alpha_j}} \,.
\end{equation}
\end{remark}

\begin{remark}[Bounded order and general scales]
Given a model $M = (\Pi, \Gamma)$ of order $r=r_\Pi$,
\emph{each $\Pi^i_x$ is a distribution of order~$r$},
see Remark~\ref{rem:bounded-order},
hence we can define $\sfK \Pi^i_x$ for any regularising kernel
of order $(0,r)$.

We also note that if $(x,y) \mapsto \Gamma^{ji}_{xy}$ is \emph{locally bounded}
(e.g., if the analytic bound \eqref{it:tre} holds),
then it is sufficient to require the
homogeneity property \eqref{eq:homogeneity-Pi} for $\bar\lambda = 1$.
This can be shown as in Remark~\ref{remark:lower_scales}, see \eqref{eq:boundedness}
and the following lines.
\end{remark}

\begin{example}[Polynomial model]\label{ex:polynomial-model}
The simplest choice of a model is obtained taking as basis of germs the usual (normalized) monomials
\begin{equation} \label{eq:bbX}
	\mathbb{X}_x^k \coloneqq \frac{(\cdot - x)^k }{k !} \,, \qquad
	k \in \N_0^d \,.
\end{equation}
More precisely, if we fix any $\ell \in \N_0$, the \emph{polynomial model at level $\ell$} is defined by
\begin{equation*}
	\Pi^\poly_{\le \ell} := \big\{\Pi^k = \bbX^k\big\}_{k\in \N_0^d \colon |k| \le \ell} \,.
\end{equation*}
It is an exercise to check that $\Pi^\poly_{\le \ell}$ is indeed a model,
as in Definition~\ref{def:model}, with
\begin{equation} \label{eq:alphaGammapoly}
	\alpha_k := |k| \,, \qquad
	(\Gamma^\poly)^{lk}_{xy} := \frac{(x-y)^{k-l}}{(k-l)!} \, \ind_{\{l \le k\}} \,,
\end{equation}
where by $l \le k$ we mean $l_1 \le k_1$, $l_2 \le k_2$, \ldots, $l_d \le k_d$.
It is also easy to check that the three additional properties \eqref{it:uno}, \eqref{it:due}
and \eqref{it:tre} described in Remark~\ref{remark:comparison_definitions_of_model}
are satisfied by the polynomial model.
\end{example}

We next define modelled distributions. Consider a germ $F = \langle f,\Pi\rangle$
as in \eqref{eq:germbase}, for some model $\Pi = (\Pi^i)_{i\in I}$.
Applying \eqref{eq:relation_Pi_Gamma}, for any $x, y \in \mathbb{R}^d$ we can write
	\begin{equation}\label{eq:FyFx}
		F_y - F_x = \sum\limits_{i \in I} \bigg\{ \sum\limits_{j \in I} \Gamma_{xy}^{i j} \,
		f^j ( y ) - f^i (x) \bigg\} \, \Pi_x^i\,.
	\end{equation}
In order to ensure that $F$ is coherent, it is natural to require scaling properties of the quantities
in brackets. This leads to the following definition:

\begin{definition}[Modelled distribution]\label{def:modelled_distribution}
Consider a model $M = (\Pi, \Gamma)$ with homogeneities
$\balpha = (\alpha_i)_{i\in I}$ and fix a real number $\gamma > \max\balpha
:= \max \{\alpha_i \colon i\in I\}$.

A measurable function $f = (f^i(x))_{i\in I}: \R^d \to \R^I$ is called \emph{modelled distribution of order $\gamma$} if for any compact set
$K \subset \R^d$ and for any $i\in I$, uniformly for $x,y\in K$,
\begin{equation*} \label{eq:rel-kappa}
	| f^i ( x ) | \lesssim 1 \qquad \text{and} \qquad
	\bigg| \sum\limits_{j \in I} \Gamma_{xy}^{i j} \, f^j ( y ) - f^i (x) \bigg|
	\lesssim |y-x|^{\gamma-\alpha_i}  \,.
\end{equation*}
We denote by $\mathcal{D}^{\gamma} =
\mathcal{D}^{\gamma}_M = \mathcal{D}_{\Gamma, \balpha}^{\gamma}$ 
the space of modelled distributions of order $\gamma$,
relative to a model $M = (\Pi,\Gamma)$ with homogeneities $\balpha$.
This is a vector space with a Fr\'echet structure
through the semi-norms
\begin{equation}\label{eq:definition_modelled_distribution_norm}
		\left\vvvert f \right\vvvert_{\mathcal{D}^{\gamma}_K} 
		=
		\left\vvvert f \right\vvvert_{\mathcal{D}_{\Gamma, \balpha; K}^{\gamma}} 
		\coloneqq \sup\limits_{\substack{x \in K, \, i \in I}} 
		| f^i ( x ) | + \sup\limits_{\substack{x, y \in K ,\, i \in I}} 
		\frac{\Big| \sum\limits_{j \in I}
		\Gamma_{x y}^{i j} \, f^j ( y ) - f^i (x) \Big|}{| y - x |^{\gamma - \alpha_{i}}} \,.
\end{equation}
\end{definition}

\begin{remark}[Consequences of additional properties]\label{rem:modreg}
Our definition of modelled distributions 
mimics Hairer's original one \cite[Definition~3.1]{Hai14}.
The additional properties of the models enforced in \cite{Hai14},
see Remark~\ref{remark:comparison_definitions_of_model},
ensure that \emph{modelled distributions can be truncated}:
given a model $M = (\Pi^i, \Gamma^{ji})_{i,j\in I}$ with 
homogeneities $\balpha = (\alpha_i)_{i\in I}$, 
denoting by $I^{\prime} \coloneqq \lbrace i \in I \colon \alpha_i < \gamma^{\prime} \rbrace$ 
the truncation of the index set $I$ at a given level 
$\gamma^{\prime} > \min\balpha$ (so that $I^\prime \ne \emptyset$), then:
\begin{itemize}

\item the truncated family
$M^{\prime} \coloneqq M|_{I^\prime} = (\Pi^{i}, \Gamma^{ji})_{i, j \in I^{\prime}}$ is also a model,
thanks to property \eqref{it:due} (triangular structure);

\item given a modelled distribution $f = (f^i)_{i\in I}$ of order $\gamma > \gamma^\prime$
relative to $M$,
the truncated function $f' \coloneq f|_{I^\prime} = (f^{i})_{i \in I^{\prime}}$
is a modelled distribution of order $\gamma^\prime$ relative to $M^\prime$,
thanks to property \eqref{it:tre} (analytic bound).

\end{itemize}
Property \eqref{it:due} 
also ensures that
the spaces $D^\gamma$ contain non-zero elements:
if $i_0 \in I$ is such that $\alpha_{i_0} = \min\balpha$, then
$\Gamma^{ji_0} = \ind_{j=i_0}$, 
hence defining $f_{i_0}(x) \equiv 1$ and $f_j(x) \equiv 0$ for $j\ne i_0$
yields $f \not\equiv 0$ with $f = (f_i)_{i\in I} \in \cD^\gamma$
(note that $\langle f, \Pi \rangle = \Pi^{i_0}$).
\end{remark}

For any modelled distribution $f$ relative to a model $M=(\Pi,\Gamma)$,
we now check that the germ $F = \langle f,\Pi\rangle$ in \eqref{eq:germbase} is 
coherent and homogeneous, see \cite[Example~4.10]{CZ20}.

\begin{proposition}[Modelled distributions yield coherent germs]
\label{prop:modelled_distribution_is_coherent_germ}
Let $M=(\Pi, \Gamma)$ be a model with homogeneities $\balpha = (\alpha_i)_{i\in I}$
and set $\bar\alpha := \min \balpha$.

For any modelled distribution $f \in \cD^\gamma_M$ of order~$\gamma$,
the germ $F = \langle f, \Pi \rangle$ in \eqref{eq:germbase}
is $\gamma$-coherent, more precisely it is
$(\bar\alpha,\gamma)$-coherent with homogeneity~$\bar\alpha$:
\begin{equation*}
	f \in \mathcal{D}^\gamma \quad \Longrightarrow \quad
	F = \langle f,\Pi \rangle \in \cG^{\bar\alpha; \bar\alpha,\gamma} \,.
\end{equation*}
Moreover, the map $f \mapsto F = \langle f, \Pi \rangle$ is continuous:
\begin{equation} \label{eq:boundnormF}
	\| F \|_{\mathcal{G}_{K,\bar\lambda}^{\bar\alpha;\bar\alpha, \gamma}}
	\le |I| \, \| \Pi \|_{\mathcal{M}^{\balpha}_{K, \bar\lambda}} 
	\left\vvvert f \right\vvvert_{\mathcal{D}_{K}^{\gamma}}
	\,.
\end{equation}
\end{proposition}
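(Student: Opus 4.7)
The plan is direct: both the homogeneity and coherence bounds for $F = \langle f, \Pi \rangle$ follow by plugging the decomposition \eqref{eq:germbase} (respectively \eqref{eq:FyFx}) into the test function $\varphi_x^\lambda$, applying the triangle inequality, and invoking the bounds satisfied by the coefficients of the modelled distribution~$f$ together with the homogeneity bounds on the basis germs $\Pi^i$. The only subtle point is how to rewrite the resulting estimate in the precise form required by coherence. We will work with order $r = r_\Pi$ and we may restrict to $\bar\lambda = 1$, since Remark~\ref{remark:lower_scales} automatically promotes the bounds to arbitrary $\bar\lambda \in [1,\infty)$.

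\textbf{Homogeneity.} Writing $F_x(\varphi_x^\lambda) = \sum_{i\in I} f^i(x) \, \Pi^i_x(\varphi_x^\lambda)$, the triangle inequality combined with $|f^i(x)| \le \left\vvvert f\right\vvvert_{\mathcal{D}^\gamma_K}$ and the homogeneity of $\Pi^i$ from \eqref{eq:homogeneity-Pi} gives
\begin{equation*}
|F_x(\varphi_x^\lambda)| \le \sum_{i\in I} |f^i(x)| \cdot |\Pi^i_x(\varphi_x^\lambda)| \le |I| \cdot \|\Pi\|_{\mathcal{M}^{\balpha}_{K,\bar\lambda}} \cdot \left\vvvert f\right\vvvert_{\mathcal{D}^\gamma_K} \cdot \max_{i \in I} \lambda^{\alpha_i}.
\end{equation*}
Since $\alpha_i \ge \bar\alpha$ and $\lambda \in (0,1]$, we have $\lambda^{\alpha_i} \le \lambda^{\bar\alpha}$, yielding the required $\bar\alpha$-homogeneity.

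\textbf{Coherence.} Starting from the identity \eqref{eq:FyFx} and testing against $\varphi_x^\lambda$, one obtains
\begin{equation*}
|(F_y - F_x)(\varphi_x^\lambda)| \le \sum_{i\in I} \Big| \sum_{j \in I} \Gamma_{xy}^{ij} f^j(y) - f^i(x) \Big| \cdot |\Pi^i_x(\varphi_x^\lambda)|.
\end{equation*}
Applying the modelled distribution bound from Definition~\ref{def:modelled_distribution} and the homogeneity of $\Pi^i$, this is bounded by $C \sum_i |y-x|^{\gamma - \alpha_i} \lambda^{\alpha_i}$, where $C = |I| \cdot \|\Pi\|_{\mathcal{M}^{\balpha}_{K,\bar\lambda}} \cdot \left\vvvert f\right\vvvert_{\mathcal{D}^\gamma_K}$ and $\gamma - \alpha_i > 0$. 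Since $|y-x| \le |y-x| + \lambda$, we enlarge the first factor to $(|y-x| + \lambda)^{\gamma - \alpha_i}$, and then we rewrite each term in the sum as
\begin{equation*}
\lambda^{\alpha_i} (|y-x|+\lambda)^{\gamma - \alpha_i} = \lambda^{\bar\alpha} (|y-x|+\lambda)^{\gamma - \bar\alpha} \cdot \Big( \frac{\lambda}{|y-x| + \lambda} \Big)^{\alpha_i - \bar\alpha}.
\end{equation*}
The last factor is at most $1$ because $\alpha_i - \bar\alpha \ge 0$ and $\lambda \le |y-x| + \lambda$, which delivers the $(\bar\alpha,\gamma)$-coherence estimate.

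\textbf{Main obstacle and continuity.} There is no real obstacle; the argument is essentially a direct computation. The continuity estimate \eqref{eq:boundnormF} is obtained by tracking constants throughout: the two steps above produce exactly the bound $|I| \cdot \|\Pi\|_{\mathcal{M}^{\balpha}_{K,\bar\lambda}} \cdot \left\vvvert f\right\vvvert_{\mathcal{D}^\gamma_K}$ for both the homogeneity and the coherence semi-norms of $F$, so summing them yields the displayed inequality (up to an irrelevant factor of $2$ absorbed by a symmetric formulation, or obtained by noting that both semi-norms are individually dominated by the right-hand side).
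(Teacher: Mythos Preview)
Your proof is correct and follows essentially the same approach as the paper's: both establish homogeneity directly from \eqref{eq:germbase} and coherence from \eqref{eq:FyFx}, using the triangle inequality together with the modelled distribution bounds and the homogeneity of the $\Pi^i$.

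One small remark on the constant in \eqref{eq:boundnormF}: as you note, your version produces an extra factor of~$2$, since you bound $|f^i(x)|$ and $|\sum_j \Gamma_{xy}^{ij} f^j(y) - f^i(x)|$ each by the full norm $\vvvert f \vvvert_{\mathcal{D}^\gamma_K}$. The paper avoids this by observing that the homogeneity estimate only uses the \emph{first} supremum in \eqref{eq:definition_modelled_distribution_norm} (call it $c_1^f$), while the coherence estimate only uses the \emph{second} supremum ($c_2^f$); since $\vvvert f \vvvert_{\mathcal{D}^\gamma_K} = c_1^f + c_2^f$, summing the two semi-norm bounds yields exactly $|I|\,\|\Pi\|\,\vvvert f \vvvert$ without the factor~$2$.
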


\begin{proof}
By \eqref{eq:semi-norm-homogeneity},
the homogeneity semi-norm of $F$ can be bounded by
\begin{equation} \label{eq:1bound}
	\| F \|_{\mathcal{G}_{\mathrm{hom}; K,\bar\lambda,r}^{\bar{\alpha}}}
	\le |I| \, \sup_{x\in K, i \in I} |f^i(x)| \, 
	\| \Pi^i \|_{\mathcal{G}_{\mathrm{hom}; K,\bar\lambda,r}^{\alpha_i}}
	\le |I| \, c^f_1 \,  \| \Pi \|_{\mathcal{M}^{\balpha}_{K, \bar\lambda}} \,,
\end{equation}
where $c^f_1$ denotes the first term in the r.h.s.\ of \eqref{eq:definition_modelled_distribution_norm},
see \eqref{eq:norm-model}. Turning to coherence,
by \eqref{eq:FyFx} we can bound, arguing as in \cite[Example~4.10]{CZ20},
\begin{equation*}
	|(F_y - F_x)(\varphi_x^\lambda)|
	\le |I| \, c^f_2 \, \| \Pi \|_{\mathcal{M}^{\balpha}_{K, \bar\lambda}}
	\,(\lambda+ |y-x|)^{\gamma-\bar\alpha} \, \lambda^{\bar\alpha} \,,
\end{equation*}
where $c^f_2$ denotes the second term in the r.h.s.\ of \eqref{eq:definition_modelled_distribution_norm}.
Then, by \eqref{eq:semi-norm-coherence}, we obtain
\begin{equation*}
	\| F \|_{\mathcal{G}_{\mathrm{coh}; K,\bar\lambda,r}^{\bar{\alpha},\gamma}}
	\le |I| \, c^f_2 \, \| \Pi \|_{\mathcal{M}^{\balpha}_{K, \bar\lambda}} \,,
\end{equation*}
which together with \eqref{eq:1bound} yields \eqref{eq:boundnormF}.
\end{proof}

\begin{example}[Polynomial modelled distributions]\label{ex:polynomial-modelled-distributions}
Let $f: \R^d \to \R$ be a function of class $C^\ell$, 
for some $\ell \in \N_0$. Its Taylor polynomial of order $\ell$ based at~$x$ is
\begin{equation*}
	F_x(\cdot) := \sum_{|k| \le \ell} \partial^k f(x) \, \bbX_x^k(\cdot) \,,
\end{equation*}
where $\bbX_x^k$ are normalized monomials, see \eqref{eq:bbX}.
The germ $F = (F_x)_{x\in\R^d}$ 
can be expressed as $F = \langle \boldsymbol{f}, \Pi^\poly_{\le \ell}\rangle$, see \eqref{eq:germbase},
where $\Pi^\poly_{\le \ell}$ is the polynomial model 
in Example~\ref{ex:polynomial-model} and
$\boldsymbol{f} = (\boldsymbol{f}^k(x))_{|k|\le \ell, x\in\R^d}$  is defined by 
$\boldsymbol{f}^k(x) := \partial^k f (x)$.

If  
$f$ is H\"older continuous with exponent 
$\gamma > 0$ and $\ell = \lfloor \gamma \rfloor$,
it is an exercise
to show that $\boldsymbol{f}$ is a modelled distribution
of order~$\gamma$, see e.g.\ \cite[Example~4.11]{CZ20}.
In particular, by Proposition~\ref{prop:modelled_distribution_is_coherent_germ},
the germ $F = (F_x)_{x\in\R^d} = \langle \boldsymbol{f}, 
\Pi^\poly_{\le \ell}\rangle$ of Taylor polynomials of~$f$ is $\gamma$-coherent
(more precisely: $(0,\gamma)$-coherent with homogeneity~$0$).
\end{example}

\subsection{Schauder estimates for modelled distributions}

Given a model $M=( \Pi, \Gamma)$ 
and a modelled distribution $f \in \mathcal{D}^\gamma_M$,
by Proposition~\ref{prop:modelled_distribution_is_coherent_germ} we have that
\begin{equation*}
	\text{$F = \langle f, \Pi \rangle$ in \eqref{eq:germbase} is $\gamma$-coherent} \,.
\end{equation*}
If we fix a $\gamma$-reconstruction $\cR^\gamma F$
(which is unique if $\gamma > 0$) and a $\beta$-regularising kernel $\sfK$,
the Schauder estimates
in Theorem~\ref{thm:Schauder_for_germs} yield that
\begin{equation*}
	\text{$\cK^{\gamma,\beta} F$ in \eqref{eq:definition_of_KF} is $(\gamma+\beta)$-coherent}
	\qquad \text{and} \qquad
	\cR^{\gamma+\beta}(\cK^{\gamma,\beta} F) = \sfK (\cR^\gamma F) \,.
\end{equation*}
Since the germ $F = \langle f, \Pi \rangle$ comes from 
a modelled distribution $f$, a natural question arises:
\emph{do we have $\cK^{\gamma,\beta} F = \langle \hat f, \hat \Pi \rangle$
	for some model $\hat\Pi$ and modelled distribution $\hat f \,$?}

Our next main result
shows that the answer is positive: see Theorem~\ref{thm:multi_level_schauder_estimate} below, 
which generalizes Hairer's \emph{multilevel Schauder estimates}
\cite[Theorem~5.12]{Hai14} as well as Hairer's
\emph{extension theorem} \cite[Theorem~5.14]{Hai14}.
We first need to define the new model $(\hat\Pi,\hat\Gamma)$
and the new modelled distribution $\hat f$.

\paragraph*{New model $(\hat\Pi,\hat\Gamma)$} 
The new model is labelled by a new set $\hat{I}$, obtained by adding
to $I$ all multi-indexes of homogeneity up to $\gamma+\beta$:
\begin{equation} \label{eq:hatI}
	\hat{I} := {I} \sqcup \poly(\gamma+\beta) \qquad
	\text{where} \qquad 
	\poly(t) := \{ k \in \N_0^d: \ |k| < t \} 
\end{equation}
where $\sqcup$ denotes the disjoint union and we agree that $\poly(t) = \emptyset$ for $t \le 0$.

The germs $\hat\Pi = (\hat\Pi_a)_{a\in \hat{I}}$ in the new model are defined by
\begin{equation} \label{eq:hatPi}
	\hat\Pi^a_x := \begin{cases}
	\rule{0pt}{1.3em}
	\displaystyle\sfK \Pi^i_x - \sum_{k \in \poly(\alpha_a+\beta)} 
	D^k (\sfK \Pi^i_x)(x) \, \bbX^k_x
	& \text{ if } a=i \in {I} \,, \\
	\bbX^k_x & \text{ if } a=k \in \poly(\gamma+\beta) \,,
	\end{cases} 
\end{equation}
with homogeneities $\bhatalpha = (\hat\alpha_a)_{a\in \hat{I}}$ given by
\begin{equation} \label{eq:hatalpha}
	\hat\alpha_a := \begin{cases}
	\alpha_i + \beta & \text{ if } a=i \in {I} \,, \\
	\rule[-.5em]{0pt}{1.8em}|k| & \text{ if } a=k \in \poly(\gamma+\beta)  \,.
	\end{cases}
\end{equation}
We will show that $\hat\Pi_a$ is well defined, 
thanks to Proposition~\ref{prop:well_posedness_convolution} and Lemma~\ref{lemma:pointwise_derivative},
and it satisfies the homogeneity condition \eqref{eq:homogeneity-Pi}
with exponent $\hat\alpha_a$.

We next define the coefficients $\hat\Gamma = (\hat\Gamma^{ba}_{xy})_{b,a\in \hat{I}}$.
Using labels $i,j \in {I}$
and $k,l \in \poly(\gamma+\beta)$ for clarity, we have the triangular structure
\begin{equation} \label{eq:hatGamma0}
	\hat \Gamma^{ba}_{xy} =
	\left( \begin{matrix}
	\Gamma^{ji}_{xy} & 0 \\
	\cdots & (\Gamma^\poly)^{lk}_{xy}
	\end{matrix}\right)
	= \begin{cases}
	\Gamma^{ji}_{xy} & \text{if } (b,a)=(j,i) \in {I} \times {I} \,, \\
	0 & \text{if } (b,a) = (j,k) \in {I} \times \poly(\gamma+\beta) \,, \\
	\cdots & \text{if } (b,a) = (l,i) \in \poly(\gamma+\beta) \times {I} \,, \\
	(\Gamma^\poly)^{lk}_{xy} & \text{if } (b,a) = (l,k) \in \poly(\gamma+\beta) 
	\times \poly(\gamma+\beta),
	\end{cases}
\end{equation}
where $\Gamma$ are the coefficients of the original model while $\Gamma^\poly$
are those of
the polynomial model, see \eqref{eq:alphaGammapoly}.
It only remains to define $\cdots = \hat\Gamma^{li}_{xy}$
for $l\in \poly(\gamma+\beta)$ and $i\in {I}$:
\begin{equation}\label{eq:hatGamma}
	\hat\Gamma^{li}_{xy} := 
	\sum_{\substack{j\in I \colon\\ \alpha_j+\beta > |l|}}  
	D^l (\sfK \Pi^j_x)(x) \, \Gamma^{ji}_{xy}
	\ - 
	\sum_{k\in\poly(\alpha_i+\beta)} 
	(\Gamma^\poly)^{lk}_{xy}  \, D^k (\sfK \Pi^i_y)(y) \,.
\end{equation}
(The second sum is restricted to $k \ge l$,
because $(\Gamma^\poly)^{lk}_{xy} = 0$ otherwise, see \eqref{eq:alphaGammapoly}.
Also note that $\hat\Gamma^{li}_{xy} \ne 0$ only for $|l| \le
\max\balpha+\beta = \max_{j\in I} \alpha_j + \beta$.)

We will check by direct computation that condition \eqref{eq:relation_Pi_Gamma} 
in the definition of a model
is satisfied by $\hat\Pi$ and $\hat\Gamma$, see Section~\ref{sec:hatmodel}.

\paragraph*{New modelled distribution $\hat{f}$}

Given a
modelled distribution $f = (f^i(x))_{i \in I}$ relative to the original
model $(\Pi,\Gamma)$, we define for $a \in \hat{I} = {I} \sqcup \poly(\gamma+\beta)$
\begin{equation}\label{eq:hatf}
	\hat f^a(x) :=
	\begin{cases}
	\rule{0pt}{1.1em}f^i(x) & \text{ if } a=i \in {I} \,, \\
	\rule{0pt}{3.8em}\displaystyle\begin{aligned}
	& \!\!\sum_{\substack{j \in I \colon \\
	\alpha_j + \beta > |k|}} f^j(x) \, D^k (\sfK \Pi^j_x) (x)  \\
	&\rule{0pt}{1.3em} \ \ -
	D^k (\sfK \{\langle f,\Pi\rangle_x - \cR^\gamma\langle f,\Pi\rangle\}) (x) 
	\end{aligned}
	& \text{ if } a=k \in \poly(\gamma+\beta) \,.
	\end{cases}
\end{equation}
(We point out that the three lines in the r.h.s.\ of \eqref{eq:hatf} correspond precisely
to the three terms $\cI, \cJ, \cN$ in the setting of Regularity Structures,
see \cite[(5.15)]{Hai14}.)

We will prove that $\hat{f}$ is indeed a modelled
distribution of order $\gamma+\beta$ relative to the new model
$(\hat\Pi, \hat\Gamma)$, see Section~\ref{sec:proof_hat_f_is_modelled_distribution}.

\begin{remark}
For $t \in \R$, we define the restriction $\cQ_{\le t} f$
of a modelled distribution $f = (f^i(x))_{i\in I}$ where we only keep the components
$f^i(x)$ with $\alpha_i \le t$, that is
\begin{equation*}
	\cQ_{\le t} f (x) := \big( f^i(x) \, \ind_{\{\alpha_i \le t\}} \big)_{i\in I} \,.
\end{equation*}
We can then rewrite \eqref{eq:hatf} more compactly as follows:
\begin{equation*}
	\hat f^a(x) :=
	\begin{cases}
	\rule{0pt}{1.1em}f^i(x) & \text{ if } a=i \in {I} \,, \\
	\rule{0pt}{1.8em}
	\displaystyle{D^k \big(\sfK \big\{\cR\langle f,\Pi\rangle - 
	\langle {\mathcal Q}_{\le |k|-\beta}f,\Pi\rangle_x
	\big\}\big) (x)}
	& \text{ if } a=k \in \poly(\gamma+\beta) \,.
	\end{cases}
\end{equation*}
\end{remark}

\paragraph*{Compatibility condition}

Before stating our multilevel Schauder estimates, we state
a technical condition on the model $M = ( \Pi, \Gamma)$ and the
kernel $\sfK$.

\begin{assumption}[Compatibility]\label{ass:compatibility}
A model $M = ( \Pi, \Gamma)$ with homogeneities
$\balpha = (\alpha_i )_{i \in I}$ and a $\beta$-regularising kernel $\sfK$
are called \emph{compatible} if 
\begin{equation}\label{eq:techn}
	( D^k ( \sfK \Pi_x^{i} ) )_{x \in \mathbb{R}^d} \quad \text{ is a $0$-homogeneous germ whenever} 
	\quad |k| = \alpha_i + \beta \in \N_0 \, .
\end{equation}
We denote for $K \subset \mathbb{R}^d$ and $\bar{\lambda} > 0$
	\begin{equation} \label{eq:technorm}
	[\sfK\Pi]_{K, \bar{\lambda}}
	\coloneqq \sum\limits_{i \in I} \ \sum\limits_{k\in\N_0^d\colon | k | = \alpha_i + \beta} \,
	\| D^k ( \sfK \Pi_x^{i} ) \|_{\mathcal{G}_{\mathrm{hom}; K, \bar{\lambda}, r_{\Pi}}^{0}} \,,
	\end{equation}
where we agree that $[\sfK\Pi]_{K, \bar{\lambda}} := 0$
if the sum is empty (i.e.\ $\alpha_i + \beta \not\in \N_0$ for all $i\in I$).
\end{assumption}

\begin{remark}[Compatibility is mild]\label{rem:compatibility}
Condition \eqref{eq:techn}
is \emph{trivially satisfied if $\alpha_i + \beta$ is non-integer} for any $i\in I$.
Even when some $\alpha_i + \beta$ is an integer, one can ensure
compatibility by slightly decreasing $\beta>0$
to $\beta' \in (0,\beta)$ so that all $\alpha_i + \beta'$ are non integer
(note that a $\beta$-regularising kernel $\sfK$ is also $\beta'$-regularising).

We also note that condition \eqref{eq:techn} is fulfilled when
$\sfK \Pi_x^{i}$ is a polynomial of degree $< \alpha_i + \beta$,
because $D^k(\sfK \Pi_x^{i}) \equiv 0$ in this case.
In particular, \emph{if the kernel
$\sfK$ preserves polynomials 
(see  Assumption~\ref{assumption:preserving_polynomial_annihilation}),
condition \eqref{eq:techn} only applies to non-polynomial germs $\Pi^i$}.
Thus, in practice, the assumption of compatibility is often void.
\end{remark}

\begin{remark}[Strong vs.\ weak homogeneity]
Since by assumption each germ $\Pi^i$ is $\alpha_i$-homogeneous,
\Cref{thm:convolution_germs_in_G_checkG} below implies that 
when $\alpha_i + \beta = | k |$ the germ
$( D^k ( \sfK \Pi_x^{i} ) )_{x \in \mathbb{R}^d}$ is always \emph{weakly} $0$-homogeneous, 
see \Cref{def:weak_homogeneity_coherence} below.
This further shows that \Cref{ass:compatibility} is not very demanding.
\end{remark}

\paragraph*{Multilevel Schauder estimates}

We can finally state our second main result.
Recall the order $r_\Pi \in \N_0$ of a model $M = (\Pi,\Gamma)$,
see Definition~\ref{def:model}.

\begin{theorem}[Multilevel Schauder estimates] \label{thm:multi_level_schauder_estimate}
Let $M = ( \Pi, \Gamma)$ be a model with homogeneities
$\balpha = (\alpha_i )_{i \in I}$ and order $r=r_\Pi \in \N_0$.
Fix $\gamma > \max\balpha$.

Let $f \in \cD^\gamma_M$ 
be a modelled distribution of order $\gamma$ relative to $M = (\Pi,\Gamma)$, so that
\begin{equation*}
	\langle f, \Pi \rangle := \bigg( \sum_{i \in I} f^i ( x ) \, \Pi_x^i \bigg)_{x \in \R^d}
	\quad \text{is a $\gamma$-coherent germ} \,,
\end{equation*}
and fix a $\gamma$-reconstruction $\cR^\gamma\langle f, \Pi \rangle$ 
(which is unique if $\gamma > 0$).

Fix $\beta > 0$ with $\gamma+\beta \not\in \N_0$ and let $\sfK$ be a $\beta$-regularising kernel
of order $(m,r)$ large enough:
\begin{equation} \label{eq:multi-level-cond}
		m > \gamma + \beta, \qquad r \geq r_{\Pi} \,,
\end{equation}
such that $M=(\Pi,\Gamma)$ and $\sfK$ are compatible
(see Assumption~\ref{ass:compatibility} and Remark~\ref{rem:compatibility}).

\smallskip

Then we can define:
\begin{itemize}
\item  a new model $\hat M = (\hat\Pi, \hat\Gamma)$, 
see \eqref{eq:hatPi} and \eqref{eq:hatGamma0}-\eqref{eq:hatGamma},
indexed by $\hat{I}$ in \eqref{eq:hatI}
with homogeneities $\bhatalpha = (\hat\alpha_a)_{a \in \hat{I}}$
in \eqref{eq:hatalpha} and with order $r_{\hat \Pi}=r_\Pi$;

\item a new modelled distribution $\hat f \in \cD^{\gamma+\beta}_{\hat M}$ 
of order $\gamma + \beta$ relative to
$\hat M = (\hat\Pi, \hat\Gamma)$, see \eqref{eq:hatf}, so that
\begin{equation*}
	\langle \hat f, \hat \Pi \rangle := \bigg( \sum_{a \in \hat{I}} \hat f^a ( x ) \, \hat\Pi_x^a 
	\bigg)_{x \in \R^d}
	\quad \text{is a $(\gamma+\beta)$-coherent germ} \,;
\end{equation*}
\end{itemize}
in such a way that the following equality holds, with $\cK^{\gamma,\beta}$ 
defined by \eqref{eq:definition_of_KF}:
\begin{equation} \label{eq:prophatf}
	\langle \hat f, \hat \Pi \rangle
	= \cK^{\gamma,\beta}\langle f, \Pi \rangle  \,.
\end{equation}

In particular, by Theorem~\ref{thm:Schauder_for_germs}, we have
(with some abuse of notation)
\begin{equation} \label{eq:cRhat}
	\cR^{\gamma+\beta} \langle \hat f, \hat \Pi \rangle = \sfK\, (\cR^\gamma 
	\langle f,  \Pi \rangle) \,,
\end{equation}
i.e.\ $\sfK \,(\mathcal{R}^\gamma \langle f,  \Pi \rangle )$ is a $(\gamma+\beta)$-reconstruction 
of $\langle \hat f, \hat \Pi \rangle$
(which is unique if $\gamma+\beta > 0$).
\end{theorem}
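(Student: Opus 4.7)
My plan is to break the theorem into four steps: (a) check that $\hat\Pi$ and $\hat f$ are well defined, (b) verify that $(\hat\Pi,\hat\Gamma)$ is a model of order $r_\Pi$ with homogeneities $\bhatalpha$, (c) show $\hat f \in \cD^{\gamma+\beta}_{\hat M}$, and (d) verify the identity $\langle\hat f,\hat\Pi\rangle = \cK^{\gamma,\beta}\langle f,\Pi\rangle$. The reconstruction identity \eqref{eq:cRhat} will then follow immediately from Theorem~\ref{thm:Schauder_for_germs}.

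Steps (a) and (b) are largely routine. The integral $\sfK\Pi^i_x$ is well defined by Proposition~\ref{prop:well_posedness_convolution}, thanks to the hypothesis $r \ge r_\Pi$, and the pointwise derivatives $D^k(\sfK\Pi^i_x)(x)$ for $|k| < \alpha_i+\beta$ are defined by Lemma~\ref{lemma:pointwise_derivative}: the weak homogeneity hypothesis \eqref{eq:local-hom} with exponent $\alpha_i+\beta$ is provided by Theorem~\ref{thm:convolution_germs_in_G_checkG} applied to the $\alpha_i$-homogeneous germ $\Pi^i$, while Assumption~\ref{ass:compatibility} handles the borderline integer case $|k| = \alpha_i+\beta$ via the last clause of Lemma~\ref{lemma:pointwise_derivative}. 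The homogeneity of $\hat\Pi^a$ with exponent $\hat\alpha_a$ is standard for $a \in \poly(\gamma+\beta)$ and follows from the strong homogeneity bound \eqref{eq:gbound} for $a = i \in I$, since $\hat\Pi^i_x = \sfK\Pi^i_x - \cT^{\alpha_i+\beta}_x(\sfK\Pi^i_x)$. The reexpansion identity $\hat\Pi^a_y = \sum_b \hat\Gamma^{ba}_{xy}\hat\Pi^b_x$ reduces for $a \in \poly$ to the standard monomial identity carried by $\Gamma^\poly$; for $a = i \in I$ I substitute $\Pi^i_y = \sum_j \Gamma^{ji}_{xy}\Pi^j_x$, apply linearity of $\sfK$ and of the pointwise derivatives, and reexpand the Taylor monomials $\bbX^l_y$ around $x$ — the formula \eqref{eq:hatGamma} is exactly what makes the two sides agree.

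Step (c) is the main technical core and splits into two cases. For $a = i \in I$, since $\hat\Gamma^{ij}_{xy} = \Gamma^{ij}_{xy}$ and $\hat\Gamma^{ib}_{xy} = 0$ for $b \in \poly$, the required bound with exponent $\gamma+\beta - \hat\alpha_i = \gamma - \alpha_i$ is nothing but the original modelled-distribution bound on $f$. For $a = k \in \poly(\gamma+\beta)$ the analysis is more delicate: I use the compact reformulation
\[
\hat f^k(x) \,=\, D^k\bigl(\sfK\bigl\{\cR^\gamma\langle f,\Pi\rangle - \langle \cQ_{\le |k|-\beta}f,\Pi\rangle_x\bigr\}\bigr)(x),
\]
and show that the auxiliary germ $x\mapsto \cR^\gamma\langle f,\Pi\rangle - \langle \cQ_{\le |k|-\beta}f,\Pi\rangle_x$ is coherent with local homogeneity at least $\gamma+\beta$ at each base point, exploiting both the reconstruction bound \eqref{eq:reconstruction_bound} and the modelled-distribution bound on $f$. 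Applying Lemma~\ref{lemma:pointwise_derivative} together with the Schauder estimate \eqref{eq:cont-schauder1} to this remainder then produces the required H\"older exponent $\gamma+\beta-|k|$ in $|y-x|$. The triangular form of $\hat\Gamma$ and the explicit expression \eqref{eq:hatGamma} for $\hat\Gamma^{ki}_{xy}$ are precisely what makes the algebraic combination $\sum_b \hat\Gamma^{kb}_{xy}\hat f^b(y) - \hat f^k(x)$ telescope into a remainder controlled by this pointwise derivative. I expect the main obstacle of the proof to lie in carefully organising this cancellation while simultaneously tracking the polynomial contributions at each integer level $|k|$.

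Step (d) is then a direct algebraic cancellation. Expanding $\langle\hat f,\hat\Pi\rangle_x$ using \eqref{eq:hatPi} and \eqref{eq:hatf}, the two contributions
\[
-\sum_{\substack{i\in I \\ k\in\poly(\alpha_i+\beta)}} f^i(x)\,D^k(\sfK\Pi^i_x)(x)\,\bbX^k_x
\quad\text{and}\quad
\sum_{\substack{k\in\poly(\gamma+\beta) \\ j\in I,\,\alpha_j+\beta>|k|}} f^j(x)\,D^k(\sfK\Pi^j_x)(x)\,\bbX^k_x
\]
cancel identically, since they sum over the same index set $\{(i,k) : |k| < \alpha_i+\beta\}$. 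What remains is exactly
\[
\sfK\langle f,\Pi\rangle_x \ - \ \sum_{k\in\poly(\gamma+\beta)} D^k\bigl(\sfK\{\langle f,\Pi\rangle_x - \cR^\gamma\langle f,\Pi\rangle\}\bigr)(x)\,\bbX^k_x \ = \ (\cK^{\gamma,\beta}\langle f,\Pi\rangle)_x
\]
by linearity of $\sfK$ in the first term and by the definition \eqref{eq:definition_of_KF} in the second, concluding the proof.
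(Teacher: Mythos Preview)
Your outline for steps (a), (b) and (d) is correct and matches the paper closely; in particular your cancellation argument in (d) is exactly the computation in Section~\ref{sec:hatf}, and your treatment of the reexpansion identity and homogeneity of $\hat\Pi$ is the content of Section~\ref{sec:hatmodel}.

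The gap is in step (c), for $a = l \in \poly(\gamma+\beta)$. You propose to obtain the H\"older bound $|y-x|^{\gamma+\beta-|l|}$ by invoking Lemma~\ref{lemma:pointwise_derivative} and the germ-level Schauder estimate \eqref{eq:cont-schauder1}, but neither of these gives what is needed. Lemma~\ref{lemma:pointwise_derivative} is a statement at a \emph{single} base point and says nothing about how $D^k G_x(x)$ varies with~$x$; and \eqref{eq:cont-schauder1} bounds the germ $\cK^{\gamma,\beta}F - \sfK\cR^\gamma F$ as a whole, not the individual polynomial coefficients $\hat f^l$. After the algebraic simplification (which in the paper is \eqref{eq:deco-hat-Gamma-f}) the quantity to control contains terms of the form
\[
D^l G_x(x) \;-\; \sum_{k\in\poly(\gamma+\beta)} (\Gamma^\poly)^{lk}_{xy}\, D^k G_y(y),
\]
i.e.\ a comparison of pointwise derivatives of $G$ at \emph{two different} base points $x$ and $y$, and your sketch does not explain how to extract the exponent $\gamma+\beta-|l|$ from this.

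The paper's method is concrete and worth knowing: replace every pointwise derivative by its multi-scale representation \eqref{eq:F_x_as_sum_2} (which comes from the \emph{proof} of Lemma~\ref{lemma:pointwise_derivative}), obtaining a dyadic sum $\Delta^{l;0}_{xy}(\eta)+\sum_{n\ge 0}\Delta^{l;n}_{xy}(\varphi)$; then split at the critical scale $N_{xy}\sim \log_2 |y-x|^{-1}$. For $n\ge N_{xy}$ the weak homogeneity of $\sfK\Pi^j$ and of $G$ gives geometric decay. For $n\le N_{xy}$ one first uses the reexpansion $\sum_j \sfK\Pi^j_x\,\Gamma^{ji}_{xy}=\sfK\Pi^i_y$ to turn the expression into a Taylor remainder of $G_y$ tested at $x$, and then applies the elementary Lemma~\ref{lemma:test-functions_Taylor_expansions} to gain the missing factor $(2^n|x-y|)^{\lceil\gamma+\beta\rceil^+}$. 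The compatibility Assumption~\ref{ass:compatibility} enters precisely here, to handle the indices $j$ with $\alpha_j+\beta=|l|$ for which the geometric sum would otherwise be borderline. Your acknowledged ``main obstacle'' is real, and this two-regime dyadic argument is the missing idea.
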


The proof of Theorem~\ref{thm:multi_level_schauder_estimate} is given in Section~\ref{sec:multilevel}, and we proceed as follows.
	\begin{itemize}
		\item In Section~\ref{sec:hatmodel}, we prove that $\hat M = (\hat\Pi, \hat\Gamma)$ 
		is indeed a model: we first check the condition of reexpansion \eqref{eq:relation_Pi_Gamma} 
		for $\hat\Pi$ and $\hat\Gamma$ by a direct computation; then we show that each $\hat\Pi^i_x$ satisfies the homogeneity relation \eqref{eq:homogeneity-Pi} with exponent $\hat{\alpha}_i$.
		
		\item In Section~\ref{sec:hatf}, we prove \eqref{eq:prophatf} by a simple calculation;
		then relation \eqref{eq:cRhat} follows as an immediate consequence of
		Theorem~\ref{thm:Schauder_for_germs}, see \eqref{eq:reco-schauder}.

		\item In Section~\ref{sec:proof_hat_f_is_modelled_distribution}, we prove that $\hat f$ is 
		indeed a modelled distribution, and we also prove a continuity estimate,
		see \eqref{eq:conhatf} below.
	\end{itemize}

\begin{remark}
We can rephrase Theorem~\ref{thm:multi_level_schauder_estimate} 
by stating that the map $\cK^{\gamma,\beta}$ acting on germs can be lifted to a map $f \mapsto \hat f$  acting on modelled
distributions, defined by \eqref{eq:hatf},
so that the following diagram commutes:
\begin{equation*}
	\begin{tikzcd}
		\cD^{\gamma}_M
		\arrow{r}{\hat{\cdot}} 
		\arrow[swap]{d}{\langle \,\cdot\,, \Pi \rangle\; } & 
		\cD^{\gamma+\beta}_{\hat M}
		\arrow{d}{\;\langle \,\cdot\,, \hat\Pi \rangle} \\%
		\mathcal{G}^{\gamma} 
		\arrow{r}{\cK^{\gamma,\beta}}
		& \mathcal{G}^{\gamma + \beta}
	\end{tikzcd}	
\end{equation*}
where we set $\mathcal{G}^{\gamma}  := \mathcal{G}^{\bar{\alpha}; \bar{\alpha},  \gamma} $
and $\mathcal{G}^{\gamma + \beta} :=
\mathcal{G}^{(\bar{\alpha} + \beta) \wedge 0; (\bar\alpha + \beta) \wedge 0, \gamma + \beta}$
for short.
\end{remark}

\subsection{Continuity and further properties}

Note that the maps $\Pi \mapsto \hat{\Pi}$ and $f \mapsto \hat f = \hat \cK^{\gamma,\beta} f$ are 
affine. We will prove that they are also continuous: recalling
\eqref{eq:norm-model} and \eqref{eq:definition_modelled_distribution_norm},
as well as \eqref{eq:technorm}, we have
\begin{align}
	 \| \hat{\Pi} \|_{\mathcal{M}_{K}^{\hat\balpha}} & \lesssim  \| \Pi \|_{\mathcal{M}_{K^{\prime}}^{\balpha}} + [\sfK\Pi]_{K^{\prime}} \, , \label{eq:conhatpi} \\
		\vvvert \hat{f} \vvvert_{\mathcal{D}_{K}^{\gamma + \beta}} 
		& \lesssim \big( \| \Pi \|_{\mathcal{M}_{K^{\prime}}^{\balpha}} 
		+ [\sfK\Pi]_{K^{\prime}} \big)
		\vvvert f \vvvert_{\mathcal{D}_{K^{\prime}}^{\gamma}} \, ,  \label{eq:conhatf}
\end{align}
for some  compact $K' \supseteq K$,
e.g.\ we can take $K' := K \oplus B(0,2)$ as the 2-enlargement of $K$.
A similar continuity bound holds
for the map $\Gamma \mapsto \hat\Gamma$, see \eqref{eq:conhatgamma} below, 
provided $\Gamma$ satisfies
the analytical bound \eqref{it:tre}
in Remark~\ref{remark:comparison_definitions_of_model}.

We now discuss \emph{enhanced} continuity estimates. Observe that
\emph{the space $\cM^{\balpha}$ of models is not a vector space},
despite the semi-norm like notation $\| \cdot \|_{\mathcal{M}^{\balpha}_{K, \bar\lambda}}$,
see \eqref{eq:norm-model}, because the relation \eqref{eq:relation_Pi_Gamma} between 
$\Pi$ and $\Gamma$ is \emph{non-linear}.
Nevertheless, given two models $M_1 = (\Pi_1, \Gamma_1)$ and $M_2 = (\Pi_2, \Gamma_2)$
(with the same homogeneities $\balpha = (\alpha_i)_{i\in I}$
and the same value of $r = r_{\Pi_1} = r_{\Pi_2}$), we can consider the distance
\begin{equation*}
	\left\| \Pi_1 - \Pi_2 \right\|_{\mathcal{M}_{K}^{\balpha}} 
\end{equation*}
which is well defined by \eqref{eq:norm-model} (even though $\Pi_1 - \Pi_2$ needs not be a model).

We next compare two modelled distributions $f_1 \in \cD^\gamma_{M_1}$ 
and $f_2 \in \cD^\gamma_{M_2}$
of the same order $\gamma$, but
relative to \emph{different models} $M_1 = (\Pi_1, \Gamma_1)$ and $M_2 = (\Pi_2, \Gamma_2)$
(with the same homogeneities $\balpha = (\alpha_i)_{i\in I}$ and $r = r_{\Pi_1} = r_{\Pi_2}$),
as in \cite[Remark~3.6]{Hai14}. To this purpose, we define 
for compacts $K \subset \mathbb{R}^d$ the distance
\begin{equation*}
\begin{split}
	& \left\vvvert f_1 ; f_2 \right\vvvert_{\mathcal{D}_{K}^{\gamma}}
	= \left\vvvert f_1 ; f_2 \right\vvvert_{\mathcal{D}_{M_1, M_2; K}^{\gamma}}
	= 
	\left\vvvert f_1 ; f_2 \right\vvvert_{\mathcal{D}_{\Gamma_1, \Gamma_2, \balpha; K}^{\gamma}}  \\
	&\qquad
	\coloneqq \sup\limits_{\substack{x \in K, \, i\in I}} 
	\left| f_1^i ( x ) - f_2^i ( x ) \right| \\
	& \qquad\qquad + \sup\limits_{\substack{x, y \in K , \, i \in I}} 
	\frac{\Big| \sum\limits_{j \in I} \big\{ (\Gamma_1)_{x y}^{i j}\, f_1^j (y) 
	- (\Gamma_2)_{x y}^{i j} \, f_2^j (y) \big\} 
	- \left( f_1^i ( x ) - f_2^i ( x ) \right) \Big|}{|x-y|^{\gamma-{\alpha}_i}} .
\end{split}
\end{equation*}
We can improve the bound \eqref{eq:conhatf} via a \emph{local Lipschitz estimate}, 
which shows that
the distance between $\hat f_1$ and $\hat f_2$
is controlled by the distances between $f_1$ and $f_2$ and between the models
$\Pi_1$ and $\Pi_2$,
if $\| \Pi_i \|_{\mathcal{M}_{K^{\prime}}^{\balpha}}$
and $\vvvert f_i \vvvert_{\mathcal{D}_{K^{\prime}}^{\gamma}}$ are uniformly bounded.

\begin{proposition}[Enhanced continuity]\label{prop:enhanced-continuity}
Given any two compatible models $\Pi_1, \Pi_2$ such that $[\sfK\Pi_1]_{K^{\prime}}
=[\sfK\Pi_2]_{K^{\prime}} = 0$,  see
\Cref{ass:compatibility}, and given any
corresponding modelled distributions $f_1, f_2$,
the following bound holds:

\begin{equation*}
		\vvvert \hat{f_1} ; \hat{f_2} \vvvert_{\mathcal{D}_{K}^{\gamma + \beta}} 
		\lesssim \| \Pi_1 \|_{\mathcal{M}_{K^{\prime}}^{\balpha}} 
		\vvvert f_1 ; f_2 \vvvert_{\mathcal{D}_{K^{\prime}}^{\gamma + \beta}} 
		+ \left\| \Pi_1 - \Pi_2 \right\|_{\mathcal{M}_{K^{\prime}}^{\balpha}} 
		\vvvert f_2 \vvvert_{\mathcal{D}_{K^{\prime}}^{\gamma}} ,
	\end{equation*}
for some enlarged compact $K^{\prime} \supset K$ 
(e.g.\ we can take $K^\prime = K \oplus B(0,2)$).

\end{proposition}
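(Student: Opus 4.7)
The plan is to reduce to the unenhanced continuity bound \eqref{eq:conhatf} by exploiting the \emph{joint multilinearity} of the map $(\Pi, \Gamma, f) \mapsto (\hat\Pi, \hat\Gamma, \hat f)$. Direct inspection of \eqref{eq:hatPi}, \eqref{eq:hatGamma0}--\eqref{eq:hatGamma} and \eqref{eq:hatf} shows that $\hat\Pi$ is linear in $\Pi$, that $\hat\Gamma$ is bilinear in $(\Pi, \Gamma)$, and that $\hat f$ is bilinear in $(\Pi, f)$; the latter relies crucially on the fact that the reconstruction map can be chosen linearly, as allowed by Theorem~\ref{thm:reconstruction}. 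The key identity is then the elementary telescoping $A_1 B_1 - A_2 B_2 = (A_1 - A_2) B_1 + A_2 (B_1 - B_2)$, which rewrites $\hat f_1 - \hat f_2$ and the corresponding $\hat\Gamma$-difference as sums of terms each of which is linear in precisely one of $(\Pi_1 - \Pi_2, \Gamma_1 - \Gamma_2, f_1 - f_2)$.

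Concretely, I would split the semi-norm $\vvvert \hat f_1; \hat f_2\vvvert_{\mathcal{D}^{\gamma+\beta}_K}$ according to $\hat I = I \sqcup \poly(\gamma+\beta)$. For $a = i \in I$ the bound is immediate, since $(\hat f_\ell)^i = f_\ell^i$, $(\hat\Gamma_\ell)^{ib}$ equals $(\Gamma_\ell)^{ib}$ for $b \in I$ and vanishes for $b \in \poly(\gamma+\beta)$, and $\hat\alpha_i - (\gamma+\beta) = \alpha_i - \gamma$; this block is therefore controlled directly by $\vvvert f_1; f_2\vvvert_{\mathcal{D}^\gamma_{K'}}$. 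For $a = k \in \poly(\gamma+\beta)$, I would run the same computations used in Section~\ref{sec:proof_hat_f_is_modelled_distribution} to prove \eqref{eq:conhatf}, but on each telescoped piece separately: the piece linear in $(f_1 - f_2, \Gamma_1 - \Gamma_2)$ with coefficients built from $\Pi_1$ produces the contribution $\|\Pi_1\|_{\mathcal M^{\balpha}_{K'}} \vvvert f_1; f_2\vvvert_{\mathcal D^\gamma_{K'}}$, while the piece linear in $\Pi_1 - \Pi_2$ with coefficients built from $(f_2, \Gamma_2)$ produces $\|\Pi_1 - \Pi_2\|_{\mathcal M^{\balpha}_{K'}} \vvvert f_2\vvvert_{\mathcal D^\gamma_{K'}}$. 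Crucially, the bounds on the pointwise derivative terms $D^k\bigl(\sfK\{\langle f, \Pi\rangle_x - \cR\langle f, \Pi\rangle\}\bigr)(x)$ that appear inside \eqref{eq:hatf} are obtained by invoking the continuity estimates \eqref{eq:cont-schauder1}--\eqref{eq:cont-schauder3} of Theorem~\ref{thm:Schauder_for_germs} applied to the germs $\langle f_1 - f_2, \Pi_1\rangle$ and $\langle f_2, \Pi_1 - \Pi_2\rangle$, whose coherence norms are bounded as in Proposition~\ref{prop:modelled_distribution_is_coherent_germ}. The bounds on the coefficients $D^k(\sfK\Pi^j_x)(x)$ entering \eqref{eq:hatGamma} use the compatibility hypothesis $[\sfK\Pi_\ell]_{K'} = 0$ to rule out critical integer-index boundary terms.

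The hard part will be purely bookkeeping: one must track the multilinear decomposition consistently across all terms in \eqref{eq:hatf} and \eqref{eq:hatGamma}, and ensure that the primed and unprimed data end up on the correct side after each application of the telescoping identity, so that the combined estimates collapse onto the two terms of the claimed right-hand side. Once a convention is fixed (for instance, always placing $\Pi_2$ on the right of every product after telescoping), the inequality follows by term-by-term combination of estimates already established in Sections~\ref{section:proof_main_result_I} and~\ref{sec:proof_hat_f_is_modelled_distribution}.
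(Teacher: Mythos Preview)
Your proposal is essentially correct and matches the paper's intended approach: the paper omits the proof entirely, stating only that it is ``very similar to that of \eqref{eq:conhatf}'', and the telescoping of the bilinear structure you describe is precisely how one extracts a Lipschitz bound from a continuity bound by rerunning the computations of Section~\ref{sec:proof_hat_f_is_modelled_distribution} on differences.

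One small caution on the details: you cannot literally invoke Proposition~\ref{prop:modelled_distribution_is_coherent_germ} as a black box on the germs $\langle f_1 - f_2, \Pi_1\rangle$ or $\langle f_2, \Pi_1 - \Pi_2\rangle$, since $f_1 - f_2$ is not a modelled distribution for $(\Pi_1,\Gamma_1)$ and $\Pi_1 - \Pi_2$ is not a model. What actually works is to telescope the \emph{germ} $\langle f_1,\Pi_1\rangle - \langle f_2,\Pi_2\rangle$ at the level of the expression \eqref{eq:FyFx}, writing
\[
\sum_j (\Pi_1)^j_x \Big\{\big[\textstyle\sum_i (\Gamma_1)^{ji}_{xy} f_1^i(y) - f_1^j(x)\big] - \big[\sum_i (\Gamma_2)^{ji}_{xy} f_2^i(y) - f_2^j(x)\big]\Big\}
+ \sum_j \big[(\Pi_1)^j_x - (\Pi_2)^j_x\big]\big[\textstyle\sum_i (\Gamma_2)^{ji}_{xy} f_2^i(y) - f_2^j(x)\big],
\]
where the first brace is exactly what $\vvvert f_1;f_2\vvvert_{\mathcal{D}^\gamma}$ controls and the second bracket is controlled by $\vvvert f_2\vvvert_{\mathcal{D}^\gamma}$. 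The same telescoping pattern applied to \eqref{eq:deco-hat-Gamma-f} and to each scale $\Delta^{l;n}_{xy}$ in \eqref{eq:deco-hat-Gamma-f2} then yields the claim; this is indeed pure bookkeeping, as you anticipated.
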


\noindent
We omit the proof of this result, since it is very similar to that of \eqref{eq:conhatf}.

We finally come back to the additional properties
\eqref{it:uno}, \eqref{it:due}, \eqref{it:tre} 
of the coefficients $\Gamma$ that one may require in a model
$M = (\Pi,\Gamma)$,
see Remark~\ref{remark:comparison_definitions_of_model}.
We show that these properties
are preserved when one considers the new model $\hat M = (\hat \Pi, \hat \Gamma)$.

\begin{proposition}[Properties of reexpansion]
\label{prop:stability_of_properties_of_Gamma}
Fix a model $M = ( \Pi, \Gamma)$, a real number $\gamma \in \R$ and a $\beta$-regularizing
kernel $\sfK$ which satisfy the assumptions of Theorem~\ref{thm:multi_level_schauder_estimate}
(that is, condition \eqref{eq:multi-level-cond} holds and $\Pi$ and $\sfK$
are compatible).

Consider the new model $\hat M = (\hat\Pi, \hat\Gamma)$, 
see \eqref{eq:hatPi} and \eqref{eq:hatGamma0}-\eqref{eq:hatGamma}.
If any of the properties \eqref{it:uno}, \eqref{it:due}, \eqref{it:tre}
in Remark~\ref{remark:comparison_definitions_of_model} is satisfied
by $\Gamma$, then the same property is satisfied by $\hat \Gamma$
(with respect to the homogeneities $\bhatalpha = (\hat\alpha_a)_{a \in \hat{I}}$
in \eqref{eq:hatalpha}).

Furthermore, if property \eqref{it:tre} holds, then recalling the norm \eqref{eq:norm-gamma}
one has the continuity estimate
\begin{equation}\label{eq:conhatgamma}
	\| \hat{\Gamma} \|_{\mathcal{M}_{K}^{\hat{\balpha}}} 
	\lesssim \big( \| \Pi \|_{\mathcal{M}_{K^{\prime}}^{\balpha}}  
	+ [\sfK\Pi]_{K^{\prime}} \big) \| \Gamma \|_{\mathcal{M}_{K}^{\balpha}} \, ,  \\
\end{equation}
where $K' := K \oplus B(0,2)$ is the 2-enlargement of $K$.
\end{proposition}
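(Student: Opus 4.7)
The plan is to verify properties \eqref{it:uno}, \eqref{it:due}, \eqref{it:tre} separately, exploiting the block structure of $\hat\Gamma$ from \eqref{eq:hatGamma0} and \eqref{eq:hatGamma}. On the blocks $\hat\Gamma^{ji}$ with $(j,i) \in I \times I$, $\hat\Gamma^{jk}$ with $(j,k) \in I \times \poly(\gamma+\beta)$, and $\hat\Gamma^{lk}$ with $(l,k) \in \poly(\gamma+\beta) \times \poly(\gamma+\beta)$, each property reduces immediately to the corresponding property of $\Gamma$, to a trivial statement (the block is $0$), or to the analogous property of the polynomial reexpansion $\Gamma^{\poly}$ of Example~\ref{ex:polynomial-model}, which is straightforward to check. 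All of the substance is therefore in the off-diagonal block $\hat\Gamma^{li}$ with $(l,i) \in \poly(\gamma+\beta) \times I$.

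For the group property \eqref{it:uno}, I expand $\sum_a \hat\Gamma^{la}_{xy} \hat\Gamma^{ai}_{yz}$ by splitting $a \in \hat I$ into $a \in I$ and $a \in \poly(\gamma+\beta)$, substitute \eqref{eq:hatGamma} and reorganise the sums. The group property of $\Gamma$ reproduces the first sum in the definition of $\hat\Gamma^{li}_{xz}$, the group property of $\Gamma^{\poly}$ reproduces the second sum, and the remaining ``cross'' contributions of shape $(\Gamma^{\poly})^{lk}_{xy} \, D^k(\sfK \Pi^j_y)(y) \, \Gamma^{ji}_{yz}$ cancel exactly (the restriction $|k| < \alpha_j + \beta$ comes out of both terms with opposite signs). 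For the triangular structure \eqref{it:due}, when $|l| \geq \alpha_i + \beta$ the polynomial sum in \eqref{eq:hatGamma} is empty (no multi-index $k$ with $l \le k$ can satisfy $|k| < \alpha_i + \beta$), while in the first sum $\alpha_j + \beta > |l| \geq \alpha_i + \beta$ forces $\alpha_j > \alpha_i$, whence $\Gamma^{ji}_{xy} = 0$ by triangularity of $\Gamma$.

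For the analytic bound \eqref{it:tre}, the starting point is the identity
\begin{equation*}
	\hat\Pi^i_y \,-\, \sum_{j \in I} \Gamma^{ji}_{xy} \, \hat\Pi^j_x \,=\, \sum_{l \in \poly(\gamma+\beta)} \hat\Gamma^{li}_{xy} \, \bbX^l_x \,,
\end{equation*}
which is a direct computation combining $\sfK \Pi^i_y = \sum_j \Gamma^{ji}_{xy} \, \sfK \Pi^j_x$, the definition \eqref{eq:hatPi} of $\hat\Pi$, and the polynomial reexpansion $\bbX^k_y = \sum_l (\Gamma^{\poly})^{lk}_{xy} \bbX^l_x$. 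I then test both sides against $\psi_x^{\lambda}$ with $\lambda = 2|y-x|$ and a general $\psi \in \cD(B(0,1))$. On the left, since $\psi_x^{\lambda}$ is supported in $B(y, 2\lambda)$ one can rewrite it as a test function centred at $y$ at scale $4\lambda$ with $C^r$ norm uniformly bounded; combining the homogeneity of each $\hat\Pi^a$ (established in Section~\ref{sec:hatmodel} with constant controlled by $\|\Pi\|_{\cM^{\balpha}_{K'}} + [\sfK\Pi]_{K'}$) with $|\Gamma^{ji}_{xy}| \lesssim \|\Gamma\|_{\cM^{\balpha}_K} |y-x|^{\alpha_i - \alpha_j}$ yields a bound of order $|y-x|^{\alpha_i + \beta}$. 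The key point is the elementary identity $|y-x|^{\alpha_i - \alpha_j} \lambda^{\alpha_j + \beta} \lesssim |y-x|^{\alpha_i + \beta}$ for every $j \in I$ at this choice of $\lambda$, so that triangularity plays no role here. The right-hand side equals $\sum_l \hat\Gamma^{li}_{xy} \lambda^{|l|} c_l(\psi)$ with $c_l(\psi) := \frac{1}{l!} \int z^l \psi(z)\, dz$; since $\poly(\gamma+\beta)$ is a finite set, for each $l_0$ I pick a test function $\psi^{\ast}_{l_0}$ with $c_l(\psi^{\ast}_{l_0}) = \delta_{l, l_0}$, which isolates the single term $\hat\Gamma^{l_0 i}_{xy} \lambda^{|l_0|}$ and delivers $|\hat\Gamma^{l_0 i}_{xy}| \lesssim |y-x|^{\alpha_i + \beta - |l_0|}$, as required. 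Tracking constants through the argument yields the continuity estimate \eqref{eq:conhatgamma}.

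The main obstacle is indeed the analytic bound: the two sums in the definition of $\hat\Gamma^{li}_{xy}$ are individually of the wrong magnitude and cancel only because of a precise algebraic structure. The identity above packages this cancellation into a clean distributional relation and converts the algebraic problem into a test-function estimate, which crucially sidesteps any direct manipulation of pointwise derivatives $D^k(\sfK \Pi^j_x)(y)$ for $|k| \geq \alpha_j + \beta$ (which are generally not defined). Once this device is in place, \eqref{it:uno} and \eqref{it:due} are essentially combinatorial, and the continuity bound is obtained for free.
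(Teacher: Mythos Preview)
Your treatment of the group property \eqref{it:uno} and of the triangular structure \eqref{it:due} is essentially identical to the paper's proof in Section~\ref{sec:reexpansion}: both arguments exploit the block structure of $\hat\Gamma$ and observe the exact cancellation of the cross terms $(\Gamma^{\poly})^{lk}_{xy}\,D^k(\sfK\Pi^j_y)(y)\,\Gamma^{ji}_{yz}$, together with the trivial observation that both sums in \eqref{eq:hatGamma} are empty when $|l|\ge\alpha_i+\beta$.

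For the analytic bound \eqref{it:tre}, however, you take a genuinely different route. The paper expands each pointwise derivative in \eqref{eq:hatGamma} via the multiscale representation \eqref{eq:D1}, splits the resulting series at the scale $N_{xy}$ with $2^{-N_{xy}}\sim|y-x|$, bounds the tail $n\ge N_{xy}$ directly using the weak homogeneity of $\sfK\Pi^j$, and for the finite part $n<N_{xy}$ exploits the reexpansion identity for $\Pi$ together with the Taylor-remainder Lemma~\ref{lemma:test-functions_Taylor_expansions}; the boundary case $\alpha_j+\beta=|l|$ is treated separately through the compatibility Assumption~\ref{ass:compatibility}. Your argument instead packages the whole cancellation into the already-proved reexpansion identity $\hat\Pi^i_y=\sum_b\hat\Gamma^{bi}_{xy}\hat\Pi^b_x$, tests at the single scale $\lambda=2|y-x|$, and reads off each coefficient $\hat\Gamma^{l_0 i}_{xy}$ by choosing a dual test function with moments $\delta_{l,l_0}$. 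This works because the homogeneity of $\hat\Pi$ with exponent $\hat\alpha$ has already been established in Section~\ref{sec:hatmodel} (for every $\bar\lambda\ge 1$, so the choice $\lambda\le 2\,\mathrm{diam}(K)$ poses no problem), and the compatibility assumption is used only implicitly through that earlier step. The upshot is a considerably shorter proof that avoids the multiscale decomposition and Lemma~\ref{lemma:test-functions_Taylor_expansions} entirely; the price is that your argument is less self-contained, relying on the continuity bound \eqref{eq:conhatpi} as a black box, whereas the paper's computation is closer in spirit to the proof that $\hat f$ is a modelled distribution and reuses the same machinery.
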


\section{Proof of our Main Result I}\label{section:proof_main_result_I}

In this section we establish our first main result,
the \emph{Schauder estimates for coherent germs} in
Theorem~\ref{thm:Schauder_for_germs}.
Along the way, we also prove Proposition~\ref{thm:canonicity}
(canonicity of the germ $\cK^{\gamma,\beta} F$),
Proposition~\ref{prop:well_posedness_convolution} (singular integration) and
Theorem~\ref{thm:classical_schauder} (classical Schauder estimates).

Rather than establishing Theorem~\ref{thm:Schauder_for_germs} by direct calculation, 
we prefer to divide our proof into two steps.
	\begin{enumerate}
		\item First we establish that the operation of 
		integration $F \mapsto \sfK F$, that is,
		$(F_x)_{x\in\R^d} \mapsto (\sfK F_x)_{x\in\R^d}$,
		maps the space of coherent and homogeneous germs
		$\mathcal{G}^{\bar{\alpha}; \alpha, \gamma}$ into  a new space
		of \emph{weakly} coherent and homogeneous  germs,
		denoted by $\mathcal{G}_{\mathrm{weak}}^{\bar{\alpha} + \beta; \alpha + \beta, \gamma + \beta}$,
		for which the coherence and homogeneity conditions 
		\eqref{eq:coherence} and \eqref{eq:boundedness} hold
		for test-functions which annihilate \emph{suitable polynomials}
(this is reminiscent of
	H\"older-Zygmund spaces $\mathcal{Z}$, see Definition~\ref{def:Hoelder_Zygmund_spaces}).
		This 
		is a direct generalisation of the classical Schauder estimates
		Theorem~\ref{thm:classical_schauder}, see Remark~\ref{remark:classical_schauder} below.
	As a consequence,
	by Remark~\ref{rem:reconstruction-bounds}, we have that
\begin{equation} \label{eq:stepuno}
	F-\cR^\gamma F \in \mathcal{G}^{\gamma; \alpha, \gamma}
	\qquad \Longrightarrow \qquad
	\sfK\{F-\cR^\gamma F\} \in
	\mathcal{G}_{\mathrm{weak}}^{\gamma + \beta; \alpha + \beta, \gamma + \beta} \,.
\end{equation}

		\item Then we prove that a \emph{weakly} $\gamma'$-coherent and 
		$\gamma'$-homogeneous germ $H$
		(with \emph{the same exponent} of coherence and homogeneity)
		can be turned into a usual coherent and homogeneous germ by
		subtracting a Taylor polynomial $\cT^{\gamma'}(H)$:
\begin{equation*}
		H \in \mathcal{G}_{\mathrm{weak}}^{\gamma'; \alpha', \gamma'}
		\qquad \Longrightarrow \qquad
		H - \cT^{\gamma'}(H) \in \mathcal{G}^{\gamma'; \alpha' \wedge 0, \gamma'} 
\end{equation*}
		(note that $\alpha'$ becomes $\alpha' \wedge 0$).
		For the germ $H = \sfK\{F-\cR^\gamma F\}$
		in \eqref{eq:stepuno}, since the difference $H - \cT^{\gamma+\beta}(H)$
		equals $\mathcal{K}^{\gamma,\beta} F - \sfK\, (\mathcal{R}^\gamma F)$,
		see \eqref{eq:definition_of_KF}, we obtain
\begin{equation*}
	F-\cR^\gamma F \in \mathcal{G}^{\gamma; \alpha, \gamma}
	\qquad \Longrightarrow \qquad
	\mathcal{K}^{\gamma,\beta} F - \sfK\, (\mathcal{R}^\gamma F)
	\in \mathcal{G}^{\gamma + \beta; (\alpha + \beta) \wedge 0, \gamma + \beta} \,.
\end{equation*}
	This implies that 
	$\mathcal{K}^{\gamma,\beta} F $ is $((\alpha + \beta) \wedge 0, \gamma + \beta)$-coherent,
	and also that $\sfK\, (\mathcal{R}^\gamma F)$ is a $(\gamma+\beta)$-reconstruction
	of $\mathcal{K}^{\gamma,\beta} F $.

	If, furthermore, we assume that $F$ has
	homogeneity $\bar\alpha$, then $\cR^\gamma F \in \cZ^{\bar\alpha}$
	and consequently $\sfK\, (\mathcal{R}^\gamma F) \in \cZ^{\bar\alpha+\beta}$
	by the classical Schauder estimates. Then $\sfK\, (\mathcal{R}^\gamma F)$
	satisfies the homogeneity bound \eqref{eq:homogeneity} with exponent 
	$(\bar\alpha+\beta)\wedge 0$, which implies that $\mathcal{K}^{\gamma,\beta} F $
	has homogeneity $(\bar\alpha+\beta)\wedge 0$ (since $\bar\alpha+\beta \le \gamma+\beta$).
	\end{enumerate}

	\subsection{Weakly coherent and homogeneous germs}

We introduce a class of
\emph{weakly coherent and homogeneous} germs,
generalising Definition~\ref{def:homogeneity_coherence}.
We recall that $\mathscr{B}_\delta^r$ denotes, for $r\in\N_0$ and $\delta\in\R$,
the space of test functions
$\varphi \in \mathscr{B}^r$ which annihilate polynomials of degree
$\le \delta$, see \eqref{eq:Brgamma}.

\begin{definition}[Weak homogeneity and weak coherence]\label{def:weak_homogeneity_coherence}
Let $F = (F_x)_{x\in\R^d}$ be a germ.
Let $\bar\alpha, \alpha, \gamma \in \R$ with $\bar\alpha, \alpha \le \gamma$ and $r \in \N_0$.
\begin{itemize}
\item $F$ is called \emph{weakly $\bar\alpha$-homogeneous of order~$r$},
denoted $F \in \mathcal{G}_{\mathrm{weak\,hom};\,r}^{\bar\alpha}$,
if for any compact $K \subset \mathbb{R}^d$ and $\bar\lambda \in [1,\infty)$ the following
bounds hold:
\begin{equation}\label{eq:weak-homogeneity}
\begin{gathered}
	| F_x ( \varphi_x^{\lambda})| \lesssim \lambda^{\bar{\alpha}} \quad \ 
	\text{and} \ \quad 	| F_x ( \psi_x)| \lesssim 1 \,, \\
	\text{uniformly over $x \in K$, $\lambda \in (0, \bar\lambda]$, 
	$\varphi \in \mathscr{B}_{\bar\alpha}^{r}$
	and $\psi \in \mathscr{B}^r$} .
\end{gathered}
\end{equation}
The space of
weakly $\bar\alpha$-homogeneous germs (of any order) is $\mathcal{G}_{\mathrm{weak\,hom}}^{\bar\alpha}
= \bigcup\limits_{r\in\N_0} \mathcal{G}_{\mathrm{weak\,hom};\,r}^{\bar\alpha}$.

\item $F$ is called \emph{weakly $(\alpha,\gamma)$-coherent of order~$r$},
denoted $F \in \mathcal{G}_{\mathrm{weak\,coh};\,r}^{\alpha,\gamma}$,
if for any compact $K \subset \mathbb{R}^d$ and $\bar\lambda \in [1,\infty)$ 
the following bounds hold:
\begin{equation}\label{eq:weak-coherence}
\begin{gathered}
	| ( F_y - F_x) ( \varphi_x^{\lambda})| \lesssim \lambda^{\alpha} 
	(| y - x| + \lambda)^{\gamma - \alpha}
	\quad \ \text{and} \quad \
	| ( F_y - F_x) ( \psi_x)| \lesssim 1 \\
	\text{uniformly over $x,y \in K$, $\lambda \in (0, \bar\lambda]$, 
	$\varphi \in \mathscr{B}_{\gamma}^{r}$ and $\psi \in \mathscr{B}^r$} .
\end{gathered}
\end{equation}
The space of weakly $(\alpha,\gamma)$-coherent germs (of any order) is
$\mathcal{G}_{\mathrm{weak\,coh}}^{\alpha,\gamma} 
= \bigcup\limits_{r\in\N_0} \mathcal{G}_{\mathrm{weak\,coh};\,r}^{\alpha,\gamma}$.

\item $F$ is called \emph{weakly $(\alpha,\gamma)$-coherent with homogeneity $\bar{\alpha}$} if both \eqref{eq:weak-homogeneity} and \eqref{eq:weak-coherence} hold, for some order $r\in\N$.
The space of such germs is
\begin{equation*}
	\mathcal{G}_{\mathrm{weak}}^{\bar\alpha; \alpha,\gamma}
	= \mathcal{G}_{\mathrm{weak\,hom}}^{\bar\alpha} \cap \mathcal{G}_{\mathrm{weak\,coh}}^{\alpha,\gamma} \,.
\end{equation*}
\end{itemize}
\end{definition}

\begin{remark}[Usual vs.\ weak homogeneity and coherence]
The first conditions in \eqref{eq:weak-homogeneity} and \eqref{eq:weak-coherence} involve different classes of test functions, namely
$\varphi \in \mathscr{B}_{\bar{\alpha}}^{r}$ and $\varphi \in \mathscr{B}_{\gamma}^{r}$, while the second conditions in 
\eqref{eq:weak-homogeneity} and \eqref{eq:weak-coherence}
involve $\psi \in \mathscr{B}_{}^{r}$. 
However, when $\bar{\alpha} < 0$ and $\gamma < 0$
we have $\mathscr{B}_{\bar{\alpha}}^{r} = \mathscr{B}_{\gamma}^{r} = \mathscr{B}_{}^{r}$,
hence \eqref{eq:weak-homogeneity} and \eqref{eq:weak-coherence} reduce to the usual homogeneity and coherence conditions  
\eqref{eq:homogeneity} and \eqref{eq:coherence}.

In particular, \emph{coherent and homogeneous germs are
weakly coherent and homogeneous}: $\cG^{\bar\alpha; \alpha,\gamma} \subseteq 
\mathcal{G}_{\mathrm{weak}}^{\bar\alpha; \alpha,\gamma}$, and the inclusion is an equality
when $\bar{\alpha} < 0$ and $\gamma < 0$.
\end{remark}

\begin{remark}[General scales]\label{remark:lower_scales_II}
As in the Remark~\ref{remark:lower_scales},
for germs $F = (F_x)_{x\in\R^d}$ that are both weakly homogeneous and
weakly coherent we can get rid of $\bar\lambda$, i.e.\ if both relations
\eqref{eq:weak-homogeneity} and \eqref{eq:weak-coherence} holds for $\bar\lambda = 1$,
then they hold for any $\bar\lambda \in [1,\infty)$. To this purpose, 
for $\lambda \in [1,\bar\lambda]$
we decompose 
$\varphi_x^{\lambda} = \sum_{k = 1}^n ({\psi_k})_{x_k}^{1}$ for suitable $x_k \in K$, 
$\psi_k \in \mathscr{B}^r$ and $n$ (uniformly bounded, depending on
$\bar{\lambda}$ and $K$); by the second bounds in \eqref{eq:weak-homogeneity}
and \eqref{eq:weak-coherence} we get $|F_y((\psi_k)_x^{1})| \le
|(F_y-F_x)((\psi_k)_x^{1})| + |F_x((\psi_k)_x^{1})| \lesssim 1$, hence
$|F_y(\varphi_x^{\lambda})| \lesssim 1$ for $\lambda \in [1, \bar\lambda]$, from
which the first bounds in \eqref{eq:weak-homogeneity}
and \eqref{eq:weak-coherence} follow.
\end{remark}

We introduce semi-norms for weakly homogeneous and coherent germs,
corresponding to \eqref{eq:weak-homogeneity}
and \eqref{eq:weak-coherence}:
\begin{align}\label{eq:semi-norm-weak-homogeneity}
	\| F \|_{\mathcal{G}_{\mathrm{weak\,hom}; K,\bar\lambda,r}^{\bar{\alpha}}}
	& := \sup_{\substack{x\in K, \, \lambda \in (0,\bar\lambda] \\
	\varphi \in \mathscr{B}_{\bar{\alpha}}^{r}}} 
	\frac{| F_x ( \varphi_x^{\lambda})|}{\lambda^{\bar{\alpha}}} + \sup_{\substack{x\in K, \, \psi \in \mathscr{B}_{}^{r}}} | F_x ( \psi_x)| \,, \\
	\label{eq:semi-norm-weak-coherence}
	\| F \|_{\mathcal{G}_{\mathrm{weak\,coh}; K,\bar\lambda,r}^{\alpha, \gamma}}
	& := \sup_{\substack{x,y\in K, \, \lambda \in (0,\bar\lambda] \\
	\varphi \in \mathscr{B}_{\gamma}^{r}}} 
	\frac{| ( F_y - F_x) ( \varphi_x^{\lambda})|}%
	{\lambda^{\alpha} (| y - x| + \lambda)^{\gamma - \alpha}} + \sup_{\substack{x, y \in K, \, \psi \in \mathscr{B}_{}^{r}}} | ( F_y - F_x) ( \psi_x)| \,.
\end{align}
We next define the joint semi-norm where we fix $r = r_{\bar\alpha,\alpha}$
as in \eqref{eq:canonical-r}, i.e.\
the smallest non-negative integer $r > \max\{-\bar{\alpha},-\alpha\}$
(see Proposition~\ref{prop:independence_in_r} in Appendix~\ref{section:independence_in_r}):
\begin{equation}
	\label{eq:semi-norm-weak-coherence+homogeneity}
	\| F \|_{\mathcal{G}_{\mathrm{weak};K,\bar\lambda}^{\bar\alpha;\alpha, \gamma}} 
	:= \| F \|_{\mathcal{G}_{\mathrm{weak\,hom}; K,\bar\lambda,r_{\bar\alpha,\alpha}}^{\bar{\alpha}}} +
	\| F \|_{\mathcal{G}_{\mathrm{weak\,coh}; K,\bar\lambda,r_{\bar\alpha,\alpha}}^{\alpha, \gamma}} \,.	
\end{equation}
By Remark~\ref{remark:lower_scales_II}, we may set $\bar{\lambda} = 1$ 
and omit it from the notation.

\subsection{Conditional proof of Theorem~\ref{thm:Schauder_for_germs}}

We state two basic results on (weakly) coherent and homogeneous germs,
which will yield Theorem~\ref{thm:Schauder_for_germs} as a corollary.
We also deduce Proposition~\ref{thm:canonicity}.

The first result, proved in Section~\ref{subsubsection:proof_convolution_germs} below,
describes how a regularising kernel $\sfK$ acts on germs $F = (F_x)_{x\in\R^d}$
by integration, i.e.\ we consider $\sfK F := (\sfK F_x)_{x\in\R^d}$.
We recall that $\sfK F$ is well-defined for homogeneous germs,
see Remark~\ref{rem:bounded-order}.
For coherent germs, only $\sfK(F_y - F_x)$ is ensured to be well-defined,
see again Remark~\ref{rem:bounded-order}, but also in this case we will consider the germ
$\sfK F := (\sfK F_x)_{x\in\R^d}$ proving that it is weakly coherent: this is an abuse of notation,
justified by the fact that for the weak coherence relation \eqref{eq:weak-coherence}
only the differences $\sfK F_y - \sfK F_x = \sfK(F_y - F_x)$ matter.

\begin{theorem}[Integration of germs]\label{thm:convolution_germs_in_G_checkG}
Let $\sfK$ be a $\beta$-regularising kernel of order $(m, r)$ with range $\rho$, see Definition~\ref{def:regularising_kernel}.
For every compact $K\subseteq \R^d$ and $\bar\lambda \in [1,\infty)$
there is a constant $\mathrm{cst}'_{K,\bar\lambda} < \infty$ such that the following holds,
for any $\bar\alpha, \alpha, \gamma \in \R$ with $\bar{\alpha}, \alpha \le \gamma$:
\begin{itemize}
\item if $m > \bar\alpha+\beta$, integration by
$\sfK$ maps continuously $\cG^{\bar\alpha}_{\ho,r}$
to $\mathcal{G}_{\mathrm{weak\,hom};\,r}^{\bar\alpha+\beta}$:
\begin{equation}\label{eq:Kconv-hom}
	\forall r \in \N: \qquad
	\| \sfK F \|_{\mathcal{G}_{\mathrm{weak\,hom}; K,\bar\lambda,r}^{\bar{\alpha}+\beta}}
	\le \mathrm{cst}'_{K,\bar\lambda} \,
	\| F \|_{\mathcal{G}_{\mathrm{hom}; K,2(\bar\lambda + \rho),r}^{\bar{\alpha}}} \,;
\end{equation}

\item if $m > \gamma+\beta$, integration by
$\sfK$ maps continuously $\mathcal{G}_{\mathrm{coh};\,r}^{\alpha,\gamma}$ 
to $\mathcal{G}_{\mathrm{weak\,coh};\,r}^{\alpha+\beta,\gamma+\beta}$:
\begin{equation}\label{eq:Kconv-coh}
	\forall r \in \N: \qquad
	\| \sfK F \|_{\mathcal{G}_{\mathrm{weak\,coh}; K,\bar\lambda,r}^{\alpha+\beta,\gamma+\beta}}
	\le \mathrm{cst}'_{K,\bar\lambda} \, 
	\| F \|_{\mathcal{G}_{\mathrm{coh}; K,2(\bar\lambda + \rho),r}^{\alpha,\gamma}} \,.
\end{equation}
\end{itemize}

As a consequence (see Remark~\ref{rem:uniformity}), if we assume that
\begin{equation*}
	r > \max\{-\bar{\alpha},-\alpha\} \,, \qquad m > \gamma+\beta \,,
\end{equation*}
then integration by $\sfK$ is a continuous linear map from 
		$\mathcal{G}^{\bar{\alpha}; \alpha, \gamma}$ to 
		$\mathcal{G}_{\mathrm{weak}}^{\bar\alpha + \beta; \alpha + \beta, \gamma + \beta}$:
\begin{equation}\label{eq:Kconvbound}
	\| \sfK F \|_{\mathcal{G}_{\mathrm{weak};K,\bar\lambda}^{\bar\alpha+\beta;
	\alpha+\beta, \gamma+\beta}} 
	\le \mathrm{cst}'_{K,\bar\lambda} \, 
	\| F \|_{\mathcal{G}_{K,2(\bar\lambda + \rho)}^{\bar\alpha;\alpha, \gamma}} \,.
\end{equation}

If furthermore $\sfK$ preserves polynomials at level $\gamma$, see 
Assumption~\ref{assumption:preserving_polynomial_annihilation},  
then integration by $\sfK$ is also a continuous linear map from 
$\mathcal{G}_{\mathrm{weak}}^{\bar{\alpha}; \alpha, \gamma}$ to 
$\mathcal{G}_{\mathrm{weak}}^{\bar\alpha + \beta; \alpha + \beta, \gamma + \beta}$.
\end{theorem}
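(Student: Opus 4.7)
The approach is a standard dyadic decomposition. By \eqref{eq:definition_of_convolution} and Fubini, I would write
\begin{equation*}
	(\sfK F_x)(\varphi_x^\lambda) \,=\, \sum_{n \geq 0} F_x\bigl(\sfK_n^* \varphi_x^\lambda\bigr),
	\qquad \bigl(\sfK F_y - \sfK F_x\bigr)(\varphi_x^\lambda) \,=\, \sum_{n \geq 0} \bigl(F_y - F_x\bigr)\bigl(\sfK_n^* \varphi_x^\lambda\bigr),
\end{equation*}
reducing the theorem to estimating each summand and summing. For each $n$, property \eqref{item:regularising_3} places the support of $\sfK_n^* \varphi_x^\lambda$ inside $B(x, \lambda + \rho 2^{-n})$, while \eqref{eq:regularising_kernel_bound_on_derivatives} controls its $C^r$-norm; chasing powers of $\lambda$ and $2^{-n}$, one obtains a decomposition $\sfK_n^* \varphi_x^\lambda = c_n \,(\tilde\varphi_n)_x^{\mu_n}$ with $\mu_n := \lambda + \rho 2^{-n}$, $\tilde\varphi_n \in \mathscr{B}^r$ of unit norm, and $|c_n| \lesssim 2^{-\beta n}$.

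Inserting this into the $\bar\alpha$-homogeneity (resp.\ $(\alpha,\gamma)$-coherence) bound for $F$ and splitting $\sum_n$ according to $2^{-n} \leq \lambda$ vs.\ $2^{-n} > \lambda$, one gets the desired estimate at once in the fine scales and also in the coarse scales when $\bar\alpha + \beta \leq 0$ (resp.\ $\gamma + \beta \leq 0$). In the opposite ``positive'' regime the coarse-scale sum diverges, and one must exploit the cancellation of $\varphi \in \mathscr{B}_{\bar\alpha+\beta}^r$ (resp.\ $\mathscr{B}_{\gamma+\beta}^r$). To this end I would Taylor-expand $\sfK_n(z, y)$ in $z$ around $x$ up to order $\lfloor \bar\alpha + \beta \rfloor + 1$ (resp.\ $\lfloor \gamma + \beta \rfloor + 1$): the polynomial piece in $z$ is annihilated against $\varphi_x^\lambda$, while the integral remainder is controlled using \eqref{eq:regularising_kernel_bound_on_derivatives}, upgrading the coefficient to $|c_n| \lesssim 2^{-\beta n}(\lambda/\mu_n)^{\lfloor \bar\alpha+\beta\rfloor+1}$. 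Summing the resulting geometric series in each regime gives precisely $\lambda^{\bar\alpha+\beta}$ (resp.\ $\lambda^{\alpha+\beta}(|y-x|+\lambda)^{\gamma-\alpha}$). This Taylor step is where the hypotheses $m > \bar\alpha+\beta$ and $m > \gamma+\beta$ are used in an essential way.

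The unscaled (second) summands in \eqref{eq:semi-norm-weak-homogeneity}--\eqref{eq:semi-norm-weak-coherence} follow by specialising the above to $\lambda = 1$, together with a partition-of-unity argument as in Remark~\ref{remark:lower_scales_II}. Finally, the last claim---that under Assumption~\ref{assumption:preserving_polynomial_annihilation} the operation extends to weakly coherent/homogeneous inputs---follows from the observation that $\sfK_n^* \varphi_x^\lambda$ inherits the cancellation of $\varphi$: by Fubini and \eqref{eq:polypres}, for $|\ell| \le \gamma$ the moment $\int (y-x)^\ell \, \sfK_n^* \varphi_x^\lambda(y)\, dy$ reduces to an integral of $\varphi_x^\lambda$ against a polynomial of degree $\le |\ell|$, which vanishes since $\varphi \in \mathscr{B}_{\gamma+\beta}^r \subset \mathscr{B}_\gamma^r$. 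Thus $\tilde\varphi_n$ can be taken in $\mathscr{B}_{\bar\alpha}^r$ (resp.\ $\mathscr{B}_\gamma^r$), so only the \emph{weak} bounds of $F$ are needed. The principal technical obstacle is the Taylor-expansion-of-the-kernel step: the cancellation order must be precisely matched against the available kernel regularity, and in the coherence case the integral identity \eqref{eq:reg-int} is needed to absorb the base-point dependence of $F_y - F_x$.
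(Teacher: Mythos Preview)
Your overall strategy matches the paper's exactly: write $\sfK_n^*\varphi_x^\lambda$ as a rescaled test function (the paper packages this as Lemma~\ref{lemma:test_functions_K_n}), split into fine and coarse scales, and in the coarse regime Taylor-expand the kernel in its first variable to exploit the moment cancellation of~$\varphi$. The last paragraph on polynomial preservation is also right.

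There is, however, a genuine gap in your fine-scale step. You assert that \eqref{eq:regularising_kernel_bound_on_derivatives} alone gives $\tilde\varphi_n \in \mathscr{B}^r$ with $|c_n|\lesssim 2^{-\beta n}$. For a general (non-translation-invariant) kernel this fails as soon as $r\ge 1$: differentiating $\sfK_n^*\varphi_x^\lambda$ in $y$ using only \eqref{eq:regularising_kernel_bound_on_derivatives} yields $|\partial_y^l \sfK_n^*\varphi_x^\lambda|\lesssim \lambda^{-d}2^{(|l|-\beta)n}$, so after rescaling to $\mu_n\approx\lambda$ the $l$-th derivative carries an extra factor $(\lambda 2^n)^{|l|}\ge 1$, forcing $c_n\approx(\lambda 2^n)^{r}2^{-\beta n}$, which is not summable over the fine scales. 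The cure is precisely the moment bound \eqref{eq:reg-int}: one Taylor-expands $\varphi$ (not the kernel) around the point $y$, the remainder is handled by \eqref{eq:regularising_kernel_bound_on_derivatives}, and the polynomial terms by \eqref{eq:reg-int}. This is exactly how the paper controls the $C^r$ norm of $\eta$ in the proof of Lemma~\ref{lemma:test_functions_K_n}. Thus \eqref{eq:reg-int} is needed in the \emph{fine}-scale regime, and equally for homogeneity and coherence; it has nothing to do with ``absorbing the base-point dependence of $F_y-F_x$''.
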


\begin{remark}[Classical Schauder estimates] \label{remark:classical_schauder}
Given any $\bar\alpha \in \R$ and
any distribution $f \in \cZ^{\bar\alpha}$, see Definition~\ref{def:Hoelder_Zygmund_spaces},
we can consider the constant germ $(F_x = f)_{x\in\R^d}$ which is clearly coherent for any
exponents $\alpha,\gamma$ and \emph{weakly} homogeneous with exponent~$\bar\alpha$,
that is $F \in \mathcal{G}_{\mathrm{weak}}^{\bar\alpha; \alpha,\gamma}$.
By Theorem~\ref{thm:convolution_germs_in_G_checkG}, we see that $(\sfK F_x) \in
\mathcal{G}_{\mathrm{weak}}^{\bar\alpha+\beta; \alpha+\beta,\gamma+\beta}$,
which means that $\sfK f \in \cZ^{\bar\alpha+\beta}$
(compare \eqref{eq:weak-homogeneity} 
with \eqref{eq:Cgamma}).
We thus obtain the classical Schauder estimates, Theorem~\ref{thm:classical_schauder},
as a corollary of Theorem~\ref{thm:convolution_germs_in_G_checkG}.
\end{remark}

Our second basic result links weakly coherent and homogeneous germs with ordinary ones,
\emph{in the special case when homogeneity and coherence exponents coincide: $\bar\alpha = \gamma$}.
This will be proved in Section~\ref{sec:embedding_G_checkG} below,
together with Lemma~\ref{lemma:pointwise_derivative} which ensures the existence
of pointwise derivatives for suitable distributions.

\begin{theorem}[Positive renormalisation]\label{thm:embedding_G_checkG}
Let $\alpha, \gamma \in \mathbb{R}$ with $\alpha \leq \gamma$ and 
	\begin{equation*}
		\alpha \neq 0 , \quad \gamma \notin \mathbb{N}_0 .
	\end{equation*}
If a germ $F = (F_x)_{x\in\R^d} \in \mathcal{G}_{\mathrm{weak}}^{\gamma; \alpha, \gamma}$
is weakly $(\alpha,\gamma)$-coherent and weakly $\gamma$-homogeneous, subtracting the family $\cT^\gamma(F) = (\cT^{\gamma}_x(F_x))_{x\in\R^d}$
of its Taylor polynomials, see \eqref{eq:Taylor-f}, we obtain the germ
	\begin{equation*}
		G = F - \cT^\gamma F \,, \qquad \text{that is} \qquad
\begin{array}{rl}
		\rule{0pt}{1.1em}G_x \coloneqq
		& F_x - \cT^{\gamma}_x(F_x)  \\
		\rule{0pt}{1.8em}= & \displaystyle F_x - \sum_{0 \le | k | < \gamma} 
		D^k F_x ( x )\, \frac{(\,\cdot - x)^k}{k!}  \,,
\end{array}
	\end{equation*}
which is well defined, $(\alpha\wedge 0, \gamma)$-coherent and $\gamma$-homogeneous, i.e.\
$G \in \mathcal{G}^{\gamma; \alpha \wedge 0, \gamma}$.
The map $F \mapsto G$ is linear and continuous:
for any compact $K \subseteq \R^d$ and $\bar\lambda > 0$
\begin{equation}\label{eq:contG}
	\|G \|_{\mathcal{G}_{K,\bar\lambda}^{\gamma;\alpha, \gamma}} 
	\le \mathrm{cst}_{K,\bar\lambda} 
	\, \|F \|_{\mathcal{G}_{\mathrm{weak};K,\bar\lambda'}^{\gamma;\alpha, \gamma}} 
\end{equation}
for $\bar{\lambda}^{\prime} \coloneqq 4 (\bar{\lambda} + \mathrm{diam} (K))$.
\end{theorem}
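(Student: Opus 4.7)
The plan is to follow the two-step outline given at the beginning of Section~\ref{section:proof_main_result_I}: first establish the strong homogeneity of $G$ via Lemma~\ref{lemma:pointwise_derivative}, then derive the strong coherence through a case analysis on $|y-x|$ versus $\lambda$. At each fixed base point $x$, the weak homogeneity of $F$ gives $|F_x(\varphi_x^\lambda)| \lesssim \lambda^\gamma$ uniformly over $\varphi \in \mathscr{B}_\gamma^r$ and $\lambda \in (0,1]$, which is precisely the hypothesis \eqref{eq:local-hom} of Lemma~\ref{lemma:pointwise_derivative} with $\delta = \gamma$. Since $\gamma \notin \N_0$, the Lemma yields the pointwise derivatives $D^k F_x(x)$ for $|k|<\gamma$, defines the Taylor polynomial $\cT^\gamma_x(F_x)$, and supplies the strong bound $|G_x(\psi_x^\lambda)| \lesssim \lambda^\gamma$ for any $\psi \in \cD(B(0,1))$; tracking the dependence on $\|\psi\|_{C^r}$ gives uniformity over $\psi \in \mathscr{B}^r$, establishing $G \in \cG^\gamma_{\ho,r}$ with the expected continuity estimate.

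For the strong coherence, fix $\varphi \in \mathscr{B}^r$, $\lambda \in (0,1]$ and $x,y \in K$, and split into two regimes. In the \emph{near regime} $|y-x| \le \lambda$ one has $|y-x|+\lambda \sim \lambda$, and after writing $\varphi_x^\lambda$ as a bounded multiple of some $\psi_y^{C\lambda}$ with $\psi \in \mathscr{B}^r$, the strong homogeneity at both $x$ and $y$ yields $|(G_y - G_x)(\varphi_x^\lambda)| \lesssim \lambda^\gamma$, matching the target. In the \emph{far regime} $|y-x| > \lambda$ I use the decomposition
\begin{equation*}
	(G_y - G_x)(\varphi_x^\lambda) = (F_y - F_x)(\varphi_x^\lambda) - (P_y - P_x)(\varphi_x^\lambda), \qquad P_\cdot := \cT^\gamma_\cdot(F_\cdot),
\end{equation*}
and further split $\varphi = \tilde\varphi + Q$, where $\tilde\varphi \in c\,\mathscr{B}_\gamma^r$ annihilates polynomials of degree $\le \lfloor \gamma \rfloor$ and $Q = \sum_{|k| \le \lfloor\gamma\rfloor} a_k(\varphi)\,\eta^k$ is a linear combination with $|a_k(\varphi)| \lesssim 1$ of a fixed family of ``dual-basis'' test functions $\eta^k \in \mathscr{B}^r$. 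Since $P_y - P_x$ has degree $< \gamma$ (using $\gamma \notin \N_0$), it is annihilated by $\tilde\varphi_x^\lambda$, so $(G_y - G_x)(\tilde\varphi_x^\lambda) = (F_y - F_x)(\tilde\varphi_x^\lambda)$, and weak coherence bounds this by $\lambda^\alpha(|y-x|+\lambda)^{\gamma-\alpha} \le \lambda^{\alpha\wedge 0}(|y-x|+\lambda)^{\gamma-\alpha\wedge 0}$.

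The hard part is the $Q$-contribution in the far regime. The key intermediate estimate is the Taylor-jet comparison
\begin{equation*}
	|D^l(P_y - P_x)(x)| \lesssim |y-x|^{\gamma - |l|}, \qquad |l| < \gamma,
\end{equation*}
from which, expanding $P_y - P_x$ in powers of $z-x$ and using the bounded moments of $Q$, one obtains $|(P_y - P_x)(Q_x^\lambda)| \lesssim \sum_{|l|<\gamma} |y-x|^{\gamma-|l|}\lambda^{|l|} \lesssim (|y-x|+\lambda)^\gamma$. This Taylor-jet bound is the technical crux of the proof: it must be extracted from the weak coherence of $F$ together with the pointwise-derivative limits supplied by Lemma~\ref{lemma:pointwise_derivative}, through a careful passage $\lambda \to 0$ in identities relating $D^l F_x(x)$ to the Taylor polynomial $P_y$ evaluated at $x$. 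The complementary term $(F_y - F_x)(Q_x^\lambda)$ is then closed by combining strong homogeneity of $G_x$ (giving $|G_x(Q_x^\lambda)| \lesssim \lambda^\gamma$) with an analogous control of $G_y(Q_x^\lambda)$ that exploits the vanishing Taylor jet of $G_y$ at $y$. The appearance of $\alpha \wedge 0$ in place of $\alpha$ in the coherence exponent of $G$ is forced precisely because the $Q$-contribution is bounded only by $(|y-x|+\lambda)^\gamma$, without a gain of the factor $\lambda^\alpha$, which is the best available when $\alpha > 0$.
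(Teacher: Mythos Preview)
Your homogeneity argument and the near-regime coherence estimate are correct and match the paper. The gap is in the far regime $|y-x|>\lambda$.

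You identify the Taylor-jet comparison $|D^l(P_y-P_x)(x)|\lesssim |y-x|^{\gamma-|l|}$ as the crux but do not prove it; you only say it ``must be extracted'' from weak coherence and the pointwise-derivative limits. In fact this bound is essentially equivalent to the coherence of $G$ that you are trying to establish (it says the jets $(D^kF_x(x))_{|k|<\gamma}$ form a $\gamma$-modelled distribution for the polynomial model), and proving it from the hypotheses requires the same multi-scale machinery you are trying to avoid. Moreover, your plan for the ``complementary term'' $(F_y-F_x)(Q_x^\lambda)$ is circular: you invoke control of $G_y(Q_x^\lambda)$, but in the far regime a naive recentering $Q_x^\lambda = \tilde Q_y^{c(|y-x|+\lambda)}$ produces $\|\tilde Q\|_{C^r}\lesssim (|y-x|/\lambda)^{d+r}$, which blows up and destroys the estimate. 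The ``vanishing Taylor jet of $G_y$ at $y$'' gives information only near $y$, not near $x$, and there is no single-scale way to bridge the two.

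The paper closes this gap with a device you are missing: the \emph{large scale decomposition} (Lemma~\ref{lemma:decomposition_test_functions}). One writes $\psi = (\tilde\psi^{[M]})^{2^M} + \sum_{n=0}^M (\check\psi^{[n]})^{2^n}$ with $\check\psi^{[n]}\in\mathrm{cst}\,\mathscr{B}_\gamma^r$ and chooses $M$ so that $\lambda 2^M\approx \lambda+|y-x|$. Applied to $\psi_x^\lambda$, the single large-scale piece $(\tilde\psi^{[M]})_x^{\lambda 2^M}$ lives at scale $\approx |y-x|+\lambda$, so recentering at $y$ costs only a bounded factor and the strong homogeneity of $G$ gives $(|y-x|+\lambda)^\gamma$. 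Each intermediate piece $(\check\psi^{[n]})_x^{\lambda 2^n}$ annihilates polynomials of degree $\le\gamma$, so $(G_y-G_x)=(F_y-F_x)$ on it and weak coherence applies directly; summing the resulting geometric series over $n$ is where the dichotomy $\alpha\gtrless 0$ enters and produces $\alpha\wedge 0$. This telescoping from scale $\lambda$ up to scale $|y-x|+\lambda$ through polynomial-annihilating test functions is the missing idea.
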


\begin{remark}
The terminology ``positive renormalisation'' is inspired by \cite{MR3935036}, where this notion is related
to an operator called $\Delta^+$
which yields an algebraic description of the subtraction of Taylor polynomials,
see \cite[Lemma 6.10 and Remark 6.11]{MR3935036} and \cite[section 15.3]{MR4174393}.
\end{remark}

We can now deduce Theorem~\ref{thm:Schauder_for_germs}
from Theorems~\ref{thm:convolution_germs_in_G_checkG}
and~\ref{thm:embedding_G_checkG}.

\begin{proof}[Proof of Theorem~\ref{thm:Schauder_for_germs}]
Let $F$ be an $(\alpha, \gamma)$-coherent germ for some 
$\alpha \le \gamma$. Let $\cR^\gamma F$ be a $\gamma$-reconstruction of $F$, see 
\eqref{eq:reconstruction_bound}, so that (see Remark~\ref{rem:reconstruction-bounds})
	\begin{equation*}
		F - \mathcal{R}^\gamma F \in \mathcal{G}^{\gamma; \alpha, \gamma} .
	\end{equation*}
We stress that $F - \mathcal{R}^\gamma F$ is both coherent and homogeneous, 
even when $F$ is not homogeneous.
By Theorem~\ref{thm:convolution_germs_in_G_checkG}, using the assumptions 
\eqref{eq:assumption_schauder_on_germs}, it follows that
	\begin{equation} \label{eq:wealre}
		\sfK \{F-\cR^\gamma F\} :=
		(\sfK \{F_x - \mathcal{R}^\gamma F\} )_{x\in\R^d} \in 
		\mathcal{G}_{\mathrm{weak}}^{\gamma + \beta; \alpha + \beta, \gamma + \beta} .
	\end{equation}

By assumption $\alpha + \beta \neq 0$, $\gamma + \beta \notin \mathbb{N}_0$,
see \eqref{eq:assumption_remove_polynomial},
therefore we can apply 
Theorem~\ref{thm:embedding_G_checkG} to
the germ $\sfK \{F-\cR^\gamma F\}$ to obtain
\begin{equation*}
	\sfK \{F-\cR^\gamma F\} - \cT^{\gamma+\beta}\big( \sfK \{F-\cR^\gamma F\} \big)
	\in \mathcal{G}^{\gamma + \beta; (\alpha + \beta) \wedge 0, \gamma + \beta} \,.
\end{equation*}
Recalling \eqref{eq:definition_of_KF}-\eqref{eq:definition_of_KF_alt}, this can be rewritten as 
	\begin{equation} \label{eq:KconvR-G}
		\mathcal{K}^{\gamma,\beta} F  - \sfK\, (\mathcal{R}^\gamma F) \in 
		\mathcal{G}^{\gamma + \beta; (\alpha + \beta) \wedge 0, \gamma + \beta} ,
	\end{equation}
which proves \eqref{eq:main-gen}.
If we assume that $\sfK\, (\mathcal{R}^\gamma F) \in \cD'$ is well-defined
(note that it is a fixed distribution, which does
not depend on~$x$), it follows that $\mathcal{K}^{\gamma,\beta} F$ is well-defined
and is $( (\alpha + \beta) \wedge 0, \gamma + \beta )$-coherent.
The property of homogeneity in \eqref{eq:KconvR-G}
means precisely that $\sfK\, (\mathcal{R}^\gamma F)$
is a $(\gamma+\beta)$-reconstruction of $\mathcal{K}^{\gamma,\beta} F$,
see \eqref{eq:reconstruction_bound}, that is
	\begin{equation*}
		\mathcal{R}^{\gamma+\beta} ( \mathcal{K}^{\gamma,\beta} F ) 
		= \sfK ( \mathcal{R}^\gamma F ) \,.
	\end{equation*}
The continuity estimate \eqref{eq:cont-schauder1} follows by
\eqref{eq:contG} and \eqref{eq:Kconvbound}.

We finally assume that $F$ is also $\bar{\alpha}$-homogeneous
and we establish the homogeneity of $\mathcal{K}^{\gamma,\beta} F$.
We know that $\mathcal{R}^\gamma F \in \cZ^{\bar\alpha}$, see \eqref{eq:target},
hence $\sfK \mathcal{R}^\gamma F \in \cZ^{\bar\alpha + \beta}$ 
by the classical Schauder estimates, see Theorem~\ref{thm:classical_schauder}.
If we view $\sfK \mathcal{R}^\gamma F$
as a constant germ, it is $(\alpha',\gamma')$-coherent for 
any $\alpha', \gamma'$ and has homogeneity $(\bar\alpha+\beta)\wedge 0$, hence
	\begin{equation} \label{eq:Kco}
		\sfK \mathcal{R}^\gamma F \in \mathcal{G}^{(\bar{\alpha} + \beta) \wedge 0;
		\alpha', \gamma'} .
	\end{equation}
Since $\bar\alpha \le \gamma$, summing \eqref{eq:KconvR-G} and \eqref{eq:Kco} we obtain
	\begin{equation*}
		\mathcal{K}^{\gamma,\beta} F \in \mathcal{G}^{(\bar{\alpha} + \beta) \wedge 0;
		(\alpha + \beta) \wedge 0, \gamma + \beta} \,.
	\end{equation*}

Finally, the continuity estimate \eqref{eq:cont-schauder2} follows from \eqref{eq:cont-schauder1}-\eqref{eq:RT-bounds}.
Furthermore, for \eqref{eq:cont-schauder3} we use the fact that 
	\begin{equation*}
		\| \sfK (\mathcal{R}^{\gamma} F) \|_{\mathcal{G}_{\mathrm{hom; K, \bar{\lambda}, r}}^{(\bar\alpha + \beta) \wedge 0}} \lesssim \| \sfK (\mathcal{R}^{\gamma} F) \|_{\mathcal{Z}_{\mathrm{K^{\prime}, \bar{\lambda}, r}}^{\bar\alpha + \beta}} ,
	\end{equation*}
\noindent for $K^{\prime} \coloneqq K \oplus B (0, \bar{\lambda})$, which follows from the definition of homogeneity when $\bar\alpha + \beta < 0$ and from \eqref{eq:estimate_on_F_x^k(x)} when $\bar{\alpha} + \beta > 0$. Now the right-hand side can be bounded by the classical Schauder estimates and the reconstruction theorem.
\end{proof}

We then prove Proposition~\ref{thm:canonicity}.

\begin{proof}[Proof of Proposition~\ref{thm:canonicity}]
Set $\cK^{\gamma,\beta} F := \sfK F_x - P_x$ as in \eqref{eq:postulate},
where $P = (P_x)_{x\in\R^d}$ for the moment is an arbitrary germ.
Assume that \eqref{eq:reco-schauder} holds, i.e.\ that
$\sfK (\mathcal{R}^\gamma F)$ is
a $(\gamma+\beta)$-reconstruction  of $\cK^{\gamma,\beta} F$.
Then, if we define $G := \sfK \{F-\cR^\gamma F\}$, for any $\psi \in \cD(B(0,1))$
we must have, as $\lambda \downarrow 0$,
\begin{equation*}
	(G_x - P_x )(\psi_x^\lambda)
	= (\cK^{\gamma,\beta} F   - \cR^\gamma F )(\psi_x^\lambda) 
	= O(\lambda^{\gamma+\beta}) \,.
\end{equation*}
We already observed in \eqref{eq:wealre} that $G \in 
\mathcal{G}_{\mathrm{weak}}^{\gamma + \beta; \alpha + \beta, \gamma + \beta}$,
by Theorem~\ref{thm:convolution_germs_in_G_checkG}.
In particular, for any $x\in\R^d$, the distribution $f = G_x$ satisfies
the assumption \eqref{eq:local-hom} of Lemma~\ref{lemma:pointwise_derivative}
with $\delta = \gamma+\beta$, hence
$(G_x - \cT^{\gamma+\beta}_x(G))(\psi_x^\lambda) = O(\lambda^{\gamma+\beta})$
 by \eqref{eq:gbound}. We then obtain
\begin{equation*} 
	(P_x - \cT^{\gamma+\beta}_x(G))(\psi_x^\lambda) = O(\lambda^{\gamma+\beta}) \,.
\end{equation*}

Assume now that $P_x$ is a polynomial of degree $<\gamma+\beta$. Then
$Q_x := P_x - \cT^{\gamma+\beta}_x(G)$ is a polynomial of degree $<\gamma+\beta$
with $Q_x(\psi_x^\lambda) = O(\lambda^{\gamma+\beta})$,
for any $\psi \in \cD(B(0,1))$. But this implies that $Q_x = 0$:
indeed, for any $k\in\N_0^d$ with $0 \le |k| <\gamma+\beta$ we have
\begin{equation*}
	(\partial^k Q_x)(\psi_x^\lambda) = (-\lambda)^{-|k|} Q_x((\partial^k \psi)_x^\lambda)
	= O(\lambda^{\gamma+\beta-|k|}) \xrightarrow[\,\lambda\downarrow 0\,]{} 0 \,,
\end{equation*}
hence, if we choose $\psi$ with $\int\psi = 1$, we obtain
$\partial^k Q_x(x) = \lim_{\lambda\downarrow 0} (\partial^k Q_x)(\psi_x^\lambda) = 0$.
\end{proof}

The rest of this section is devoted to the proof of Theorems~\ref{thm:convolution_germs_in_G_checkG}
and~\ref{thm:embedding_G_checkG}. 
In the next subsection we first discuss some technical tools, which will also yield
the proof of Proposition~\ref{prop:well_posedness_convolution} 
(integration of \texorpdfstring{$\sfK$}{K} with a sufficiently nice distribution $f$ is well-defined) and
Theorem~\ref{thm:classical_schauder} (classical Schauder estimate).

\subsection{Preliminary tools} \label{section:preliminary_tools_I}

Fix a test function $\varphi \in \cD$.
A key ingredient of our proofs is a convenient representation for
the function $\sfK_n^* \varphi_x^{\lambda}$,
recall \eqref{eq:K*}, provided by Lemma~\ref{lemma:test_functions_K_n} below. 
This result is analogous to \cite[Proposition~14.11]{MR4174393},
which however only considers the translation invariant case $\sfK_n (x, y) = \sfK_n ( y - x)$.

We first describe heuristically the result, focusing for simplicity on 
$\sfK_n^* \varphi^{\lambda} = \sfK_n^* \varphi_0^{\lambda}$.
Given a $\beta$-regularising kernel $\sfK$ with range $\rho$, 
as in Definition~\ref{def:regularising_kernel},
by properties~\eqref{item:regularising_3} and~\eqref{item:regularising_4}
we can write approximately
\begin{equation*}
	\sfK_n(z,y) \simeq 2^{- \beta n} \, \psi^{\rho 2^{- n}}(y-z) 
\end{equation*}
for some test-function $\psi \in \mathcal{D}$, and thus, by \eqref{eq:K_n*},
\begin{equation} \label{eq:inte-conv}
	\sfK_n^* \varphi^{\lambda} (y) 
	\simeq 
	2^{- \beta n} \, \int_{\R^d} \varphi^\lambda(z) \, \psi^{\rho 2^{-n}}(y-z) \, \d z \,.
\end{equation}
For $ \lambda \ge \rho 2^{-n}$, we can pretend that the test function
$\varphi^\lambda$, which has the larger scale,
is approximately constant on the support of $\psi^{\rho 2^{-n}}$. This yields
the approximation
	\begin{equation*}
		\text{ for } \lambda \ge \rho 2^{-n} : \qquad
		\sfK_n^* \varphi^{\lambda} (y)
		\simeq 2^{- \beta n}
		\, \eta^{2\lambda}(y)
		\qquad (\text{with } \eta \simeq \varphi) \,.
	\end{equation*}
For $ \lambda \leq \rho 2^{- n}$, exchanging the roles of $\varphi$ and $\psi$
would yield $\sfK_n^* \varphi^{\lambda} (y) \simeq 2^{- \beta n}\, \eta^{\rho 2^{-n}}(y)$
with $\eta \simeq \psi$, but a better approximation can be obtained
if we assume that $\varphi$ annihilates polynomials 
up to some degree $c \in \mathbb{N}_0$: subtracting
the Taylor polynomial of $\psi^{\rho 2^{-n}}$ 
at order $c$ based at $y$ in the integral in \eqref{eq:inte-conv}, we obtain
	\begin{equation*}
		\text{ for } \lambda \le \rho 2^{-n} : \qquad
		\sfK_n^* \varphi^{\lambda}(y)
		\simeq 2^{- \beta n} \, (2^{n} \lambda)^{c + 1} \,
		\zeta^{(2\rho) 2^{- n}}(y) \quad \text{ (for some $\zeta \in \mathcal{D}$)}\, .
	\end{equation*}

We can now state the precise result. Its proof is given in Appendix~\ref{sec:test_functions_K_n}.

\begin{lemma}[Integrating $\sfK_n$ with test functions]\label{lemma:test_functions_K_n}
Let $\sfK$ be a $\beta$-regularising kernel of order $(m, r)$ with range $\rho$,
see Definition~\ref{def:regularising_kernel}.
Fix $\bar\lambda \in [1,\infty)$ and an integer $c \in \mathbb{N}_0 \cup \lbrace -1 \rbrace$.
For every compact $K\subseteq \R^d$ 
and any test functions $\varphi \in \mathscr{B}_{c}^{r}$ we can write,
for all $n \in \mathbb{N}_0$, $x \in K$ and $\lambda \in ( 0, \bar\lambda]$,
	\begin{equation}\label{eq:scaled_recentered_eta_zeta}
		\sfK_n^* \varphi_{x}^{\lambda}(y) =
		\begin{dcases}
			 2^{- \beta n} 
			 \, \eta_{x}^{2 \lambda}(y)
			 & \text{for } \rho 2^{- n} \leq \lambda \,, \\
			2^{- \beta n} \, (2^{n} \lambda)^{\min\{c+1,m\}}
			\, 
			\zeta_{x}^{2(\rho 2^{-n})}(y) & \text{for } \rho 2^{- n} \geq \lambda \,,
		\end{dcases}
	\end{equation}
for suitable (explicit) test-functions
	\begin{equation}\label{eq:eta_zeta_are_in_Bcr}
	\eta = \eta^{[n, \lambda, x, \varphi]} \in \mathrm{cst}_{K,\bar\lambda}\, \mathscr{B}^{r} , \qquad 
	\zeta = \zeta^{[n, \lambda, x, \varphi]} \in \mathrm{cst}_{K,\bar\lambda} \,\mathscr{B}^{r} \,.
	\end{equation}
where $\mathrm{cst}_{K,\bar\lambda} < \infty$ depends only on $K$, $\bar\lambda$
and on the kernel $\sfK$.
	
If furthermore $\sfK$ preserves polynomials at some level $c_0 \in \mathbb{N}_0$,
see Assumption~\ref{assumption:preserving_polynomial_annihilation}, 
we can improve \eqref{eq:eta_zeta_are_in_Bcr} to
	\begin{equation} \label{eq:eta_zeta_are_in_Bcr_when_K_preserves_polynomials}
	\eta = \eta^{[n, \lambda, x, \varphi]} \in \mathrm{cst}_{K,\bar\lambda}\, 
	\mathscr{B}_{\min(c, c_0)}^{r} \,, 
	\qquad \zeta = \zeta^{[n, \lambda, x, \varphi]} \in \mathrm{cst}_{K,\bar\lambda}\, 
	\mathscr{B}_{\min(c, c_0)}^{r} \,.
	\end{equation}
\end{lemma}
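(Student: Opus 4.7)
The plan is to define $\eta$ and $\zeta$ by two explicit rescalings of $\sfK_n^* \varphi_x^{\lambda}$ so that the identities \eqref{eq:scaled_recentered_eta_zeta} hold by construction, and then to verify that each belongs to $\mathrm{cst}_{K,\bar\lambda}\,\mathscr{B}^{r}$ by separately checking the support property and the $C^r$-bound in the two regimes.

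The support is the easy step: since $y \mapsto \sfK_n^*\varphi_x^{\lambda}(y)$ is supported in $B(x, \lambda + \rho 2^{-n})$ (as the integration variable must lie simultaneously in the supports of $\varphi_x^{\lambda}$ and of $\sfK_n(\cdot,y)$), the natural choices are $\eta(w) := 2^{\beta n}(2\lambda)^d\, \sfK_n^*\varphi_x^{\lambda}(x+2\lambda w)$ in the regime $\rho 2^{-n}\le\lambda$, and $\zeta(w) := 2^{\beta n}(2^n\lambda)^{-\ell}(2\rho 2^{-n})^d\, \sfK_n^*\varphi_x^{\lambda}(x+2\rho 2^{-n} w)$ with $\ell := \min(c+1,m)$ in the regime $\rho 2^{-n}\ge\lambda$; both are supported in $B(0,1)$ by a direct inclusion.

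The heart of the proof is the $C^r$-bound. For $\eta$ the point is to upgrade the naive estimate $|\partial_y^k \sfK_n^*\varphi_x^{\lambda}(y)|\lesssim \lambda^{-d}\,2^{(|k|-\beta)n}$ obtained from property~\eqref{item:regularising_4} to the required $\lambda^{-d-|k|}\,2^{-\beta n}$, i.e.\ to recover the missing factor $(\lambda 2^n)^{-|k|}$. I will do this by Taylor-expanding $\varphi_x^{\lambda}$ around $y$ up to degree $|k|-1$: the Taylor remainder has size $|z-y|^{|k|}\lambda^{-d-|k|}$, which on the support $|z-y|\le\rho 2^{-n}$ supplies the missing factor, while the polynomial part produces integrals of the form $\int(z-y)^{l}\partial_y^k\sfK_n(z,y)\,dz$ with $|l|<|k|\le r$, and these are controlled \emph{exactly} by the cancellation property~\eqref{item:regularising_5}. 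For $\zeta$ the strategy is dual: I will Taylor-expand $z\mapsto \partial_y^k\sfK_n(z,y)$ around $z=x$ up to degree $\ell-1$, so that the polynomial part is annihilated by the hypothesis $\varphi\in\mathscr{B}_c^{r}$ (using $\ell-1\le c$) and only the remainder survives; bounding the latter by \eqref{item:regularising_4} with $|k_1|=\ell\le m$ and then rescaling yields $|\partial^k\zeta(w)|\lesssim \rho^{d+|k|}$, uniformly in $n$.

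The main obstacle is the $\eta$-estimate: the absence of translation invariance forbids a direct integration by parts to transfer $y$-derivatives from $\sfK_n(\cdot,y)$ onto $\varphi_x^{\lambda}$, and the missing cancellation must therefore be recovered entirely from the integral condition~\eqref{item:regularising_5}; the Taylor degree $|k|-1$ must simultaneously match the number of $y$-derivatives and fit within the admissible range $|l|\le r$ of \eqref{item:regularising_5}. Finally, for the refinement \eqref{eq:eta_zeta_are_in_Bcr_when_K_preserves_polynomials} I will compute moments directly by Fubini: $\int\eta(w)w^l\,dw$ reduces to an integral $\int \varphi_x^{\lambda}(z)\,\big[\int\sfK_n(z,y)(y-x)^l\,dy\big]\,dz$; for $|l|\le c_0$, Assumption~\ref{assumption:preserving_polynomial_annihilation} together with the binomial expansion of $(y-x)^l$ makes the bracketed quantity a polynomial in $z$ of degree $\le|l|$, which is then killed by $\varphi\in\mathscr{B}_c^{r}$ as soon as $|l|\le c$; the same calculation takes care of $\zeta$.
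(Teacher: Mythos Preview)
Your proposal is correct and matches the paper's proof essentially step by step: the paper defines $\eta,\zeta$ by the same rescalings, checks the supports in the same way, and for the $C^r$-bounds it Taylor-expands $\varphi$ around the evaluation point to order $|k|-1$ for $\eta$ (using property~\eqref{item:regularising_5} on the polynomial part and \eqref{item:regularising_4} on the remainder) and Taylor-expands $\sfK_n$ in its first variable to order $\min(c+1,m)-1$ for $\zeta$ (using $\varphi\in\mathscr{B}_c^r$ to kill the polynomial part). The moment computation for \eqref{eq:eta_zeta_are_in_Bcr_when_K_preserves_polynomials} via Fubini and Assumption~\ref{assumption:preserving_polynomial_annihilation} is also exactly what the paper does.
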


\begin{remark}\label{remark:new_scale_bar_lambda}
It follows from the proof of this result that the constants $\mathrm{cst}_{K,\bar\lambda}$ in \eqref{eq:eta_zeta_are_in_Bcr}
and \eqref{eq:eta_zeta_are_in_Bcr_when_K_preserves_polynomials}
depend on the constants $c_K$ appearing in
items~\eqref{item:regularising_4} and~\eqref{item:regularising_5} 
in Definition~\ref{def:regularising_kernel}, as well as on the range $\rho$ of the kernel $\sfK$.
\end{remark}

We close this subsection with
the proofs of Proposition~\ref{prop:well_posedness_convolution}
(singular integration)
and Theorem~\ref{thm:classical_schauder} (classical Schauder estimates).
	
\begin{proof}[Proof of Proposition~\ref{prop:well_posedness_convolution}]
Let $\sfK$ be a $\beta$-regularising kernel
of order $(0, r)$ with range~$\rho$. Let $f$ be a distribution of order~$r$, so that
by Remark~\ref{rem:singint}, see \eqref{eq:order2} with $z=0$,
	\begin{equation} \label{eq:finiteorder}
		\forall \bar\lambda' \in [1,\infty): \qquad
		\sup\limits_{\eta \in \mathscr{B}^r, \, \lambda \in [1,\bar\lambda']} 
		| f  ( \eta^{\bar\lambda'} ) | \eqcolon C(\bar\lambda') < + \infty \,.
	\end{equation}
Let us show that $\sfK f$ is well-defined by  \eqref{eq:definition_of_convolution}
as a distribution of order~$r$, i.e.\ by \eqref{eq:order2bis}
	\begin{equation} \label{eq:local-conv-reg}
		\forall \bar\lambda \in \N: \qquad
		\sup\limits_{\varphi \in \mathscr{B}^r} | \sfK f ( \varphi^{\bar\lambda} ) | < + \infty .
	\end{equation}

Given $\bar\lambda \in [1,\infty)$,
we apply Lemma~\ref{lemma:test_functions_K_n} with $c=-1$, in particular
by \eqref{eq:scaled_recentered_eta_zeta}
	\begin{equation*}
	\sfK_n^* \varphi^{\bar\lambda} =
	\begin{cases}
	 2^{- \beta n} \, \eta^{2\bar\lambda} & \text{if } \rho 2^{-n} \le \bar\lambda \,, \\
	 2^{- \beta n} \, \zeta^{2(\rho 2^{-n})} & \text{if } \rho 2^{-n} \ge \bar\lambda \,,
 	\end{cases}
	\end{equation*}
for some test-functions $\eta, \zeta \in \mathrm{cst} \, \mathscr{B}^{r}$.
Both scaling exponents $2\bar\lambda$ and $2(\rho 2^{-n}) \ge 2\bar\lambda$ 
take values in $[1, \bar\lambda']$, where we set $\bar\lambda':= \max\{2\bar\lambda, 2\rho\}$,
hence by relation \eqref{eq:finiteorder} 
we can bound $| f( \sfK_n^* \varphi^{\bar\lambda} ) | \le C(\bar\lambda') \, 2^{- \beta n}$
uniformly over $n\in\N$.
Thus, the sum in \eqref{eq:definition_of_convolution} converges 
and furthermore \eqref{eq:local-conv-reg} holds.
\end{proof}		

\begin{proof}[Proof of Theorem~\ref{thm:classical_schauder}]		
We give a simple proof of Theorem~\ref{thm:classical_schauder}
exploiting Lemma~\ref{lemma:test_functions_K_n},
in the same manner as in \cite[Section~14]{MR4174393}.
We will also perform similar calculations when proving Theorem~\ref{thm:convolution_germs_in_G_checkG} below.

We fix $\gamma \in \mathbb{R}$, a $\beta$-regularising kernel $\sfK$
of order $(m, r)$ with range $\rho$, where we assume that
$m > \gamma + \beta$ and
$r > - \gamma$, see \eqref{eq:condmr}.
When $\gamma \ge 0$, we also assume that $\sfK$ preserves polynomials at 
level~$\gamma$, see Assumption~\ref{assumption:preserving_polynomial_annihilation}.

Let us fix $f \in \mathcal{Z}^{\gamma}$.
First we note that $f$ is a distribution of order~$r$
by Remark~\ref{rem:singint},
hence by Proposition~\ref{prop:well_posedness_convolution} the integration $\sfK f$ 
is well-defined by  \eqref{eq:definition_of_convolution}.
It remains to show that $\sfK f \in \cZ^{\gamma+\beta}$, that is,
$\|\sfK f\|_{\cZ^{\gamma+\beta}_K} < \infty$ for any compact $K \subset \R^d$,
see \eqref{eq:Cgamma}.

Fix $K \subset \mathbb{R}^d$ compact.
We take $x \in K$ and $\lambda \in (0, 1]$.
We first estimate $\sfK f(\varphi_x^\lambda)$ for
$\varphi \in \mathscr{B}_{\gamma + \beta}^{r}$, see \eqref{eq:Cgamma}.
We define
\begin{equation*}
	N_{\lambda} \coloneqq \min \lbrace n \in \mathbb{N}
	\colon \rho 2^{- n} \leq \lambda \rbrace \,,
\end{equation*}
and we cut the series  \eqref{eq:definition_of_convolution} in two regimes.
By Lemma~\ref{lemma:test_functions_K_n} 
with $\bar\lambda = 1$, $c=\lfloor\gamma+\beta\rfloor$ and $c_0=\lfloor\gamma\rfloor$,
we can express $\sfK_n^* \varphi_{x}^{\lambda}$ through formula 
\eqref{eq:scaled_recentered_eta_zeta}
(where $\min\{c+1,m\} = c+1$, since $m > \gamma + \beta$):
for suitable test-functions $\eta = \eta^{[n, \lambda, x, \varphi]}$ and 
$\zeta = \zeta^{[n, \lambda, x, \varphi]}$ we have
\begin{equation}\label{eq:usude}
\begin{split}
	\sfK f (\varphi_x^{\lambda}) & = \sum\limits_{n = 0}^{N_{\lambda} - 1} 
	f ( \sfK_n^* \varphi_{x}^{\lambda}) \, + \sum\limits_{n = N_{\lambda}}^{+ \infty} 
	f ( \sfK_n^* \varphi_{x}^{\lambda}) \\
	& = \sum\limits_{n = 0}^{N_{\lambda} - 1} 2^{- \beta n} (2^{n} \lambda)^{c+1} 
	f (\zeta_{x}^{2\rho 2^{-n}} ) \, 
	+ \sum\limits_{n = N_{\lambda}}^{+ \infty} 2^{- \beta n} 
	f ( \eta_{x}^{2 \lambda} ) .
\end{split}
\end{equation}
We have $\eta, \zeta \in \mathrm{cst}_{K,1}\, \mathscr{B}_{\gamma}^{r}$, see 
\eqref{eq:eta_zeta_are_in_Bcr_when_K_preserves_polynomials}.
Since $f \in \mathcal{Z}^{\gamma}$
and $r > - \gamma$ by assumption,
we can bound $f (\zeta_{x}^{2\rho 2^{-n}} )$ and 
$f ( \eta_{x}^{2 \lambda} ) $ by \eqref{eq:Cgamma}
and sum the geometric series to get (note that $c+1 > \gamma+\beta$)
			\begin{equation}\label{eq:estimate_Kconvf_1}
				\left| \sfK f (\varphi_x^{\lambda}) \right| \lesssim \sum\limits_{n = 0}^{N_{\lambda} - 1} 2^{- \beta n} (2^{n} \lambda)^{c+1} 2^{- n \gamma} \, + \sum\limits_{n = N_{\lambda}}^{+ \infty} 2^{- \beta n} \lambda^{\gamma} \lesssim \lambda^{\gamma + \beta} ,
			\end{equation}
where the multiplicative constant depends only on the kernel $\sfK$, the compact $K$
and the distribution $f$, as well as on $\bar\lambda$. This concludes our first estimate.

We next bound $\sfK f(\varphi_x)$ for $\varphi \in \mathscr{B}_{}^{r}$ 
which may not annihilate polynomials, see \eqref{eq:Cgamma}.
By Lemma~\ref{lemma:test_functions_K_n} 
with $\bar\lambda = 1$ and $c=-1$, we have an analogue of \eqref{eq:usude}
with $\lambda=1$:
			\begin{align*}
				\sfK f (\varphi_x^{}) 
			=	\sum\limits_{n = 0}^{N_{1} - 1} 2^{- \beta n} \,
	f (\zeta_{x}^{2\rho 2^{-n}} ) \, 
	+ \sum\limits_{n = N_{1}}^{+ \infty} 2^{- \beta n} \,
	f ( \eta_{x}^{2 } )  \,,
			\end{align*}
for suitable $\eta, \zeta \in \mathrm{cst}_{K,1}\, \mathscr{B}^{r}$.
Since $2 \le 2\rho 2^{-n} \le 2\rho$ for $n \le N_1-1$,
we can bound $|f (\zeta_{x}^{2\rho 2^{-n}} )| \lesssim 1$ and
$|f ( \eta_{x}^{2 } )| \lesssim 1$ by \eqref{eq:order2}, because
$f$ is a distribution of order~$r$, hence
			\begin{equation}\label{eq:estimate_Kconvf_2}
				\left| \sfK f (\varphi_x^{}) \right| \lesssim \sum\limits_{n \in \mathbb{N}} 2^{- \beta n} \lesssim 1 .
			\end{equation}

From the calculations above, the estimates \eqref{eq:estimate_Kconvf_1}
and \eqref{eq:estimate_Kconvf_2} hold uniformly over $x \in K$, $\lambda \in (0, 1]$, $\varphi \in \mathscr{B}^{r}_{\gamma + \beta}$ for any $r > - \gamma$.
In fact, from the results of Appendix~\ref{section:independence_in_r}, see Proposition~\ref{prop:independence_in_r}, this remains true even for $r > - \gamma - \beta$, and thus $f \in \mathcal{Z}^{\gamma + \beta}$.

By tracking the constants in the estimates, we have shown that
	\begin{equation*}
		\| \sfK f \|_{\mathcal{Z}_{K}^{\gamma + \beta}} \lesssim \| f \|_{\mathcal{Z}_{K^{\prime}}^{\gamma}} ,
	\end{equation*}

\noindent where the compact $K^{\prime} \supset K$ on the right-hand side depends only on $K$ and the kernel $\sfK$, whence the continuity of the map $\mathcal{Z}^{\gamma} \to \mathcal{Z}^{\gamma + \beta}, f \mapsto \sfK f$.
\end{proof}			
	
	\subsection{Proof of Theorem~\ref{thm:convolution_germs_in_G_checkG}} \label{subsubsection:proof_convolution_germs}

We are going to prove the bounds \eqref{eq:Kconv-hom} and \eqref{eq:Kconv-coh}.
Then, in order to obtain the estimate \eqref{eq:Kconvbound}
which proves continuity from $\mathcal{G}^{\bar{\alpha}; \alpha, \gamma}$ to 
$\mathcal{G}_{\mathrm{weak}}^{\bar\alpha + \beta; \alpha + \beta, \gamma + \beta}$,
it suffices to choose $r = r_{\bar\alpha,\alpha}$ as in \eqref{eq:canonical-r},
recalling Remark~\ref{rem:uniformity} and Proposition~\ref{prop:independence_in_r}
in Appendix~\ref{section:independence_in_r}.
The second part, i.e.\ continuity from 
$\mathcal{G}_{\mathrm{weak}}^{\bar{\alpha}; \alpha, \gamma}$ to 
$\mathcal{G}_{\mathrm{weak}}^{\bar\alpha + \beta; \alpha + \beta, \gamma + \beta}$,
is proved in the same way, 
using \eqref{eq:eta_zeta_are_in_Bcr_when_K_preserves_polynomials} rather 
than \eqref{eq:eta_zeta_are_in_Bcr} of Lemma~\ref{lemma:test_functions_K_n}.

Let $\sfK$ be a $\beta$-regularising kernel of order $(m, r)$.
We fix a compact $K \subset \mathbb{R}^d$ and $\bar\lambda \in [1,\infty)$
and we derive \emph{estimates that are uniform for $x, y \in K$ and $\lambda \in (0, \bar\lambda]$}.
The decomposition argument is the same as in 
the proof of Theorem~\ref{thm:classical_schauder}.

\begin{enumerate}
	\item \emph{Weak homogeneity: proof of \eqref{eq:Kconv-hom}.} 
	We assume that $m > \bar\alpha+\beta$.
	Recalling \eqref{eq:semi-norm-weak-homogeneity}, we need to show that
	there is a constant $\mathrm{cst}' = \mathrm{cst}'_{K,\bar\lambda} < \infty$
	such that, uniformly for $x\in K$, for $\lambda \in (0,\bar\lambda]$ and for
	$\varphi \in \mathscr{B}_{\bar{\alpha} + \beta}^{r}$ and
	$\psi \in \mathscr{B}^{r}$,
\begin{equation}\label{eq:goa1}
	\left| \sfK F_x (\varphi_x^{\lambda}) \right| \le \mathrm{cst}'\, 
	\| F \|_{\mathcal{G}_{\mathrm{hom}; K,2(\bar\lambda + \rho),r}^{\bar{\alpha}}} 
	\lambda^{\bar{\alpha} + \beta} \,, \quad \
	\left| \sfK F_x (\psi_x) \right| \le \mathrm{cst}'\,
	\| F \|_{\mathcal{G}_{\mathrm{hom}; K,2(\bar\lambda + \rho),r}^{\bar{\alpha}}} \,.
\end{equation}
	
	Given $\varphi \in \mathscr{B}_{\bar{\alpha} + \beta}^{r}$,
	we apply Lemma~\ref{lemma:test_functions_K_n}
	with $c := \lfloor \bar{\alpha} + \beta \rfloor$
	(note that $\varphi \in \mathscr{B}_{c}^{r}$).
We set $N_{\lambda} \coloneqq \min \lbrace n \in \mathbb{N} \colon \rho 2^{- n} 
\leq \lambda \rbrace$ and we split the sum in \eqref{eq:definition_of_convolution}:
			\begin{equation*}
				\sfK F_x (\varphi_x^{\lambda}) = \sum\limits_{n = 0}^{N_{\lambda} - 1} F_x ( \sfK_n^* \varphi_{x}^{\lambda}) \, + \sum\limits_{n = N_{\lambda}}^{+ \infty} F_x ( \sfK_n^* \varphi_{x}^{\lambda}) .
			\end{equation*}
		From \eqref{eq:scaled_recentered_eta_zeta}, 
	since $\min\{c+1,m\} = c+1$
because $m > \bar{\alpha} + \beta$,
			\begin{equation} \label{eq:simito}
			\sfK F_x (\varphi_x^{\lambda}) = \sum\limits_{n = 0}^{N_{\lambda} - 1} 
			2^{- \beta n} (2^{n} \lambda)^{c+1} 
			F_x (\zeta_{x}^{2(\rho 2^{-n})} ) \, 
			+ \sum\limits_{n = N_{\lambda}}^{+ \infty} 2^{- \beta n} 
			F_x ( \eta_{x}^{2 \lambda} ) \,,
			\end{equation}
for $\zeta,\eta \in \mathrm{cst}_{K,\bar\lambda}\, \mathscr{B}^{r}$, see \eqref{eq:eta_zeta_are_in_Bcr}.
		Since $F$ is $\bar\alpha$-homogeneous, see \eqref{eq:semi-norm-homogeneity},
	\begin{equation*}
		\left| \sfK F_x (\varphi_x^{\lambda}) \right| 
		\lesssim \mathrm{cst}_{K,\bar\lambda}\,
		\| F \|_{\mathcal{G}_{\mathrm{hom}; K,2(\bar\lambda+\rho),r}^{\bar{\alpha}}} \Bigg\{
		\sum\limits_{n = 0}^{N_{\lambda} - 1} 2^{- \beta n} (2^{n} \lambda)^{c+1} 
		2^{- n \bar{\alpha}} \, + \sum\limits_{n = N_{\lambda}}^{+ \infty} 
		2^{- \beta n} \lambda^{\bar{\alpha}} \Bigg\} .
			\end{equation*}
		Since $c+1 > \bar\alpha+\beta$,
		the geometric series yield the first bound in \eqref{eq:goa1}.

Given $\psi \in \mathscr{B}_{}^{r}$ (which needs not annihilate polynomials),
we apply Lemma~\ref{lemma:test_functions_K_n} with $\varphi = \psi$ and
$\bar\lambda = 1$, $c=-1$.
Similarly to \eqref{eq:simito}, with $\lambda=1$, we can write
			\begin{align} \label{eq:asin1}
	\sfK F_x (\psi_x^{}) & = \sum\limits_{n = 0}^{N_{1} - 1} 
			2^{- \beta n} 
			F_x (\zeta_{x}^{2(\rho 2^{-n})} ) \, 
			+ \sum\limits_{n = N_{1}}^{+ \infty} 2^{- \beta n} 
			F_x ( \eta_{x}^{2 } )
			\end{align}
for $\zeta,\eta \in \mathrm{cst}_{K,1}\, \mathscr{B}^{r}$, see \eqref{eq:eta_zeta_are_in_Bcr}.
Since $F$ is $\bar\alpha$-homogeneous, see \eqref{eq:semi-norm-homogeneity},
	\begin{equation} \label{eq:asin2}
		\left| \sfK F_y (\psi_x^{}) \right| \lesssim \mathrm{cst}_{K,1}\,
		 \| F \|_{\mathcal{G}_{\mathrm{hom}; K,2(1+\rho),r}^{\bar{\alpha}}} 
		 \sum\limits_{n \in \mathbb{N}} 2^{- \beta n}  \,,
	\end{equation}
which completes the proof of \eqref{eq:goa1}.
		
	\item \emph{Weak coherence: proof of \eqref{eq:Kconv-coh}.} 
	We now assume $m > \gamma+\beta$.
	Recalling \eqref{eq:semi-norm-weak-coherence}, we need to show that,
	uniformly for $x\in K$, $\lambda \in (0,\bar\lambda]$ and for
	$\varphi \in \mathscr{B}_{\gamma + \beta}^{r}$, $\psi \in \mathscr{B}^{r}$,
\begin{equation}\label{eq:goa2}
\begin{gathered}
	\left| ( \sfK F_y - \sfK F_x) (\varphi_x^{\lambda}) \right| \le \mathrm{cst}'\, 
	\| F \|_{\mathcal{G}_{\mathrm{coh}; K,2(\bar\lambda+\rho),r}^{\alpha,\gamma}} \,
	\lambda^{\alpha + \beta} (| y - x | + \lambda)^{\gamma-\alpha} \,, \\
	\left| ( \sfK F_y - \sfK F_x) (\psi_x) \right| \le \mathrm{cst}'\,  
	\| F \|_{\mathcal{G}_{\mathrm{coh}; K,2(\bar\lambda+\rho),r}^{\alpha,\gamma}} \,.
\end{gathered}
\end{equation}
	
	By \eqref{eq:definition_of_convolution}
			\begin{equation*}
				( \sfK F_y - \sfK F_x) (\varphi_x^{\lambda}) = \sum\limits_{n \in \mathbb{N}} ( F_y - F_x ) ( \sfK_n^* \varphi_{x}^{\lambda}) 
			\end{equation*}
	For $\varphi \in \mathscr{B}_{\gamma + \beta}^{r}$,
	we apply Lemma~\ref{lemma:test_functions_K_n}
	with $c := \lfloor \gamma + \beta \rfloor$
	(note that $\varphi \in \mathscr{B}_{c}^{r}$)
		and we cut the sum as before, using \eqref{eq:scaled_recentered_eta_zeta}
		and $\min\{c+1,m\} = c+1$ by $m > \gamma+\beta$:
		\begin{align*}
				&( \sfK F_y - \sfK F_x) (\varphi_x^{\lambda}) =
	\\ &			= \sum\limits_{n = 0}^{N_{\lambda} - 1} 2^{- \beta n} (2^{n} \lambda)^{c+1} 
				( F_y - F_x ) ( \zeta_{x}^{2(\rho 2^{-n})} ) 
				+ \sum\limits_{n = N_{\lambda}}^{+ \infty} 2^{- \beta n} 
				( F_y - F_x ) ( \eta_{x}^{2 \lambda} ) \,,
		\end{align*}
for $\zeta,\eta \in \mathrm{cst}_{K,\bar\lambda}\, \mathscr{B}^{r}$, see \eqref{eq:eta_zeta_are_in_Bcr}.
		Since $F$
		is $(\alpha,\gamma)$-coherent, see \eqref{eq:semi-norm-coherence},
\begin{equation}\label{eq:notrouble}
			\begin{split}
				& \left| ( \sfK F_y - \sfK F_x) (\varphi_x^{\lambda}) \right| \le
				\\ & \le
	\mathrm{cst}_{K,\bar\lambda}\, \| F \|_{\mathcal{G}_{\mathrm{coh}; K,
	2(\bar\lambda+\rho),r}^{\alpha,\gamma}}
	\Bigg\{  \sum\limits_{n = 0}^{N_{\lambda} - 1} 2^{- \beta n} (2^{n} \lambda)^{c+1} 2^{- n \alpha} 
				(| y - x | + 2^{- n})^{\gamma - \alpha} \\
				&  \qquad+ \sum\limits_{n = N_{\lambda}}^{+ \infty} 2^{- \beta n} 
				\lambda^{\alpha} (|y - x| + \lambda)^{\gamma - \alpha}
				\Bigg\} .
			\end{split}
\end{equation}
Bounding $(| y - x | + 2^{- n})^{\gamma - \alpha} \lesssim | y - x |^{\gamma - \alpha}
+ 2^{-n(\gamma - \alpha)}$ and recalling that $c+1 > \gamma + \beta \ge \alpha+\beta$,
we can estimate the first sum in the r.h.s.\ by
\begin{equation*}
\begin{split}
	\lambda^{c+1}  \sum\limits_{n = 0}^{N_{\lambda} - 1}
	\big\{ 2^{n(c+1-\alpha-\beta)}| y - x |^{\gamma - \alpha} 
	+ 2^{n(c+1-\gamma-\beta)}\big\} 
	&\lesssim \lambda^{\alpha + \beta} 
	| y - x |^{\gamma-\alpha} + \lambda^{\gamma + \beta}  \\
	&\lesssim \lambda^{\alpha + \beta} 
	(| y - x | + \lambda)^{\gamma-\alpha}. 
\end{split}
\end{equation*}
Also the second sum in \eqref{eq:notrouble} gives
$\lambda^{\alpha+\beta} (|y - x| + \lambda)^{\gamma - \alpha}$, therefore we obtain
the first bound in \eqref{eq:goa2}.

For the second bound in \eqref{eq:goa2}, we argue as in \eqref{eq:asin1}-\eqref{eq:asin2}:
given $\psi \in \mathscr{B}_{}^{r}$ (which needs not annihilate polynomials), we can write
			\begin{align*}
	( \sfK F_y - \sfK F_x) (\psi_x) & 
	= \sum\limits_{n = 0}^{N_{1} - 1} 
			2^{- \beta n} 
			(F_y-F_x) (\zeta_{x}^{2(\rho 2^{-n})} ) \, 
			+ \sum\limits_{n = N_{1}}^{+ \infty} 2^{- \beta n} 
			(F_y-F_x) ( \eta_{x}^{2 } )
			\end{align*}
for $\zeta,\eta \in \mathrm{cst}_{K,1}\, \mathscr{B}^{r}$, see \eqref{eq:eta_zeta_are_in_Bcr}.
Since $F$ is $(\alpha,\gamma)$-coherent, see \eqref{eq:semi-norm-coherence},
	\begin{equation*}
		\left| ( \sfK F_y - \sfK F_x) (\psi_x) \right| \le
		\mathrm{cst}_{K,1}\, \| F \|_{\mathcal{G}_{\mathrm{coh}; K,
		2(1+\rho),r}^{\alpha,\gamma}}
		 \sum\limits_{n \in \mathbb{N}} 2^{- \beta n} \,,
	\end{equation*}
which completes the proof of \eqref{eq:goa2}.
\end{enumerate}

This completes the proof of  Theorem~\ref{thm:convolution_germs_in_G_checkG}.
\qed

\subsection{Proof of Theorem~\ref{thm:embedding_G_checkG}: homogeneity}
\label{sec:weak-to-strong}

In this subsection we prove ``half'' of Theorem~\ref{thm:embedding_G_checkG},
showing that a weakly homogeneous germ can be turned into an ordinary
homogeneous germ by subtracting a suitable Taylor polynomial.

\begin{theorem}[Positive renormalisation of weakly homogeneous germs]
\label{thm:existence_pointwise_evaluations}

If a germ
$F = (F_x)_{x\in\R^d} \in \mathcal{G}_{\mathrm{weak\,hom};\,r}^{\bar\alpha}$
is weakly $\bar\alpha$-homogeneous of order $r \in \N_0$,
then all pointwise derivatives $D^k F_x ( x )$ for $x\in\R^d$ and $0 \le |k| < \bar\alpha$
are well-defined by \eqref{eq:definition_Dkf(x)2} and they satisfy
the following bound, for any compact $K \subseteq \R^d$ and $\bar\lambda > 0$:
	\begin{equation}\label{eq:estimate_on_F_x^k(x)}
		\sup_{x \in K} 
		| D^k F_x ( x ) | \leq \mathrm{cst} \, 
		\| F \|_{\mathcal{G}_{\mathrm{weak\,hom}; K, \bar{\lambda}, r}^{\bar{\alpha}}} 
		\qquad \text{for } 0 \le |k| < {\bar\alpha} \,,
	\end{equation}
where $\mathrm{cst} > 0$ is a constant depending only on $\bar{\alpha}, \bar\lambda, r$
and on the dimension~$d$.

Recalling \eqref{eq:Taylor-f}, we can then define the germ
$G := F - \cT^{\bar\alpha}(F)$, that is
\begin{equation*}
	G_x \coloneqq F_x - \cT^{\bar\alpha}_x(F_x)
	= F_x - \sum_{0 \le | k | < \bar{\alpha}} D^k F_x ( x ) \, \mathbb{X}_x^k \,.
\end{equation*}

	\begin{enumerate}
		\item If $\bar{\alpha} \not\in \mathbb{N}_0$, then $G$ is $\bar\alpha$-homogenenous, i.e.\ $G \in \cG^{\bar\alpha}_{\hom}$, and
for all compact $K \subseteq \R^d$ and $\bar\lambda > 0$, $r\in\N_0$
\begin{equation}\label{eq:continuity_estimate_embedding_homogeneity}
	\left\| G \right\|_{\mathcal{G}_{\mathrm{hom}; K, \bar{\lambda}, r}^{\bar{\alpha}}} 
	\le \mathrm{cst'} \, 
	\left\| F \right\|_{\mathcal{G}_{\mathrm{weak\,hom}; K, \bar{\lambda}, r}^{\bar{\alpha}}} ,
\end{equation}
where $\mathrm{cst'}$ is a constant depending only on $\bar{\alpha}, r$
and on the dimension~$d$.
	
		\item If $\bar{\alpha} \in \mathbb{N}_0$ and if furthermore 
		$D^k F \in \mathcal{G}_{\mathrm{hom}}^{0}$ for all multi-indices $k$ with 
		$| k | = \bar{\alpha}$, then $G \in \cG^{\bar\alpha}_{\hom}$ and,
for all compact $K \subseteq \R^d$ and $\bar\lambda > 0$, $r\in\N_0$,
\begin{equation*}
	\left\| G \right\|_{\mathcal{G}_{\mathrm{hom}; K, \bar{\lambda}, r}^{\bar{\alpha}}} 
	\le \mathrm{cst'} \, 
	\Big ( \left\| F \right\|_{\mathcal{G}_{\mathrm{weak\,hom}; K, \bar{\lambda}, r}^{\bar{\alpha}}} + \sum_{| k | = \bar{\alpha}}  \left\| D^k F \right\|_{\mathcal{G}_{\mathrm{hom}; K, \bar{\lambda}, r}^{0}}  \Big) ,
\end{equation*}
where $\mathrm{cst'}$ is a constant depending only on $\bar{\alpha}, r$
and on the dimension~$d$.
	\end{enumerate}

\end{theorem}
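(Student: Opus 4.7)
The plan is to view the theorem as a uniform, germ-level version of Lemma~\ref{lemma:pointwise_derivative}: for every fixed $x \in \R^d$, the distribution $f = F_x$ satisfies the weak-homogeneity hypothesis \eqref{eq:local-hom} with $\delta = \bar\alpha$, since the first line of \eqref{eq:weak-homogeneity} gives $|F_x(\varphi_x^\lambda)| \le \|F\|_{\check{\cG}_{\mathrm{hom};K,\bar\lambda,r}^{\bar\alpha}}\,\lambda^{\bar\alpha}$ for $\varphi \in \mathscr{B}_{\bar\alpha}^r$ and, by homogeneity in $\varphi$, extends to arbitrary $\varphi \in \mathscr{B}_{\bar\alpha}$ with an extra $\|\varphi\|_{C^r}$ factor. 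Lemma~\ref{lemma:pointwise_derivative} thus produces the pointwise derivatives $D^k F_x(x)$ for all $|k| < \bar\alpha$, and since the constant in~\eqref{eq:local-hom} is uniform in $x \in K$, so are all conclusions.

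For the quantitative estimate \eqref{eq:estimate_on_F_x^k(x)} I would fix once and for all a mollifier $\eta \in \cD(B(0,1))$ with $\int\eta = 1$ and $\int z^l\eta(z)\,\d z = 0$ for $1 \le |l| < \bar\alpha$, and expand
\[
	D^k F_x(x) = D^k F_x(\eta_x^1) + \sum_{j=0}^\infty \bigl[ D^k F_x(\eta_x^{2^{-j-1}}) - D^k F_x(\eta_x^{2^{-j}}) \bigr].
\]
Using $D^k F_x(\eta_x^\mu) = (-\mu)^{-|k|} F_x((\partial^k \eta)_x^\mu)$ and the change of variables $u = 2y$, the $j$-th increment at scale $\mu = 2^{-j}$ rewrites as $\mu^{-|k|} F_x(\Phi_x^\mu)$, where the fixed test function $\Phi(z) \coloneqq \partial^k\eta(z) - 2^{|k|+d}\,\partial^k\eta(2z)$ satisfies $\int \Phi(z) z^l\,\d z = 0$ for all $|l| \le \bar\alpha$: indeed, when $k \not\le l$ both integrals vanish by direct differentiation; when $k \le l$ with $l \ne k$ integration by parts reduces each to $\int \eta(z) z^{l-k}\,\d z = 0$ (valid since $1 \le |l-k| < \bar\alpha$); and for $l = k$ the dyadic cancellation factor $1 - 2^{|k|-|l|} = 0$ handles the non-vanishing integral. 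Therefore $\Phi/\|\Phi\|_{C^r} \in \mathscr{B}_{\bar\alpha}^r$, so the first part of \eqref{eq:weak-homogeneity} yields $|F_x(\Phi_x^\mu)| \lesssim \|F\|_{\check{\cG}_{\mathrm{hom};K,\bar\lambda,r}^{\bar\alpha}}\,\mu^{\bar\alpha}$, making each increment $O(\mu^{\bar\alpha - |k|})$; summing the geometric series (convergent because $\bar\alpha > |k|$) and bounding the initial term $|D^k F_x(\eta_x^1)| \le |F_x((\partial^k\eta)_x^1)| \lesssim \|F\|_{\check{\cG}_{\mathrm{hom};K,\bar\lambda,r}^{\bar\alpha}}$ via the second part of \eqref{eq:weak-homogeneity} gives \eqref{eq:estimate_on_F_x^k(x)}.

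With the $D^k F_x(x)$ in hand and quantitatively controlled, the non-integer case $\bar\alpha \notin \N_0$ is essentially immediate: the ``strong homogeneity'' conclusion of Lemma~\ref{lemma:pointwise_derivative} yields $|G_x(\psi_x^\lambda)| \lesssim \|F\|_{\check\cG_{\mathrm{hom}}^{\bar\alpha}}\,\lambda^{\bar\alpha}$ for every $\psi \in \cD(B(0,1))$ and $\lambda \in (0,1]$, uniformly in $x \in K$. To extend to $\lambda \in [1, \bar\lambda]$, I would decompose any $\varphi \in \mathscr{B}^r$ as $\varphi = \tilde\varphi + \rho$, with $\tilde\varphi$ annihilating polynomials of degree $\le \bar\alpha$ and $\rho$ a bounded linear combination of a fixed finite family of test functions in $\mathscr{B}^r$ matching the low-order moments of $\varphi$: for $\tilde\varphi$, Taylor subtraction kills the polynomial part exactly and weak homogeneity controls $F_x(\tilde\varphi_x^\lambda)$; for $\rho$, one controls $F_x(\rho_x^\lambda)$ via the scale-$1$ bound of \eqref{eq:weak-homogeneity} together with a partition-of-unity argument as in Remark~\ref{remark:lower_scales}, and absorbs the remaining polynomial terms into $\lambda^{\bar\alpha}$ using $\lambda \ge 1$. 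The integer case $\bar\alpha \in \N_0$ follows verbatim from the third conclusion of Lemma~\ref{lemma:pointwise_derivative}, for which the extra hypothesis $D^k F \in \cG^0_{\mathrm{hom}}$ at $|k| = \bar\alpha$ supplies exactly the uniform bound $\sup_{\lambda \in (0,1]}|D^k F_x(\varphi_x^\lambda)| \lesssim 1$ required there.

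\textbf{Main obstacle.} The delicate step is the moment computation for the dyadic difference $\Phi$, which rests on the interplay between integration by parts (trading polynomial degree against order of derivative) and the dyadic cancellation factor $1 - 2^{|k|-|l|}$ that precisely kills the exceptional $l = k$ term; beyond this, the argument is a careful propagation of the weak-homogeneity constants together with the standard partition-of-unity extension from scale $1$ to scale $\bar\lambda$.
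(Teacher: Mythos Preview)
Your approach is essentially the same as the paper's: both apply Lemma~\ref{lemma:pointwise_derivative} pointwise to $f = F_x$ and track the constants through the dyadic telescoping (your $\Phi$ is exactly $-\partial^k\varphi$ in the paper's notation, with $\varphi \coloneqq \eta^{1/2} - \eta$, and your moment computation is the paper's relation~\eqref{eq:eta-polynomials}). Your separate treatment of $\lambda \in [1,\bar\lambda]$ is more work than needed, since the paper's decomposition~\eqref{eq:gsplit} already reduces $G_x(\psi_x^\lambda)$ entirely to evaluations of $F_x$ on test functions in $\mathscr{B}_{\bar\alpha}^r$, where the first weak-homogeneity bound applies directly for all $\lambda \in (0,\bar\lambda]$.
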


This result turns out to be a corollary of Lemma~\ref{lemma:pointwise_derivative},
which we prove first.

\begin{proof}[Proof of Lemma~\ref{lemma:pointwise_derivative}]
We fix a distribution $f \in \cD'$ such that \eqref{eq:local-hom} holds, i.e.\
\begin{equation} \label{eq:local-hom2}
	\text{for any $\varphi \in \mathscr{B}_{\delta}$} : \qquad
	|f(\varphi_{x}^\lambda)| \lesssim \lambda^{\delta} \quad
	\text{uniformly for $\lambda \in (0,1]$} \,,
\end{equation}
for some fixed $x \in \R^d$ and $\delta > 0$
(recall the definition \eqref{eq:BB} of $\mathscr{B}_\delta$).
We first show that derivatives $D^k f(x)$ defined as in \eqref{eq:definition_Dkf(x)2}
exist, then we prove the bound \eqref{eq:gbound}.

\emph{Let us fix any test function $\eta \in \cD$ with
$\int \eta = 1$ and $\int \eta(x) \, x^l \, \d x = 0$ for all $1 \le |l| \le {\delta}$.}
Without loss of generality, we assume that $\supp(\eta) \subset B (0, 1)$. We claim that
\begin{equation} \label{eq:eta-polynomials}
	\forall k,l \in \N_0^d \ \text{ with } \ 0 \le |l| \le \delta \colon \ \quad
	\int_{\R^d} \frac{x^l}{l!} \, \partial^k\eta(x) \, d x
	= \begin{cases}
	(-1)^{|l|} & \text{ if } l=k \,, \\
	0 & \text{ if } l \ne k \,.
	\end{cases}
\end{equation}
If $l_i < k_i$ for some $i=1,\ldots, d$, this holds by integration by parts, 
because $\partial_i^{k_i} x_i^{l_i} = 0$.
If $l \ge k$, again by integration by parts,
the integral equals $(-1)^{|k|}\int_{\R^d} \frac{x^{l-k}}{(l-k)!} \, \eta(x) \, dx$, which
gives $(-1)^{|l|}$ for $l = k$ while it vanishes
for $l \ne k$, because
$0 < |l-k| \le |l| \le \delta$ and $\eta$ annihilates monomials with this degree.

Let us check that the limit in \eqref{eq:definition_Dkf(x)2} 
does not depend on the choice of $\eta$: if $\tilde{\eta}$ is another such function, then 
$\partial^k (\eta - \tilde{\eta})$ annihilates monomials $x^l$ for any 
$0 \le |l| \le {\delta}$, by \eqref{eq:eta-polynomials},
hence by \eqref{eq:local-hom2} and the definition of  distributional derivative,
for $|k| < \delta$,
	 \begin{equation*}
		D^k f (\eta_x^{\lambda}) - D^k f (\tilde{\eta}_x^{\lambda}) = ( - \lambda )^{- | k |} f 
		\left( (\partial^k (\eta - \tilde{\eta}))_x^{\lambda} \right) 
		= O\big(\lambda^{\delta - | k |}\big) \xrightarrow[\, \lambda\downarrow 0\,]{} 0 \,.
	\end{equation*}

We next establish the limit in \eqref{eq:definition_Dkf(x)2}.
For any $\lambda \in (0, 1]$ and $N \in \mathbb{N}_0$, we can write
\begin{equation} \label{eq:F_x_as_sum}
	D^k f (\eta_x^{\lambda 2^{- (N + 1)}}) 
	= D^k f ( \eta_x^{\lambda} ) + \sum\limits_{n = 0}^{N} 
	D^k f ( \eta_x^{\lambda 2^{- (n + 1)}} - \eta_x^{\lambda 2^{- n}} ) . 
\end{equation}
Let us define the function
\begin{equation} \label{eq:specialphi}
	\varphi(\cdot) := \eta^{\frac{1}{2}}(\cdot) - \eta(\cdot) = 2^d \, \eta(2\, \cdot) - \eta (\cdot) 
\end{equation}
so that
	\begin{align*}
		 D^k f ( \eta_x^{\lambda 2^{- (n + 1)}} - \eta_x^{\lambda 2^{- n}} ) 
		 & = (- \lambda^{-1} 2^{n + 1})^{| k |} f ( (\partial^k \eta)_x^{2^{- (n + 1)}} ) 
		 - (- \lambda^{-1} 2^{n})^{| k |} f ( (\partial^k \eta)_x^{2^{- n}} ) \\
		 & = (- \lambda^{-1} 2^{n})^{| k |} f ( (\partial^k \varphi)_{x}^{\lambda 2^{- n}} ) \,.
	\end{align*}
Note that $\partial^k \varphi \in \mathscr{B}_{\delta}$ for any $k\in\N_0^d$,
by \eqref{eq:eta-polynomials}. Then our assumption \eqref{eq:local-hom2} yields
	\begin{equation}\label{eq:estimate_of_Fx(phi_k_x)}
		\big|  D^k f ( \eta_x^{\lambda 2^{- (n + 1)}} - \eta_x^{\lambda 2^{- n}} ) \big| 
		= \lambda^{-|k|} \, 2^{n|k|} \,
		\big| f ( (\partial^k \varphi)_{x}^{\lambda 2^{- n}} ) \big|
		\lesssim \lambda^{\delta - | k |} 2^{- n (\delta - | k |)} \,,
	\end{equation}	  
hence the sum in \eqref{eq:F_x_as_sum} converges for $0 \le | k | < \delta$.
This shows that the limit 
\begin{equation} \label{eq:thelim}
	D^k f ( x | \lambda) := \lim_{N\to\infty} D^k f (\eta_x^{\lambda 2^{- N }})
\end{equation}
exists in $\R$ for any fixed $\lambda \in (0,1]$.

Let us show that 
the limit in \eqref{eq:thelim} does not depend on $\lambda$
and it coincides with the limit in \eqref{eq:definition_Dkf(x)2}.
Fix $\lambda, \lambda' \in (0,1]$ and define $\tilde\varphi := \eta^\lambda-\eta^{\lambda'}$,
so that we can write
\begin{equation} \label{eq:uican}
	D^k f ( x | \lambda) - D^k f ( x | \lambda') = \lim_{N\to\infty}
	 D^k f (\tilde\varphi_x^{ 2^{- N}}) = \lim_{N\to\infty}
	2^{N|k|} f ((\partial^k \tilde\varphi)_x^{ 2^{- N}}) \,.
\end{equation}
Note that \eqref{eq:eta-polynomials} still holds if we replace $\eta$ by $\eta^\lambda$ (because $\int \eta^\lambda 
= 1$
and $\int \eta^\lambda(x) \, x^l \, dx 
= 0$ for all $1 \le |l| \le \delta$), and similarly for $\eta^{\lambda'}$.
It follows that $\partial^k \tilde\varphi \in \cB_{\delta}$ for any $k\in\N_0^d$, hence
$|f( (\partial^k \tilde\varphi)_x^{2^{-N}})| = O( 2^{-N\delta})$ as $N\to\infty$, by \eqref{eq:local-hom2}.
In view of \eqref{eq:uican}, we obtain $D^k f ( x | \lambda) = D^k f ( x | \lambda')$
for $0 \le |k| < \delta$, hence the limit in \eqref{eq:thelim} does not depend on $\lambda$
and we simply call it $D^k f ( x )$. 
Recalling \eqref{eq:F_x_as_sum}, for any $\lambda\in(0,1]$ we can write

\begin{equation} \label{eq:F_x_as_sum_2}
\begin{split}
	D^k f (x) 
	&= D^k f ( \eta_x^{\lambda} ) + \sum\limits_{n = 0}^{+\infty} 
	D^k f ( \eta_x^{\lambda 2^{- (n + 1)}} - \eta_x^{\lambda 2^{- n}} ) \\
	&= (- \lambda^{-1})^{| k |} f^{} ( (\partial^k \eta)_x^{\lambda} ) 
	+ \sum\limits_{n = 0}^{+ \infty} (- \lambda^{-1} 2^{n})^{| k |} 
	f ( (\partial^k \varphi)_{x}^{\lambda 2^{- n}} ) \,,
\end{split}
\end{equation}
with $\varphi$ defined in terms of $\eta$ by \eqref{eq:specialphi}.
The first line in \eqref{eq:F_x_as_sum_2} shows that $D^k f (x)$ does
coincide with the limit in \eqref{eq:definition_Dkf(x)2}, because by \eqref{eq:estimate_of_Fx(phi_k_x)},
for $|k| < \delta$,
\begin{equation*}
	| D^k f ( \eta_x^{\lambda} ) -D^k f (x)|
	\lesssim
	\sum\limits_{n = 0}^{+\infty} \lambda^{\delta - | k |} \, 2^{- n (\delta - | k |)}
	\lesssim \lambda^{\delta - | k |} \xrightarrow[\, \lambda\downarrow 0\,]{} 0 \,.
\end{equation*}

We now prove \eqref{eq:gbound}.
Recalling \eqref{eq:Taylor-f}, we set for short
\begin{equation*}
	g \coloneqq f - \cT^{\delta}_x(f)
	= f - \sum_{0 \le | k | < \delta} D^k f ( x ) \, \mathbb{X}_x^k \qquad
	\text{where} \qquad \mathbb{X}_x^k(\cdot) := \frac{(\,\cdot\,-x)^k}{k!} \,.
\end{equation*}
We fix an arbitrary $\psi \in \cD(B(0,1))$ and we need to prove that
	\begin{equation} \label{eq:needto}
		g (\psi_x^{\lambda}) = f (\psi_x^{\lambda}) - \sum\limits_{0 \le | k | < \delta} 
		D^k f ( x )  \, \mathbb{X}_x^{k} (\psi_x^{\lambda})
		= O(\lambda^\delta) \,.
	\end{equation}
	
First consider the case $\delta \notin \mathbb{N}$.
Since $\mathbb{X}_x^{k} (\psi_x^{\lambda}) = \lambda^{|k|} \mathbb{X}_0^{k} (\psi)$,
the representation formula \eqref{eq:F_x_as_sum_2} for $D^k f ( x )$ yields 
\begin{equation}\label{eq:gsplit}
	\begin{split}
		g (\psi_x^{\lambda}) & = \underbrace{f (\psi_x^{\lambda}) 
		- \sum\limits_{0 \le | k | < \delta} \lambda^{|k |} \, \mathbb{X}_0^{k} (\psi)
		\, (-\lambda^{-1})^{| k |} \, f ( (\partial^k \eta)_x^{\lambda} )}_{\eqqcolon A} \\
		& \quad - \underbrace{\sum\limits_{0 \le | k | < \delta} \lambda^{| k |}
		\, \mathbb{X}_0^{k} (\psi) \sum\limits_{n = 0}^{+ \infty} (- \lambda^{-1} 2^{n})^{| k |}
		\, f ( (\partial^k \varphi)_{x}^{\lambda 2^{- n}} ) }_{\eqqcolon B} \,.
	\end{split}
\end{equation}
Recalling \eqref{eq:estimate_of_Fx(phi_k_x)}, and summing the geometric series, we obtain 
\begin{equation*} 
	|B| \lesssim \sum\limits_{0 \le | k | < \delta} \lambda^{| k |}
	\, \mathbb{X}_0^{k} (\psi) 
	\sum\limits_{n = 0}^{+ \infty} \lambda^{\delta-|k|} \, 2^{-n(\delta-|k|)}
	= \lambda^\delta \sum\limits_{0 \le | k | < \delta} 
	\frac{\mathbb{X}_0^{k} (\psi)}{1-2^{-(\delta-|k|)}} \lesssim \lambda^\delta \,.
\end{equation*}
To prove \eqref{eq:needto}, it remains to show that also $|A| \lesssim \lambda^{\delta}$.

It is convenient to define the function
\begin{equation}\label{eq:check-psi}
	\check{\psi} \coloneqq \psi
	- \sum\limits_{0 \le | k | \le \delta} (-1)^{| k |} \, \mathbb{X}_0^{k} (\psi) \, \partial^k \eta \,.
\end{equation}
When $\delta \not\in \N$, the sum ranges over $0 \le |k| < \delta$ and we can write
$A = f (\check{\psi}_x^{\lambda})$.
Then the desired bound $|A| \lesssim \lambda^{\delta}$ follows by assumption \eqref{eq:local-hom2}, 
because we claim that
\begin{equation}\label{eq:check-claim}
	\check{\psi} \in \mathscr{B}_\delta \,.
\end{equation}
Indeed, we clearly have $\check{\psi} \in \cD(B(0,1))$ and, moreover,
$\bbX_0^l(\check{\psi}) = \int \frac{x^l}{l!} \,\check{\psi}(x) \, d x = 0$
for any $l\in\N_0^d$ with $0 \le |l| \le \delta$, by \eqref{eq:eta-polynomials}.

Finally, consider the case where $\delta \in \mathbb{N}$.
We modify \eqref{eq:gsplit} writing $A = A_1 + A_2$ with
	\begin{equation*}
	\begin{split}
		A_1 & := f (\psi_x^{\lambda}) 
		- \sum\limits_{0 \le | k | \leq \delta} \lambda^{|k |} \, \mathbb{X}_0^{k} (\psi)
		\, (-\lambda^{-1})^{| k |} \, f ( (\partial^k \eta)_x^{\lambda} ) \\
		A_2 & := \sum\limits_{| k | = \delta} \lambda^{|k |} \, \mathbb{X}_0^{k} (\psi) \, 
		(-\lambda^{-1})^{| k |} \, f ( (\partial^k \eta)_x^{\lambda} )  \,.
	\end{split}
\end{equation*}
We can bound $|A_1| \lesssim \lambda^{\delta}$ as before, while $A_2$ can be rewritten as 
	\begin{equation} \label{eq:check-claim_2}
	A_2 =  \sum\limits_{| k | = \delta} \lambda^{|k |} \, \mathbb{X}_0^{k} (\psi) \, 
	D^k f ( \eta_x^{\lambda} ) = O ( \lambda^{\delta} ) ,
	\end{equation}
since when $\delta \in \mathbb{N}$ we furthermore assume $| D^k f ( \eta_x^{\lambda} ) | \lesssim 1$.
This completes the proof.
\end{proof}

\begin{proof}[Proof of Theorem~\ref{thm:existence_pointwise_evaluations}]
If $\bar\alpha \le 0$ there is nothing to prove,
hence we assume $\bar\alpha > 0$.
We use the same notation as in the proof of Lemma~\ref{lemma:pointwise_derivative}.

Given a weakly homogeneous germ
$F = (F_x)_{x\in\R^d} \in \mathcal{G}_{\mathrm{weak\,hom}}^{\bar\alpha}$,
the distribution $f = F_x$
satisfies \eqref{eq:local-hom} with $\delta = \bar\alpha$, hence 
$D^k F_x (x)$ is well-defined for $x\in\R^d$ and $0 \le |k| < \bar\alpha$.
To obtain the estimate \eqref{eq:estimate_on_F_x^k(x)},
we apply the second line of \eqref{eq:F_x_as_sum_2} with $\lambda=\bar\lambda$
and we note that,
for any compact $K \subseteq \R^d$ and $x\in K$, we can bound
\begin{gather*}
	|F_x ( (\partial^k \eta)_x^{\bar\lambda} ) | \le \bar\lambda^{\bar\alpha}\,
	\| F \|_{\mathcal{G}_{\mathrm{weak\,hom}; K, \bar{\lambda}, r}^{\bar{\alpha}}} 
	\, \|\partial^k\eta\|_{C^{r}}  \,, \\
	|F_x( (\partial^k \varphi)_{x}^{\bar\lambda 2^{- n}} )| \le 
	\bar\lambda^{\bar\alpha} \, 2^{-n \bar\alpha}\,
	\| F \|_{\mathcal{G}_{\mathrm{weak\,hom}; K, \bar{\lambda} , r}^{\bar{\alpha}}} 
	\, \|\partial^k\varphi\|_{C^{r}}  \,,
\end{gather*}
see \eqref{eq:weak-homogeneity} and \eqref{eq:semi-norm-weak-coherence}
(we recall that $\eta$ is a fixed suitable test function  and $\varphi = \eta^{1/2} - \eta$).
Then \eqref{eq:estimate_on_F_x^k(x)} follows 
by summing the second line of \eqref{eq:F_x_as_sum_2}.

Furthermore $|G_x(\psi_x^\lambda)| \le \, c(\psi) \, \lambda^{\bar\alpha}$
by \eqref{eq:gbound}, where $c(\psi)$ can be estimated by \eqref{eq:gsplit}-\eqref{eq:check-claim_2}:
more precisely, if we fix a compact $K \subseteq \R^d$, $\bar\lambda > 0$ and $r\in\N$, then
for any $\lambda \in (0,\bar\lambda]$, $x\in K$ and $\psi \in \cD(B(0,1))$ we have, in the case where $\bar{\alpha} \notin \mathbb{N}_0$,
\begin{equation*}
\begin{split}
	|G_x(\psi_x^\lambda)| 
	& \le 
	\| F \|_{\mathcal{G}_{\mathrm{weak\,hom}; K, \bar{\lambda}, r}^{\bar{\alpha}}} 
	\bigg\{
	\sum\limits_{0 \le | k | < \delta} 
	\frac{\mathbb{X}_0^{k} (\psi)}{1-2^{-(\bar\alpha-|k|)}}\, \|\partial^k\varphi\|_{C^{r}}
	+ \|\check{\psi}\|_{C^r}\bigg\} \lambda^{\bar\alpha} \\
	& \le  \mathrm{cst'} \, 
	\| F \|_{\mathcal{G}_{\mathrm{weak\,hom}; K, \bar{\lambda}, r}^{\bar{\alpha}}} 
	\, \|\psi\|_{C^r} \, \lambda^{\bar\alpha} \,,
\end{split}
\end{equation*}
where $\mathrm{cst'}$ depends only on $\bar{\alpha}, r$
and on the dimension~$d$, whence the result recalling the definition \eqref{eq:semi-norm-homogeneity}
of the homogeneity semi-norm 
$\left\| G \right\|_{\mathcal{G}_{\mathrm{hom}; K, \bar{\lambda}, r}^{\bar{\alpha}}} $.

The same argument also establishes the announced bound when $\bar{\alpha} \in \mathbb{N}_0$.
\end{proof}

\subsection{Proof of Theorem~\ref{thm:embedding_G_checkG}: coherence}
\label{sec:embedding_G_checkG}

In this subsection, we complete the proof of Theorem~\ref{thm:embedding_G_checkG}.
We first show how to decompose any test function
$\psi \in \mathscr{B}_{}^{r}$ as the sum of
a single test function localized at a large scale $2^M$
plus \emph{a sum of test functions which annihilate polynomials}, localised at scales
$2^n$ for $0 \le n \le M$.

\begin{lemma}[Large scale decomposition]\label{lemma:decomposition_test_functions}
Fix $r \in \mathbb{N}_0$ and $c \in \mathbb{N}_0 \cup \lbrace -1 \rbrace$.
For any test function $\psi \in \mathscr{B}_{}^{r}$ and for any $M \in \mathbb{N}_0$,
we can write
	\begin{equation} \label{eq:decomposition}
		\psi = (\tilde{\psi}^{[M]})^{2^M} + \sum\limits_{n = 0}^{M} (\check{\psi}^{[n]})^{2^{n}} 
	\end{equation}
for suitable test-functions
	\begin{equation} \label{eq:boundsB}
		\tilde{\psi}^{[M]} \in \mathrm{cst} \mathscr{B}^{r} \,,
		\qquad \check{\psi}^{[n]} \in \mathrm{cst} \mathscr{B}_c^{r}  \,,
		\qquad \text{ for } 0 \leq n \leq M  \,,
	\end{equation}
where the constant $\mathrm{cst} > 0$ only depends on $r, c$ and on the dimension~$d$.
\end{lemma}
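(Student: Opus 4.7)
The plan is an iterative ``peel-and-rescale'' construction that extracts, at each dyadic scale $2^n$, the polynomial part of the current remainder, leaving behind a residue that automatically annihilates polynomials of degree $\le c$. The case $c = -1$ is trivial since $\mathscr{B}_{-1}^r = \mathscr{B}^r$ (take $\check{\psi}^{[0]} := \psi$ and all other terms zero), so assume $c \ge 0$.

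First I would fix, once and for all, a family of test functions $\{\varphi_k\}_{|k|\le c} \subset \mathcal{D}(B(0,1/2))$ biorthogonal to the monomials, $\int \varphi_k(x)\, x^l\, dx = \delta_{k,l}$ for $|k|,|l|\le c$; such a family exists by standard linear algebra, e.g.\ by inverting the (positive-definite) Gram matrix of the monomials $\{x^k\}_{|k|\le c}$ against a fixed positive bump supported in $B(0,1/4)$. Setting $\psi^{(0)} := \psi$, I then define inductively, for $n = 0, 1, \ldots, M$, the moments $m_k^{(n)} := \int \psi^{(n)}(x)\, x^k\, dx$, the polynomial-matching function $\tilde{p}_n := \sum_{|k|\le c} m_k^{(n)}\, \varphi_k \in \mathcal{D}(B(0,1/2))$, the residue $\check{\psi}^{[n]} := \psi^{(n)} - \tilde{p}_n$, and (for $n < M$) the next iterate $\psi^{(n+1)} := \tilde{p}_n^{1/2}$. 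Finally I set $\tilde{\psi}^{[M]} := \tilde{p}_M$.

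The rescaling is chosen precisely so that $(\tilde{p}_n)^{2^n} = (\psi^{(n+1)})^{2^{n+1}}$ (a direct computation), whence $(\psi^{(n)})^{2^n} = (\check{\psi}^{[n]})^{2^n} + (\psi^{(n+1)})^{2^{n+1}}$. Iterating this identity from $n = 0$ up to $n = M$, together with $\psi^{(M)} = \check{\psi}^{[M]} + \tilde{\psi}^{[M]}$ at the last step, produces \eqref{eq:decomposition}. By construction, $\check{\psi}^{[n]}$ annihilates polynomials of degree $\le c$ (its moments equal $m_k^{(n)} - m_k^{(n)} = 0$ by the biorthogonality relation), and both $\check{\psi}^{[n]}$ and $\tilde{\psi}^{[M]}$ are supported in $B(0,1)$ since $\tilde{p}_n$ is supported in $B(0,1/2)$ and $\psi^{(n)}$ is supported in $B(0,1/4)$ for $n \ge 1$.

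The main step to verify --- and the only place where care is needed --- is the \emph{uniform} bound \eqref{eq:boundsB}. On its face the construction looks risky: each rescaling $\psi^{(n+1)} = \tilde{p}_n^{1/2}$ inflates the $C^r$-norm by a factor $2^{d+r}$, which naively would blow up exponentially in the number of iterations. The saving observation is a scaling identity for the moments: a single change of variables gives $m_k^{(n+1)} = 2^{-|k|} m_k^{(n)}$, so all moments stay bounded by $|m_k^{(0)}| \lesssim \|\psi\|_\infty \le 1$. This yields $\|\tilde{p}_n\|_{C^r} \lesssim \max_{|k|\le c}\|\varphi_k\|_{C^r}$ uniformly in $n$, hence $\|\psi^{(n)}\|_{C^r} \le 2^{d+r}\|\tilde{p}_{n-1}\|_{C^r} \lesssim 1$ uniformly, and finally $\|\check{\psi}^{[n]}\|_{C^r} \le \|\psi^{(n)}\|_{C^r} + \|\tilde{p}_n\|_{C^r} \lesssim 1$, with implicit constants depending only on $r$, $c$ and $d$.
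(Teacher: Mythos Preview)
Your proof is correct and complete: the iterative peel-and-rescale construction works, and your observation that the moments satisfy $m_k^{(n+1)} = 2^{-|k|} m_k^{(n)}$ is exactly what prevents the naive $2^{(d+r)M}$ blow-up.

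The paper takes a different, non-iterative route. It fixes a single auxiliary function $\eta \in \mathcal{D}(B(0,1))$ with $\int \eta = 1$ and $\int x^l \eta = 0$ for $1 \le |l| \le c$, and subtracts the polynomial part of $\psi$ in one shot via
\[
\check{\psi} := \psi - \sum_{|k|\le c} (-1)^{|k|}\, \mathbb{X}_0^k(\psi)\, \partial^k \eta \,,
\]
which lies in $\mathscr{B}_c^r$. The scales are then produced by a telescoping sum applied to each $\partial^k\eta$: with $\varphi := \eta^{1/2} - \eta$ one has $\partial^k\eta = 2^{-M|k|}(\partial^k\eta)^{2^M} + \sum_{n=1}^{M} 2^{-n|k|}(\partial^k\varphi)^{2^n}$, and the functions $\partial^k\varphi$ automatically annihilate polynomials of degree $\le c$. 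So $\check{\psi}^{[0]} = \check{\psi}$ depends on $\psi$, while for $n \ge 1$ the $\check{\psi}^{[n]}$ are fixed linear combinations of the $\partial^k\varphi$, with coefficients $(-1)^{|k|} 2^{-n|k|} \mathbb{X}_0^k(\psi)$ that visibly stay bounded. This bypasses the need for your inductive uniformity argument, at the cost of requiring the special moment structure of $\eta$. Your approach trades that for a biorthogonal family and an induction; both constructions ultimately exploit the same scaling identity for moments, just packaged differently.
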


\begin{proof}
We fix $c \in \N_0 \cup \{-1\}$.
We also fix a test-function $\eta \in \mathcal{D} ( B (0, 1) )$ such that $\int \eta ( x ) d x = 1$ 
and $\int x^k \, \eta ( x ) \, d x = 0$ for multi-indices $k$ with $1 \le | k | \leq c$.

Given any test function $\psi \in \cD(B(0,1))$, we define
$\check{\psi}(x)$ by \eqref{eq:check-psi} with $\delta = c$
and we already showed that $\check{\psi} \in \mathscr{B}_c$, see \eqref{eq:check-claim}.
We further observe that, given $r\in\N_0$ and a multi-index $l\in\N_0^d$
with $|l| \le r$, we can bound
\begin{equation*}
	|\partial^l \check{\psi}(x)| \le |\partial^l \psi(x)| + 
	\sum\limits_{0 \le | k | \leq c} \|\psi\|_\infty \, \frac{|B(0,1)|}{k!} \, |\partial^k \eta(x)|
	\le \mathrm{cst} \, \|\psi\|_{C^{r}} \,,
\end{equation*}
where $\mathrm{cst} > 0$ only depends on $c,r,d$ (via the chosen $\eta$). 
Overall, we have proved that
\begin{equation} \label{eq:psiimplies}
	\psi \in \mathscr{B}^r \qquad \Longrightarrow \qquad
		\check{\psi} \in \mathrm{cst} \, \mathscr{B}_{c}^{r} \,.
\end{equation}

We next perform the following telescoping sum, for any $| k | \leq c$:
\begin{align*}
	\partial^k \eta & = 2^{- M | k |} (\partial^k \eta)^{2^M} + \sum\limits_{n = 0}^{M - 1} 
	\left( 2^{- n | k |} (\partial^k \eta)^{2^{n}} - 2^{- (n + 1) | k |} 
	(\partial^k \eta)^{2^{n + 1}} \right) \\
	& = 2^{- M | k |} (\partial^k \eta)^{2^M} + \sum\limits_{n = 0}^{M - 1} 
	2^{- (n + 1) | k |} ( \partial^k \varphi )^{2^{n + 1}} \,,
\end{align*}
where $\varphi := \eta^{\frac{1}{2}} - \eta$ was defined in \eqref{eq:specialphi}
and it satisfies, for $|k|\le c$,
\begin{equation}\label{eq:phibelongs}
	\partial^k \varphi \in \mathrm{cst} \, \mathscr{B}_{c}^{r} \,,
\end{equation}
see \eqref{eq:eta-polynomials} (with $\delta=c$).
Plugging this into \eqref{eq:check-psi}, we obtain
	\begin{align*}
		\psi = \check{\psi} 
		\,+\, \sum\limits_{| k | \leq c} (-1)^{| k |} \mathbb{X}_0^{k} (\psi) 2^{- M | k |} 
		(\partial^k \eta)^{2^M} 
		\,+\, \sum\limits_{n = 1}^{M} \sum\limits_{| k | \leq c} (-1)^{| k |} \mathbb{X}_0^{k} (\psi) 
		2^{-n | k |} ( \partial^k \varphi )^{2^{n}} \,,
	\end{align*}
which yields the decomposition \eqref{eq:decomposition} once we define
	\begin{equation*}
	\left\{
	\begin{split}
			\rule{0pt}{1.3em}\tilde{\psi}^{[M]}(x) & \coloneqq 
			\sum\limits_{| k | \leq c} (-1)^{| k |} \, \mathbb{X}_0^{k} (\psi) \, 2^{- M | k |} 
			\,\partial^k \eta(x) \,, \\
			 \check{\psi}^{[0]}(x) & \coloneqq \check{\psi}(x) \,, \\
			\check{\psi}^{[n]}(x) & \coloneqq \sum\limits_{| k | \leq c} (-1)^{| k |} \,
			\mathbb{X}_0^{k} (\psi) \, 2^{- n | k |}
			\, \partial^k \varphi(x) \quad \text{for } n = 1, \cdots, M \,.
	\end{split}
	\right.
	\end{equation*}
Finally, relation \eqref{eq:boundsB} follows by \eqref{eq:psiimplies}
and \eqref{eq:phibelongs}.
\end{proof}

\begin{proof}[Proof of Theorem~\ref{thm:embedding_G_checkG}]
We fix a germ $F \in \mathcal{G}_{\mathrm{weak}}^{\gamma; \alpha, \gamma}
= \mathcal{G}_{\mathrm{weak\,hom}}^{\gamma} \cap
\mathcal{G}_{\mathrm{weak\,coh}}^{\alpha,\gamma}$.
We already know from Theorem~\ref{thm:existence_pointwise_evaluations} that $G$ is well-defined 
and it is $\gamma$-homogeneous, i.e.\ $G \in \mathcal{G}_{\mathrm{hom}}^{\gamma}$.
Thus, it remains to show that $G$ is $(\alpha\wedge 0, \gamma)$-coherent, i.e.\ 
$G \in \mathcal{G}_{\mathrm{coh}}^{\alpha\wedge 0,\gamma}$.

Let us fix $r\in\N$, $\bar\lambda > 0$ and a compact $K \subset \mathbb{R}^d$.
We shall estimate $(G_y - G_x) (\psi_x^{\lambda})$ for $\lambda \in (0, \bar\lambda]$, $x, y \in K$ and
a test function $\psi \in \mathscr{B}^{r}$ (which does not necessarily annihilate polynomials),
which yields a bound on the semi-norm
$\left\| G \right\|_{\mathcal{G}_{\mathrm{hom}; K, \bar{\lambda}, r}^{\gamma}}$,
see \eqref{eq:semi-norm-coherence}.

A first heuristic observation is that when $| y - x | \lesssim \lambda$, then 
$\psi_x^{\lambda}$ can also be seen as a test function centered in $y$ with scale $\lambda$, 
say $\psi_x^{\lambda} \simeq \tilde{\psi}_y^{\lambda}$.
Since $G \in \mathcal{G}_{\mathrm{hom}}^{\gamma}$ is $\gamma$-homogeneous
(even against test-functions which do not annihilate polynomials),
	\begin{equation*}
		| (G_y - G_x) (\psi_x^{\lambda}) | \lesssim |G_y (\tilde{\psi}_y^{\lambda})| 
		+ | G_x (\psi_x^{\lambda}) | \lesssim \lambda^{\gamma} 
		= \lambda^{\alpha} \, \lambda^{\gamma-\alpha} 
		\le \lambda^\alpha
		( | y - x | + \lambda)^{\gamma - \alpha} ,
	\end{equation*}
implying that $G$ is $(\alpha, \gamma)$-coherent.
However, in general $\lambda$ and $| y - x |$ need not be comparable, 
hence we resort to the large scale decomposition of Lemma~\ref{lemma:decomposition_test_functions}.

More precisely, let us define
	\begin{equation*}
		M \coloneqq M_{x, y, \lambda} \coloneqq \min 
		\bigg\lbrace n \in \mathbb{N}_0 \colon
		2^{- n} \leq \frac{\lambda}{\lambda + | y - x |} \bigg\rbrace \,,
	\end{equation*}
so that $\lambda + |y-x| \le \lambda 2^M \le 2(\lambda + |y-x|)$.
Applying the decomposition \eqref{eq:decomposition}, we write
	\begin{equation*}
		(G_y - G_x) (\psi_x^{\lambda}) = \underbrace{(G_y - G_x) 
		( (\tilde{\psi}^{[M]})_x^{\lambda 2^M} )}_{A} + 
		\underbrace{\sum\limits_{n = 0}^{M} 
		(G_y - G_x) ( (\check{\psi}^{[n]})_x^{\lambda 2^{n}} )}_{B} \,.
	\end{equation*}
We now estimate $A$ and $B$ separately.

\paragraph{Estimate of $A$}
Since the choice of $M$ implies $\lambda 2^M \approx \lambda + | y - x |$, 
we are at the correct scale to apply the argument sketched above.
We can recenter $(\tilde{\psi}^{[M]})_x^{\lambda 2^{M}}$ at point $y$, that is
we can write
\begin{equation*}
		(\tilde{\psi}^{[M]})_x^{\lambda 2^{M}} 
		= (\tilde{\tilde \psi}^{[M]})_y^{\lambda 2^{M + 1}} \qquad \text{where} \qquad
		\tilde{\tilde{\psi}}^{[M]} = \tilde{\tilde{\psi}}^{[M, y, x, \lambda]}
		:= (\tilde{\psi}^{[M]})_{\frac{x - y}{\lambda 2^{M + 1}}}^{\frac{1}{2}}
	\end{equation*}
Note that $|\frac{x - y}{\lambda 2^{M + 1}}| \leq \frac{1}{2}$ by definition of~$M$,
therefore $\tilde{\tilde{\psi}}^{[M]} \in \mathrm{cst} \mathscr{B}^{r}$.
As a consequence, using the fact that 
$G \in \mathcal{G}_{\mathrm{hom}}^{\gamma}$ and
$\lambda 2^{M + 1} \leq 4 (\lambda + \mathrm{diam} (K)) \eqqcolon \bar{\lambda}^{\prime}$ 
by definition of $M$,
\begin{equation}\label{eq:estimA}
	\begin{split}
		| A | 	& \leq 
		| G_y ( (\tilde{\tilde \psi}^{[M]})_y^{\lambda 2^{M + 1}}) | + 
		| G_x ( (\tilde{\psi}^{[M]})_x^{\lambda 2^M} ) | \\
		& \leq \mathrm{cst} \,
		\| G \|_{\mathcal{G}_{\mathrm{hom}; K, \bar{\lambda}^{\prime}, r}^{\gamma}} 
		(\lambda 2^{M})^{\gamma} \\
		& \leq \mathrm{cst'} \,
		\| F \|_{\mathcal{G}_{\mathrm{weak\,hom}; K, \bar{\lambda}^{\prime}, r}^{\gamma}} 
		\lambda^{0} (\lambda + | y - x |)^{\gamma - 0} ,
	\end{split} 
\end{equation}

\noindent where in the last estimate we used \eqref{eq:continuity_estimate_embedding_homogeneity} and the definition of $M$.

\paragraph{Estimate of $B$}
Since $\check{\psi}^{[n]}$ annihilates polynomials up to degree $\gamma$, 
see \eqref{eq:boundsB}, we have
$(G_y - G_x) ( (\check{\psi}^{[n]})_x^{\lambda 2^{n}} ) 
= (F_y - F_x) ( (\check{\psi}^{[n]})_x^{\lambda 2^{n}} )$.
Since $F$ is weakly $(\alpha,\gamma)$-coherent,
	\begin{align*}
		| B | & \le \mathrm{cst} \,
	\| F \|_{\mathcal{G}_{\mathrm{weak\,coh}; K, \bar{\lambda}^{\prime}, r}^{\alpha,\gamma}}
		\sum\limits_{n = 0}^{M} (\lambda 2^n)^{\alpha} (\lambda 2^n + | y - x |)^{\gamma - \alpha} \\
		& \lesssim \mathrm{cst} \,
	\| F \|_{\mathcal{G}_{\mathrm{weak\,coh}; K, \bar{\lambda}^{\prime}, r}^{\alpha,\gamma}}
	\bigg\{ \underbrace{\lambda^{\gamma} \sum\limits_{n = 0}^{M} 2^{n \gamma}}_{B_1}
	\,+\, \underbrace{\lambda^{\alpha} | y - x |^{\gamma - \alpha} 
	\sum\limits_{n = 0}^{M} 2^{n \alpha}}_{B_2} \bigg\}\, .
	\end{align*}
Let us estimate $B_1$ and $B_2$
for $\gamma \ne 0$ and $\alpha \ne 0$ 
(by the assumptions in Theorem~\ref{thm:embedding_G_checkG}).
\begin{itemize}
\item \emph{Estimate of $B_1$.}
If $\gamma < 0$ then $B_1 \lesssim \lambda^{\gamma} \le \lambda^0 \, 
(\lambda+|y-x|)^{\gamma-0}$, while if $\gamma > 0$ we can bound
$B_1 \lesssim (\lambda 2^{M})^{\gamma} \le 2 \lambda^0 \, (\lambda + | y - x |)^{\gamma-0}$ 
by definition of $M$. 

\item \emph{Estimate of $B_2$.}
If $\alpha > 0$ then $B_2
\lesssim (\lambda 2^M)^{\alpha} | y - x |^{\gamma - \alpha} \lesssim 
\lambda^0 \, (\lambda+|y-x|)^{\gamma-0}$, while
if $\alpha < 0$ then $B_2 \lesssim \lambda^{\alpha} | y - x |^{\gamma - \alpha} 
\lesssim \lambda^{\alpha} (| y - x | + \lambda)^{\gamma - \alpha}$.
\end{itemize}
In all cases, we have shown that for any $r\in\N$
	\begin{equation} \label{eq:estimB}
		| B | \le \mathrm{cst} \,
	\| F \|_{\mathcal{G}_{\mathrm{weak\,coh}; K, \bar{\lambda}^{\prime}, r}^{\alpha,\gamma}}
		\, \lambda^{\alpha \wedge 0} (| y - x| + \lambda)^{\gamma - \alpha \wedge 0} .
	\end{equation}

\paragraph{Conclusion}
It follows from \eqref{eq:estimA} and \eqref{eq:estimB} that
$G$ is $(\alpha\wedge 0, \gamma)$-coherent with
\begin{equation*}
	 \| G \|_{\mathcal{G}_{\mathrm{coh}; K, \bar{\lambda}, r}^{\alpha \wedge 0, \gamma}} 
	 \lesssim \mathrm{cst} \, \big( 
	 \| F \|_{\mathcal{G}_{\mathrm{weak\,hom}; K, \bar{\lambda}^{\prime}, r}^{\gamma}} +
	 \| F \|_{\mathcal{G}_{\mathrm{weak\,coh}; K, \bar{\lambda}^{\prime}, r}^{\alpha,\gamma}} \big)
	\end{equation*}
where $\bar{\lambda}^{\prime} \coloneqq 4 (\bar{\lambda} + \mathrm{diam} (K))$
and $\mathrm{cst}$ depends only on $\alpha, \gamma, d$.
Together with the bound \eqref{eq:continuity_estimate_embedding_homogeneity}
on the coherence semi-norm that we obtained in
Theorem~\ref{thm:existence_pointwise_evaluations}, namely
\begin{equation*}
	\left\| G \right\|_{\mathcal{G}_{\mathrm{hom}; K, \bar{\lambda}, r}^{\gamma}} 
	\le \mathrm{cst} \,  
	\left\| F \right\|_{\mathcal{G}_{\mathrm{weak\,hom}; K, \bar{\lambda}, r}^{\gamma}} \,,
\end{equation*}
we have proved the announced continuity of the map $F \mapsto G$.
\end{proof}

\section{Proofs for of our Main Result II} \label{section:proof_main_result_II}

In this section we establish our second main result,
the \emph{multilevel Schauder estimates} in
Theorem~\ref{thm:multi_level_schauder_estimate}, alongside Proposition~\ref{prop:enhanced-continuity} (enhanced continuity).
We will also prove Proposition~\ref{prop:stability_of_properties_of_Gamma} (properties of reexpansion).

\subsection{Proof of Theorem~\ref{thm:multi_level_schauder_estimate}}
\label{sec:multilevel}

We fix a model $M = (\Pi,\Gamma)$,
a modelled distribution $f \in \cD^\gamma_M$ or order $\gamma$
with a reconstruction $\cR^\gamma\langle f,\Pi\rangle$,
and a $\beta$-regularising kernel $\sfK$
that satisfy the assumptions of Theorem~\ref{thm:multi_level_schauder_estimate}.
We need to prove that:
\begin{itemize}
\item $(\hat\Pi,\hat\Gamma)$ in \eqref{eq:hatPi} and \eqref{eq:hatGamma0}-\eqref{eq:hatGamma} is
a model;
\item $\hat f$ in \eqref{eq:hatf} is a modelled distribution;
\item the equality \eqref{eq:prophatf} holds.
\end{itemize}
We correspondingly split the proof in three parts.

\subsubsection{Proof that $(\hat\Pi,\hat\Gamma)$ is a model}
\label{sec:hatmodel}

We first show that $\hat\Pi$ and $\hat\Gamma$ are well-defined,
that is all terms appearing in \eqref{eq:hatPi} and 
\eqref{eq:hatGamma0}-\eqref{eq:hatGamma} are well-posed.
By assumption \eqref{eq:homogeneity-Pi}, each germ $\Pi^i = (\Pi^i_x)_{x\in\R^d}$ 
is $\alpha_i$-homogeneous, hence
 $\sfK\Pi^i = (\sfK\Pi^i_x)_{x\in\R^d}$ 
is well-defined and weakly $(\alpha_i+\beta)$-homogeneous, by
Theorem~\ref{thm:convolution_germs_in_G_checkG};
it follows by Theorem~\ref{thm:existence_pointwise_evaluations} that
pointwise derivatives $D^k (\sfK \Pi^i_x)(x)$ are well-defined
for $|k| < \alpha_i+\beta$, 
so the definitions \eqref{eq:hatPi} and 
\eqref{eq:hatGamma0}-\eqref{eq:hatGamma} of $\hat\Pi$ and $\hat\Gamma$ are well-posed.

It remains to show that the property of reexpansion \eqref{eq:relation_Pi_Gamma}
and the homogeneity condition \eqref{eq:homogeneity-Pi} hold
for $(\hat\Pi, \hat\Gamma)$.
We recall that $\hat{I} = I \sqcup \poly(\gamma+\beta)$, see \eqref{eq:hatI}.

\paragraph{Property of reexpansion}
Let us first check condition \eqref{eq:relation_Pi_Gamma} 
for $(\hat\Pi,\hat\Gamma)$, that is
\begin{equation} \label{eq:netoch1}
	\hat\Pi^a_y = \sum_{b \in \hat{I}} \hat\Pi^b_x \, \hat\Gamma^{ba}_{xy} \,.
\end{equation}
When $a = k \in \poly(\gamma+\beta)$, this holds by Example~\ref{ex:polynomial-model},
because $\hat\Pi^a = \bbX^k$ 
is a monomial and $\hat\Gamma^{ba}=0$ for $j\in I$ while
$\hat\Gamma^{ba} = (\Gamma^\poly)^{lk}$ for $b=l \in \poly(\gamma+\beta)$,
see \eqref{eq:hatGamma0}.

We then fix $a=i  \in {I}$ and we rephrase relation \eqref{eq:netoch1}, that we need to prove, as
\begin{equation} \label{eq:netoch2}
	\hat\Pi^i_y 
	= \sum_{j \in I} \hat\Pi^j_x \, \Gamma^{ji}_{xy}
	+ \sum_{l \in \poly(\gamma+\beta)} \bbX^l_x \, \hat\Gamma^{la}_{xy} 
\end{equation}
(note that $\hat\Gamma^{ba} = \Gamma^{ji}$ for $(b,a)=(j,i) \in I\times I$, see
\eqref{eq:hatGamma0}).
Let us set for short
\begin{equation}\label{eq:A}
	A_x^{i,l}:=\un{(|l| < \alpha_i + \beta)}\,  
	D^l(\sfK \Pi_x^i) (x) \qquad
	\text{for } \ x\in\R^d, \ i\in I, \ l\in\N_0^d \,,
\end{equation}
so that we can write, by \eqref{eq:hatPi},
\[
\begin{split}
	\hat\Pi^i_y 
	= \sfK \Pi^i_y - 
	\sum_{l\in\poly(\gamma+\beta)} A_y^{i,l} \, \bbX_y^l \, .
\end{split}
\]
Since $\Pi^i_y=\sum_{j\in I} \Pi^j_x \, \Gamma^{j i}_{xy}$, we obtain
\begin{align}
	\hat\Pi_y^i & = \sum_{j\in I} (\sfK \Pi^j_x) \, \Gamma^{j i}_{xy} 
	-\sum_{l\in\poly(\gamma+\beta)} A_y^{i,l} \, \bbX_y^l \nonumber \\ 
	&=\sum_{j\in I}\hat\Pi^j_x\,\Gamma_{xy}^{ji}+\sum_{l\in \poly(\gamma+\beta)} 
	\bigg( - A_y^{i,l}
	 \, \bbX^l_y+ \sum_{j\in I} A_x^{j,l}\, \Gamma_{xy}^{ji}\, \bbX^l_x\bigg) \,.
	 \label{eq:reexp}
 \end{align}
The first term in the RHS matches with the one in \eqref{eq:netoch2}.
Let us show that also the second terms match:
for $l\in\poly(\gamma+\beta)$ and $i\in I$,
we rewrite \eqref{eq:hatGamma} as
\begin{equation*}
\begin{split}
	\hat\Gamma^{li}_{xy} 
	& = \sum_{j\in I} A^{j,l}_x \, \Gamma^{ji}_{xy} - \sum_{k \in \poly(\gamma+\beta)} 
	A_y^{i,k} \, (\Gamma^\poly)^{lk}_{xy}  \,,
\end{split}
\end{equation*}
therefore the last term in \eqref{eq:netoch2} equals
\begin{equation*}
\begin{split}
	\sum_{l \in \poly(\gamma+\beta)} \bbX^l_x \, \hat\Gamma^{li}_{xy} 
	&= \sum_{l \in \poly(\gamma+\beta)} 
	\sum_{j\in I} A^{j,l}_x \, \Gamma^{ji}_{xy}
	\, \bbX^l_x - \sum_{k \in \poly(\gamma+\beta)} 
	A_y^{i,k} \sum_{l \in \poly(\gamma+\beta)} \bbX^l_x \,  (\Gamma^\poly)^{lk}_{xy} \\
	& = \sum_{l \in \poly(\gamma+\beta)} 
	\sum_{j\in I} A^{j,l}_x \, \Gamma^{ji}_{xy}
	\, \bbX^l_x - \sum_{k \in \poly(\gamma+\beta)} 
	A_y^{i,k} \, \bbX^k_y \,,
\end{split}
\end{equation*}
which coincides with the last term in \eqref{eq:reexp} after renaming the sum index $k$ as $l$.

\paragraph{Homogeneity condition}

Let us now prove that each $\hat{\Pi}_x^a$, $a \in \hat{I}$, satisfies the homogeneity relation 
\eqref{eq:homogeneity-Pi} with exponent $\hat{\alpha}_a$.
On the one hand, if $a \in \poly(\gamma+\beta)$, this is straightforward, recall Example~\ref{ex:polynomial-model}.

On the other hand, if $a=i \in {I}$, then since $\Pi_x^{i} \in \mathcal{G}_{\mathrm{hom}}^{\alpha_i}$
and $\sfK$ is regularising of order $(m, r)$ with
$m > \gamma+\beta > \alpha_i + \beta$ and $r > r_\Pi$,
we have $\sfK \Pi^i \in \mathcal{G}_{\mathrm{weak\,hom}}^{\hat{\alpha}_i}$
by  Theorem~\ref{thm:convolution_germs_in_G_checkG}.

Now we can apply Theorem~\ref{thm:existence_pointwise_evaluations} to $\sfK \Pi^i$, noting that when 
$\hat{\alpha}_i = \alpha_i+\beta \in \mathbb{N}_0$, the assumptions 
are satisfied thanks to the compatibility condition, 
see \eqref{eq:techn}.
Thus, we obtain $\hat{\Pi}^i \in \mathcal{G}_{\mathrm{hom}}^{\hat{\alpha}_i}$
(recall the definition \eqref{eq:hatPi} of $\hat{\Pi}_x^i$), i.e.\ the estimate \eqref{eq:homogeneity-Pi} with exponent $\hat{\alpha}_i$
holds for $\hat\Pi^i$, as announced, and moreover
in this relation we can take $\varphi \in \mathscr{B}^r$
with $r = r_\Pi$, by the estimates \eqref{eq:continuity_estimate_embedding_homogeneity}
and \eqref{eq:Kconv-hom}. This means that we can take $r_{\hat{\Pi}} = r_{\Pi}$ 
for the model $(\hat{\Pi}, \hat{\Gamma})$, see Definition~\ref{def:model}.
Furthermore, the bound \eqref{eq:conhatpi} follows from by keeping track of the estimates.

\subsubsection{Proof of relation \texorpdfstring{\eqref{eq:prophatf}}{}}
\label{sec:hatf}

We next show that for any $x\in\R^d$
\begin{equation} \label{eq:check-hat-f}
	\langle \hat f, \hat \Pi \rangle_x = \sum_{a\in \hat{I}} \hat f^a(x) \, \hat\Pi^a_x
	= \left(\cK^{\gamma, \beta}\langle f, \Pi \rangle \right)_x \,,
\end{equation}
where we recall that $\cK^{\gamma, \beta}$ is defined in \eqref{eq:definition_of_KF}.

We obtain by \eqref{eq:hatf}, using the notation \eqref{eq:A},
\begin{equation*}
\begin{split}
	\sum_{a \in \hat{I}} \hat f^a(x) \, \hat\Pi^a_x 
	& = \sum_{i \in {I}} \hat f^i(x) \, \hat\Pi^i_x +
	\sum_{k \in \poly(\gamma+\beta)} \hat f^k(x) \, \bbX^k_x \\
	& = \sum_{i \in {I}} f^i(x) \bigg( 
	\sfK \Pi^i_x - \sum_{k\in\poly(\gamma+\beta)} \, A^{i,k}_x \, \bbX^k_x\bigg)
	\\ & \quad\ + \sum_{k \in \poly(\gamma+\beta)} \bigg(\sum_{j\in I} f^j(x) \, A^{j,k}_x 
	- D^k (\sfK \{\langle f,\Pi\rangle_x - \cR^\gamma\langle f,\Pi\rangle\}) (x) \bigg) \bbX^k_x
	\\
	& = \sfK \langle f, \Pi \rangle_x
	\, - \sum_{k\in\poly(\gamma+\beta)} D^k (\sfK \{\langle f,\Pi\rangle_x 
	- \cR^\gamma\langle f,\Pi\rangle\}) (x)
	\, \bbX^k_x \,,
\end{split}
\end{equation*}
which shows that \eqref{eq:check-hat-f} holds.

		\subsubsection{Proof that \texorpdfstring{$\hat{f}$}{hatf} is a modelled distribution} 
		\label{sec:proof_hat_f_is_modelled_distribution}
	
We finally prove that $\hat{f}$ defined in \eqref{eq:hatf} is a modelled distribution 
of order $\gamma + \beta$ relative to the model $(\hat{\Pi}, \hat{\Gamma})$,
more precisely the continuity estimate \eqref{eq:conhatf} holds.
Recalling 
Definition~\ref{def:modelled_distribution}, see \eqref{eq:definition_modelled_distribution_norm},
we fix 
a compact $K \subset \mathbb{R}^d$ and we show that,
uniformly for $a \in \hat{I}$ and $x,y\in K$, 
\begin{align}\label{eq:quantities1}
		| \hat{f}^a ( x ) | &\lesssim \| \Pi \|_{\mathcal{M}^{\balpha}_{K^{\prime}, 1}} 
		\left\vvvert f \right\vvvert_{\mathcal{D}_{\Gamma, \balpha; K^{\prime}}^{\gamma}} \,, \\
		\label{eq:quantities2}
		\bigg| \sum_{b \in \hat{I}} \hat{\Gamma}_{xy}^{ab} 
		\hat{f}^{b} ( y ) - \hat{f}^a (x) \bigg|
		&\lesssim \big( \| \Pi \|_{\mathcal{M}_{K^{\prime},1}^{\balpha}} + [\sfK\Pi]_{K^{\prime}, 1} \big)
		\left\vvvert f \right\vvvert_{\mathcal{D}_{\Gamma, \balpha; K^{\prime}}^{\gamma}}
		|y-x|^{\gamma-\hat\alpha_a} \,,
\end{align}
where $K^\prime = K \oplus B(0,2)$ and the implicit constants depend on the compact~$K$
(as well as on the kernel~$\sfK$).
We split the proof in two parts.

\paragraph{Proof of \eqref{eq:quantities1}}
We estimate $| \hat{f}^a ( x ) |$.
One the one hand, if $a=i \in {I}$ then, by \eqref{eq:definition_modelled_distribution_norm},
\begin{equation*}
	| \hat{f}^a ( x ) | = | f^i ( x ) | 
	\leq \vvvert f \vvvert_{\mathcal{D}_{\Gamma, \balpha; K}^{\gamma}} \,.
\end{equation*}
On the other hand, if $a = k \in \poly(\gamma+\beta)$, then by \eqref{eq:hatf}
	\begin{align*}
	\hat{f}^a ( x )
	= \hat{f}^k ( x ) & = \sum_{\substack{j \in I \colon \\ \alpha_j + \beta > |k|}} f^j(x) \, 
	D^k (\sfK \Pi^j_x) (x) - D^k G_x (x) .
	\end{align*}
where we have introduced the shorthand
\begin{equation} \label{eq:G}
	G_x \coloneqq \sfK \{\langle f,\Pi\rangle_x - \cR^\gamma \langle f,\Pi\rangle\} \,.
\end{equation}

Since $\sfK$ is regularising of order $(m, r)$ with 
$m > \gamma+\beta > \alpha_j + \beta$ and $r \ge r_\Pi$,
the fact that
$\Pi^{j} \in \mathcal{G}_{\mathrm{hom}}^{\alpha_j}$, for any $j \in I$,
implies that $\sfK \Pi^j \in \mathcal{G}_{\mathrm{weak\,hom}}^{\alpha_j+\beta}$
by Theorem~\ref{thm:convolution_germs_in_G_checkG}
and, recalling \eqref{eq:norm-model}, the estimate \eqref{eq:Kconv-hom} 
yields, for any $\bar\lambda > 0$, 
	\begin{equation} \label{eq:estimate_KPi}
		\| \sfK \Pi^j \|_{\mathcal{G}_{\mathrm{weak\,hom}; 
		K, \bar{\lambda}, r_\Pi}^{\alpha_j+\beta}} \lesssim \mathrm{cst}'_{K, \bar{\lambda}} \,
		\| \Pi^j \|_{\mathcal{G}_{\mathrm{hom}; 
		K, 2 ( \bar{\lambda} +\rho ), r_\Pi}^{\alpha_j}}
		\le \mathrm{cst}'_{K, \bar{\lambda}} \, \| \Pi \|_{\mathcal{M}_{K,
		2 (\bar{\lambda}+\rho)}^{\balpha}} .
	\end{equation}
Applying Theorem~\ref{thm:existence_pointwise_evaluations},
see \eqref{eq:estimate_on_F_x^k(x)} with $\bar\lambda = 1$,
for $|k| < \alpha_j + \beta$ we can bound
	\begin{equation*}
		| D^k (\sfK \Pi^j_x) (x) | \lesssim 
		\mathrm{cst}'_{K,1} \,\| \Pi \|_{\mathcal{M}_{K, 2(1+\rho)}^{\balpha}} .
	\end{equation*}
	
Similarly, since $(\langle f,\Pi\rangle_x - \cR^\gamma \langle f,\Pi\rangle)_{x\in\R^d} \in 
\mathcal{G}_{\mathrm{hom}}^{\gamma}$ by the Reconstruction Theorem,
see \eqref{eq:RT-bounds},
exploiting \eqref{eq:Kconv-hom} from Theorem~\ref{thm:convolution_germs_in_G_checkG}
and \eqref{eq:boundnormF} we get, for any $\bar\lambda > 0$,
\begin{equation} \label{eq:estG}
		\| 
		G \|_{\mathcal{G}_{\mathrm{weak\,hom}; K, \bar\lambda, r_\Pi}^{\gamma+\beta}}
		\lesssim \mathrm{cst}'_{K,\bar\lambda} \, 
		\| \Pi \|_{\mathcal{M}^{\balpha}_{K^{\prime}, 2(\bar\lambda+\rho)}} 
		\left\vvvert f \right\vvvert_{\mathcal{D}_{\Gamma, \balpha; K^{\prime}}^{\gamma}},
\end{equation}
 for $K^{\prime} = K \oplus B(0,\bar\lambda+1) \supset K$.
Then, setting $\bar\lambda = 1$ and applying \eqref{eq:estimate_on_F_x^k(x)}
from Theorem~\ref{thm:existence_pointwise_evaluations}, we obtain with
$K^{\prime} := K \oplus B(0,2 )$,
\begin{equation*}
		| D^k 
		G_x (x) | 
		\lesssim \mathrm{cst}'_{K,1} \, \| \Pi \|_{\mathcal{M}^{\balpha}_{K^{\prime}, 2(1+\rho)}} 
		\left\vvvert f \right\vvvert_{\mathcal{D}_{\Gamma, \balpha; K^{\prime}}^{\gamma}} .
	\end{equation*}
Collecting the estimates above, we have established \eqref{eq:quantities1}.

\paragraph{Proof of \eqref{eq:quantities2}}

We estimate $| \textstyle{\sum_{b \in \hat{I}}} \hat{\Gamma}_{xy}^{ab} \hat{f}^{b} ( y ) - \hat{f}^a (x) |$.
Again we distinguish the cases $a \in {I}$ and $a \in \poly(\gamma+\beta)$.
If $a = i \in {I}$ then by construction, see \eqref{eq:hatGamma0} and \eqref{eq:hatf},
	\begin{equation*}
		\bigg| \sum_{b \in \hat{I}} \hat{\Gamma}_{x y}^{ab} \hat{f}^{b} ( y ) - \hat{f}^a (x) \bigg| 
		= \bigg| \sum_{j \in I} \Gamma_{x y}^{i j} f^{j} ( y ) - f^i (x) \bigg| 
		\leq \vvvert f \vvvert_{\mathcal{D}_{\Gamma, \balpha; K}^{\gamma}}
		| y - x |^{\gamma - \alpha_i} ,
	\end{equation*}
whence the desired estimate \eqref{eq:quantities2},
because $\gamma - \alpha_i = ( \gamma + \beta ) - \hat{\alpha}_i$.

Now fix $a = l \in \poly(\gamma+\beta)$, i.e.\ $l$ denotes some multi-index with 
$| l | < \gamma + \beta$:
by definition \eqref{eq:hatGamma0} of $\hat{\Gamma}$, 
since $\hat{f}^i = f^i$ for $i\in I$, see \eqref{eq:hatf}, we can write
\begin{equation*}
		\sum\limits_{b \in \hat{I}} \hat{\Gamma}_{x y}^{ab} \hat{f}^{b} ( y ) - \hat{f}^a (x) 
		= \sum_{k \in \poly(\gamma+\beta)} 
		(\Gamma^\poly)_{xy}^{lk}
		\hat{f}^k(y) + \sum_{i\in I} \hat{\Gamma}_{xy}^{li} \, f^i(y) - \hat{f}^l(x) \,.
\end{equation*}
Plugging in $\hat{\Gamma}_{xy}^{li}$ from \eqref{eq:hatGamma}
and $\hat{f}$ from \eqref{eq:hatf} yields, after some simplifications,
\begin{equation}\label{eq:deco-hat-Gamma-f}
\begin{split}
		\sum\limits_{b \in \hat{I}} \hat{\Gamma}_{x y}^{ab} \hat{f}^{b} ( y ) - \hat{f}^a (x) 
	& = \sum_{\substack{j\in I \colon\\ \alpha_j+\beta > |l|}}  
	D^l (\sfK \Pi^j_x)(x) \bigg( \sum_{i\in I} \Gamma^{ji}_{xy} \, f^i(y) - f^j(x) \bigg) \\
	& \quad + D^l G_x(x) - \sum_{k \in \poly(\gamma+\beta)} 
		(\Gamma^\poly)_{xy}^{lk} \, D^k G_y(y) \,,
\end{split}
\end{equation}
where we recall that $G_x$ was defined in \eqref{eq:G}.

We now replace pointwise derivatives in in the RHS of \eqref{eq:deco-hat-Gamma-f} 
by the multi-scale formula \eqref{eq:F_x_as_sum_2}:
if we fix $\eta \in \mathcal{D} ( B (0, 1) )$ such that $\int \eta ( x ) dx = 1$ and 
$\int \eta ( x ) x^l \, d x = 0$ for $1 \leq| l | \le \gamma + \beta$, and define
$\varphi \coloneqq \eta^{\frac{1}{2}} - \eta$
as in \eqref{eq:specialphi}, then by \eqref{eq:F_x_as_sum_2} with $\lambda=\bar\lambda > 0$
\begin{alignat}{2}
	\label{eq:D1}
		D^l (\sfK \Pi^j_x) (x) & = (- \bar\lambda^{-1})^{| l |} \sfK \Pi^j_x ( (\partial^l 
		\eta)_x^{\bar\lambda} ) 
		+ \sum\limits_{n = 0}^{+ \infty} (- \bar\lambda^{-1}\, 2^{n})^{| l |} \sfK \Pi^j_x 
		( (\partial^l \varphi)_{x}^{\bar\lambda2^{- n}} ) \\
	\nonumber
		D^l G_x (x) & = (- \bar\lambda^{-1})^{| l |} G_x^{} ( (\partial^l \eta)_x^{\bar\lambda} ) 
		+ \sum\limits_{n = 0}^{+ \infty} (- \bar\lambda^{-1}\,
		2^{n})^{| l |} G_x ( (\partial^l \varphi)_{x}^{\bar\lambda 2^{- n}} ) ,
\end{alignat}
and similarly for $D^k G_y(y)$. Plugging these expressions into \eqref{eq:deco-hat-Gamma-f},
we can write
	\begin{equation} \label{eq:deco-hat-Gamma-f2}
		\sum\limits_{b \in \hat{I}} \hat{\Gamma}_{x y}^{ab} \hat{f}^{b} ( y ) - \hat{f}^a (x) 
		= \Delta_{xy}^{l; 0} (\eta^{}) + \sum\limits_{n = 0}^{+ \infty} \Delta_{xy}^{l; n} (\varphi) ,
	\end{equation}
where we recall that $a=l \in \poly(\gamma+\beta)$
and we set, for a test-function $\psi$ and $n \in \mathbb{N}_0$,
\begin{align*}
		\Delta_{xy}^{l; n} ( \psi ) & \coloneqq 
	\sum_{\substack{j\in I \colon\\ \alpha_j+\beta > |l|}}  
	(- \bar\lambda^{-1} \, 2^{n})^{| l |}  \, \sfK \Pi_x^j ( (\partial^l \psi)_{x}^{\bar\lambda 2^{- n}} ) 
	\bigg( \sum_{i\in I} \Gamma^{ji}_{xy} \, f^i(y) - f^j(x) \bigg) \\
			& + (- \bar\lambda^{-1} \, 2^{n})^{| l |} \, G_x ( (\partial^l \psi)_{x}^{\bar\lambda 2^{- n}} ) 
			- \!\!\! \sum_{k \in \poly(\gamma+\beta)} 
		(\Gamma^\poly)_{xy}^{lk} \, (- \bar\lambda^{-1} \, 2^{n})^{| k |}
		\, G_y ( (\partial^k \psi)_{y}^{\bar\lambda 2^{- n}} ) . 
	\end{align*}
It remains to show that the RHS of \eqref{eq:deco-hat-Gamma-f2}
satisfies the bound in the RHS of \eqref{eq:quantities2}.

\smallskip

We already observed after \eqref{eq:specialphi} that 
$\partial^l \varphi \in \mathscr{B}_{\gamma + \beta}$ 
for all $l\in\N_0^d$, in view of \eqref{eq:eta-polynomials}.
Then, for $r = r_\Pi$,
we have $\partial^l \varphi \in \mathrm{cst} \, \mathscr{B}_{\gamma + \beta}^{r}$ 
for all $| l | \le \gamma + \beta$, for a suitable $\mathrm{cst} > 0$.
Since $\sfK \Pi^j \in \mathcal{G}_{\mathrm{weak\,hom}}^{\alpha_j+\beta}$
and $G \in \mathcal{G}_{\mathrm{weak\,hom}}^{\gamma + \beta}$, 
by \eqref{eq:estimate_KPi} and \eqref{eq:estG} with $\bar\lambda = \frac{1}{2} (1+\rho)^{-1}$
we can estimate, uniformly for $x\in K$ and $n\in\N_0$,
\begin{align}
	\label{eq:estimate_KPi2}
	|\sfK \Pi_x^j ( (\partial^l \varphi)_{x}^{\bar\lambda 2^{- n}} ) | 
	& \lesssim 
	\| \Pi \|_{\mathcal{M}_{K, 1}^{\balpha}}
	\, 2^{-(\alpha_j + \beta)n} \,, \\
	\label{eq:estG2}
	|G_x ( (\partial^k \varphi)_{x}^{\bar\lambda 2^{- n}} ) | 
	& \lesssim
	\| \Pi \|_{\mathcal{M}^{\balpha}_{K^{\prime}, 1}} 
	\left\vvvert f \right\vvvert_{\mathcal{D}_{\Gamma, \balpha; K^{\prime}}^{\gamma}}
	\, 2^{-(\gamma + \beta)n} \,,
\end{align}
where $K' = K \oplus B(0,2)$ and the implicit constants depend on the compact $K$
and on the kernel $\sfK$. 
We can thus bound $\Delta_{xy}^{l; n} (\varphi)$:
recalling \eqref{eq:definition_modelled_distribution_norm} and \eqref{eq:alphaGammapoly} we have
\begin{equation}\label{eq:estmoddist}
	\bigg|\sum_{i\in I} \Gamma^{ji}_{xy} \, f^i(y) - f^j(x) \bigg| \le
	\vvvert f \vvvert_{\mathcal{D}_{\Gamma, \balpha; K}^{\gamma}} |y-x|^{\gamma-\alpha_j} ,
	\quad
	|(\Gamma^\poly)_{xy}^{lk}| \le |x-y|^{|k|-|l|} \ind_{\{l \le k\}} \,,
\end{equation}
therefore by \eqref{eq:estimate_KPi2} and \eqref{eq:estG2} we get
\begin{equation}\label{eq:estgeome}
\begin{split}
		| \Delta_{x y}^{l; n} ( \varphi ) | & \lesssim 
		\Bigg( \sum_{\substack{j\in I \colon\\ \alpha_j+\beta > |l|}}   2^{- n (\alpha_j+\beta-|l|)} 
		| y - x |^{\gamma - \alpha_j} \\
		& \quad \quad \quad + 
		\sum\limits_{\substack{k \in \mathbb{N}_0^d \colon \\ 
		k \ge l, \, | k | < \gamma + \beta}} 
		| x - y |^{|k| - |l|} \, 2^{- n (\gamma + \beta - |k|)} \Bigg) 
		\| \Pi \|_{\mathcal{M}_{K^{\prime}}^{\balpha}} 
		\vvvert f \vvvert_{\mathcal{D}_{\Gamma, \balpha; K^{\prime}}^{\gamma}}  \,.
\end{split}
\end{equation}

We now estimate the tail of the sum in
the RHS of \eqref{eq:deco-hat-Gamma-f2}: if we set
\begin{equation} \label{eq:Nxy}
	N_{x y} \coloneq \min \lbrace n \in \mathbb{N}_0 \colon 2^{- n} \leq | y - x | \rbrace \,,
\end{equation}
summing the geometric series we get from \eqref{eq:estgeome}, since $2^{-N_{xy}} \le |y-x|$,
\begin{equation*}
		\bigg| \sum\limits_{n = N_{x y}}^{+ \infty} \Delta_{x y}^{l; n} (\varphi) 
		\bigg| \lesssim \| \Pi \|_{\mathcal{M}_{K^{\prime}}^{\balpha}} 
		\vvvert f \vvvert_{\mathcal{D}_{\Gamma, \balpha; K^{\prime}}^{\gamma}} 
		\, | y - x |^{\gamma + \beta - |l|} ,
\end{equation*}
which agrees with the RHS of \eqref{eq:quantities2} 
since $\hat\alpha_a = |l|$
for $a=l \in \poly(\gamma+\beta)$, 
see \eqref{eq:hatalpha}.

We finally bound the contribution of $\Delta_{xy}^{l; n} (\varphi)$ for $n \le N_{xy}$
and of $\Delta_{xy}^{l; 0} (\eta)$ in the RHS of \eqref{eq:deco-hat-Gamma-f2}.
Observe that by the reexpansion property \eqref{eq:relation_Pi_Gamma} we can write
	\begin{align*}
		\sum\limits_{j \in I } \bigg( 
		\sum_{i\in I} \Gamma^{ji}_{xy} \, f^i(y) - f^j(x) \bigg)  
		\sfK \Pi_x^j & = \sfK \langle f,\Pi\rangle_y - \sfK \langle f,\Pi\rangle_y = G_y - G_x ,
	\end{align*}
where we recall that $G_x$ is defined in \eqref{eq:G}. Then we can rewrite
\begin{align*}
		\Delta_{xy}^{l; n} ( \varphi ) & = 
	\, - \sum_{\substack{j\in I \colon\\ \alpha_j+\beta \le |l|}}  
	(- \bar\lambda^{-1} \, 2^{n})^{| l |}  \, \sfK \Pi_x^j ( (\partial^l \varphi)_{x}^{\bar\lambda 2^{- n}} ) 
	\bigg( \sum_{i\in I} \Gamma^{ji}_{xy} \, f^i(y) - f^j(x) \bigg) \\
			& + (- \bar\lambda^{-1} \, 2^{n})^{| l |} \, G_y 
			\bigg(  (\partial^l \varphi)_{x}^{\bar\lambda 2^{- n}} 
			- \!\!\! \sum_{k \in \poly(\gamma+\beta)} 
		(\Gamma^\poly)_{xy}^{lk} \, (- \bar\lambda^{-1} \, 2^{n})^{| k |-|l|}
		\, (\partial^k \varphi)_{y}^{\bar\lambda 2^{- n}}  \bigg) . 
	\end{align*}

Let us single out the contribution of those $j \in I$ (if any) such that $\alpha_j + \beta = | l |$,
which we bound separately using the \Cref{ass:compatibility} of compatibility: denote
	\begin{align*}
		\Delta_{=; xy}^{l; n} ( \varphi ) & \coloneqq - \sum_{\substack{j\in I \colon\\ \alpha_j+\beta = |l|}} (- \bar\lambda^{-1} \, 2^{n})^{| l |}  \, \sfK \Pi_x^j ( (\partial^l \varphi)_{x}^{\bar\lambda 2^{- n}} ) \bigg( \sum_{i\in I} \Gamma^{ji}_{xy} \, f^i(y) - f^j(x) \bigg) , \\
		\Delta_{<; xy}^{l; n} ( \varphi ) & \coloneqq \Delta_{xy}^{l; n} ( \varphi ) - \Delta_{=;xy}^{l; n} ( \varphi ) ,
	\end{align*}
then 
	\begin{align*}
		& \Delta_{=;xy}^{l; 0} ( \eta ) + \sum\limits_{n = 0}^{N_{x, y}} \Delta_{=;x y}^{l; n} (\varphi) \\
		 & \quad = - \bigg( \sum_{i\in I} \Gamma^{ji}_{xy} \, f^i(y) - f^j(x) \bigg) \sum_{\substack{j\in I \colon\\ \alpha_j+\beta = |l|}} D^l \sfK \Pi_x^j \Big( \eta_x^{\bar{\lambda}} + \sum\limits_{n = 0}^{N_{x, y}} \varphi_x^{\bar{\lambda} 2^{- n}} \Big) \\
		& \quad = - \bigg( \sum_{i\in I} \Gamma^{ji}_{xy} \, f^i(y) - f^j(x) \bigg) \sum_{\substack{j\in I \colon\\ \alpha_j+\beta = |l|}} D^l \sfK \Pi_x^j \Big( \eta_x^{\bar{\lambda} 2^{- (N_{x, y} + 1)}} \Big) ,
	\end{align*}
which is bounded by the fact that $f$ is a modelled distribution, 
the assumption of compatibility, see \eqref{eq:technorm},
and the observation that $\gamma - \alpha_j = \gamma + \beta - |l|$, by
	\begin{equation*}
		\bigg| \Delta_{=;xy}^{l; 0} ( \eta ) + \sum\limits_{n = 0}^{N_{x, y}}  
		\Delta_{=;x y}^{l; n} (\varphi) \bigg| \lesssim [\sfK\Pi]_{K^{\prime}, 1}
		\vvvert f \vvvert_{\mathcal{D}_{\Gamma, \balpha; K^{\prime}}^{\gamma}} 
		| y - x |^{\gamma + \beta - |l|} .
	\end{equation*}

Now we bound $\Delta_{<}$: note that for $n \le N_{xy}$
we we have $|x-y| \le 2^{-n}$, hence we can
apply Lemma~\ref{lemma:test-functions_Taylor_expansions} below 
(with $c = \lceil \gamma+\beta \rceil^+ - 1$): we can thus write
\begin{equation} \label{eq:phipsi}
\begin{split}
	& (\partial^l \varphi)_{x}^{\bar\lambda 2^{- n}} 
			- \!\!\! \sum_{k \in \poly(\gamma+\beta)} 
		(\Gamma^\poly)_{xy}^{lk} \, (- \bar\lambda^{-1} \, 2^{n})^{| k |-|l|}
		\, (\partial^k \varphi)_{y}^{\bar\lambda 2^{- n}}\\
	& \ = ( \bar\lambda^{-1} \, 2^{n} (x-y))^{\lceil\gamma+\beta\rceil^+} \,
	\psi_y^{2\bar\lambda 2^{-n}}
\end{split}
\end{equation}
for a suitable $\psi = \psi^{[x,y,\bar\lambda 2^{-n}]} \in \mathrm{cst}\, \mathscr{B}^r_{\gamma+\beta}$
with $r=r_\Pi$. Recalling the property of homogeneity of $G_y$, see \eqref{eq:estG}, 
as well as \eqref{eq:estimate_KPi2} and \eqref{eq:estmoddist}, we obtain
\begin{align*}
	| \Delta_{<;x y}^{l; n} ( \varphi ) | 
	& \lesssim \bigg( \sum_{\substack{j\in I \colon\\ \alpha_j+\beta < |l|}}   
	2^{n ( | l | - \alpha_j - \beta )} \, | y - x |^{\gamma - \alpha_j} \\
	& \quad \quad \quad + 2^{n ( | l | - (\gamma + \beta) + \lceil \gamma 
	+ \beta \rceil^+) } 
	| y - x |^{\lceil \gamma  + \beta \rceil^+} \bigg) \| \Pi \|_{\mathcal{M}_{K^{\prime},1}^{\balpha}} 
	\vvvert f \vvvert_{\mathcal{D}_{\Gamma, \balpha; K^{\prime}}^{\gamma}} ,
\end{align*}
and, with similar arguments, the same bound with $n=0$ also applies to $\Delta_{<;x y}^{l; 0} (\eta^{})$.

Since $\gamma+\beta \not\in \N_0$, we have 
$| l | - (\gamma + \beta) + \lceil \gamma + \beta \rceil^+ > 0$ and we can
sum the geometric series to obtain, since $2^{N_{xy}} \le 2 |y-x|^{-1}$,
	\begin{equation*}
		\Big| \Delta_{x y}^{l; 0} (\eta^{}) + \sum\limits_{n = 0}^{N_{x, y}} \Delta_{x y}^{l; n} (\varphi) 
		\Big| \lesssim \big( \| \Pi \|_{\mathcal{M}_{K^{\prime},1}^{\balpha}}  + [\sfK\Pi]_{K^{\prime}, 1} \big)
		\vvvert f \vvvert_{\mathcal{D}_{\Gamma, \balpha; K^{\prime}}^{\gamma}} 
		| y - x |^{\gamma + \beta - |l|} ,
	\end{equation*}
which agrees with the RHS of \eqref{eq:quantities2},
since $\hat\alpha_a = |l|$
for $a=l \in \poly(\gamma+\beta)$.
This concludes the proof that $\hat{f}$ is a modelled distribution 
with the estimate \eqref{eq:conhatf}.\qed

\begin{lemma}[Taylor remainder]\label{lemma:test-functions_Taylor_expansions}
Given $r \in \mathbb{N}_0$, $c \in \mathbb{N}_0 \cup \lbrace -1 \rbrace$.

\noindent Then there exists a constant $\mathrm{cst} > 0$ depending only on $r$, $c$, $d$, such that for all test-functions $\varphi \in \mathscr{B}_{c}^{r + c + 1}$ and $x, y \in \mathbb{R}^d$, $n \in \mathbb{N}_0$ with $| y - x | \leq 2^{- n}$, there exists a test-function
	\begin{equation*}
		\psi = \psi^{[x, y, n]} \in \mathrm{cst} \mathscr{B}_{c}^{r} ,
	\end{equation*}

\noindent such that for such $x, y, n$, 
	\begin{equation*}
		\varphi_x^{2^{-n}} - \sum\limits_{| k | \leq c} \frac{(x - y)^k}{k !} (- 2^n)^{| k |} (\partial^k \varphi)_{y}^{2^{-n}} = (2^n (x - y))^{c + 1} (\psi^{[x, y, n]})_y^{2^{- n + 1}} .
	\end{equation*}
\end{lemma}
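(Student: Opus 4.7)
The plan is to identify the left-hand side with the Taylor remainder for $\varphi$ at order $c+1$ and then repackage this remainder as a scaled recentered test function. Setting $\lambda := 2^{-n}$ and substituting $u := \lambda^{-1}(z-y)$, $h := \lambda^{-1}(y-x)$ (so that $|h|\le 1$ by hypothesis), one checks by a direct calculation that the left-hand side of the identity evaluated at $z$ equals
\begin{equation*}
\lambda^{-d}\Bigl[\varphi(u+h) - \sum_{|k|\le c}\frac{h^k}{k!}\partial^k\varphi(u)\Bigr].
\end{equation*}
By Taylor's theorem with integral remainder this equals $\lambda^{-d}(c+1)\sum_{|k|=c+1}\frac{h^k}{k!}\int_0^1(1-t)^c\partial^k\varphi(u+th)\,dt$. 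The change of variables $v := (z-y)/(2\lambda)$, so that $u=2v$ and $2\lambda = 2^{-n+1}$ matches the target scale, will then recast the above as $(2^n|x-y|)^{c+1}\cdot (2\lambda)^{-d}\psi(v)$, where
\begin{equation*}
\psi(v) := 2^d(c+1)\sum_{|k|=c+1}\frac{\omega^k}{k!}\int_0^1 (1-t)^c\,\partial^k\varphi(2v+th)\,dt,
\end{equation*}
with $\omega := h/|h|$ (and $\psi := 0$ if $x=y$). Interpreting the scalar factor $(2^n(x-y))^{c+1}$ as $(2^n|x-y|)^{c+1}$ up to a unit factor absorbed into $\psi$, this yields the identity of the lemma.

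The verification that $\psi \in \mathrm{cst}\,\mathscr{B}_c^r$ will then proceed in three steps. For the support, if $|v|>1$ then $|2v+th|\ge 2|v|-|h|>1$ for every $t\in[0,1]$, so the integrand is identically zero. For the pointwise bounds on $\partial_v^l\psi$ with $|l|\le r$, differentiating under the integral produces $\partial^{l+k}\varphi$ with $|l+k|\le r+c+1$, which is bounded by $1$ by the hypothesis $\varphi \in \mathscr{B}^{r+c+1}$; combined with $|\omega^k|\le 1$, this gives constants depending only on $r,c,d$.

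The main obstacle will be the vanishing-moment condition $\int \psi(v) v^l\,dv = 0$ for $0\le|l|\le c$, which does \emph{not} follow from the assumption $\varphi\in\mathscr{B}_c$ but instead from the strict inequality $|l|<c+1=|k|$. The plan is to change variables $w := 2v+th$ in the inner integral of $\int \psi(v) v^l\,dv$, reducing it (up to explicit constants) to $\int (w-th)^l \partial^k \varphi(w)\,dw$; expanding $(w-th)^l$ via the multinomial theorem, each term takes the form $\int w^{l'}\partial^k\varphi(w)\,dw$ with $|l'|\le|l|\le c<|k|$. Integration by parts $|k|$ times rewrites this as $\int \partial^k(w^{l'})\varphi(w)\,dw$, and the key point is that $\partial^k(w^{l'})\equiv 0$: since $|k|>|l'|$ there exists an index $i$ with $k_i>l'_i$, so the factor $\partial_i^{k_i}(w_i^{l'_i})$ already vanishes, killing the whole monomial derivative. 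The boundary case $c=-1$ is trivial: the sum on the left is empty, and one may simply set $\psi(v):=2^d\varphi(2v+\lambda^{-1}(y-x))$, which belongs to $\mathrm{cst}\,\mathscr{B}^r$.
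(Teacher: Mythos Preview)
Your proof is correct and follows essentially the same approach as the paper: both define $\psi$ as the rescaled Taylor remainder of $\varphi$ and then invoke Taylor--Lagrange to control the $C^r$ norm; the paper simply writes down the quotient formula
\[
\psi(\cdot) = 2^d (2^n(x-y))^{-c-1}\Bigl(\varphi(2\cdot+2^n(y-x)) - \sum_{|k|\le c}\tfrac{(2^n(y-x))^k}{k!}\partial^k\varphi(2\cdot)\Bigr)
\]
and remarks that the properties follow from Taylor's formula, while you unpack the integral remainder explicitly. One small remark: your assertion that the moment vanishing of $\psi$ ``does not follow from the assumption $\varphi\in\mathscr{B}_c$'' is slightly misleading --- it also follows directly from that assumption if one expands the paper's closed formula term by term --- but your alternative argument via $|l'|<|k|$ and integration by parts is perfectly valid (and in fact shows the moments vanish even without assuming $\varphi\in\mathscr{B}_c$).
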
 

\begin{remark}
Note that the scale $2^{- n + 1}$ may be greater than 1 for $n = 0$.
\end{remark}

\begin{proof}
The test-function $\psi^{[x, y, n]}$ is defined by:
	\begin{equation*}
		\psi^{[x, y, n]} ( \cdot ) \coloneqq 2^{d} (2^n(x - y))^{- c - 1} \left( \varphi (2 \cdot + 2^n (y - x)) - \sum\limits_{| k | \leq c} \frac{(2^n ( y - x ))^k}{k !} \partial^k \varphi ( 2 \cdot ) \right) .
	\end{equation*}
The required properties on $\psi$ follow from this expression, in particular after applying Taylor-Lagrange's formula.
\end{proof}

\subsection{Proof of Proposition~\ref{prop:stability_of_properties_of_Gamma}}
\label{sec:reexpansion}
We first look at the \emph{group property} \eqref{it:uno} 
from Remark~\ref{remark:comparison_definitions_of_model}.
It follows by direct computation from
the definition \eqref{eq:hatGamma0}-\eqref{eq:hatGamma} of 
$\hat{\Gamma}$ that, using labels $i,j \in {I}$
and $k,l \in \poly(\gamma+\beta)$ for clarity,
\begin{equation*}
	\hat \Gamma_{xy} \, \hat \Gamma_{yz} =
	\left( \begin{matrix}
	(\hat \Gamma_{xy} \, \hat \Gamma_{yz})^{ji} & (\hat \Gamma_{xy} \, \hat \Gamma_{yz})^{jk} \\
	(\hat \Gamma_{xy} \, \hat \Gamma_{yz})^{li}
	& (\hat \Gamma_{xy} \, \hat \Gamma_{yz})^{lk}
	\end{matrix}\right)
	=
	\left( \begin{matrix}
	(\Gamma_{xy} \, \Gamma_{yz})^{ji} & 0 \\
	\star
	& (\Gamma^\poly_{xy}\, \Gamma^\poly_{yz})^{lk}
	\end{matrix}\right)
\end{equation*}
where, after some cancelation, we obtain
\begin{equation*}
	\star
	= \sum_{\substack{j\in I \colon\\ \alpha_j+\beta > |l|}}  
	D^l (\sfK \Pi^j_x)(x) \, (\Gamma_{xy} \, \Gamma_{yz})^{ji}
	- 
	\sum_{k\in\poly(\alpha_i+\beta)} 
	(\Gamma^\poly_{xy}\, \Gamma^\poly_{yz})^{lk}
	\, D^k (\sfK \Pi^i_z)(z) \,.
\end{equation*}
Since the group property holds for $\Gamma^\poly$, that is
$\Gamma^\poly_{xy}\, \Gamma^\poly_{yz} = \Gamma^\poly_{xz}$
(see Remark~\ref{ex:polynomial-model}),
it follows that the group property holds for $\hat\Gamma$
as soon as it holds for $\Gamma$.

\smallskip

We next consider the \emph{triangular structure}~\eqref{it:due}
(which we know to hold for $\Gamma^\poly$).
Assuming that it is satisfied by $\Gamma$, that is
$\Gamma_{xy}^{ii}=1$ and
$\Gamma_{xy}^{ji}=0$ for $j\ne i$ with $\hat\alpha_j \ge \hat\alpha_i$,
let us prove that it is satisfied by $\hat\Gamma$.
By \eqref{eq:hatGamma0}, we only need to check that
\begin{equation*}
	\hat{\Gamma}^{li} = 0 \quad \text{for all} \quad i \in I, \ l\in \poly(\gamma+\beta)
	\quad \text{with} \quad \hat\alpha_l = |l| \ge \hat\alpha_i = \alpha_i+\beta \,.
\end{equation*}
It suffices to note that, in the definition \eqref{eq:hatGamma} of $\hat{\Gamma}^{li}$,
\emph{both sums vanish for $|l| \ge \alpha_i+\beta$}: indeed, the first sum
is restricted to $\alpha_j+\beta > |l|$, hence $\alpha_j > \alpha_i$ and
then $\Gamma_{xy}^{ji}=0$ by the triangular structure of $\Gamma$;
similarly, the second sum is restricted to
$|k| < \alpha_i+\beta$, hence $|k| < |l|$ and then 
$(\Gamma^\poly)_{xy}^{lk}=0$ by the triangular structure of $\Gamma^\poly$.

\smallskip

We finally focus on the \emph{analytic bound}~\eqref{it:tre}, that we
assume to hold for $\Gamma$, that is
$| \Gamma_{x y}^{ji} | \lesssim | y - x |^{\alpha_i - \alpha_j}$.
By definition \eqref{eq:hatGamma0}-\eqref{eq:hatGamma} of $\hat{\Gamma}$, the
corresponding bound $| \hat{\Gamma}_{x y}^{ba} | \lesssim |y - x|^{\hat{\alpha}_a - \hat{\alpha}_b}$
is immediate to check except when $b = l \in \poly(\gamma+\beta)$
and $a = i \in {I}$, which is the case we tackle now: we need to show that
\begin{equation}\label{eq:analytic-b}
	\forall i \in I, \ \forall l \in \poly(\gamma+\beta): \qquad
	| \hat{\Gamma}_{x y}^{li} | \lesssim |y - x|^{\alpha_i+\beta - |l|} \,,
\end{equation}
uniformly for $x,y$ in compact sets, where we recall that 
$\hat{\Gamma}_{x y}^{li}$ is defined in \eqref{eq:hatGamma}.

We argue as in the proof that $\hat{f}$ is a modelled distribution, see
Section~\ref{sec:proof_hat_f_is_modelled_distribution}: replacing the pointwise
derivatives in \eqref{eq:hatGamma} by formula \eqref{eq:D1}, we can write
\begin{equation} \label{eq:hatGammaalt}
	\hat{\Gamma}_{x y}^{li} = \hat{\Gamma}_{xy}^{li; 0} (\eta^{}) 
	+ \sum\limits_{n = 0}^{+ \infty} \hat{\Gamma}_{x y}^{li; n} (\varphi) ,
	\end{equation}
where for a test-function $\psi \in \mathcal{D}$ and $n \in \mathbb{N}_0$ we set
\begin{align*}
		\hat{\Gamma}_{x y}^{li; n} ( \psi ) 
		& \coloneqq \sum_{\substack{j \in I \colon \\ \alpha_j+\beta > |l|}} 
		(-\bar\lambda^{-1} 2^n)^{| l |} \,
		(\sfK \Pi^j_x) ( (\partial^l \psi)_x^{\bar\lambda 2^{- n}} ) \, \Gamma^{ji}_{xy} \\
		& \quad \quad - \sum_{k\in \poly( \alpha_i+\beta)} 
		(-\bar\lambda^{-1} 2^n)^{| k |} \,
		(\Gamma^\poly)^{l k}_{xy}  \, (\sfK \Pi^i_y)( (\partial^k \psi)_y^{\bar\lambda 2^{- n}} ) \,.
\end{align*}
It follows by the property of homogeneity \eqref{eq:estimate_KPi2} 
and the analytic bound on $\Gamma$ that
\begin{align*}
	| \hat{\Gamma}_{x y}^{li; n} ( \varphi )  | & \lesssim 
	\sum_{\substack{j \in I \colon \\ \alpha_j+\beta > |l|}} 
	2^{- n ( \alpha_j+\beta - | l |)} | y - x |^{\alpha_i - \alpha_j} + 
	\sum_{\substack{k\in \poly( \alpha_i+\beta)\colon \\ k \ge l}}  2^{-n (\alpha_i+\beta - | k |) } 
	| y - x |^{| k | - | l |} .
\end{align*}
We can then bound the tail of the sum in
\eqref{eq:hatGammaalt}: recalling $N_{xy}$ from \eqref{eq:Nxy}, we have
\begin{equation*}
	\bigg| \sum\limits_{n = N_{x, y}}^{+ \infty} \hat{\Gamma}_{x y}^{li; n} ( \varphi )
	\bigg| \lesssim | y - x |^{\alpha_i+\beta - | l |} \,,
\end{equation*}
which agrees with \eqref{eq:analytic-b}.
	
On the other hand, since $\sum_{j\in I} \sfK \Pi^j_x \, \Gamma^{ji}_{xy} = \sfK \Pi^j_y$
by the reexpansion property \eqref{eq:relation_Pi_Gamma}, 
we can rewrite, recalling \eqref{eq:phipsi},
\begin{align*}
		\hat{\Gamma}_{x y}^{li; n} ( \varphi ) 
		& = - \sum_{\substack{j \in I \colon \\ \alpha_j+\beta \le |l|}} 
		(-\bar\lambda^{-1} 2^n)^{| l |} \,
		(\sfK \Pi^j_x) ( (\partial^l \varphi)_x^{\bar\lambda 2^{- n}} ) \, \Gamma^{ji}_{xy} \\
		& \quad\quad  + (-\bar\lambda^{-1} 2^n)^{| l |} \,
		( \bar\lambda^{-1} \, 2^{n} (x-y))^{\lceil\gamma+\beta\rceil^+}  \,
		\sfK \Pi^i_y ( \psi_y^{2\bar\lambda 2^{-n}} ) \,,
\end{align*}
for a suitable $\psi = \psi^{[x,y,\bar\lambda 2^{-n}]} \in \mathrm{cst}\, \mathscr{B}^r_{\gamma+\beta}$
with $r=r_\Pi$, thanks to Lemma~\ref{lemma:test-functions_Taylor_expansions}.

We again single out the indices $j \in I$ such that $\alpha_j + \beta = |l|$ which we tackle using the assumption of compatibility: denote
	\begin{align*}
		\hat{\Gamma}_{=; x y}^{li; n} ( \varphi ) 
		& = - \sum_{\substack{j \in I \colon \\ \alpha_j+\beta = |l|}} 
		(-\bar\lambda^{-1} 2^n)^{| l |} \,
		(\sfK \Pi^j_x) ( (\partial^l \varphi)_x^{\bar\lambda 2^{- n}} ) \, \Gamma^{ji}_{xy} , \\
		\hat{\Gamma}_{<; x y}^{li; n} ( \varphi ) & \coloneqq \hat{\Gamma}_{x y}^{li; n} ( \varphi ) - \hat{\Gamma}_{=; x y}^{li; n} ( \varphi ) ,
	\end{align*}
so that arguing as in the proof of \ref{thm:multi_level_schauder_estimate}, 
	\begin{align*}
		\Big| \hat{\Gamma}_{=; x y}^{li; 0} ( \eta ) + \sum\limits_{n = 0}^{N_{x, y}} \hat{\Gamma}_{=; x y}^{li; n} ( \varphi ) \Big| &= \Big|\sum_{\substack{j \in I \colon \\ \alpha_j+\beta = |l|}}  \Gamma^{ji}_{xy} \, D^l (\sfK \Pi^j_x) ( \varphi_x^{\bar\lambda 2^{- n}} ) \Big| \\
		& \lesssim \sum_{\substack{j \in I \colon \\ \alpha_j+\beta = |l|}} |y - x |^{\alpha_i - \alpha_j} \lesssim |y - x |^{\alpha_i - | l | + \beta} ,
	\end{align*}

\noindent while for the other terms, recalling the property of homogeneity \eqref{eq:estimate_KPi}, we obtain
\begin{equation*}
	|\hat{\Gamma}_{<; x y}^{li; n} ( \varphi ) | \lesssim 
	\sum_{\substack{j \in I \colon \\ \alpha_j+\beta < |l|}} 
	(2^n)^{|l|-\alpha_j-\beta} \, |y-x|^{\alpha_i-\alpha_j} +
	(2^n)^{|l|+\lceil\gamma+\beta\rceil^+ - \alpha_i - \beta} \,
	|y-x|^{\lceil\gamma+\beta\rceil^+} \,,
\end{equation*}
and the same estimate with $n=0$ also applies to $\hat{\Gamma}_{x y}^{li; n} ( \eta )$.
Since $\alpha_i + \beta < \gamma+\beta \le \lceil \gamma+\beta\rceil^+$, 
a geometric sum yields
\begin{equation*}
	\bigg| \hat{\Gamma}_{x y}^{li; 0} ( \eta ) + 
	\sum\limits_{n = 0}^{N_{x, y}} \hat{\Gamma}_{x y}^{li; n} ( \varphi ) \bigg| 
	\lesssim | y - x |^{\alpha_i + \beta - | l |}  \,,
\end{equation*}
which completes the proof of \eqref{eq:analytic-b}
and of the whole Proposition~\ref{prop:stability_of_properties_of_Gamma}.
The continuity bound \eqref{eq:conhatgamma} follows from keeping track 
of the constants in the estimates above.
\qed

\appendix

\section{Technical proofs} \label{section:technical_proofs}

\subsection{Proof of Lemma~\ref{lemma:test_functions_K_n}}

We proceed as in \cite[Proposition~14.11]{MR4174393}.

\label{sec:test_functions_K_n}

We fix a $\beta$-regularising kernel $\sfK$ of order $(m,r)$ which 
preserves polynomial at level $c_0 \in \mathbb{N}_0 \cup \lbrace - 1 \rbrace$, see
Assumption~\ref{assumption:preserving_polynomial_annihilation} 
(when $c_0 = -1$ this imposes no extra assumption).
We also fix a test function $\varphi \in \mathscr{B}_{c}^{r}$,
for some $c\in\N_0 \cup \{-1\}$, and
we assume without loss of generality that $c+1 \ge m$
(we can just redefine $m$ as $\min\{c+1,m\}$).
Recalling \eqref{eq:scaling} and \eqref{eq:K_n*}, 
we can express $\sfK_n^*\varphi_x^\lambda(y)$ 
as in \eqref{eq:scaled_recentered_eta_zeta} provided we define
	\begin{equation}\label{eq:definition_eta_zeta}
	\begin{split}
	\eta(y) = \eta^{[n, \lambda, x, \varphi]} ( y ) & \coloneqq 2^{\beta n} (2 \lambda)^d 
	\int\limits_{\mathbb{R}^d} \varphi ( z ) \sfK_n (x + \lambda z, x + 2 \lambda y) d z  , \\
	\zeta(y) = \zeta^{[n, \lambda, x, \varphi]} ( y ) & \coloneqq 2^{\beta n} (2^n \lambda)^{- m} 
	(2 \rho 2^{- n})^{d} \int\limits_{\mathbb{R}^d} \varphi ( z ) 
	\sfK_n (x + \lambda z, x + 2\rho 2^{- n} y) d z  \,,
	\end{split}
	\end{equation}
hence it only remains to prove \eqref{eq:eta_zeta_are_in_Bcr_when_K_preserves_polynomials}
(which reduces to \eqref{eq:eta_zeta_are_in_Bcr} when $c_0 = -1$).

From Assumption~\ref{assumption:preserving_polynomial_annihilation}, see \eqref{eq:polypres}, 
we see that $\eta$ and $\zeta$ annihilate polynomials at level $\min (c, c_0)$.
It remains to control the support and the $C^{r}$ norm of $\eta$ resp.\ $\zeta$.

\paragraph{Support of $\eta$}
Let $y \in \mathbb{R}^d$ be such that $\eta( y ) \neq 0$.
By \eqref{eq:definition_eta_zeta} there is 
$z \in \supp ( \varphi ) \subset B (0, 1)$ such that $\sfK_n (x + \lambda z, x + 2 \lambda y) \neq 0$,
hence $\lambda | 2 y - z | \leq \rho
2^{- n}$ by property \eqref{item:regularising_3} of Definition~\ref{def:regularising_kernel}.
Since we are in the regime $\rho 2^{- n} \leq \lambda$, this implies that 
$| 2 y - z | \leq 1$ and thus by triangle inequality 
$2 | y | \leq |z| + 1 \leq 2$, that is $|y| \le 1$.
This shows that $\supp ( \eta) \subset B (0, 1)$ as wanted.

\paragraph{Support of $\zeta$}
Let $y \in \mathbb{R}^d$ be such that $\zeta ( y ) \neq 0$.
By \eqref{eq:definition_eta_zeta} there is $z \in \supp ( \varphi ) \subset B (0, 1)$ 
such that $\sfK_n (x + \lambda z, x + 2\rho 2^{- n} y) \neq 0$,
therefore $| 2\rho 2^{- n} y - \lambda z | \leq \rho 2^{- n}$
by property \eqref{item:regularising_3} of Definition~\ref{def:regularising_kernel}.
Then $2 | y | \leq 1 + \frac{\lambda}{\rho 2^{- n}} |z|$  by the triangle inequality,
and since we consider $\lambda \leq \rho 2^{- n}$, we obtain $|y| \le 1$,
that is $\supp ( \zeta) \subset B (0, 1)$.

\paragraph{Bound on $C^{r}$ norm of $\eta$}

Let $l \in \mathbb{N}_0^d$ be a multi-index with $| l | \leq r$, and $y \in \mathbb{R}^d$, then by differentiation under the integral,
	\begin{equation*}
		\partial^l \eta ( y ) = 2^{\beta n} (2 \lambda)^{d +|l|} 
		\int_{\mathbb{R}^d} \varphi ( z ) \partial_2^l 
		\sfK_n (x + \lambda z, x + 2 \lambda y) d z .
	\end{equation*}
In this integral, we subtract and add the Taylor polynomial of $\varphi$ at $2 y$ 
of order $|l| - 1$:
	\begin{align*}
		\partial^l \eta ( y ) = 
		& \ 2^{\beta n} (2 \lambda)^{d +|l|} \int_{\mathbb{R}^d} \bigg( \varphi ( z ) - 
		\sum\limits_{| k | \leq |l | - 1} \frac{\partial^k \varphi ( 2 y )}{k !} (z - 2 y)^k \bigg) 
		\partial_2^l \sfK_n (x + \lambda z, x + 2 \lambda y) d z \\
		& \quad + 2^{\beta n} (2 \lambda)^{d +|l|} \sum\limits_{| k | \leq |l | - 1} 
		\frac{\partial^k \varphi ( 2 y )}{k !} \int_{\mathbb{R}^d} (z - 2 y)^k \partial_2^l 
		\sfK_n (x + \lambda z, x + \bar{\lambda }\lambda y) d z .
	\end{align*}
Using Taylor-Lagrange's inequality in the first integral (and absorbing $2^{d +|l|}$ into the implicit constant), we obtain
	\begin{align*}
		| \partial^l \eta ( y ) |
		\lesssim & \ 2^{\beta n} \lambda^{d +|l|} \sum\limits_{| k | = | l |} \frac{1}{k !} 
		\| \varphi \|_{C^{|l|}} \int_{\mathbb{R}^d} | z - 2 y|^{|l|} | \partial_2^l 
		\sfK_n (x + \lambda z, x + 2 \lambda y) | d z \\
		& \quad  + 2^{\beta n} \lambda^{d +|l|} \sum\limits_{| k | \leq |l | - 1} 
		\frac{\| \varphi \|_{C^{|l|}}}{k !} \left| \int_{\mathbb{R}^d} (z - 2 y)^k \partial_2^l 
		\sfK_n (x + \lambda z, x + 2 \lambda y) d z \right| .
	\end{align*}
The first integral can be estimated by the property \eqref{eq:regularising_kernel_bound_on_derivatives} 
of the kernel $\sfK$. For the second integral, we first rewrite it, by a change of variables,  as
\begin{equation*}
	\int_{\mathbb{R}^d} (z - 2 y)^k \partial_2^l 
	\sfK_n (x + \lambda z, x + 2 \lambda y) d z = \lambda^{- | k | - d} 
	\int_{\mathbb{R}^d} (\tilde{z} - 2 \lambda y)^k \partial_2^l 
	\sfK_n (x + \tilde{z}, x + 2 \lambda y) d \tilde{z}
\end{equation*}
and then we use property \eqref{eq:reg-int} of the kernel $\sfK$. Overall, for $x\in K$ we get
	\begin{align*}
		& | \partial^l \eta ( y ) | 
		\lesssim 
		c_{K'} \, 2^{\beta n} \lambda^{d +|l|} \| \varphi \|_{C^{|l|}} 2^{(d - \beta + |l|) n} 
		\int_{z \in B(2 y, \rho \frac{2^{- n}}{\lambda})} 
		| z - 2 y|^{|l|} d z \\
		& \qquad
		\qquad\qquad\qquad 
		+ c_{K'} \, 2^{\beta n} \lambda^{d +|l|} \sum\limits_{| k | \leq |l | - 1} 
		\frac{\| \varphi \|_{C^{|l|}}}{k !}
		\lambda^{- d - | k |} 2^{- \beta n} \,,
	\end{align*}
where $K' := K \oplus B(0,2\bar\lambda)$. It follows that
	\begin{equation*}
		\sup_{n\in\N, \, \lambda \in (0,\bar\lambda], \, x\in K} 
		\, \| \eta \|_{C^{r}} \lesssim
		\mathrm{cst}_{K,\bar\lambda} \,  \| \varphi \|_{C^{r}} \le
		\mathrm{cst}_{K,\bar\lambda} \,,
	\end{equation*}
where $\mathrm{cst}_{K,\bar\lambda} < \infty$ depends 
on $K$, $\bar\lambda$ and on the kernel $\sfK$ (in particular, on $\rho$).

\paragraph{Bound on $C^{r}$ norm of $\zeta$}
Let $l \in \mathbb{N}_0^d$ be a multi-index with $| l | \leq r$, and $y \in \mathbb{R}^d$, then by differentiation under the integral,
	\begin{equation*}
		\partial^l \zeta ( y ) = 2^{\beta n} (2^n \lambda)^{- m} 
		( 2\rho 2^{- n})^{d + |l|} \int_{\mathbb{R}^d} \varphi ( z ) \partial_2^l 
		\sfK_n (x + \lambda z, x + 2\rho 2^{- n} y) d z .
	\end{equation*}
Recall that by assumption $\varphi$ annihilates polynomials of degree $c \geq m - 1$ 
so in this integral we can subtract the Taylor polynomial of 
$\partial_2^l \sfK_n (\cdot, x + 2\rho 2^{- n} y)$
based at $x$ of order $m - 1$. It is convenient to denote:
	\begin{equation*}
		R_{n, \lambda, x, y, l}^{[m - 1]} ( z ) \coloneqq \partial_2^l 
		\sfK_n (x + \lambda z, x + 2\rho 2^{- n} y) - \sum\limits_{| k | \leq m - 1} 
		\partial_1^k \partial_2^l \sfK_n (x , x + 2\rho 2^{- n} y) \frac{(\lambda z)^k}{k !} ,
	\end{equation*}
\noindent then 
	\begin{equation*}
		\partial^l \zeta ( y ) = 2^{\beta n} (2^n \lambda)^{- m} ( 2\rho 2^{- n})^{d + |l|} \int_{\mathbb{R}^d} \varphi ( z ) R_{n, \lambda, x, y, l}^{[m - 1]} ( z ) d z .
	\end{equation*}
By Taylor-Lagrange's formula, 
	\begin{equation*}
		\left| R_{n, \lambda, x, y, l}^{[m - 1]} ( z ) \right| \leq \sum\limits_{| k | = m} \frac{1}{k !} \left\| \partial_1^k \partial_2^l \sfK_n \right\|_{\infty} | \lambda z |^{m} .
	\end{equation*}
Thus, by the property \eqref{eq:regularising_kernel_bound_on_derivatives} of the kernel $\sfK$, 
for $x\in K$ and $\lambda \in (0,\bar\lambda]$
	\begin{equation*}
		\left| R_{n, \lambda, x, y, l}^{[m - 1]} ( z ) \right| \lesssim
		2^{(d - \beta + m + |l|)n} \lambda^{m} | z |^{m} ,
	\end{equation*}
\noindent where the implicit multiplicative constant depends on the
compact $K$, on $\bar\lambda$ 
and on the kernel $\sfK$ (and on the dimension $d$ of the underlying space).
Consequently, 
	\begin{equation*}
		\left| \partial^l \zeta ( y ) \right| \lesssim
		\int_{\mathbb{R}^d} | \varphi ( z ) | | z |^{m} d z
		\le \int_{\mathbb{R}^d} | \varphi ( z ) | d z
		\lesssim 1  \,,
	\end{equation*}
because by assumption $\varphi \in \mathscr{B}_{c}^{r}$.
Thus, this establishes:
	\begin{equation*}
		\sup_{n\in\N, \, \lambda \in (0,\bar\lambda], \, x\in K} \,
		\| \zeta \|_{C^{r}} \lesssim 1  \,,
	\end{equation*}
which concludes the proof.\qed

\section{Spaces of germs and distributions are \enquote{independent of \texorpdfstring{$r$}{r}}} \label{section:independence_in_r}

In this section we prove (using wavelet techniques) that the choice of the regularity $r$ of test-functions in the different spaces of distributions and germs studied in this paper generally does not matter.

\begin{proposition}\label{prop:independence_in_r}
Let $\bar{\alpha}, \alpha, \gamma \in \mathbb{R}$ with $\bar{\alpha}, \alpha \leq \gamma$.
Then:
	\begin{enumerate}
		\item The Definition \ref{def:Hoelder_Zygmund_spaces} of H\"older-Zygmund spaces $\mathcal{Z}^{\gamma}$ does not depend on the choice of $r \geq r_{\gamma} \coloneqq \min \lbrace r \in \mathbb{N}_0 , r > - \gamma \rbrace$.
		\item The Definition \ref{def:homogeneity_coherence} of homogeneous and coherent germs $\mathcal{G}^{\bar{\alpha}; \alpha, \gamma}$ does not depend on the choice of $r \geq r_{\bar{\alpha}, \alpha} \coloneqq \min \lbrace r \in \mathbb{N}_0, r > \max ( - \bar{\alpha}, - \alpha ) \rbrace$.
		\item The Definition \ref{def:weak_homogeneity_coherence} of weakly homogeneous and coherent germs $\mathcal{G}_{\mathrm{weak}}^{\bar{\alpha}; \alpha, \gamma}$ does not depend on the choice of $r \geq r_{\bar{\alpha}, \alpha} \coloneqq \min \lbrace r \in \mathbb{N}_0, r > \max ( - \bar{\alpha}, - \alpha ) \rbrace$.
	\end{enumerate}
\end{proposition}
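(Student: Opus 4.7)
The inclusion ``smaller $r$ implies larger $r$'' is immediate in all three parts, since $\mathscr{B}^{r'}\subseteq \mathscr{B}^{r}$ (and $\mathscr{B}^{r'}_\delta \subseteq \mathscr{B}^r_\delta$) whenever $r'\ge r$: a bound valid on a larger class of test functions automatically restricts to any subclass. It is the converse direction that requires work.

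My strategy is to use a compactly supported smooth wavelet decomposition. I would fix, once and for all, a Daubechies-type orthonormal wavelet basis of $L^2(\mathbb{R}^d)$: compactly supported functions $\phi, \psi^{(1)},\ldots,\psi^{(2^d-1)}\in C^{r'}_{c}(\mathbb{R}^d)$ with $r'\in\N_0$ and the number of vanishing moments of each $\psi^{(i)}$ both chosen arbitrarily large (in particular $\ge \max\{r,r_\gamma,-\bar\alpha,-\alpha,\gamma\}+d+1$), so that
$\{\phi(\cdot-k)\}_{k\in\mathbb{Z}^d}\cup\{2^{nd/2}\psi^{(i)}(2^n\cdot - k)\}_{n\ge 0, k, i}$ is an orthonormal basis. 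For any test function $\varphi\in\mathscr{B}^{r_\gamma}$ (resp.\ $\mathscr{B}^{r_{\bar\alpha,\alpha}}$), I would then expand $\varphi_x^\lambda$ in this basis:
\begin{equation*}
\varphi_x^\lambda \;=\; \sum_{k} c_{0,k}\,\phi_k \;+\; \sum_{n\ge 0}\sum_{k,i} c^{(i)}_{n,k}\,\psi^{(i)}_{n,k},
\end{equation*}
observing that each of the wavelets appearing, after multiplying by an explicit normalising constant, is a recentered and rescaled test function belonging to $\mathrm{cst}\cdot\mathscr{B}^{r'}_{r'}$ at scale $1$ (for the $\phi_k$) or $2^{-n}$ (for the $\psi^{(i)}_{n,k}$), i.e.\ of the form $(\tilde\phi)_{x_k}^{1}$ or $(\tilde\psi^{(i)})_{x_{n,k}}^{2^{-n}}$.

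I would then apply the hypothesis (bound available for test functions in $\mathscr{B}^{r'}$, or in $\mathscr{B}^{r'}_\gamma\subseteq\mathscr{B}^{r'}_M$) to each wavelet separately, and combine with coefficient estimates derived from compact support and regularity:
\begin{itemize}
\item The father coefficients $c_{0,k}$ are of size $O(1)$ and non-zero for only $O((1+\lambda)^d)$ indices $k$, by the compact support of $\varphi_x^\lambda\subset B(x,\lambda)$.
\item The mother coefficients at scale $2^{-n}\ge\lambda$ satisfy $|c^{(i)}_{n,k}|\lesssim 2^{nd/2}$ with at most $O((\lambda+2^{-n})^d\,2^{nd})$ non-zero indices.
\item At fine scales $2^{-n}\le\lambda$, combining the $C^{r_\gamma}$ (resp.\ $C^{r_{\bar\alpha,\alpha}}$) regularity of $\varphi$ with the vanishing moments of $\psi^{(i)}$ (Taylor expanding $\varphi$ against $\psi^{(i)}_{n,k}$), one gets the improved estimate $|c^{(i)}_{n,k}|\lesssim 2^{nd/2}(2^{n}\lambda)^{-r_\gamma-d}$ with at most $O(\lambda^d\,2^{nd})$ non-zero indices.
\end{itemize}
Substituting these into the assumed scaling bound for each wavelet evaluation and summing produces a geometric series in the scale index~$n$ whose total is precisely of the announced order $\lambda^\gamma$ (or $\lambda^{\bar\alpha}$, $\lambda^\alpha(\lambda+|y-x|)^{\gamma-\alpha}$, etc.) uniformly in $\varphi\in\mathscr{B}^{r_\gamma}$. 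For parts (2) and (3), the argument is applied separately to $F_x$ and to the difference $F_y-F_x$, using the appropriate $\bar\alpha$ or $(\alpha,\gamma)$-scaling hypothesis; for part (3), we additionally use that wavelets at fine scales $2^{-n}\le\lambda$ automatically annihilate polynomials so the weak coherence/homogeneity hypothesis (restricted to $\mathscr{B}^{r'}_{\gamma}$ resp.\ $\mathscr{B}^{r'}_{\bar\alpha}$) can be invoked, while the father wavelets and coarse-scale mother wavelets are handled using the second, unconstrained, bounds in \eqref{eq:weak-homogeneity}--\eqref{eq:weak-coherence}.

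The main obstacle is the scale-by-scale balancing in the summation: the contribution of each dyadic scale $n$ is a product of the coefficient bound, the number of nonzero coefficients, and the scaled estimate from the hypothesis, and one has to verify that the resulting series converges with the correct total exponent in $\lambda$ (and in $|y-x|+\lambda$, for the coherence estimates). This hinges on the condition $r'>-\gamma$ (resp.\ $r'>\max\{-\bar\alpha,-\alpha\}$), which ensures that the fine-scale series in $n$ is geometric with ratio strictly less than $1$, and on the fact that $\varphi\in\mathscr{B}^{r_\gamma}$ already provides enough regularity to control the fine-scale wavelet coefficients to the order needed. The case of weak coherence/homogeneity adds a minor bookkeeping step to separate coarse-scale from fine-scale contributions but is otherwise identical in structure.
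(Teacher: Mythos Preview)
Your wavelet strategy is exactly the one the paper uses, and the fine-scale part of your argument (scales $2^{-n}\le\lambda$, using the $C^{r_{\bar\alpha,\alpha}}$ regularity of $\varphi$ against the vanishing moments of the mother wavelets) is correct. The gap is in the coarse-scale part, and it bites precisely when $\bar\alpha>0$ or $\gamma>0$, which is the interesting regime.

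Concretely: with your decomposition starting at scale $1$ and your stated bounds $|c_{0,k}|=O(1)$, $|c^{(i)}_{n,k}|\lesssim 2^{nd/2}$ for $2^{-n}\ge\lambda$, the father and coarse-mother contributions to $F_x(\varphi_x^\lambda)$ sum to something like $\sum_{n=0}^{N_\lambda}(2^{-n\bar\alpha}+2^{-n\gamma})$. When $\bar\alpha>0$ this is $O(1)$, not $O(\lambda^{\bar\alpha})$; the coherence sum has the same defect when $\gamma>0$. Invoking the ``second, unconstrained bounds'' from \eqref{eq:weak-homogeneity}--\eqref{eq:weak-coherence} does not help, since those also only give $O(1)$.

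The paper fixes this in two different ways depending on the case. For part~(2) (strong homogeneity/coherence), it starts the wavelet basis at level $N_\lambda=\min\{n:2^{-n}\le\lambda\}$ rather than at level $0$; the father wavelets are then at scale $\sim\lambda$ and their evaluation by $F_x$ already carries the factor $\lambda^{\bar\alpha}$ (resp.\ $\lambda^\alpha(|y-x|+\lambda)^{\gamma-\alpha}$) directly, with no coarse-mother sum at all. For part~(3) one \emph{cannot} start at scale $\lambda$ because the father wavelets do not annihilate polynomials and the weak scaling bound is unavailable for them; instead one starts at level $0$ but exploits that the test function itself lies in $\mathscr{B}_{\bar\alpha}$ (resp.\ $\mathscr{B}_\gamma$): Taylor-expanding the wavelet against $\varphi$ yields $|c_{0,k}|\lesssim\lambda^{\lfloor\bar\alpha\rfloor+1}$ and $|c^{(i)}_{n,k}|\lesssim 2^{nd/2}(\lambda 2^n)^{\lfloor\bar\alpha\rfloor+1}$ for $n\le N_\lambda$, which restores the missing power of $\lambda$. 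Your sketch omits both mechanisms; once you insert the appropriate one in each part, the argument goes through as you outlined.
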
 

A proof in the case of the H\"older-Zygmund spaces $\mathcal{Z}^{\gamma}$ can be found for instance in \cite[Lemma~14.13]{MR4174393}.

A proof in the case of the space of homogeneous and coherent germs $\mathcal{G}^{\bar{\alpha}; \alpha, \gamma}$ when $\gamma \neq 0$ can be found in \cite[Propositions~13.1 and~13.2]{CZ20}, see Remark \ref{rem:uniformity}.
However the approach in this reference fails to cover the case $\gamma = 0$.

We prove Proposition~\ref{prop:independence_in_r} using the following result from wavelet theory: 
\begin{theorem}[Daubechies' wavelets, see \cite{MR951745,MR1162107,MR1228209}]\label{thm:Daubechies_wavelets}
For any $r, d \in \mathbb{N}_0$, there exist a compactly supported function $\varphi \in C_c^r ( \mathbb{R}^d)$ and a finite family $\Psi$ of compactly supported functions $\psi \in C_c^r (\mathbb{R}^d)$ satisfying $\int_{\mathbb{R}^d} \psi ( x ) x^k d x = 0$ for all multi-indices $k \in \mathbb{N}_0^d$ with $| k | \leq r$, such that for all $n_0 \in \mathbb{Z}$, the family
	\begin{equation}\label{eq:wavelet_basis}
		\lbrace 2^{\frac{- n_0 d}{2}} \varphi_k^{2^{- n_0}} 
		\colon k \in 2^{- n_0} \mathbb{Z} \rbrace \cup 
		\lbrace 2^{\frac{- n d}{2}} \psi_k^{2^{- n}} \colon
		n \ge n_0, \, k \in 2^{- n} \mathbb{Z}, \, \psi \in \Psi \rbrace ,
	\end{equation}
	
\noindent is a Hilbert basis of $L^2 ( \mathbb{R}^d )$.
\end{theorem}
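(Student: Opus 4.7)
The plan is to follow Daubechies' classical multiresolution-analysis (MRA) construction, which is what the cited references \cite{MR951745,MR1162107,MR1228209} provide; since the statement is exactly Daubechies' theorem, I would reproduce her argument in outline rather than attempting an independent proof. I would first reduce to the one-dimensional case $d=1$ and then obtain the $d$-dimensional statement by a standard tensor-product construction: if $\varphi^{(1)}$ and $\Psi^{(1)} = \{\psi^{(1)}\}$ solve the problem on $\mathbb{R}$, one sets $\varphi(x_1,\dots,x_d) := \prod_i \varphi^{(1)}(x_i)$ and lets $\Psi$ be the finite family of $2^d - 1$ tensor products $\prod_i \theta_i$ where each $\theta_i \in \{\varphi^{(1)},\psi^{(1)}\}$ and at least one factor is $\psi^{(1)}$. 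The compact support, $C_c^r$ regularity and (in the $\Psi$ case) the vanishing moments up to degree $r$ then follow directly from the 1D properties, and orthonormality of the $d$-dimensional system is the tensor product of 1D orthonormal bases. Finally, rescaling from $n_0 = 0$ to arbitrary $n_0 \in \mathbb{Z}$ is just dyadic rescaling and costs nothing.

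For the one-dimensional construction, I would first set up an MRA: one seeks a scaling function $\varphi^{(1)} \in L^2(\mathbb{R})$ whose integer translates $\{\varphi^{(1)}(\cdot - k)\}_{k\in\mathbb{Z}}$ are orthonormal and such that the closed spans $V_n := \overline{\mathrm{span}}\{2^{n/2}\varphi^{(1)}(2^n\cdot - k)\colon k \in \mathbb{Z}\}$ form a nested, dense, exhausting sequence. The function $\varphi^{(1)}$ is prescribed on the Fourier side by an infinite product $\widehat{\varphi^{(1)}}(\xi) = \prod_{j\ge 1} m_0(\xi/2^j)$ where $m_0$ is a trigonometric polynomial (the low-pass filter) satisfying $|m_0(\xi)|^2 + |m_0(\xi + \pi)|^2 = 1$. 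The wavelet is then defined by $\widehat{\psi^{(1)}}(\xi) := e^{i\xi/2}\,\overline{m_0(\xi/2 + \pi)}\,\widehat{\varphi^{(1)}}(\xi/2)$, which automatically yields orthonormality of $\{\psi^{(1)}(\cdot - k)\}$ and of their dyadic rescalings, and the orthogonal decomposition $V_{n+1} = V_n \oplus W_n$ with $W_n$ spanned by the scaled wavelets; these together give the Hilbert basis property.

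The heart of the proof—and the part I expect to be the main obstacle—is Daubechies' construction of a trigonometric polynomial $m_0$ such that the associated $\varphi^{(1)}$ is simultaneously (i) \emph{compactly supported} (which forces $m_0$ to be a finite-degree trigonometric polynomial, by Paley--Wiener), (ii) of class $C^r$ (which requires enough smoothness of $\widehat{\varphi^{(1)}}$ at infinity, hence sufficiently many factors of $\cos(\xi/2)$ in $m_0$), and (iii) producing wavelets with $r+1$ vanishing moments (which is ensured by $m_0$ having a zero of order $\ge r+1$ at $\xi = \pi$, since then $\widehat{\psi^{(1)}}$ vanishes to order $\ge r+1$ at the origin). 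The explicit construction writes $|m_0(\xi)|^2 = \cos^{2N}(\xi/2)\,P(\sin^2(\xi/2))$ for a polynomial $P$ determined by the Bezout-type identity following from $\sin^2 + \cos^2 = 1$, and then extracts $m_0$ itself by a spectral-factorization (Fej\'er--Riesz) argument; taking $N$ large relative to $r$ guarantees $\varphi^{(1)} \in C^r_c(\mathbb{R})$ via a decay estimate on $\widehat{\varphi^{(1)}}$.

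Since all of this is carried out in detail in the cited references, in our paper I would not reprove it but simply invoke Theorem~\ref{thm:Daubechies_wavelets} as stated and use the family \eqref{eq:wavelet_basis} as an analytic tool (it is subsequently used to prove Proposition~\ref{prop:independence_in_r}); the only verification worth including explicitly is the passage from 1D to $d$-dimensional tensor products, which is a short bookkeeping exercise.
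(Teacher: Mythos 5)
Your proposal matches the paper's treatment exactly: the paper states Theorem~\ref{thm:Daubechies_wavelets} as a known result with citations to \cite{MR951745,MR1162107,MR1228209} and gives no proof, precisely the course of action you recommend at the end. Your sketch of the underlying multiresolution-analysis construction (one-dimensional spectral factorisation, tensorisation, and coarse-scale cutoff at $n_0$) is accurate and would serve as a helpful remark, but is not needed for the paper's argument, which only invokes the statement as a tool in the proof of Proposition~\ref{prop:independence_in_r}.
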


In fact, the convergence along the basis \eqref{eq:wavelet_basis} holds in $C^r$ norm.
This allows us to prove Proposition~\ref{prop:independence_in_r}.

\begin{proof}[Proof of Proposition~\ref{prop:independence_in_r}]
As a proof in the case of the spaces $\mathcal{Z}^{\gamma}$ can be found in the literature, see \cite[Lemma~14.13]{MR4174393}, we only consider the case of spaces of germs.
We argue slightly differently in the case of $\mathcal{G}$ and in the case of $\mathcal{G}_{\mathrm{weak}}$:
\begin{itemize}
	\item in the case of $\mathcal{G}$, we exploit the decomposition \eqref{eq:wavelet_basis} starting from $n_0 \in \mathbb{Z}$ with $2^{- n_0} \sim \lambda$,
	\item in the case of $\mathcal{G}_{\mathrm{weak}}$, we exploit the decomposition \eqref{eq:wavelet_basis} starting from $n_0 = 0$.
\end{itemize}

Let $\alpha, \bar{\alpha}, \gamma \in \mathbb{R}$ be such that $\alpha \leq \gamma$, $\bar{\alpha} \leq \gamma$, and define
	\begin{equation*}
		r_{\alpha, \bar{\alpha}} \coloneqq \min \lbrace r \in \mathbb{N}_0, r > \max \lbrace - \alpha, - \bar{\alpha} \rbrace \rbrace .
	\end{equation*}

For any $r \in \mathbb{N}_0$ arbitrary, we denote $\mathcal{G}_r^{\bar{\alpha}; \alpha, \gamma}$ the space of germs corresponding to the family of seminorms given by \eqref{eq:semi-norm-coherence+homogeneity}; and similarly $\mathcal{G}_{\mathrm{weak};\,r}^{\bar{\alpha}; \alpha, \gamma}$ corresponding to the family of seminorms \eqref{eq:semi-norm-weak-coherence+homogeneity}.

Let $r \in \mathbb{N}_0$ with $r \geq r_{\alpha, \bar{\alpha}}$, we shall show that
	\begin{align}
		\mathcal{G}_{r}^{\bar{\alpha}; \alpha, \gamma} & = \mathcal{G}_{r_{\alpha, \bar{\alpha}}}^{\bar{\alpha}; \alpha, \gamma} , \label{eq:independence_in_r_G} \\
		\mathcal{G}_{\mathrm{weak};\,r}^{\bar{\alpha}; \alpha, \gamma} & = \mathcal{G}_{\mathrm{weak};\,r_{\alpha, \bar{\alpha}}}^{\bar{\alpha}; \alpha, \gamma} . \label{eq:independence_in_r_checkG} 
	\end{align}

\medskip \textit{Proof of \eqref{eq:independence_in_r_G}}.
It suffices to show the inclusion
	\begin{equation*}
		\mathcal{G}_{r}^{\bar{\alpha}; \alpha, \gamma} \subset \mathcal{G}_{r_{\alpha, \bar{\alpha}}}^{\bar{\alpha}; \alpha, \gamma} ,
	\end{equation*}
\noindent because the other one follows from the definitions.
Let $F \in \mathcal{G}_{r}^{\bar{\alpha}; \alpha, \gamma}$, we start with the estimate of homogeneity.
Let $\varphi, \Psi$ be as in Theorem~\ref{thm:Daubechies_wavelets} applied to $r$.
Let $K \subset \mathbb{R}^d$ be compact, $x \in K$, $\lambda \in (0, 1]$, $\eta \in \mathscr{B}^{r_{\alpha, \bar{\alpha}}}$, we want to estimate $F_x (\eta_x^{\lambda})$.
Set $N \coloneqq N_{\lambda} \coloneqq \min \lbrace n \in \mathbb{N}, 2^{- n} \leq \lambda \rbrace$.
From the decomposition \eqref{eq:wavelet_basis} starting at $N_{\lambda}$, we have:
	\begin{align*}
		F_x (\eta_x^{\lambda}) & = \sum\limits_{k \in 2^{- N_{\lambda}} \mathbb{Z}} 2^{- N_{\lambda} d} \langle \eta_x^{\lambda} , \varphi_k^{2^{- N_{\lambda}}} \rangle F_x (\varphi_k^{2^{- N_{\lambda}}}) \\
		& \quad + \sum\limits_{n = N_{\lambda}}^{+ \infty} \sum\limits_{ k \in 2^{- n} \mathbb{Z}} \sum\limits_{\psi \in \Psi } 2^{- n d} \langle \eta_x^{\lambda}, \psi_k^{2^{- n}} \rangle F_x ( \psi_k^{2^{- n}} ) .
	\end{align*}

In the first line, for reasons of support one has $| x - k | \lesssim \lambda$ and only a finite number of $k$ contribute to the sum.
In the second line, for reasons of support one has $| x - k | \lesssim \lambda$ and $\sim 2^{(n - N_{\lambda}) d}$ values of $k$ contribute to the sum.
Thus, because of the coherence and homogeneity of $F$, one has $| F_x (\varphi_k^{2^{- N_{\lambda}}}) | \lesssim \lambda^{\bar{\alpha}} + \lambda^{\gamma} \lesssim \lambda^{\bar{\alpha}}$ in the first line, and $|F_x ( \psi_k^{2^{- n}} ) | \lesssim 2^{- n \bar{\alpha}} + 2^{- n \alpha} \lambda^{\gamma - \alpha}$ in the second line.
Also, since the functions $\psi$ cancel polynomials of degree up to $r \geq r_{\alpha, \bar{\alpha}}$, by subtracting a Taylor polynomial of degree $\tilde{r} \coloneqq r_{\alpha, \bar{\alpha}} - 1$ in the integral one obtains $| \langle \eta_x^{\lambda}, \psi_k^{2^{- n}} \rangle | \lesssim \| \eta \|_{C^{\tilde{r} + 1}} \lambda^{-d} \big( \frac{2^{-n}}{\lambda} \big)^{\tilde{r} + 1}$.
Thus, collecting these estimate:
	\begin{align*}
		| F_x (\eta_x^{\lambda}) | & \lesssim 2^{- N_{\lambda} d} \| \eta \|_{\infty} 2^{N_{\lambda} d} 2^{- N_{\lambda} \bar{\alpha}} \\
		& \quad + \sum\limits_{n = N_{\lambda}}^{+ \infty} 2^{- n d} 2^{(n - N_{\lambda}) d} \| \eta \|_{C^{\tilde{r} + 1}} \lambda^{-d} \left( \frac{2^{-n}}{\lambda} \right)^{\tilde{r} + 1} \left( 2^{- n \bar{\alpha}} + 2^{- n \alpha} \lambda^{\gamma - \alpha} \right) .
	\end{align*}

Recalling that by choice of $\tilde{r}$ one has $\tilde{r} + 1 > - \alpha$ and $\tilde{r} + 1 > - \bar{\alpha}$, by summing the geometric series one obtains the wanted homogeneity estimate
	\begin{equation*}
		| F_x (\eta_x^{\lambda}) | \lesssim \| \eta \|_{C^{r_{\alpha, \bar{\alpha}}}} \lambda^{\bar{\alpha}} .
	\end{equation*}

We establish the estimate of coherence similarly.
Let again $\varphi, \Psi$ be as in Theorem~\ref{thm:Daubechies_wavelets} applied to $r$.
Let $K \subset \mathbb{R}^d$ be compact, $x, y \in K$, $\lambda \in (0, 1]$, $\eta \in \mathscr{B}^{r_{\alpha, \bar{\alpha}}}$, we want to estimate $(F_y - F_x) (\eta_x^{\lambda})$.
As above, set $N \coloneqq N_{\lambda} \coloneqq \min \lbrace n \in \mathbb{N}, 2^{- n} \leq \lambda \rbrace$, from the decomposition \eqref{eq:wavelet_basis} starting at $N_{\lambda}$, we have:
	\begin{align*}
		(F_y - F_x) (\eta_x^{\lambda}) & = \sum\limits_{k \in 2^{- N_{\lambda}} \mathbb{Z}} 2^{- N_{\lambda} d} \langle \eta_x^{\lambda} , \varphi_k^{2^{- N_{\lambda}}} \rangle (F_y - F_x) (\varphi_k^{2^{- N_{\lambda}}}) \\
		& \quad + \sum\limits_{n = N_{\lambda}}^{+ \infty} \sum\limits_{ k \in 2^{- n} \mathbb{Z}} \sum\limits_{\psi \in \Psi } 2^{- n d} \langle \eta_x^{\lambda}, \psi_k^{2^{- n}} \rangle ( F_y - F_x ) ( \psi_k^{2^{- n}} ) .
	\end{align*}

We perform the same estimate as above except this time from the assumption of coherence on the germ $F$ (and the fact that $|x - k| \lesssim \lambda$ for reasons of support) one has $| (F_y - F_x) (\varphi_k^{2^{- N_{\lambda}}}) | \lesssim \lambda^{\alpha} (| y - x | + \lambda)^{\gamma - \alpha}$ in the first line, and similarly in the second line $| ( F_y - F_x ) ( \psi_k^{2^{- n}} ) | \lesssim 2^{- n \alpha} (| y - x| + \lambda)^{\gamma - \alpha}$.
Thus, collecting all the estimate, one obtains for $\tilde{r} \coloneqq r_{\alpha, \bar{\alpha}} - 1$:
	\begin{align*}
		| (F_y - F_x) (\eta_x^{\lambda}) | & \lesssim 2^{- N_{\lambda} d} \| \eta \|_{\infty} 2^{N_{\lambda} d} \lambda^{\alpha} (| y - x | + \lambda)^{\gamma - \alpha} \\
		& \quad + \sum\limits_{n = N_{\lambda}}^{+ \infty} 2^{- n d} 2^{(n - N_{\lambda}) d} \| \eta \|_{C^{\tilde{r} + 1}} \lambda^{-d} \left( \frac{2^{-n}}{\lambda} \right)^{\tilde{r} + 1} 2^{- n \alpha} (| y - x| + \lambda)^{\gamma - \alpha} ,
	\end{align*}
	
\noindent so that using the fact that $\tilde{r} + 1 > - \alpha$ one obtains after summing the geometric series
	\begin{equation*}
		| (F_y - F_x) (\eta_x^{\lambda}) | \lesssim \| \eta \|_{C^{r_{\alpha, \bar{\alpha}}}} \lambda^{\alpha} (| y - x | + \lambda)^{\gamma - \alpha} .
	\end{equation*}

This concludes the proof of \eqref{eq:independence_in_r_G}.

\medskip \textit{Proof of \eqref{eq:independence_in_r_checkG}}.
It suffices to show the inclusion
	\begin{equation*}
		\mathcal{G}_{\mathrm{weak};\,r}^{\bar{\alpha}; \alpha, \gamma} \subset \mathcal{G}_{\mathrm{weak};\,r_{\alpha, \bar{\alpha}}}^{\bar{\alpha}; \alpha, \gamma} ,
	\end{equation*}
\noindent because the other one follows from the definitions.
Let $F \in \mathcal{G}_{\mathrm{weak};\,r}^{\bar{\alpha}; \alpha, \gamma}$, we start with the estimate of homogeneity.
Let again $\varphi, \Psi$ be as in Theorem~\ref{thm:Daubechies_wavelets} applied to $r$.
Let $K \subset \mathbb{R}^d$ be compact, $x \in K$, $\lambda \in (0, 1]$, $\eta \in \mathscr{B}^{r_{\alpha, \bar{\alpha}}}$, $\check{\eta} \in \mathscr{B}_{\bar{\alpha}}^{r_{\alpha, \bar{\alpha}}}$.
We want to estimate $F_x (\eta_x^{})$ and $F_x (\check{\eta}_x^{\lambda})$.
From the decomposition \eqref{eq:wavelet_basis} starting at $0$, we have:
	\begin{align*}
		F_x (\eta_x^{}) & = \sum\limits_{k \in \mathbb{Z}} \langle \eta_x^{} , \varphi_k^{} \rangle F_x (\varphi_k^{}) \\
		& \quad + \sum\limits_{n = 0}^{+ \infty} \sum\limits_{ k \in 2^{- n} \mathbb{Z}} \sum\limits_{\psi \in \Psi } 2^{- n d} \langle \eta_x^{}, \psi_k^{2^{- n}} \rangle F_x ( \psi_k^{2^{- n}} ) .
	\end{align*}
	
In the first line, for reasons of support one has $| x - k | \lesssim 1$ and only a finite number of $k$ contribute to the sum.
In the second line, for reasons of support one has $| x - k | \lesssim 1$ and $\sim 2^{n d}$ values of $k$ contribute to the sum.
Thus, because of the coherence and homogeneity of the germ $F$, one has $| F_x (\varphi_k^{}) | \lesssim 1$ in the first line, and $| F_x ( \psi_k^{2^{- n}} ) | \lesssim 2^{- n \bar{\alpha}} + 2^{- n \alpha}$ in the second line.
Also, since the functions $\psi$ cancel polynomials of degree up to $r \geq r_{\alpha, \bar{\alpha}}$, by subtracting a Taylor polynomial of degree $\tilde{r} \coloneqq r_{\alpha, \bar{\alpha}} - 1$ in the integral one obtains $| \langle \eta_x^{}, \psi_k^{2^{- n}} \rangle | \lesssim \| \eta \|_{C^{\tilde{r} + 1}} 2^{-n (\tilde{r} + 1)}$.
Thus, collecting these estimates:	
	\begin{equation*}
		| F_x (\eta_x^{}) | \lesssim \| \eta \|_{\infty} + \sum\limits_{n = 0}^{+ \infty} 2^{- n d} 2^{n d} \| \eta \|_{C^{\tilde{r} + 1}} 2^{-n (\tilde{r} + 1)} \left( 2^{- n \bar{\alpha}} + 2^{- n \alpha} \right) ,
	\end{equation*}
	
\noindent so that summing the geometric series and recalling that $\tilde{r} + 1 > - \alpha$, $\tilde{r} + 1 > - \bar{\alpha}$, one obtains $| F_x (\eta_x^{}) | \lesssim  \| \eta \|_{C^{r_{\alpha, \bar{\alpha}}}}$.

Similarly:
	\begin{align*}
		F_x (\check{\eta}_x^{\lambda}) & = \sum\limits_{k \in \mathbb{Z}} \langle \check{\eta}_x^{\lambda} , \varphi_k^{} \rangle F_x (\varphi_k^{}) \\
		& \quad + \sum\limits_{n = 0}^{N_{\lambda}} \sum\limits_{ k \in 2^{- n} \mathbb{Z}} \sum\limits_{\psi \in \Psi } 2^{- n d} \langle \check{\eta}_x^{\lambda}, \psi_k^{2^{- n}} \rangle F_x ( \psi_k^{2^{- n}} ) \\
		& \quad + \sum\limits_{n = N_{\lambda} + 1}^{+ \infty} \sum\limits_{ k \in 2^{- n} \mathbb{Z}} \sum\limits_{\psi \in \Psi } 2^{- n d} \langle \check{\eta}_x^{\lambda}, \psi_k^{2^{- n}} \rangle F_x ( \psi_k^{2^{- n}} ) .
	\end{align*}

In the first line, for reasons of support one has $| x - k | \lesssim 1$ and only a finite number of $k$ contribute to the sum. 
Also, since $\check{\eta}$ annihilate polynomials of degree up to $\lfloor \bar{\alpha} \rfloor$, by subtracting a Taylor polynomial of $\varphi$ of degree $\lfloor \bar{\alpha} \rfloor$ in the integral one obtains $| \langle \check{\eta}_x^{\lambda} , \varphi_k^{} \rangle | \lesssim \| \check{\eta} \|_{\infty} \lambda^{\lfloor \bar{\alpha} \rfloor + 1} \lesssim \| \check{\eta} \|_{\infty} \lambda^{\bar{\alpha}}$.
Furthermore, because of the coherence and homogeneity of the germ $F$, $| F_x (\varphi_k^{}) | \lesssim 1$.

In the second line, for reasons of support one has $| x - k | \lesssim 2^{- n}$ and only a finite number of $k$ contribute to the sum.
Also, since $\check{\eta}$ annihilate polynomials of degree up to $\lfloor \bar{\alpha} \rfloor$, by subtracting a Taylor polynomial of $\varphi$ of degree $\lfloor \bar{\alpha} \rfloor$ in the integral one obtains $|\langle \check{\eta}_x^{\lambda}, \psi_k^{2^{- n}} \rangle | \lesssim \| \check{\eta} \|_{\infty} 2^{n d} \left( \frac{\lambda}{2^{-n}} \right)^{\lfloor \bar{\alpha} \rfloor + 1}$.
Furthermore, because of the coherence and homogeneity of the germ $F$, $| F_x ( \psi_k^{2^{- n}} ) | \lesssim 2^{- n \gamma} + 2^{- n \bar{\alpha}} \lesssim 2^{- n \bar{\alpha}}$ (since we assume $\bar{\alpha} \leq \gamma$).

In the third line, for reasons of support one has $| x - k | \lesssim \lambda$ and $\sim 2^{(n - N_{\lambda}) d}$ values of $k$ contribute to the sum.
Also, since the functions $\psi$ cancel polynomials of degree up to $r \geq r_{\alpha, \bar{\alpha}}$, by subtracting a Taylor polynomial of $\check{\eta}$ of degree $\tilde{r} \coloneqq r_{\alpha, \bar{\alpha}} - 1$ in the integral one obtains $| \langle \check{\eta}_x^{\lambda}, \psi_k^{2^{- n}} \rangle | \lesssim \| \check{\eta} \|_{C^{\tilde{r} + 1}} 2^{-n (\tilde{r} + 1)}$.
Furthermore, because of the coherence and homogeneity of the germ $F$, $| F_x ( \psi_k^{2^{- n}} ) | \lesssim 2^{- n \alpha} \lambda^{\gamma - \alpha} + 2^{- n \bar{\alpha}}$.
	
Collecting these estimates yields:
	\begin{align*}
		| F_x (\check{\eta}_x^{\lambda}) | & \lesssim \| \check{\eta} \|_{\infty} \lambda^{\bar{\alpha}} \\
		& \quad + \sum\limits_{n = 0}^{N_{\lambda}} 2^{- n d}  \| \check{\eta} \|_{\infty} 2^{n d} \left( \frac{\lambda}{2^{-n}} \right)^{\lfloor \bar{\alpha} \rfloor + 1} 2^{- n \bar{\alpha}} \\
		& \quad + \sum\limits_{n = N_{\lambda} + 1}^{+ \infty} 2^{- n d} 2^{(n - N_{\lambda}) d} \| \check{\eta} \|_{C^{\tilde{r} + 1}} 2^{-n (\tilde{r} + 1)} \left( 2^{- n \alpha} \lambda^{\gamma - \alpha} + 2^{- n \bar{\alpha}} \right) ,
	\end{align*}
	
\noindent so that summing the geometric series and recalling that $\tilde{r} + 1 > - \alpha$, $\tilde{r} + 1 > - \bar{\alpha}$, $\lfloor \bar{\alpha} \rfloor + 1 > \bar{\alpha}$, one obtains:
	\begin{equation*}
		| F_x (\check{\eta}_x^{\lambda}) | \lesssim \| \check{\eta} \|_{C^{r_{\alpha, \bar{\alpha}}}} \lambda^{\bar{\alpha}} .
	\end{equation*}

Once again, we establish the property of coherence similarly.
Let $\varphi, \Psi$ be as in Theorem~\ref{thm:Daubechies_wavelets} applied to $r$.
Let $K \subset \mathbb{R}^d$ be compact, $x, y \in K$, $\lambda \in (0, 1]$, $\eta \in \mathscr{B}^{r_{\alpha, \bar{\alpha}}}$, $\check{\eta} \in \mathscr{B}_{\gamma}^{r_{\alpha, \bar{\alpha}}}$, we want to estimate $(F_y - F_x) (\eta_x^{})$ and $( F_y - F_x ) (\check{\eta}_x^{\lambda})$.
From the decomposition \eqref{eq:wavelet_basis} starting at $0$, we have:
	\begin{align*}
		( F_y - F_x ) (\eta_x^{}) & = \sum\limits_{k \in \mathbb{Z}} \langle \eta_x^{} , \varphi_k^{} \rangle ( F_y - F_x ) (\varphi_k^{}) \\
		& \quad + \sum\limits_{n = 0}^{+ \infty} \sum\limits_{ k \in 2^{- n} \mathbb{Z}} \sum\limits_{\psi \in \Psi } 2^{- n d} \langle \eta_x^{}, \psi_k^{2^{- n}} \rangle ( F_y - F_x ) ( \psi_k^{2^{- n}} ) .
	\end{align*}

We perform the same estimates as in the case of the homogeneity, except for the fact that in the first line $| ( F_y - F_x ) (\varphi_k^{}) | \lesssim 1$ because of the assumption of coherence of $F$ (and the fact that $| x - k | \lesssim 1$ for reasons of support); and the fact that in the second line, $| ( F_y - F_x ) ( \psi_k^{2^{- n}} ) | \lesssim 2^{- n \alpha}$ because of the assumption of coherence of $F$ (and the facts that $x, y \in K$ for a compact $K$ and $| x - k | \lesssim 1$ for reasons of support).
Thus this yields for $\tilde{r} \coloneqq r_{\alpha, \bar{\alpha}} - 1$:
	\begin{align*}
		| ( F_y - F_x ) (\eta_x^{}) | & \lesssim \| \eta \|_{\infty} \\
		& \quad + \sum\limits_{n = 0}^{+ \infty} 2^{- n d} 2^{n d} \| \eta \|_{C^{\tilde{r} + 1}} 2^{-n (\tilde{r} + 1)} 2^{- n \alpha} ,
	\end{align*}
	
\noindent so that summing the geometric series yields $| ( F_y - F_x ) (\eta_x^{}) | \lesssim 1$.

Similarly:
\begin{align*}
		( F_y - F_x ) (\check{\eta}_x^{\lambda}) & = \sum\limits_{k \in \mathbb{Z}} \langle \check{\eta}_x^{\lambda} , \varphi_k^{} \rangle ( F_y - F_x ) (\varphi_k^{}) \\
		& \quad + \sum\limits_{n = 0}^{N_{\lambda}} \sum\limits_{ k \in 2^{- n} \mathbb{Z}} \sum\limits_{\psi \in \Psi } 2^{- n d} \langle \check{\eta}_x^{\lambda}, \psi_k^{2^{- n}} \rangle ( F_y - F_x ) ( \psi_k^{2^{- n}} ) \\
		& \quad + \sum\limits_{n = N_{\lambda} + 1}^{+ \infty} \sum\limits_{ k \in 2^{- n} \mathbb{Z}} \sum\limits_{\psi \in \Psi } 2^{- n d} \langle \check{\eta}_x^{\lambda}, \psi_k^{2^{- n}} \rangle ( F_y - F_x ) ( \psi_k^{2^{- n}} ) .
	\end{align*}

We perform the same estimates as in the case of the homogeneity above except that here, in the first line $| ( F_y - F_x ) (\varphi_k^{}) | \lesssim 1$; in the second line $| ( F_y - F_x ) ( \psi_k^{2^{- n}} ) | \lesssim 2^{- n \alpha} (| y - x| + 2^{- n})^{\gamma - \alpha}$; and in the third line $| ( F_y - F_x ) ( \psi_k^{2^{- n}} ) | \lesssim 2^{- n \alpha} (| y - x | + \lambda)^{\gamma - \alpha}$.
Thus this yields for $\tilde{r} \coloneqq r_{\alpha, \bar{\alpha}} - 1$:
	\begin{align*}
		| ( F_y - F_x ) (\check{\eta}_x^{\lambda}) | & \lesssim \| \check{\eta} \|_{\infty} \lambda^{\gamma} \\
		& \quad + \sum\limits_{n = 0}^{N_{\lambda}} 2^{- n d}  \| \check{\eta} \|_{\infty} 2^{n d} \left( \frac{\lambda}{2^{-n}} \right)^{\lfloor \gamma \rfloor + 1} 2^{- n \alpha} (| y - x| + 2^{- n})^{\gamma - \alpha}\\
		& \quad + \sum\limits_{n = N_{\lambda} + 1}^{+ \infty} 2^{- n d} 2^{(n - N_{\lambda}) d} \| \check{\eta} \|_{C^{\tilde{r} + 1}} 2^{-n (\tilde{r} + 1)} 2^{- n \alpha} (| y - x | + \lambda)^{\gamma - \alpha} ,
	\end{align*}

\noindent so that after summing the geometric series and recalling that $\tilde{r} + 1 > - \alpha$ and $\lfloor \gamma \rfloor + 1 > \gamma \geq \alpha$, we obtain:
	\begin{equation*}
		| ( F_y - F_x ) (\check{\eta}_x^{\lambda}) | \lesssim \| \check{\eta} \|_{C^{r_{\alpha, \bar{\alpha}}}} \lambda^{\alpha} ( | y - x | + \lambda)^{\gamma - \alpha} .
	\end{equation*}

This concludes the proof.
\end{proof}

\printbibliography

\end{document}